\newcommand{\beq}{\begin{equation}}
\newcommand{\eeq}{\end{equation}}
\newcommand{\bea}{\begin{eqnarray}}
\newcommand{\eea}{\end{eqnarray}}
\newcommand{\beas}{\begin{eqnarray*}}
\newcommand{\eeas}{\end{eqnarray*}}
\newtheorem{theorem}{Theorem}[section]
\newtheorem{definition}[theorem]{Definition}
\newtheorem{proposition}[theorem]{Proposition}
\newtheorem{corollary}[theorem]{Corollary}
\newtheorem{lemma}[theorem]{Lemma}
\newtheorem{remark}[theorem]{Remark}
\newtheorem{example}[theorem]{Example}
\newtheorem{examples}[theorem]{Examples}
\newtheorem{foo}[theorem]{Remarks}
\newenvironment{proof}{\addvspace{\medskipamount}\par\noindent{\it
Proof}.}
{\unskip\nobreak\hfill$\Box$\par\addvspace{\medskipamount}}
\newcommand{\R}[1]{\mathbb{R}}     % absolute value
\newcommand{\bG}{\mathbb G}
\newcommand{\p}{\partial}
\newcommand{\ee}{\ell}
\newcommand{\bM}{\mathbb M}
\newcommand{\ri}{\text{Ric}}
\newcommand{\Rn}{\mathbb R^n}
\newcommand{\Om}{\Omega}
\newcommand{\F}{\mathcal F}
\newcommand{\Ho}{\mathcal H}
\newcommand{\di}{\mathfrak h}
\title{Generalized Bochner formulas and Ricci lower bounds for sub-Riemannian manifolds of rank two}
\author{Fabrice Baudoin, Nicola Garofalo \footnote{Second author supported in part by NSF Grant DMS-0701001}
\\
{\small Department of Mathematics} \\
{\small Purdue University} \\
}
\begin{document}

\maketitle

\begin{abstract}
We study a new class of rank two sub-Riemannian manifolds
encompassing Riemannian manifolds, CR manifolds with vanishing
Webster-Tanaka torsion, orthonormal bundles over Riemannian
manifolds, and graded nilpotent Lie groups of step two. These
manifolds admit a canonical horizontal connection and a canonical
sub-Laplacian. We construct on these manifolds an analogue of the
Riemannian  Ricci tensor and prove Bochner type formulas for the
sub-Laplacian. As a consequence, it is possible to formulate on
these spaces a sub-Riemannian analogue of the so-called curvature
dimension inequality. Sub-Riemannian manifolds for which this
inequality is satisfied are shown to share many properties in common
with Riemannian manifolds whose Ricci curvature is bounded from
below.
\end{abstract}

\tableofcontents

\section{Introduction}

A sub-Riemannian manifold is a smooth Riemannian manifold $\bM$
equipped with a fiber inner product $g_R(\cdot,\cdot)$ on the tangent
bundle $T\bM$ and a non-holonomic, or bracket generating, subbundle
$\mathcal H \subset T\bM$. This means that if we denote by
$L(\mathcal H)$ the Lie algebra of the vector fields generated by
the global $C^\infty$ sections of $\mathcal H$, then $\text{span}
\{X(x)\mid X\in L(\mathcal H)\} = T_x(\bM)$ for every $x\in \bM$. A
piecewise smooth curve $\gamma:[a,b]\to \bM$ is called admissible,
or horizontal, if it is tangent to $\mathcal H$, i.e. if $\gamma'(t)
\in \mathcal H_{\gamma(t)}$, whenever $\gamma'(t)$ is defined. The
horizontal length of $\gamma$ is defined as in Riemannian geometry
\[ \ee_\mathcal H(\gamma) = \int_a^b g_R(\gamma'(t),\gamma'(t)) dt. \]
Denoting by $\mathcal H(x,y)$ the collection of all horizontal
curves joining $x, y\in \bM$, one defines a distance $d(x,y)$
between $x$ and $y$ by minimizing on the length of all $\gamma\in
\mathcal H(x,y)$, i.e.
\[
d(x,y) = \underset{\gamma \in \mathcal H(x,y)}{\inf}\ \ee_\mathcal
H(\gamma). \] Such distance was introduced by Carath\'eodory in his
seminal paper \cite{Car} on formalization of the classical
thermodynamics. In such framework horizontal curves correspond
loosely speaking to adiabatic processes.

To be precise, in \cite{Car} the question of whether $d(x,y)$ be a
true distance was left open. This question was answered by the
fundamental connectivity theorem of Chow \cite{Ch} and Rashevsky
\cite{Ra} which states that if $\bM$ is connected and $\mathcal H$
is bracket generating, then $\mathcal H(x,y)\not= \varnothing$ for
every $x,y\in \bM$. As a consequence, $d(x,y)$ is finite and
therefore it is a true distance. Such metric is nowadays known as
the control, or Carnot-Carath\'eodory distance on $\bM$ (after
Gromov, Lafontaine and Pansu \cite{GLP}, see also \cite{Gromov},
\cite{Gromov2}). Besides the cited references the reader should
consult E. Cartan's pioneering address \cite {Ca} at the Bologna
International Congress of Mathematicians in 1928, as well as the
articles by Rayner \cite{Rayner} (where sub-Riemannian manifolds are
called parabolic spaces), by Mitchell \cite{mitchell} and Strichartz
\cite{Strichartz}, see also \cite{stricorr}. One should also consult
the monographs \cite{Be}, \cite{Montgomery}, \cite{A} and
\cite{baudoin}.

\

We note that when $\mathcal H = T\bM$, then the distance $d(x,y)$ is
simply the Riemannian distance associated with the inner product
$g_R (\cdot,\cdot)$, and thus sub-Riemannian manifolds encompass
Riemannian ones. However, some aspects of the geometry of
sub-Riemannian manifolds are considerably less regular than their
Riemannian ancestors. Some of the major differences between the two
geometries are the following:
\begin{enumerate}
\item The Hausdorff dimension of the metric space $(\bM,d)$ is usually greater than the manifold
dimension;
\item The exponential map defined by the geodesics of the metric space   $(\bM,d)$ is in general not a local diffeomorphism in a
neighborhood of the point at which it is based (see \cite{Rayner});
\item The space of horizontal paths joining
two fixed points may have singularities (the so-called abnormal geodesics, see \cite{Montgomery}).
\end{enumerate}

\

During the last two decades there have been several advances in the
study of sub-Riemannian spaces and the closely connected theory of
sub-elliptic pde's, see \cite{steinNice}, \cite{Fo}, \cite{RS},
\cite{FP1}, \cite{NSW}, \cite{SC}, \cite{JSC}, \cite{Jer},
\cite{FSC}, \cite{KS1}, \cite{KS2}, \cite{VSC}. However, these
developments are of a local nature. As a consequence, the theory
presently lacks a body of results which, similarly to the Riemannian
case, connect properties of solutions of the relevant pde's to the
geometry of the ambient manifold.

\

The purpose of this paper is to begin a program aimed at filling
this gap. Precisely, in a sub-Riemannian manifold of rank two we
introduce a new notion of Ricci curvature tensor, and with such
notion we obtain various results which parallel those of Riemannian
manifolds with Ricci tensor bounded from below.

\

To put our work in the proper perspective we recall that in
Riemannian geometry a first point of view on the Ricci tensor is to
understand it as a measure of volume distortion in geodesic normal
coordinates. This point of view has recently led several authors (
Lott-Villani \cite{villani-lott}, Sturm \cite{sturm1},
\cite{sturm2}, Ollivier \cite{ollivier}) to introduce on metric
spaces more general than the Riemannian ones a suitable notion of
Ricci curvature based on the theory of optimal transport (see
\cite{villani}). However, as pointed out in \cite{juillet}, this
notion of Ricci curvature bound may not be applied in a
sub-Riemannian framework. In sub-Riemannian geometry, the
quantification of volume distortion properties is particularly
difficult to handle because of the singular nature of the
exponential map. To the authors' best knowledge, the only
significant results in this direction are at the moment known only
in the special case of three-dimensional  contact manifolds (see
\cite{rumin}, \cite{juillet} and \cite{agrachev}).

\

There is a second point of view on the Riemannian Ricci tensor and
this is the one that we have adopted in the present paper. On a
Riemannian manifold, there is a canonical second order differential
operator: the Laplace-Beltrami operator $\Delta$. Properties of this
operator and of the associated heat flow are intimately related to
the geometry and the topology of the underlying manifold. One of the
cornerstones of the interplay between the analysis of the
Laplace-Beltrami operator and geometry is given by the celebrated
Bochner formula:
 \[
\Delta(|\nabla f|^2) = 2 ||\nabla^2 f||^2 + 2 <\nabla
f,\nabla(\Delta f)> + 2\ \text{Ric}(\nabla f,\nabla f), \] This is
where the Ricci curvature tensor appears in the study of the
Laplace-Beltrami operator, and it is then seen that a lower bound
assumption on the Ricci curvature is equivalent to a
\textit{coercivity} property of a canonical bilinear differential
form associated to $\Delta$.  More precisely, associated with
$\Delta$ are the two following differential bilinear forms on smooth
functions $f, g :\mathbb{M} \rightarrow \mathbb{R}$,
\begin{equation*}
\Gamma(f,g) =\frac{1}{2}(\Delta(fg)-f\Delta g-g\Delta f)=( \nabla f
, \nabla g ),
\end{equation*}
and
\begin{equation*}
\Gamma_{2}(f,g) = \frac{1}{2}\big[\Delta\Gamma(f,g) - \Gamma(f,
\Delta g)-\Gamma (g,\Delta f)\big].
\end{equation*}
As an application of the Bochner's formula, which we can re-write as
\[ \Delta \Gamma(f,f) = 2 ||\nabla^2 f||^2 + 2 \Gamma(f,\Delta f) +
2\ \text{Ric}(\nabla f,\nabla f), \] one obtains
\[
\Gamma_{2}(f,f)= \| \nabla^2 f \|_2^2 +\text{Ric}(\nabla f, \nabla
f).
\]
With the aid of Schwarz inequality, which gives $\| \nabla^2 f
\|_2^2\ge \frac{1}{d} (\Delta f)^2$, the assumption that the
Riemannian Ricci tensor on $\bM$ is bounded from below by $\rho \in
\mathbb{R}$,  translates then into the so-called curvature-dimension
inequality:
\begin{equation}\label{curvature_dimension}
\Gamma_{2}(f,f) \ge \frac{1}{d} (\Delta f)^2 + \rho
\Gamma(f,f).
\end{equation}
The inequality  \eqref{curvature_dimension} perfectly  captures the
notion of Ricci curvature lower bound, see example \ref{CDriemann}
below. In the hands of D. Bakry, M. Ledoux and their co-authors it
has proven to be a powerful tool in recovering most of the
well-known theorems which, in Riemannian geometry, are obtained
under the assumption that the Ricci curvature be bounded from below
(see for instance \cite{bakry-stflour}, \cite{ledoux-zurich},
\cite{li}).

\

The purpose of this work is to generalize this point of view in a
sub-Riemannian setting. More precisely, we assume that on $\bM$ we
are given smooth vector fields $X_1,...,X_d$ which generate the
horizontal subbundle $\mathcal H$. The commutators $[X_i,X_j]$ are
supposed to satisfy some structural assumptions, see \eqref{bra1}
and \eqref{bra2} below, with $Z_{mn}, m,n=1,...,\di$, being the
non-horizontal, or vertical directions. Setting \[ \mathcal H(x)  =
\text{span} \{X_1(x),...,X_d(x)\},\ \ \ \mathcal V(x) = \text{span}
\{Z_{mn}(x)\mid 1\le m<n\le \di\}, \] we assume that \[ T_x \bM =
\mathcal H(x) \oplus \mathcal V(x),\ \ \ x\in \bM. \]

We also assume that $\mathcal H$ is bracket-generating of rank two,
i.e. \[ T_x \bM = \text{span} \{X_i(x), [X_j,X_k](x)\},\ \ \ \ x\in
\bM. \]

The first step will be to equip $\bM$ with a canonical subelliptic
operator
\[
L=X_0+\sum_{i=1}^d X_i^2,
\]
which shall play the role of the Laplace-Beltrami operator. We then
equip $\bM$ with a degenerate metric tensor $g(\cdot,\cdot)$ for
which $\{X_1(x),...,X_d(x)\}$ is orthonormal at every $x\in \bM$,
the spaces $\mathcal H(x)$ and $\mathcal V(x)$ are orthogonal, and
such that $g|_\mathcal V = 0$. In such a manifold we introduce a
canonical connection $\nabla$ generalizing the Levi-Civita
connection. One of the fundamental assumptions in this work is that
the torsion be vertical. Besides the Riemannian case, basic examples
covered by our framework are CR manifolds with vanishing
Webster-Tanaka torsion (Sasakian manifolds), orthonormal bundles
over Riemannian manifolds, and graded nilpotent Lie groups of step
two.

\

An essential tool in our analysis are two sub-Riemannian Bochner
formulas for $L$, one in the horizontal direction and one in the
vertical one. Such formulas are established in section
\ref{S:bochner}. We mention here that their technical complexity is
the main \emph{raison d'\^{e}tre} of the above rank two assumption
(in this regard, one should see the closing comments at the end of
this introduction). By means of these formulas we identify a tensor
$\mathcal R$ which plays the role of the Riemannian Ricci tensor. We
show in Proposition \ref{P:miniblackhole}  that for every $f\in
C^\infty(\bM)$ one has
\begin{align*}
\mathcal{R} (f,f)= &\sum_{\ee,k=1}^d \text{Ric}(X_\ee,X_k) X_\ee f
X_k f -( (\nabla_{X_\ee} T) (X_\ee,X_k)f )(X_k f)  +\frac{1}{4}
\left( T(X_\ee,X_k)f\right)^2.
\end{align*}
where $T$ is the torsion of the canonical connection $\nabla$ and $
\text{Ric}$ its Ricci curvature. In fact, in the Riemannian case,
one has \[ \mathcal R(f,f) = \text{Ric}(\nabla f,\nabla f), \ \ \ \
\ f\in C^\infty(\bM). \] A lower bound assumption on the tensor
$\mathcal R$ will translate into a generalized curvature dimension
inequality for $L$ that writes in the form:
\begin{equation}\label{CDintro}
\Gamma_{2}(f,f)+\nu \Gamma^Z_{2}(f,f) \ge \frac{1}{d} (Lf)^2 + \left( \rho_1 -\frac{\kappa}{\nu}\right)  \Gamma (f,f) + \rho_2 \Gamma^Z (f,f), \quad \nu >0,
\end{equation}
where $\Gamma^Z$ and $\Gamma^Z_{2}$ are two bilinear differential
forms on the vertical subbundle. The parameters of this inequality
are $\rho_1,\rho_2,\kappa$, and a recurrent theme of our work is
that all estimates solely depend in an explicit quantitative way on
these parameters. As shown in the following summary of the main
results, the parameter $\rho_1$ is of particular importance:
\begin{enumerate}
\item \textbf{Dodziuk-Yau type theorem:} If the inequality (\ref{CDintro}) holds
for some constants $\rho_1 \in \mathbb{R},\rho_2 >0,\kappa>0$, then
the heat semigroup $P_t=e^{tL}$ is stochastically complete and
bounded solutions of the heat equation are characterized by their
initial condition;
\item \textbf{ Li-Yau type inequality:} If the inequality
(\ref{CDintro})  holds for some constants $\rho_1 \in
\mathbb{R},\rho_2 >0,\kappa>0$, then the heat semigroup $P_t$
satisfies a Li-Yau type gradient estimate. Exploiting the latter, we
prove that the heat kernel $p(x,y,t)$ of $P_t$ satisfies a uniform
Harnack inequality and a Gaussian upper bound estimate;
\item \textbf{Yau-Liouville type theorem:} If the inequality (\ref{CDintro}) holds
for some constants $\rho_1 \ge 0,\rho_2 >0,\kappa>0$, then there is
no non constant bounded harmonic function;
\item \textbf{Volume and isoperimetry:} If the inequality (\ref{CDintro})
holds for some constants $\rho_1 \ge 0,\rho_2 >0,\kappa>0$, then the
volume growth of the geodesic balls is at most polynomial. Moreover,
under the assumption that the volume of balls is at least
polynomial, we obtain a global isoperimetric inequality;
\item  \textbf{Myers type theorem:} If the inequality (\ref{CDintro})
holds for some constants $\rho_1 > 0,\rho_2 >0,\kappa>0$, then the
metric space $(\mathbb{M},d)$ is compact in the metric topology with
a Hausdorff dimension less than $d\left( 1+\frac{3 \kappa}{2\rho_2}
\right)$ and we have \[ \emph{diam}\ \bM \le 2\sqrt{3} \pi \sqrt{
\frac{\kappa+\rho_2}{\rho_1\rho_2} \left(
1+\frac{3\kappa}{2\rho_2}\right)d };
\]
\item \textbf{$L^1$ Poincar\'e inequality:} If the inequality (\ref{CDintro})
holds for some constants $\rho_1 > 0,\rho_2 >0,\kappa>0$, then the
following inequality is satisfied
\[
\inf_{c \in \mathbb{R}} \int_\bM | f-c | d \mu \le  6 d \left( 1+\frac{3\kappa}{2\rho_2} \right)
\sqrt{\frac{\kappa+\rho_2}{d\rho_1 \rho_2}} \int_\bM
\sqrt{\Gamma(f)} d\mu.
\]
\item  \textbf{Lichnerowicz type estimate:} If the inequality (\ref{CDintro}) holds
for some constants $\rho_1 > 0,\rho_2 >0,\kappa>0$, then the first
non zero eigenvalue $\lambda_1$ of $-L$ satisfies the estimate
\[
\lambda_1 \ge \frac{\rho_1 \rho_2}{\frac{d-1}{d} \rho_2 +\kappa}.
\]
\end{enumerate}

The basic source of the above listed results is the study of
monotone entropy type functionals of the heat semigroup. More precisely, generalizing \cite{bakry-baudoin},  \cite{Bakry-Ledoux} and \cite{baudoin-bonnefont},  one of the most important observations is that in our framework, the inequality (\ref{CDintro}) implies that for the functionals
\[
\Phi_1 (t)=P_t \left( (P_{T-t} f) \Gamma (\ln P_{T-t}f) \right),
\]
\[
\Phi_2 (t)=P_t \left( (P_{T-t} f) \Gamma^Z (\ln P_{T-t}f) \right).
\]
we have the following differential inequality
\[
\left(- \frac{b'}{2\rho_2} \Phi_1 +b \Phi_2 \right)' \ge  -\frac{2b' \gamma}{d\rho_2} LP_Tf + \frac{b'  \gamma^2}{d\rho_2}  P_T f,
\]
where $b$ is any smooth, positive and decreasing function on the time
interval $[0,T]$ and
\[
\gamma=\frac{d}{4} \left( \frac{b''}{b'} +\frac{\kappa}{\rho_2} \frac{b'}{b} +2\rho_1 \right).
\]
Depending on the value of $\rho_1$, a careful choice of the function $b$ leads then to a generalized Li-Yau type inequality from which it is possible to deduce the above results.

\

Some final comments are in order. We have mentioned above that the
choice of working with sub-Riemannian manifolds of rank two is
closely connected with the complexity of the Bochner type formulas
in section \ref{S:bochner}. On one hand, the rank two setting is
rich enough to encompass at one time the case of Riemannian and CR
(Sasakian) manifolds. On the other hand, following the program in
section \ref{S:bochner}, for manifolds of arbitrary rank $\mathfrak
r$ one would need to establish $\mathfrak r$ Bochner type formulas
(of decreasing complexity, the most difficult one being the
horizontal one). Whereas it would be desirable to treat
sub-Riemannian manifolds of arbitrary rank, we have felt that the
increased technical difficulties connected with this endeavor would
distract from the main ideas, and have consequently decided to defer
the treatment of manifolds of rank $\mathfrak \ge 3$ to a future
study. We should also mention that most of the constants appearing
in the main results in this paper are not optimal. This can be seen
by considering the special setting of graded nilpotent Lie groups
where one can use the underlying non-isotropic dilations to obtain
sharper constants. Finally, we mention that our methods strongly
rely on the assumption that the torsion of the canonical connection
is vertical.

\

In closing we mention that a pseudo-hermitian version of the
Bonnet-Myers theorem for contact manifolds of dimension three was
proved by Rumin, see Theorem 16 in \cite{rumin}. We thank S. Webster
for bringing this to our attention.

\section{The framework and its geometrical interpretation
}\label{S:framework}

\subsection{Prelimimaries}\label{SS:framework}

Henceforth in this paper, $\mathbb{M}$ will be a smooth connected
Riemannian manifold. We assume that $X_1,...,X_d$ are given smooth
vector fields on $\mathbb M$ satisfying the following commutation
relations:
\begin{equation}\label{bra1}
[X_i,X_j]=\sum_{\ee=1}^d \omega_{ij}^\ee X_\ee +\sum_{m,n=1}^{\di}
\gamma_{ij}^{mn} Z_{mn},
\end{equation}
\begin{equation}\label{bra2} [X_i,Z_{mn}]=\sum_{\ee=1}^d \delta_{imn}^\ee X_\ee,
\end{equation}
for some smooth vector fields $\{Z_{mn}\}_{1\le m,n \le \di}$  and
smooth functions $ \omega_{ij}^\ee $, $ \gamma_{ij}^{mn}$ and
$\delta_{imn}^\ee $. By convention $Z_{mn}=-Z_{nm}$,
$\omega_{ij}^\ee =-\omega_{ji}^\ee$ and
$\gamma_{ij}^{mn}=-\gamma_{ji}^{mn}$ .  We will assume that
\begin{equation}\label{deltas}
\delta_{imn}^\ee=-\delta_{\ee mn}^i,\ \ i, \ee = 1,...,d,\
\text{and}\ m, n=1,...,\di.
\end{equation}
Note that \eqref{deltas} implies $\delta^i_{imn} = 0$ for
$i=1,..,d$, and $m,n=1,...,\di$. The assumption \eqref{deltas} plays
a pervasive role in the results of this paper.

\

We shall moreover assume that the vector fields $X_i$'s satisfy
H\"ormander's finite rank condition of step two \cite{Ho}: for every
$x \in \mathbb{M}$,
\[
\text{span}\left\{ X_i (x),[X_j,X_k](x), 1 \le i \le d, 1 \le j <k
\le d \right\} = T_x\mathbb{M}.
\]
We denote by
\[
\mathcal{H}(x)=\text{span} \left\{X_1 (x),...,X_d (x) \right\},
\quad x\in \mathbb{M},
\]
the set of \textit{horizontal} directions at $x$, and by
\[
\mathcal{V}(x)=\text{span}\left\{  Z_{mn} (x), 1\le m<n \le \di
\right\},
\]
that of \emph{vertical} directions. We shall assume that
\[
\text{dim} \mathcal{H}(x) =d, \quad \text{dim}
\mathcal{V}(x)=\frac{\di(\di-1)}{2},
\]
at each point $x\in \bM$, and that
\[
\mathcal{H}(x) \oplus \mathcal{V}(x)= T_x \mathbb{M}.
\]
The dimension of $\bM$ is therefore $d+\frac{\di(\di-1)}{2}$, but
such number will never explicitly appear in the results in this
paper. We indicate with $\mathcal H = \bigcup_{x\in \bM} \mathcal
H(x)$, and $\mathcal V = \bigcup_{x\in \bM} \mathcal V(x)$,
respectively the horizontal and vertical subbundles of $T\bM$.

\

Our goal is to study the second order subelliptic  operator $L$ given by
\begin{equation}\label{L}
L= \sum_{i=1}^d X_i^2 + X_0,
\end{equation}
where
\begin{equation}\label{X0}
X_0=-\sum_{i,k=1}^d \omega_{ik}^k X_i.
\end{equation}
We will assume that with respect to the Riemannian measure $\mu$ of $\bM$,
\begin{equation}\label{Zstar}
Z_{mn}^*=-Z_{mn},\ \ \ \ \ \ \ \ L^* = L,
\end{equation}
where $Z_{mn}^*$ denotes the formal adjoint of $Z_{mn}$ and $L^*$ the formal adjoint of $L$. We note
explicitly that \eqref{Zstar} means that, if we set for every $\phi,
\psi\in C^\infty_0(\bM)$
\[
<\phi,\psi> = \int_{\bM} \phi \psi d\mu,
\]
then one has
\begin{equation}\label{symm}
<Z_{mn} \phi,\psi> = - <\phi,Z_{mn} \psi>\ ,\ \ \ \ \ \ <L\phi,\psi>
= <\phi,L\psi>.
\end{equation}

We recall that, thanks to H\"ormander's hypoellipticity theorem
\cite{Ho}, distributional solutions to $Lf =0$ in $\mathbb M$ are
$C^\infty$ functions. We stress that, thanks to the second identity
in the assumption \eqref{Zstar}, the operator $L$ can be realized as
\begin{equation}\label{subL}
L = - \sum_{i=1}^d X^*_i X_i.
\end{equation}

We also assume that $\bM$ is endowed with a Levi-Civita connection
with respect to which the Laplace-Beltrami operator is given by
\begin{equation}\label{LB}
\Delta = \sum_{i=1}^d X_i^2 + \sum_{1\le m<n\le \di} Z_{mn}^2 + X_0.
\end{equation}

Let us now give some examples that, at least locally, fit into the
previous framework and that constitute a basic motivation for our
study.

\begin{example}\label{riemannian}[Laplace-Beltrami operator on a Riemannian
manifold]\label{E:riemannian} Let $(\mathbb{M},g)$ be a
$d$-dimensional connected Riemannian manifold with Levi-Civita
connection $\nabla$. Let $X_1,...,X_d$ be a local orthonormal frame
around a point $x_0 \in \mathbb{M}$. In that case, we have
\[
[X_i,X_j]= \nabla_{X_i} X_j - \nabla_{X_j} X_i = \sum_{k=1}^d \left(
\Gamma_{ij}^k -\Gamma_{ji}^k \right) X_k
\]
where $\Gamma_{ij}^k$ are the Christoffel symbols of the Levi-Civita
connection. Thus, in this particular case we have $\gamma^{mn}_{ij}
= 0$ in \eqref{bra1}. The Laplace-Beltrami operator on $\mathbb{M}$
reads
\[
\Delta= \sum_{i=1}^d X_i^2 + X_0,
\]
where
\[
X_0=-\sum_{i,k=1}^d \left(\Gamma_{ik}^k -\Gamma_{ki}^k \right) X_i.
\]
Finally, we can observe that $L$ is symmetric with respect to the Riemannian measure on $\mathbb{M}$.
\end{example}
\begin{example}\label{nilpotent} [Graded nilpotent Lie groups of step two]
Let $\mathbb{G}$ be a connected and simply connected nilpotent Lie
group of step two. This means that its Lie algebra can be written as
$\mathfrak g = V_1 \oplus V_2$, where $[V_1,V_1] = V_2$, and
$[V_1,V_2]=\{0\}$. Let $L_x(y) = x y$ be the operator of
left-translation on $\bG$, and indicate with $dL_x$ its
differential. If $e_1,...,e_d$ is an orthonormal basis of $V_1$, we
indicate with $X_1,...,X_d$, where $X_j(x) = dL_x(e_j)$, the
corresponding system of left-invariant vector fields on $\bG$. We
assume that $\bG$ is endowed with a left-invariant Riemannian inner
product with respect to which $\{X_1,...,X_d\}$ constitutes a global
orthonormal frame. In this framework, we see that \eqref{bra1} holds
with
\[
\omega_{ij}^\ee = 0,
\]
\[
Z_{mn}=[X_m,X_n],
\]
\[
\begin{cases}
\gamma_{mn}^{mn}=-\gamma_{nm}^{mn}=\frac{1}{2}, \quad m \neq n,
\\
\gamma_{ij}^{mn}=0, \quad \text{otherwise},
\end{cases}
\]
\[
\delta^\ee_{imn}=0.
\]
In view of \eqref{subL} we see that $L = \sum_{i=1}^d X_i^2$ is the
sub-Laplacian associated with $X_1,...,X_d$, see \cite{Fo},
\cite{Strichartz}. In this case $L$ is symmetric with respect to the
bi-invariant Haar measure on $\bM$.

\end{example}

\begin{example}\label{horizontalbochner}[Horizontal Bochner Laplace operator] Let $(\mathbb{M},g)$ be a $d$-dimensional
connected  smooth Riemannian manifold endowed with the Levi-Civita
connection. Let us consider the orthonormal frame bundle $\
\mathcal{O} \left( \mathbb{M}\right) $ over $\mathbb{M}$. For each
$x \in \mathbb{R}^d$ we can define  a horizontal vector field $H_x$
on $\mathcal{O}\left( \mathbb{M}\right)$ by the property that at
each point $u \in \mathcal{O} ( \mathbb{M} )$, $H_{x} (u)$ is the
horizontal lift of $u (x)$ from $u$. If  $(e_1,...,e_d)$ is the
canonical basis of $\mathbb{R}^d$,  the fundamental horizontal
vector fields are then defined by
\begin{equation*}
H_i= H_{e_i}.
\end{equation*}
 Now, for every $M \in \mathfrak{o}_d
(\mathbb{R})$ (space of $d
\times d$ skew symmetric matrices), we can define a vertical vector field $V_M$ on
$\mathcal{O} \left( \mathbb{M}\right)$ by
\[
(V_M F)(u) = \lim_{t \rightarrow 0} \frac{F \left( u e^{tM}
\right) -F(u)}{t},
\]
where $u \in \mathcal{O} \left( \mathbb{M}\right)$ and $F:
\mathcal{O} \left( \mathbb{M}\right) \rightarrow \mathbb{R}$. If
$E_{ij}$, $1 \le i <j \le d$ denote the canonical basis of
$\mathfrak{o}_d (\mathbb{R})$ ($E_{ij}$ is the matrix whose
$(i,j)$-th entry is $1/2$, $(j,i)$-th entry is $-1/2$ and all other
entries are zero), then the fundamental vertical vector fields are
given by
\[
V_{ij}= V_{E_{ij}}.
\]
It can be shown that we have the following Lie bracket relations:
\[
[H_i,H_j]=-2\sum_{k<l} \Omega_{ij}^{kl} V_{kl},
\]
\[
[H_i,V_{jk}]=-\delta_{ij} \frac{1}{2} H_k +\delta_{ik} \frac{1}{2} H_j,
\]
where $\delta_{ij}=1$ if $i=j$ and $0$ otherwise,  and where $\Omega$ is the Riemannian curvature form:
\[
\Omega (X,Y)(u)=u^{-1} R(\pi_* X , \pi_* Y)u,~~X,Y \in
\mathrm{T}_u \mathcal{O} \left( \mathbb{M}\right),
\]
$R$ denoting the Riemannian curvature tensor on $\mathbb{M}$ and $\pi$ the canonical projection $\mathcal{O}\left( \mathbb{M}\right) \rightarrow \mathbb{M}$.
In this setting, the Bochner's horizontal Laplace operator  is
by definition the  operator on $\mathcal{O}\left( \mathbb{M}\right)$ given by
\begin{equation*}
\Delta _{\mathcal{O}\left( \mathbb{M}\right)
}=\sum_{i=1}^{d}H_{i}^{2}.
\end{equation*}
Its fundamental property is
that it is the lift of the Laplace-Beltrami operator $\Delta_{
\mathbb{M}}$ of $\mathbb{M}$. That is, for every smooth $f:
\mathbb{M} \rightarrow \mathbb{R}$,
\[
\Delta _{\mathcal{O}\left( \mathbb{M}\right)} (f \circ
\pi)=(\Delta _{ \mathbb{M}} f)\circ \pi.
\]

For the reader unfamiliar with the above construction, we refer for
instance to Chapter 3 in \cite{baudoin} for further details. It is
also proved in the last reference that if the curvature form
$\Omega$ is everywhere non degenerate, then $\Delta
_{\mathcal{O}\left( \mathbb{M}\right) }$ is subelliptic. Under this
last assumption, it is then readily checked that the study of
$\Delta _{\mathcal{O}\left( \mathbb{M}\right)}$ falls into our
framework.
\end{example}

\begin{example}\label{CR}[The subelliptic Laplace operator on CR manifolds]
Let $\mathbb{M}$ be a non degenerate CR manifold of real
hypersurface type and dimension $d +1$, where $d= 2n$. Let $\theta$
be a contact form on $\mathbb{M}$ with respect to which the Levi
form $L_\theta$ is positive definite. Let us assume that the
pseudo-Hermitian torsion of the Tanaka-Webster connection of
$(\mathbb{M},\theta)$ is zero. We denote by $T$ the characteristic
direction of $\theta$ and consider a local orthonormal frame
$T_1,...,T_n$, that is $ L_\theta ( T_i ,\bar{T_j} )=\epsilon_{ij}$
where $\epsilon_{ij}$ is the Kronecker symbol. The following
commutations properties hold:
\[
[T_i, T_j]=\sum_{k=1}^n \left( \Gamma_{ij}^k -\Gamma_{ji}^k \right)
T_k,
\]
\[
[T_i, \bar{T_j}]=-2 \sqrt{-1} \epsilon_{ij} T+ \sum_{k=1}^n \Gamma_{i\bar{j}}^{\bar{k}} T_{\bar{k}} - \Gamma_{\bar{j} i}^k T_k,
\]
\[
[T,T_i]=\sum_{k=1}^n \ \Gamma_{0i}^k  T_k,
\]
where the  $\Gamma_{ij}^k$ are the Christoffel symbols of the Tanaka-Webster connection (see \cite{CR} pp. 32). The sub-Laplacian (locally) reads:
\[
\Delta=-\sum_{k=1}^n T_i^* T_i +\bar{T_i}^* \bar{T_i},
\]
where $T_i^*$ is the adjoint of $T_i$ with respect to the volume form $\theta \wedge (d \theta)^n$. If we denote
\[
X_i=\frac{1}{\sqrt{2}} (T_i +\bar{T_i}), X_{i+n}= \frac{\sqrt{-1}}{\sqrt{2}} (T_i -\bar{T_i}), \quad 1 \le i \le n,
\]
then we fall into the previous framework.
\end{example}

\subsection{Canonical connection}\label{SS:cc}

On $\mathbb{M}$ there is a canonical connection associated with the
geometry of the differential system generated by the vector fields
$X_1,...,X_d$. We consider the degenerate metric tensor $g$ on
$\mathbb{M}$, such that $\{X_1 (x),...,X_d (x)\}$ is orthonormal at
each point $x\in \bM$, and for which the spaces $\mathcal{H}(x)$ and
$\mathcal{V}(x)$ are orthogonal, and $g_{/ \mathcal{V}(x)}=0$.
%For the sake of
%simplicity, in what follows we will denote by $<\cdot,\cdot>$,
%instead of $g(\cdot,\cdot)$, the metric tensor.

\begin{proposition}\label{P:horconn}
On $\mathbb{M}$, there is a unique affine connection $\nabla$ that satisfies the following properties:
\begin{itemize}
\item $\nabla g =0$;
\item $\nabla_{X_i} X_j$ is horizontal for $1 \le i,j \le d$;
\item $\nabla Z_{mn}=0$, $1\le m,n \le d$;
\item If $X,Y$ are horizontal vector fields,
the torsion field $T(X,Y)=\nabla_{X} Y -\nabla_{Y} X-[X,Y]$ is
vertical and $T(X_i,Z_{mn})=0$, $1\le i \le d, 1\le m,n \le \di$.
\end{itemize}
This connection is characterized by the formulas:
\begin{equation}\label{christoffel1}
\nabla_{X_i} X_j= \sum_{k=1}^d \Gamma^k_{ij} X_k = \sum_{k=1}^d
\frac{1}{2} \left(\omega_{ij}^k +\omega_{ki}^j - \omega_{jk}^i
\right) X_k,
\end{equation}
\begin{equation}\label{christoffel2}
\nabla_{Z_{mn}} X_i=-\sum_{\ee=1}^d \delta_{imn}^\ee X_\ee,
\end{equation}
\begin{equation}\label{christoffel3}
\nabla Z_{mn}=0,
\end{equation}
where we have denoted by $\Gamma^k_{ij}$ the Christoffel symbols of
the connection.
\end{proposition}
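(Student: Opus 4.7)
The plan is to prove uniqueness and existence separately, with the uniqueness argument being a Koszul-type derivation adapted to the degenerate metric of the sub-Riemannian setting.

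For uniqueness, the condition $\nabla Z_{mn}=0$ is postulated outright. For $\nabla_{Z_{mn}} X_i$, the torsion condition $T(X_i,Z_{mn}) = 0$ reads
\[
\nabla_{X_i} Z_{mn} - \nabla_{Z_{mn}} X_i = [X_i, Z_{mn}] = \sum_{\ee=1}^d \delta_{imn}^\ee X_\ee,
\]
and since $\nabla_{X_i} Z_{mn} = 0$ by the third axiom, this immediately forces \eqref{christoffel2}. For $\nabla_{X_i} X_j$ I expand $\nabla_{X_i} X_j = \sum_k \Gamma_{ij}^k X_k$, which is legitimate because $\nabla_{X_i} X_j$ is horizontal, and derive two relations for the unknown $\Gamma_{ij}^k$. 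The horizontal part of $T(X_i,X_j)$ must vanish by verticality of the torsion on horizontal pairs, so in view of \eqref{bra1} we obtain $\Gamma_{ij}^k - \Gamma_{ji}^k = \omega_{ij}^k$. Metric compatibility applied to the constant function $g(X_j, X_k) = \delta_{jk}$ yields the skew-symmetry $\Gamma_{ij}^k + \Gamma_{ik}^j = 0$. A cyclic permutation of these two identities in $(i,j,k)$, of the sort used to derive the Levi-Civita Christoffel symbols, solves uniquely for $\Gamma_{ij}^k$ and produces \eqref{christoffel1}.

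For existence, I define $\nabla$ on the frame by \eqref{christoffel1}--\eqref{christoffel3}, extend $C^\infty(\bM)$-linearly in the first slot and by the Leibniz rule in the second, and verify the four axioms. The statements that $\nabla_{X_i}X_j$ is horizontal and $\nabla Z_{mn}=0$ are immediate from the formulas. The identity $T(X_i,Z_{mn})=0$ is obtained by reversing the uniqueness derivation. The vertical-torsion condition $T(X_i,X_j) \in \V(x)$ reduces algebraically to $\Gamma_{ij}^k - \Gamma_{ji}^k = \omega_{ij}^k$, which follows directly from the Koszul expression for $\Gamma_{ij}^k$ and the antisymmetry $\omega_{ab}^c = -\omega_{ba}^c$ built into \eqref{bra1}. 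Finally, $\nabla g = 0$ must be checked on all frame combinations. For horizontal triples it reduces to $\Gamma_{ij}^k + \Gamma_{ik}^j = 0$, which again drops out of the Koszul expression. Combinations involving vertical vectors are trivial in view of $g|_\V = 0$ together with $\nabla Z_{mn}=0$, except for the one obtained by applying $Z_{mn}$ to $g(X_j, X_k) = \delta_{jk}$; here one must verify
\[
g(\nabla_{Z_{mn}}X_j, X_k) + g(X_j, \nabla_{Z_{mn}}X_k) = -\delta_{jmn}^k - \delta_{kmn}^j,
\]
which vanishes by exactly the antisymmetry hypothesis \eqref{deltas}.

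The real content of the proof is organizational rather than technical. The Koszul manipulation and the verifications involving only horizontal fields are classical; the specifically sub-Riemannian aspects are (i) the requirement that $\nabla_{X_i}X_j$ be horizontal, which discards the $\gamma_{ij}^{mn}$ contribution to $[X_i,X_j]$ from the torsion and lets us work with only $d$-indexed Christoffel symbols, and (ii) the metric compatibility check along vertical directions, which is precisely the step that demands \eqref{deltas}. The fact that this mild-looking hypothesis is exactly what renders $\nabla g = 0$ self-consistent along vertical vectors explains the authors' remark that \eqref{deltas} plays a pervasive role throughout the paper.
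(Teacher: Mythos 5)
Your proof is correct, and it organizes the argument somewhat differently from the paper's. The paper establishes both \eqref{christoffel1} and \eqref{christoffel2} by invoking the Koszul identity \eqref{ko} (a consequence of $\nabla g = 0$ and vertical torsion), pairing against horizontal frame vectors and simplifying; in particular it uses \eqref{deltas} \emph{inside} the Koszul computation to collapse $\tfrac{1}{2}(-\delta^\ell_{imn}+\delta^i_{\ell mn})$ to $-\delta^\ell_{imn}$. You instead pull \eqref{christoffel2} directly out of $T(X_i,Z_{mn})=0$ together with $\nabla Z_{mn}=0$ — no Koszul, no metric — and then derive \eqref{christoffel1} from the two raw relations $\Gamma^k_{ij}-\Gamma^k_{ji}=\omega^k_{ij}$ (vertical torsion) and $\Gamma^k_{ij}=-\Gamma^j_{ik}$ (metric compatibility on the horizontal frame) via the classical cyclic permutation, which amounts to re-deriving the Koszul identity rather than citing it. The payoff of your route is that \eqref{deltas} drops out of the uniqueness argument entirely and reappears, isolated, as the exact condition needed for $\nabla g = 0$ to hold along vertical directions in the existence check — a cleaner way of seeing why that hypothesis is indispensable. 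The paper's route is shorter because it treats existence as a routine verification and leans on the standard Koszul machinery; yours is more elementary and more self-contained. Both are sound.
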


\begin{proof}
First, it is easy to check that the connection given by the formulas
\eqref{christoffel1}, \eqref{christoffel2}, \eqref{christoffel3}
satisfies the above properties.

 Let now $\nabla$ be an affine
connection that satisfies these properties. One first has
\begin{align*}
\nabla_{X_i} X_j  & = \sum_{k=1}^d g(\nabla_{X_i} X_j,X_k) X_k
\end{align*}
Next, by using the fact that $\nabla g=0$ and that the torsion tensor has to be vertical, we easily obtain that the Koszul identity holds for $\nabla$, that is
\begin{align}\label{ko}
g(\nabla_X Y,Z) & = \frac{1}{2}\big\{Xg(Y,Z) + Yg(Z,X) - Zg(X,Y)
\\
& + g([X,Y],Z) - g([Y,Z],X) + g([Z,X],Y)\big\}. \notag
\end{align}
If we use the orthonormality assumptions on the $X_i's$ and the $Z_{mn}$'s,
and \eqref{bra1}, we obtain
\begin{align*}
\Gamma^k_{ij} & = g(\nabla_{X_i} X_j,X_k)  = \frac{1}{2}
\left\{g([X_i,X_j],X_k) -
g([X_j,X_k],X_i) + g([X_k,X_i],X_j)\right\} \\
& = \frac{1}{2} \left\{\omega_{ij}^k  + \omega_{ki}^j -
\omega_{jk}^i\right\}.
\end{align*}
Similarly, since $T(X,Z_{mn})=0$ and $\nabla_{X_i} Z_{mn} =0 $ one has
\begin{align*}
\nabla_{Z_{mn}} X_i & = \sum_{\ee=1}^d g(\nabla_{Z_{mn}} X_i,X_\ee)
X_\ee .
\end{align*}
Using \eqref{ko} again, along with \eqref{bra2} and \eqref{deltas},
we find
\begin{align*}
g(\nabla_{Z_{mn}} X_i,X_\ee) & = \frac{1}{2}
\left\{g([Z_{mn},X_i],X_\ee) - g([X_i,X_\ee],Z_{mn}) +
g([X_\ee,Z_{mn}],X_i) \right\}
\\
& = \frac{1}{2} \left\{- \delta_{imn}^\ee  + \delta_{\ee mn}^i
\right\} = - \delta_{imn}^\ee.
\end{align*}
\end{proof}

In the sequel it will be useful to have the expression of the
torsion tensor on the vector fields $X_1,...,X_d$.

\begin{proposition}\label{P:torsion}
For every $k,\ee =1,...,d$ one has
\[
T(X_\ee,X_k) = - \sum_{m,n=1}^{\di} \gamma^{mn}_{\ee k} Z_{mn}.
\]
\end{proposition}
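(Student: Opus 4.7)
The plan is a direct unwinding of the definition of the torsion on the horizontal frame, reducing everything to the explicit Christoffel formula \eqref{christoffel1} and the bracket relation \eqref{bra1}.

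First, I would write
\[
T(X_\ell, X_k) = \nabla_{X_\ell} X_k - \nabla_{X_k} X_\ell - [X_\ell, X_k],
\]
and use \eqref{christoffel1} to express the symmetric part in the two lower indices of the Christoffel symbols. Plugging in the formula $\Gamma^i_{\ell k} = \tfrac{1}{2}(\omega_{\ell k}^i + \omega_{i\ell}^k - \omega_{ki}^\ell)$ and its counterpart with $k,\ell$ exchanged, and exploiting the antisymmetry $\omega_{ij}^p = -\omega_{ji}^p$, the cross terms involving $\omega_{i\ell}^k$, $\omega_{\ell i}^k$, $\omega_{ki}^\ell$, $\omega_{ik}^\ell$ cancel pairwise, leaving
\[
\nabla_{X_\ell} X_k - \nabla_{X_k} X_\ell = \sum_{i=1}^d \omega_{\ell k}^i \, X_i.
\]

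Second, by \eqref{bra1} we have
\[
[X_\ell, X_k] = \sum_{i=1}^d \omega_{\ell k}^i \, X_i + \sum_{m,n=1}^{\mathfrak h} \gamma_{\ell k}^{mn} \, Z_{mn}.
\]
Subtracting this from the previous identity produces the cancellation of the horizontal part and leaves precisely $-\sum_{m,n} \gamma_{\ell k}^{mn} Z_{mn}$, which is the claimed formula.

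There is no substantive obstacle here: the content of the proposition is that the connection $\nabla$ was designed so that its horizontal-horizontal torsion matches, up to a sign, the vertical defect of the Lie bracket in \eqref{bra1}. The verification is algebraic, and the key mechanism is simply the antisymmetry of $\omega_{ij}^k$ in the lower indices, which forces the horizontal part of the torsion to vanish in agreement with the general statement (already built into Proposition \ref{P:horconn}) that $T(X,Y)$ is vertical for horizontal $X,Y$.
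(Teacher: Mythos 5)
Your proof is correct and follows essentially the same route as the paper: both reduce the torsion to $\sum_s(\Gamma^s_{\ell k}-\Gamma^s_{k\ell}-\omega^s_{\ell k})X_s - \sum_{m,n}\gamma^{mn}_{\ell k}Z_{mn}$ and then use \eqref{christoffel1} together with the antisymmetry $\omega^s_{ij}=-\omega^s_{ji}$ to show the horizontal coefficient vanishes. The only difference is presentational: you write out the cancellation explicitly rather than stating it as an ``easy to check'' identity.
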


\begin{proof}
One has
\begin{align*}
T(X_\ee,X_k) & = \nabla_{X_\ee} X_k - \nabla_{X_k} X_\ee -
[X_\ee,X_k]
\\
& = \sum_{s=1}^d \left(\Gamma^s_{\ee k} - \Gamma^s_{k\ee} -
\omega^s_{\ee k}\right) X_s - \sum_{m,n=1}^{\di} \gamma^{mn}_{\ee k}
Z_{mn}.
\end{align*}
Using \eqref{christoffel1} and the skew-symmetry of the matrix
$[\omega^s_{ij}]_{i,j=1,...,d}$ we now easily conclude that
\[
\Gamma^s_{\ee k} - \Gamma^s_{k\ee} - \omega^s_{\ee k} = 0.
\]
\end{proof}

We also record the following consequence of \eqref{christoffel1}
\begin{equation}\label{christoffel4}
\nabla_{X_i} X_i = - \sum_{j=1}^d \omega^i_{ij} X_j.
\end{equation}

\begin{remark}
We can observe that
\[
L=\sum_{i=1}^d X^2_i-\nabla_{X_i} X_i
\]
\end{remark}

\begin{example}
For the Example \ref{riemannian}, it is readily checked that $\nabla$ is the Levi-Civita connection.
\end{example}

\begin{example}
Let us now identify $\nabla$ for the Example \ref{horizontalbochner}, whose notations are in force in what follows. Let us  recall (see for instance Chapter 3 in \cite{baudoin}) that the Ehresmann connection form $\alpha$  on $\mathcal{O} \left(
\mathbb{M}\right)$   is the unique
skew-symmetric matrix $\alpha$ of one forms on $\mathcal{O} \left(
\mathbb{M}\right)$ such that:
\begin{enumerate}
\item $\alpha (X) =0$ if and only if $X \in \mathcal{H} \mathcal{O} ( \mathbb{M}
)$;
\item $V_{\alpha (X) }=X$ if and only if $X \in \mathcal{V} \mathcal{O} ( \mathbb{M}
)$,
\end{enumerate}
where $ \mathcal{H} \mathcal{O} ( \mathbb{M})$ denotes the horizontal bundle and $\mathcal{V} \mathcal{O} ( \mathbb{M}
)$ the vertical bundle. It is then easily checked that for a vector field $Y$ on $ \mathcal{O} ( \mathbb{M})$,
\[
\nabla_Y H_i=\sum_{k=1}^d \alpha_j^k (Y) H_k.
\]
Let us observe for later use that if $X,Y$ are smooth horizontal vector fields then we have for the torsion:
\[
T(X,Y)=-V_{\Omega (X,Y)}.
\]
\end{example}

\begin{example}
 For the Example \ref{CR}, it is an immediate consequence of Theorem 1.3 in \cite{CR} that $\nabla$ is the Webster-Tanaka
connection.

\end{example}

\subsection{Sub-Riemannian distance and local volume growth}\label{SS:srd}

In sub-Riemannian geometry the Riemannian distance $d_R$ of $\mathbb
M$ is most of the times confined to the background (see in this
regard the discussion in section 0.1 of Gromov's
\emph{Carnot-Carath\'eodory spaces seen from within} in \cite{Be}).
There is another distance on $\mathbb M$, that was introduced by
Carath\'eodory in his seminal paper \cite{Car}, which plays a
central role. A piecewise $C^1$ curve $\gamma:[0,T]\to \mathbb M$ is
called subunitary at $x$ if for every $\xi\in T_x\mathbb M$ one has
\[
g_R(\gamma'(t),\xi)^2 \le
 \sum_{i=1}^d g_R(X_i(\gamma(t)),\xi)^2.
\]
We define the subunit length of $\gamma$ as $\ell_s(\gamma) = T$. If
we indicate with $S(x,y)$ the family of subunit curves such that
$\gamma(0) = x$ and $\gamma(T) = y$, then thanks to the fundamental
accessibility theorem of Chow-Rashevsky the connectedness of
$\mathbb M$ implies that $S(x,y) \not= \varnothing$ for every
$x,y\in \mathbb M$, see \cite{Ch}, \cite{Ra}. This allows to define
the sub-Riemannian distance on $\mathbb M$ as follows
\[
d(x,y) = \inf\{\ell_s(\gamma)\mid \gamma\in S(x,y)\}.
\]
We refer the reader to the cited contribution of Gromov to
\cite{Be}, and to the opening article by Bella\"iche in the same
volume.

We next recall that a metric space $(S,\rho)$ is called a
length-space, or intrinsic, if for any $x,y\in S$
\[
\rho(x,y)=\inf\ell(\gamma_{xy}),
\]
where the infimum is taken over all continuous, rectifiable curves
$\gamma_{xy}$, joining $x$ to $y$. For a continuous curve
$\gamma:[a,b]\to S$, one defines
$\ell(\gamma)=\sup\sum^p_{i=1}\rho(\gamma(t_i),\gamma(t_{i+1}))$,
the supremum being taken on all finite partitions
$a=t_1<t_2<\dots<t_p<t_{p+1}=b$ of the interval $[a,b]$. Since by
the triangle inequality we trivially have
$\rho(x,y)\leq\ell(\gamma_{xy})$ for any continuous curve joining
$x$ to $y$, it follows that in any metric space $(S,\rho)$,
\[
\rho(x,y)\leq\inf\ell(\gamma_{xy}),\  \ \ \ x,y\in S.
\]
In particular, such inequality is therefore valid in the space
$(\mathbb M,d)$. Now it is proved in Proposition 2.2 in \cite{pisa}
that in any sub-Riemannian space the opposite inequality is also
valid. Therefore, every sub-Riemannian space $(\mathbb M,d)$ is a
length-space.

\

Another elementary consequence of the Chow-Rashevsky theorem is that
$i:(\mathbb M,d) \hookrightarrow (\mathbb M,d_R)$ is continuous. On
the other hand, it was proved in \cite{NSW} that for any connected
set $\Omega \subset \mathbb M$ which is bounded in the distance
$d_R$ there exist $C = C(\Omega)>0$, and $\epsilon =
\epsilon(\Omega)>0$, such that
\[
d(x,y) \leq  C d_R(x,y)^\epsilon,\ \ \ x, y \in \Omega.
\]
This implies that also the inclusion $i: (\mathbb M,
d_R)\hookrightarrow (\mathbb M, d)$ is continuous, and thus, the
topologies of $d_R$ and $d$ coincide. In particular, compact sets
are the same in either topology. The metric space $(M,d)$ is locally
compact and, furthermore, for every compact set $K\subset \mathbb M$
there exists $r_0(K)>0$ such that for $x\in K$ and $0<r<r_0$, the
closed balls $\overline B(x,r)$ in the metric $d$ are compact, see
Proposition 1.1 in \cite{GN}. If $\bM$ is unbounded and the vector
fields grow too fast at infinity, then balls of large radii may not
be bounded in $d_R$ in general, and the space $(M,d)$ may fail to be
complete. The lack of completeness and the fact that not all metric
balls are bounded in the metric  $d_R$ are two equivalent properties
in view of the following powerful generalization of the classical
Theorem of Hopf-Rinow due to Cohn-Vossen, see \cite{Bus}.

\begin{theorem}[of Hopf-Rinow type]\label{T:HR} In any locally compact
length-space $(S,\rho)$, the completeness of the metric space is
equivalent to the compactness of the closed balls.
\end{theorem}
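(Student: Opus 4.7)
The plan is to treat the two implications separately. The direction \emph{compactness of closed balls implies completeness} is routine: every Cauchy sequence $(x_n)$ in $S$ is bounded, so it lies inside some closed ball $\overline{B}(x_0,R)$; by assumption this ball is compact, hence $(x_n)$ admits a convergent subsequence, and being Cauchy the whole sequence converges in $S$.

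For the reverse implication, assume $(S,\rho)$ is complete, locally compact and intrinsic. Fix $x_0\in S$ and set
\[
R^{*} \;=\; \sup\{R\ge 0 : \overline{B}(x_0,R)\ \text{is compact}\}.
\]
Local compactness gives $R^{*}>0$; the task is to rule out $R^{*}<\infty$. I would carry this out in the familiar ``open and closed'' style.

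\emph{Step 1.} Show that $\overline{B}(x_0,R^{*})$ is itself compact. Given a sequence $(y_n)\subset \overline{B}(x_0,R^{*})$, either some subsequence lies in $\overline{B}(x_0,R^{*}-\eta)$ for some $\eta>0$, in which case the compactness of smaller balls ends the argument, or $\rho(x_0,y_n)\to R^{*}$. In the second case the length-space hypothesis furnishes, for each large $n$, an almost-minimizing curve from $x_0$ to $y_n$ along which I can pick an intermediate point $z_n$ with $\rho(x_0,z_n)=R^{*}-1/n$ and $\rho(z_n,y_n)\le 2/n$. The points $z_n$ live in the compact ball $\overline{B}(x_0,R^{*}-1/n)$, so they admit a convergent subsequence; by the triangle inequality $(y_n)$ inherits the Cauchy property along that subsequence, and completeness yields the desired limit.

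\emph{Step 2.} Show that $R^{*}\in E$ forces $R^{*}+\delta\in E$ for some $\delta>0$, contradicting the definition of $R^{*}$. For each $y\in \overline{B}(x_0,R^{*})$ local compactness produces a radius $r_y>0$ such that $\overline{B}(y,r_y)$ is compact; by compactness of $\overline{B}(x_0,R^{*})$ I extract a uniform $\delta>0$ and finitely many centers $y_1,\dots,y_N$ with $\overline{B}(x_0,R^{*})\subset \bigcup_i B(y_i,\delta/2)$ and each $\overline{B}(y_i,\delta)$ compact. For any $w\in \overline{B}(x_0,R^{*}+\delta/2)$ the intrinsic hypothesis supplies a point $y$ on an almost-minimizing curve from $x_0$ to $w$ with $\rho(x_0,y)\le R^{*}$ and $\rho(y,w)<\delta/2$; combined with the finite covering this places $w$ in some $\overline{B}(y_i,\delta)$. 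Hence $\overline{B}(x_0,R^{*}+\delta/2)$ is a closed subset of the compact union $\bigcup_i \overline{B}(y_i,\delta)$, so it is compact.

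The main obstacle, and the place where the length-space hypothesis is indispensable, is the trick in both steps of replacing a point at distance close to $R^{*}$ (or $R^{*}+\delta/2$) by a point lying on an almost-minimizing curve at a strictly controlled distance from $x_0$; in a general complete locally compact metric space this step fails, and closed balls need not be compact.
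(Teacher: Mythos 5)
The paper does not prove this theorem; it cites Cohn-Vossen via \cite{Bus}, so there is no internal argument to compare against. Your proposal follows the standard strategy: set $R^{*}=\sup\{R\ge 0:\overline{B}(x_0,R)\text{ compact}\}$, rule out $R^{*}<\infty$ by first showing $\overline{B}(x_0,R^{*})$ is itself compact and then using local compactness together with the length-space axiom to push past $R^{*}$. The easy direction and Step 2 are sound (modulo routine bookkeeping with the constants $r_{y_i}$ and $\delta$).

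There is, however, a genuine gap in Step 1. You select $z_n\in\overline{B}(x_0,R^{*}-1/n)$ and then assert that the $z_n$ ``admit a convergent subsequence,'' but these balls grow with $n$ and there is no single compact set containing the whole sequence: $\rho(x_0,z_n)\to R^{*}$, so any ball containing all the $z_n$ would have to be $\overline{B}(x_0,R^{*})$, whose compactness is precisely what you are trying to establish. Two standard repairs are available. (i) Diagonalize: for each fixed $m$ choose $z_n^{(m)}$ on the almost-minimizing curve at arc-length parameter $R^{*}-1/m$, so $(z_n^{(m)})_n\subset\overline{B}(x_0,R^{*}-1/m)$ lies in a single compact ball; extract nested convergent subsequences in $m$, pass to the diagonal, and observe that along it $(y_n)$ has oscillation at most $2/m+o(1)$ for every $m$, hence is Cauchy, and completeness finishes. (ii) More cleanly, drop sequences and prove total boundedness directly: given $\varepsilon>0$, take a finite $\varepsilon/3$-net of the compact ball $\overline{B}(x_0,R^{*}-\varepsilon/3)$; by the same almost-minimizing-curve trick every $y\in\overline{B}(x_0,R^{*})$ is within $2\varepsilon/3$ of that smaller ball and hence within $\varepsilon$ of the net, so $\overline{B}(x_0,R^{*})$ is totally bounded, and being closed in the complete space $(S,\rho)$ it is compact. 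A separate small slip: arc length only bounds distance from above, so you can guarantee $\rho(x_0,z_n)\le R^{*}-1/n$ but not equality; this is harmless here but should be stated correctly.
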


Since as we have mentioned $(M,d)$ is a locally compact
length-space, if we want to guarantee the compactness of closed
balls (in the $d$ metric) of arbitrary radii, we need to assume that
$(M,d)$ is a complete metric space. By what has been said, this is
equivalent to requiring that the space $(\bM,d_R)$ is complete.

\

In section \ref{S:myer} it will be expedient to work with yet
another distance on $\mathbb M$. For every $x, y \in \mathbb{M}$, we
set
\begin{equation}\label{d}
\rho(x,y)=\sup_{f\in C^\infty(\mathbb M), \Gamma_\infty(f) \le 1}
\left| f(x) -f(y) \right|,
\end{equation}
where $\Gamma_\infty(f) = \underset{\mathbb M}{\sup}\ \Gamma(f,f)$,
and the quantity $\Gamma(f,f)$ is defined in \eqref{gamma} below.
Thanks to Lemma 5.43 in \cite{CKS} we know that
\[
d(x,y) = \rho(x,y),\ \ \ x, y\in \mathbb M,
\]
hence we can work indifferently with either one of the distances $d$
or $\rho$.

The following fundamental result proved by Nagel, Stein and Wainger
in \cite{NSW} provides a uniform local control of the growth of the
metric balls in $(\bM,d)$.

\begin{theorem}\label{T:doubling}
For any $x\in \bM$ there exist constants $C(x), R(x)>0$ such that
with $Q(x) = \log_2 C(x)$ one has
\[
\mu(B(x,tr)) \ge C(x)^{-1} t^{Q(x)} \mu(B(x,r)),\ \ \ 0\le t\le 1,\
0<r\le R(x).
\]
Given any compact set $K\subset \bM$ one has
\[
\underset{x\in K}{\inf}\ C(x) >0,\ \ \ \underset{x\in K}{\inf}\ R(x)
>0.
\]
\end{theorem}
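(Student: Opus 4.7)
The plan is to invoke the Nagel--Stein--Wainger ball--box parametrization from \cite{NSW} and track the constants. The feature of our framework that makes their theorem directly applicable is the rank--two hypothesis: at every $x\in\bM$ we have $T_x\bM=\Ho(x)\oplus\V(x)$, every horizontal direction is reached by an $X_i$, and every vertical direction is reached by a single commutator $[X_j,X_k]$. Hence the H\"ormander ``type'' is constant on $\bM$, and at each $x$ one can select an $n$-tuple of vectors $Y_1^x,\dots,Y_n^x$ spanning $T_x\bM$ with assigned degrees $d_i\in\{1,2\}$: $d_i=1$ for the $d$ horizontal $X_i$'s, $d_i=2$ for the $\di(\di-1)/2$ vertical $Z_{mn}$'s ($m<n$) needed to complete a basis.

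First I would form the NSW map
\[
\Phi_x(u_1,\dots,u_n)=\exp\!\Bigl(\sum_{i=1}^n u_iY_i^x\Bigr)(x),
\]
and consider the anisotropic box $B_s=\{u:|u_i|<s^{d_i}\}$. The main NSW estimate asserts the existence of constants $0<c_1(x)\le c_2(x)$ and $R(x)>0$ such that
\[
\Phi_x(B_{c_1(x)r})\subset B(x,r)\subset \Phi_x(B_{c_2(x)r}),\qquad 0<r\le R(x).
\]
Next I would differentiate $\Phi_x$: the matrix $d\Phi_x(0)$ is invertible by construction, and the Baker--Campbell--Hausdorff expansion together with smoothness of the bracket coefficients $\omega_{ij}^\ell,\gamma_{ij}^{mn},\delta_{imn}^\ell$ gives $|\det d\Phi_x(u)|\ge c_3(x)>0$ on $B_{R(x)}$ after possibly shrinking $R(x)$. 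Integrating the Riemannian volume form through $\Phi_x$ yields
\[
\mu\!\bigl(\Phi_x(B_s)\bigr)\asymp s^{d_1+\cdots+d_n}=s^{Q(x)},\qquad Q(x)=d+\di(\di-1).
\]

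Combining the box inclusion with the Jacobian estimate, one gets $\mu(B(x,r))\asymp r^{Q(x)}$ for $0<r\le R(x)$. The doubling inequality $\mu(B(x,tr))\ge C(x)^{-1}t^{Q(x)}\mu(B(x,r))$ for $0\le t\le 1$ then follows with $C(x)=2^{Q(x)}$ after possibly enlarging $C(x)$ to absorb the implied constants in $\asymp$.

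The main obstacle, and the point where care is needed, is the last statement: uniform positivity of $C(x)$ and $R(x)$ on compact sets. For this I would observe that every quantity entering the NSW construction, namely the bracket coefficients and their derivatives, the minors of the map $x\mapsto(Y_1^x,\dots,Y_n^x)$, the remainder terms in BCH, and the Riemannian volume density in the chart $\Phi_x$, depends continuously on $x\in\bM$. A compactness argument then gives $\inf_{x\in K}C(x)>0$ and $\inf_{x\in K}R(x)>0$ for every compact $K\subset\bM$. I would in fact expect that the cleanest presentation just cites the quantitative version of Theorem 1 in \cite{NSW} and limits the actual writing to (i) verifying the constancy of type under our rank--two assumption, and (ii) the continuity/compactness argument for uniformity.
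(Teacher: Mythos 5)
Your proposal is correct and takes the same route as the paper: the paper offers no proof at all, stating the result and attributing it directly to Nagel--Stein--Wainger \cite{NSW}, so there is nothing to diverge from. Your elaboration is a sound reconstruction of the NSW ball--box argument, and the observation that the rank--two framework here is equiregular (so that $\mu(B(x,r))\asymp r^{Q}$ locally with the fixed homogeneous dimension $Q=d+\di(\di-1)$, from which the stated doubling bound follows) is a valid strengthening, though the paper is content to invoke the general NSW statement where the exponent is merely $\log_2$ of a local doubling constant.
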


\subsection{Geodesics}\label{S:geodesics}

In this section we prove a first variation formula for the geodesics
of the metric space $(\bM,d)$. More precisely, we will describe with
our connection $\nabla$ the local minima of the energy functional.
To this end, let us first introduce a  family of natural
skew-symmetric linear operators of the horizontal bundle. For $V \in
\mathcal{H}$, we define:
\[
J_{mn} (V)=\sum_{i,j=1}^d  g( X_j,V) \gamma_{ij}^{mn} X_i,\ \ \ \ 1
\le m,n \le \di.
\]
With this in hands, we can now extend the method of Rumin
\cite{rumin} to provide the equation of the geodesics. We first have
the following result.

\begin{lemma}
For  $1 \le m,n \le \di$, let us denote by $\theta_{mn}$ the
one-form on $\bM$ such that $\theta_{mn} (Z_{mn})=1$ , $\theta_{mn}
(Z_{pq})=0$ if $\{m,n\} \neq \{p,q \}$ and $\theta (X_i)=0$. If
$V_1$ is a smooth vector field on $\bM$, and $V_2$ is a smooth
horizontal vector field, we have
\[
d\theta_{mn} (V_1,V_2)=-g(V_1,J_{mn}V_2).
\]
\end{lemma}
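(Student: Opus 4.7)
The identity is a direct application of Cartan's formula for the exterior derivative of a one-form,
\[
d\theta_{mn}(V_1,V_2) = V_1(\theta_{mn}(V_2)) - V_2(\theta_{mn}(V_1)) - \theta_{mn}([V_1,V_2]),
\]
combined with the structure relations \eqref{bra1}, \eqref{bra2} and the defining properties of $\theta_{mn}$. The first simplification is that since $V_2$ is horizontal, $\theta_{mn}(V_2) = 0$, killing the first term. To treat the rest, I would split $V_1 = V_1^H + V_1^V$ along $T\bM = \mathcal H \oplus \mathcal V$ and handle each summand separately, using bilinearity of $d\theta_{mn}$.

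For the horizontal piece, $\theta_{mn}(V_1^H) = 0$, so only $-\theta_{mn}([V_1^H, V_2])$ remains. Writing $V_1^H = \sum_i g(X_i,V_1) X_i$ and $V_2 = \sum_j g(X_j,V_2) X_j$ and expanding the Lie bracket by Leibniz, every $(X_\ast f) X_\ast$-type term is horizontal and therefore killed by $\theta_{mn}$. Only the coefficient-weighted brackets $g(X_i,V_1) g(X_j,V_2) [X_i,X_j]$ survive, and by \eqref{bra1} pairing against $\theta_{mn}$ extracts precisely the $\gamma_{ij}^{mn}$-contribution. Comparing with the definition of $J_{mn}(V) = \sum_{i,j} g(X_j,V)\gamma_{ij}^{mn} X_i$, this yields $-g(V_1^H, J_{mn} V_2)$, which equals $-g(V_1, J_{mn} V_2)$ because $J_{mn} V_2$ is horizontal and $g$ vanishes on $\mathcal V$.

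For the vertical piece, write $V_1^V = \sum_{p<q} c_{pq} Z_{pq}$, so $\theta_{mn}(V_1^V) = c_{mn}$ and $-V_2(\theta_{mn}(V_1^V)) = -V_2(c_{mn})$. Expanding $[V_1^V, V_2]$ by Leibniz produces three families of terms: terms involving $[Z_{pq},X_j]$, which by \eqref{bra2} are horizontal; terms of the form $Z_{pq}(\cdot)X_j$, also horizontal; and terms $-X_j(c_{pq}) Z_{pq}$. Only the last family is visible to $\theta_{mn}$, and after summation it contributes $+V_2(c_{mn})$, which exactly cancels the $-V_2(c_{mn})$ from the Cartan expansion. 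Thus the vertical piece contributes zero, again consistent with the right-hand side. The only real obstacle is the careful index bookkeeping — keeping straight the antisymmetries $Z_{mn}=-Z_{nm}$ and $\gamma_{ij}^{mn}=-\gamma_{ji}^{mn}$, together with the induced antisymmetry in the upper indices from $Z_{mn}=-Z_{nm}$ — so that the constant produced by evaluating $\theta_{mn}$ on $[X_i,X_j]$ matches the normalization chosen in the definition of $J_{mn}$.
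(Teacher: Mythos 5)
Your proof is correct and follows essentially the same route as the paper: apply Cartan's formula, split $V_1$ along $T\bM = \mathcal H \oplus \mathcal V$, and use the structural relations to evaluate $\theta_{mn}$ on the bracket terms. The only cosmetic difference is that the paper exploits the tensoriality of $d\theta_{mn}$ to reduce to $V_1 = Z_{pq}$ a basis vector field (so $\theta_{mn}(V_1)$ is constant and the second Cartan term vanishes outright), whereas you work with a general vertical $V_1^V = \sum c_{pq}Z_{pq}$ and verify explicitly that the $-V_2(c_{mn})$ term is cancelled by the Leibniz contribution from $\theta_{mn}([V_1^V,V_2])$ — a correct and slightly more self-contained account of the same fact.
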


\begin{proof}
Let us first assume that $V_1$ and $V_2$ are both horizontal. From Cartan's formula
\[
d\theta_{mn} (V_1,V_2)= V_1 \theta_{mn} (V_2)-V_2 \theta_{mn} (V_1)-\theta_{mn} ([V_1,V_2]).
\]
Therefore
\[
d\theta_{mn} (V_1,V_2)= -\theta_{mn} ([V_1,V_2]).
\]
>From the definition of $\theta_{mn}$, we easily check that for $1\le
i,j\le d$,
\[
\theta_{mn} ([X_i,X_j])=\gamma_{ij}^{mn},
\]
so that
\[
d\theta_{mn} (V_1,V_2)=-g(V_1,J_{mn}V_2).
\]
Now, if $V_1=Z_{pq}$, since from our assumption \eqref{bra2} the
bracket $[Z_{pq},X_i]$ is always a horizontal vector field, again
from Cartan's formula we obtain
\[
d\theta_{mn} (V_1,V_2)=0.
\]
\end{proof}

\begin{proposition}
Let $x,y \in \bM$ and denote by $\mathcal H(x,y)$ the set of
horizontal curves $\gamma(t)$, $0\le t \le1$, going from $x$ to $y$.
Then $\gamma \in \mathcal H(x,y)$ is a critical point of the energy
functional
\[
E(\gamma)=\int_0^1 g(\gamma'(t),\gamma'(t)) dt
\]
if and only if there exist constants $a_{mn}$, $1\le m,n \le \di$
such that
\[
\nabla_{\gamma'(t)} \gamma'(t)=\sum_{m,n=1}^{\di} a_{mn} J_{mn}
\gamma'(t).
\]
\end{proposition}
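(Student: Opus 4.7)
The plan is to compute the first variation of $E$ along variations $(s,t)\mapsto \gamma_s(t)$ of $\gamma$ with fixed endpoints that preserve horizontality, and then to apply a Lagrange multiplier argument to separate the unconstrained Euler--Lagrange part from the horizontality constraint. Let $V(t) = \partial_s \gamma_s(t)\big|_{s=0}$ be the variation vector field, so that $V(0) = V(1) = 0$.

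First, using $\nabla g = 0$, the symmetry $[V,\gamma']=0$ of coordinate frames, and the identity $\nabla_V\gamma' - \nabla_{\gamma'}V = T(V,\gamma')$, one gets
\[
\frac{d}{ds}\bigg|_{s=0} E(\gamma_s) = 2\int_0^1 g(\nabla_{\gamma'} V + T(V,\gamma'), \gamma')\,dt.
\]
Because the torsion of $\nabla$ is vertical and $T(X_i,Z_{mn})=0$, the vector $T(V,\gamma')$ is vertical; since $\gamma'$ is horizontal and $\mathcal{H}\perp\mathcal{V}$ with $g|_{\mathcal{V}}=0$, the torsion contribution vanishes. Integration by parts together with the boundary conditions then gives
\[
\frac{d}{ds}\bigg|_{s=0} E(\gamma_s) = -2\int_0^1 g(V, \nabla_{\gamma'}\gamma')\,dt.
\]
Note that $\nabla_{\gamma'}\gamma'$ is horizontal because $\nabla_{X_i}X_j$ is horizontal, so only the horizontal component $V_H$ of $V$ contributes to this integral.

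Next, I linearize the horizontality constraint $\theta_{mn}(\gamma_s')\equiv 0$. Setting $\Phi(s,t) = \gamma_s(t)$ and equating $d(\Phi^*\theta_{mn})$ with $\Phi^* d\theta_{mn}$, then differentiating in $s$ at $s=0$ and using the preceding lemma, one obtains
\[
\frac{d}{dt}\theta_{mn}(V) \;=\; g(V, J_{mn}\gamma').
\]
Since $J_{mn}\gamma'$ is horizontal, the right-hand side depends only on $V_H$. Integrating from $0$ to $1$ and using $V(0)=V(1)=0$ converts this into the integral constraints
\[
\int_0^1 g(V_H, J_{mn}\gamma')\,dt = 0, \qquad 1 \le m,n \le \di.
\]
Conversely, any horizontal $V_H$ vanishing at the endpoints and satisfying these constraints extends, by prescribing its vertical part through the above ODE, to an admissible variation; realizing such infinitesimal deformations as actual smooth horizontal variations is a standard consequence of the bracket-generating hypothesis.

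The problem is thus reduced to: find the horizontal vector fields $W := \nabla_{\gamma'}\gamma'$ along $\gamma$ such that $\int_0^1 g(V_H, W)\,dt = 0$ whenever $V_H$ is horizontal, vanishes at the endpoints, and lies in the common kernel of the $\di(\di-1)/2$ linear functionals $V_H \mapsto \int_0^1 g(V_H, J_{mn}\gamma')\,dt$. The Lagrange multiplier theorem for finitely many scalar constraints then yields constants $a_{mn}$ with
\[
\int_0^1 g\bigl(V_H,\ \nabla_{\gamma'}\gamma' - \textstyle\sum_{m,n} a_{mn} J_{mn}\gamma'\bigr)\,dt = 0
\]
for every horizontal $V_H$ vanishing at the endpoints; the fundamental lemma of the calculus of variations, applied componentwise along the frame $X_1,\dots,X_d$, delivers the stated geodesic equation. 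The converse direction is immediate from the above variation formula together with the integral constraints characterizing admissible $V_H$. The main technical point, which I would treat briefly, is the justification that every admissible infinitesimal variation arises as the derivative of a genuine smooth horizontal family with fixed endpoints; once this is granted, the remaining steps are linear and elementary.
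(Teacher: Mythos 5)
Your proposal reaches the correct conclusion via a route that differs in a substantive way from the paper's. Both proofs open with the same first-variation computation, exploiting $\nabla g=0$ and the fact that $T(V,\gamma')$ is vertical to arrive at $\tfrac{d}{ds}E=-2\int_0^1 g(V,\nabla_{\gamma'}\gamma')\,dt$. The split happens next. The paper does not formulate a constrained-minimization principle at all: it \emph{a priori} decomposes $\nabla_{\gamma'}\gamma'=\sum a_{mn}(t)J_{mn}\gamma'+Y$ with time-dependent coefficients, uses the lemma in the form $g(V,J_{mn}\gamma')=\gamma'\bigl(\theta_{mn}(V)\bigr)$ to convert that part of the integrand into an exact $t$-derivative, integrates by parts so that $a_{mn}'$ appears, and reads off that vanishing of the first variation forces $a_{mn}'\equiv 0$ and $Y\equiv 0$. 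You instead linearize the holonomy constraint $\theta_{mn}(\gamma_s')\equiv 0$ into the ODE $\tfrac{d}{dt}\theta_{mn}(V)=g(V,J_{mn}\gamma')$, observe that the boundary conditions turn it into the finitely many integral constraints $\int_0^1 g(V_H,J_{mn}\gamma')\,dt=0$ on the horizontal part of the variation field, and then apply the Lagrange multiplier rule with a fixed finite set of scalar constraints, so that the multipliers $a_{mn}$ emerge constant from the outset. The two arguments are logically equivalent, but yours makes the variational structure (and the origin of the constancy of $a_{mn}$) more transparent, at the cost of invoking the multiplier theorem as an external tool, whereas the paper's integration by parts is self-contained.

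One caveat you should flag more carefully: you dispose of the ``integrability'' of infinitesimal variations --- that every $V$ with $V_H$ vanishing at the endpoints and satisfying the integral constraints is realized by an actual family $\gamma_s\in\mathcal H(x,y)$ --- by calling it a ``standard consequence of the bracket-generating hypothesis.'' That is not accurate. Bracket-generating (Chow--Rashevsky) guarantees connectivity, but it does \emph{not} imply that the endpoint map is a submersion at every horizontal $\gamma$; precisely the abnormal curves are those at which the linearization fails to be surjective, and at such curves not every infinitesimally admissible $V$ comes from an actual variation. The result as stated is really a normal-geodesic equation, and the Lagrange multiplier rule in the abnormal case only produces multipliers in the degenerate form $(\lambda_0,a_{mn})$ with $\lambda_0$ possibly zero. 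The paper's own proof glosses over the same point (it merely posits that $V$ is ``a Legendre vector field of the constrained problem''), so this is a shared informality rather than a defect unique to your argument, but since you make the surjectivity claim explicit you should either restrict to non-abnormal $\gamma$ or weaken the wording.
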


\begin{proof}
Let us consider a family of curve $\gamma_u(t)$, $-\epsilon \le u
\le \epsilon$, $0 \le t \le 1$, such that $\gamma_0(t) \in
\mathcal{H}(x,y)$ and $V=\frac{\partial \gamma_u}{\partial u}_{|
u=0}$ is a Legendre vector field of the constrained problem
corresponding to the variational problem we are looking at. We
denote $X=\gamma'(t)$. We have
\[
V(x)=V(y)=0,
\]
and
\[
[V,X] \in \mathcal{H}.
\]
Since the torsion of $\nabla$ is vertical, we now compute
\begin{align*}
\frac{\partial E(\gamma_u)}{\partial u}_{| u=0}& =\int_0^1 V g(X,X) dt \\
 &=2\int_0^1  g(\nabla_V X,X) dt \\
 &=2\int_0^1  g(\nabla_X V,X) dt \\
 &=2 \int_0^1 Xg(V,X)- g( V,\nabla_X X) dt \\
 &=-2 \int_0^1  g( V,\nabla_X X) dt
\end{align*}
Therefore, if $\gamma$ is a critical point, we must have $g(X,\nabla_X X)=0$. We now write
\[
\nabla_X X = \sum_{m,n=1}^{\di} a_{mn}(t) J_{mn} X +Y,
\]
where $Y$ is orthogonal to $X$ and to $\sum_{m,n=1}^{\di} a_{mn}(t)
J_{mn} X$. We have then
\[
g(V,\sum_{m,n=1}^{\di} a_{mn}(t) J_{mn}X)=\sum_{m,n=1}^{\di}
a_{mn}(t) g(V,J_{mn}X).
\]
But, from the previous lemma and Cartan's formula
\[
 g(V,J_{mn}X)=-d\theta_{mn} (V,X)=X \theta(V).
 \]
 Therefore,
\begin{align*}
\frac{\partial E(\gamma_u)}{\partial u}_{| u=0}&=-2 \int_0^1
\sum_{m,n=1}^{\di} a_{mn}(t) X \theta(V) +g(V,Y)  dt
\end{align*}
Integrating by parts, we obtain
\begin{align*}
\frac{\partial E(\gamma_u)}{\partial u}_{| u=0}&=-2 \int_0^1
\sum_{m,n=1}^{\di} -a'_{mn}(t)  \theta(V) +g(V,Y)  dt,
\end{align*}
so that $\gamma_0$ is a critical point if and only if $a_{mn}$ is constant and $Y=0$.
\end{proof}

\section{The curvature tensor}\label{S:ct}

In this section we introduce a first-order differential quadratic
form which plays a pervasive role in the results of this paper. If
$f\in C^\infty(\bM)$ we define
\begin{align}\label{R}
\mathcal R(f,f) & = \sum_{k,\ee=1}^d \bigg\{\bigg(\sum_{j=1}^d
\sum_{m,n=1}^\di \gamma_{kj}^{mn} \delta_{jmn}^\ee\bigg) +
\sum_{j=1}^d
(X_\ee\omega^j_{kj} - X_j\omega^k_{\ee j}) \\
& + \sum_{i,j=1}^d \omega_{ji}^i \omega^\ee_{k j} - \sum_{i=1}^d
\omega_{k i}^i \omega_{\ee i}^i + \frac{1}{2} \sum_{1\le i<j\le d}
\bigg(\omega^\ee_{ij} \omega^k_{ij} - (\omega_{\ee j}^i +\omega_{\ee
i}^j)(\omega^i_{kj} + \omega^j_{ki})\bigg)\bigg\}X_k f X_\ee f
\notag\\
&   +  \sum_{k=1}^d \sum_{m,n=1}^\di \bigg(\sum_{\ell,j=1}^d
\omega_{j\ee}^\ee \gamma_{kj}^{mn} + \sum_{1\le \ee<j\le d}
\omega^k_{\ee j} \gamma^{mn}_{\ee j}  - \sum_{j=1}^d X_j
\gamma^{mn}_{kj}\bigg) Z_{mn} f X_k f \notag\\
& + \frac{1}{2} \sum_{1\le \ee<j\le d}\bigg(\sum_{m,n=1}^\di
\gamma^{mn}_{\ee j} Z_{mn} f\bigg)^2. \notag
\end{align}

The geometric meaning of the differential quadratic form $\mathcal
R(f,f)$ will become fully clear in section \ref{S:bochner}. The main
purpose of the present section is to prove its tensorial nature. In
fact, in Proposition \ref{P:miniblackhole} below we show that
$\mathcal R(f,f)$ can be expressed solely in terms of the curvature
and torsion tensors with respect to the canonical connection
$\nabla$ introduced in section \ref{SS:cc}. At the end of this
section we work out the formulas for $\mathcal R(f,f)$ in some
special examples. In example \ref{E:Rriemann} we show that in the
Riemannian case this form coincides with the Riemannian Ricci
tensor.

In what follows $\nabla$ indicates the canonical connection on $\bM$
introduced in section \ref{SS:cc}. Let us recall that the torsion
tensor is given by
\[
T(X,Y)=\nabla_{X} Y -\nabla_{Y}X -[X,Y],
\]
the curvature tensor is given by
\[
R(X,Y)Z= \nabla_{X}\nabla_{Y}Z
-\nabla_{Y}\nabla_{X}Z-\nabla_{[X,Y]}Z,
\]
and the Ricci tensor by
\[
\ri(X,Y)=\sum_{i=1}^d g(R(X_i,X)Y,X_i),
\]
where $X,Y,Z$ are smooth vector fields.

\begin{proposition}\label{P:miniblackhole}
If $f : \mathbb{M} \rightarrow \mathbb{R}$ is a smooth function,
then
\begin{align*}
\mathcal{R} (f,f)=  &\sum_{\ee,k=1}^d \emph{Ric}(X_\ee,X_k) X_\ee f
X_k f -( (\nabla_{X_\ee} T) (X_\ee,X_k)f )(X_k f)  +\frac{1}{4}
\left( T(X_\ee,X_k)f\right)^2.
\end{align*}
As a consequence, $\mathcal R$ is a $(0,2)$ tensor on $\bM$ and
therefore it is coordinate free.
\end{proposition}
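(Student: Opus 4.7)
The strategy is purely computational: expand the right-hand side of the stated identity in terms of the structure functions $\omega^k_{ij}$, $\gamma^{mn}_{ij}$, $\delta^\ell_{imn}$ using the explicit formulas \eqref{christoffel1}–\eqref{christoffel3} for the canonical connection, and match the result term-by-term with the definition \eqref{R} of $\mathcal R(f,f)$. The tensorial statement then falls out for free, since the right-hand side is manifestly coordinate-free as a contraction of the curvature, torsion, and covariant derivative of torsion.

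I would organize the calculation into three blocks corresponding to the three summands on the right.

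\textbf{Block 1: the torsion-squared term.} By Proposition \ref{P:torsion}, $T(X_\ell,X_k)f=-\sum_{m,n}\gamma^{mn}_{\ell k}Z_{mn}f$, so
\[
\tfrac{1}{4}\sum_{\ell,k=1}^d (T(X_\ell,X_k)f)^2
=\tfrac{1}{2}\sum_{1\le \ell<k\le d}\Bigl(\sum_{m,n}\gamma^{mn}_{\ell k}Z_{mn}f\Bigr)^2,
\]
using $\gamma^{mn}_{\ell k}=-\gamma^{mn}_{k\ell}$. This reproduces exactly the last line of \eqref{R}.

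\textbf{Block 2: the $(\nabla_{X_\ell}T)$ term.} Expanding $(\nabla_{X_\ell}T)(X_\ell,X_k)=\nabla_{X_\ell}(T(X_\ell,X_k))-T(\nabla_{X_\ell}X_\ell,X_k)-T(X_\ell,\nabla_{X_\ell}X_k)$ and using $\nabla Z_{mn}=0$, Proposition \ref{P:torsion}, and \eqref{christoffel1}, this becomes a horizontal-times-vertical derivative expression in $X_k f\cdot Z_{mn}f$. Writing the Christoffel symbols as $\Gamma^s_{ij}=\tfrac12(\omega^s_{ij}+\omega^j_{si}-\omega^i_{js})$ and using \eqref{christoffel4} for $\nabla_{X_\ell}X_\ell$, this reproduces (after collecting coefficients of $X_\ell\gamma^{mn}_{kj}$ and of the $\omega\cdot\gamma$ monomials) the middle line of \eqref{R} with the appropriate sign.

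\textbf{Block 3: the Ricci term.} This is the main obstacle and the longest computation. From
\[
R(X_i,X_\ell)X_k=\nabla_{X_i}\nabla_{X_\ell}X_k-\nabla_{X_\ell}\nabla_{X_i}X_k-\nabla_{[X_i,X_\ell]}X_k,
\]
I expand each covariant derivative using \eqref{christoffel1} and, for the bracket, split $[X_i,X_\ell]$ into its horizontal and vertical parts via \eqref{bra1}, handling the vertical contribution with \eqref{christoffel2}. Taking $g(\cdot,X_i)$ and summing over $i$ produces the Ricci components $\mathrm{Ric}(X_\ell,X_k)$ as a combination of $X_\ell\omega^j_{kj}$, $X_j\omega^k_{\ell j}$, quadratic terms in $\omega$, and the crossed $\gamma\cdot\delta$ term coming from the vertical part of the bracket. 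This matches the first (large) braced expression in \eqref{R}. Here the hypothesis \eqref{deltas} is essential: it converts $\delta^\ell_{kmn}$ appearing with a sign from $\nabla_{Z_{mn}}X_k$ into the symmetric combination $\gamma^{mn}_{kj}\delta^\ell_{jmn}$ that appears in \eqref{R}.

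The delicate bookkeeping is in Block 3: there are sign cancellations that rely on the skew-symmetries $\omega^k_{ij}=-\omega^k_{ji}$, $\gamma^{mn}_{ij}=-\gamma^{mn}_{ji}$, and the key identity \eqref{deltas}, together with the identity $\Gamma^s_{\ell k}-\Gamma^s_{k\ell}=\omega^s_{\ell k}$ already used in the proof of Proposition \ref{P:torsion}. Once the three blocks are summed, one obtains \eqref{R} verbatim, proving the identity. The ``as a consequence'' clause is immediate: each of $\mathrm{Ric}(X_\ell,X_k)X_\ell f\,X_k f$, $((\nabla_{X_\ell}T)(X_\ell,X_k)f)(X_k f)$, and $(T(X_\ell,X_k)f)^2$ is defined via contractions of globally defined tensors with the horizontal gradient of $f$, so the sum is a $(0,2)$ tensor in $f$, independent of the choice of local horizontal frame $X_1,\dots,X_d$.
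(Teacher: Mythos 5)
Your proposal is correct and follows essentially the same route as the paper: decompose $\mathcal R(f,f)$ from \eqref{R} into three pieces (purely horizontal, horizontal-vertical, purely vertical), then match them respectively with $\sum\mathrm{Ric}(X_\ell,X_k)X_\ell f\,X_kf$, $-\sum((\nabla_{X_\ell}T)(X_\ell,X_k)f)(X_kf)$, and $\tfrac14\sum(T(X_\ell,X_k)f)^2$, using Proposition \ref{P:torsion}, the Christoffel formula \eqref{christoffel1}, and the crucial assumption \eqref{deltas} to convert $\delta^i_{\ell mn}$ into $\delta^\ell_{imn}$. The paper orders the blocks Ricci, $\nabla T$, then torsion-squared, and names the corresponding pieces of \eqref{R} as $\mathcal R_I$, $\mathcal R_{II}$, $\mathcal R_{III}$, but the decomposition, the lemmas invoked, and the bookkeeping are identical to yours.
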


\begin{proof}
We begin by writing the quantity $\mathcal R(f,f)$ in \eqref{R} as
follows
\[
\mathcal R(f,f) = \mathcal R_I(f,f) + \mathcal R_{II}(f,f) +
\mathcal R_{III}(f,f),
\]
where
\begin{align*}
\mathcal R_I(f,f) & = \sum_{k,\ee=1}^d \bigg\{\bigg(\sum_{j=1}^d
\sum_{m,n=1}^\di \gamma_{kj}^{mn} \delta_{jmn}^\ee\bigg) +
\sum_{j=1}^d
(X_\ee\omega^j_{kj} - X_j\omega^k_{\ee j}) \\
& + \sum_{i,j=1}^d \omega_{ji}^i \omega^\ee_{k j} - \sum_{i=1}^d
\omega_{k i}^i \omega_{\ee i}^i + \frac{1}{2} \sum_{1\le i<j\le d}
\bigg(\omega^\ee_{ij} \omega^k_{ij} - (\omega_{\ee j}^i +\omega_{\ee
i}^j)(\omega^i_{kj} + \omega^j_{ki})\bigg)\bigg\}X_k f X_\ee f,
\notag
\end{align*}
\begin{equation*}
\mathcal R_{II}(f,f)  =  \sum_{k=1}^d \sum_{m,n=1}^\di
\bigg(\sum_{\ell,j=1}^d \omega_{j\ee}^\ee \gamma_{kj}^{mn}  +
\sum_{1\le \ee<j\le d} \omega^k_{\ee j} \gamma^{mn}_{\ee j}  -
\sum_{j=1}^d X_j \gamma^{mn}_{kj}\bigg) Z_{mn} f X_k f,
\end{equation*}
\begin{equation*}
\mathcal R_{III}(f,f) =  \frac{1}{2} \sum_{1\le \ee<j\le
d}\bigg(\sum_{m,n=1}^\di \gamma^{mn}_{\ee j} Z_{mn} f\bigg)^2.
\end{equation*}
The proof will be completed if we show that
\begin{equation}\label{RI}
\mathcal R_I(f,f) = \sum_{k,\ee=1}^d \ri(X_k,X_\ee) X_kf X_\ee f,
\end{equation}
\begin{equation}\label{RII}
\mathcal R_{II}(f,f) = - \sum_{\ee,k=1}^d ((\nabla_{X_\ee}
T)(X_\ee,X_k)f)(X_k f),
\end{equation}
\begin{equation}\label{RIII}
\mathcal R_{III}(f,f) = \frac{1}{4} \sum_{\ee,k=1}^d\left(
T(X_\ee,X_k)f\right)^2.
\end{equation}
To establish \eqref{RI} we observe that
\begin{align}\label{miniblackhole}
\ri(X_k,X_\ee) & = \sum_{i=1}^d g(R(X_i,X_k)X_\ee,X_i) =
\sum_{j=1}^d \sum_{m,n=1}^\di \gamma_{kj}^{mn} \delta^\ee_{jmn} +
\sum_{j=1}^d \left(X_j \Gamma^j_{k\ell} - X_k
\Gamma^j_{j\ell}\right)
\\
& + \sum_{i,j=1}^d \left(\Gamma^j_{k\ee} \Gamma^i_{ij} -
\Gamma^j_{i\ee}\Gamma^i_{kj} - \omega^j_{ik} \Gamma^i_{j\ee}\right).
\notag\end{align} The identity \eqref{miniblackhole} can be verified
in a standard fashion. Our goal is thus proving that
\begin{align*}
 &  \sum_{k,\ee=1}^d \left(  \sum_{j=1}^d
\left(X_j \Gamma^j_{k\ell} - X_k \Gamma^j_{j\ell}\right) +
\sum_{i,j=1}^d \left(\Gamma^j_{k\ee} \Gamma^i_{ij} -
\Gamma^j_{i\ee}\Gamma^i_{kj} - \omega^j_{ik} \Gamma^i_{j\ee}\right)\right) (X_kf) (X_\ee f) \\
= & \sum_{k,\ee=1}^d \bigg(\sum_{j=1}^d (X_\ee\omega^j_{kj} -
X_j\omega^k_{\ee j}) + \sum_{i,j=1}^d \omega_{ji}^i \omega^\ee_{k j}
 - \sum_{i=1}^d \omega_{k i}^i \omega_{\ee i}^i
 \\
 & + \frac{1}{2}
\sum_{1\le i<j\le d} \bigg(\omega^\ee_{ij} \omega^k_{ij} -
(\omega_{\ee j}^i +\omega_{\ee i}^j)(\omega^i_{kj} +
\omega^j_{ki})\bigg) \bigg) (X_kf) (X_\ee f).
\end{align*}
To show this we use the formula \eqref{christoffel1}
\[
 \Gamma_{ij}^k= \frac{1}{2} \left(\omega_{ij}^k +\omega_{ki}^j - \omega_{jk}^i\right),
 \]
which allows to express the Christoffel symbols $\Gamma^k_{ij}$ in
terms of the structural constants $\omega^k_{ij}$. Keeping in mind
that $\omega^\ee_{ij} = - \omega^\ee_{ji}$, one easily recognizes
that
\[
\sum_{k,l=1}^d \left(\sum_{j=1}^d \left(X_j \Gamma^j_{k\ell} - X_k
\Gamma^j_{j\ell}\right)\right) X_kf X_\ee f = \sum_{k,l=1}^d \left(
\sum_{j=1}^d \left(X_\ee \omega^j_{kj} - X_j \omega^k_{\ell
j}\right)\right) X_kf X_\ee f . \] To complete the proof of
\eqref{RI} we are thus left with proving that
\begin{align}\label{ft}
& \sum_{k,\ee=1}^d  \sum_{i,j=1}^d \left(\Gamma^j_{k\ee}
\Gamma^i_{ij} - \Gamma^j_{i\ee}\Gamma^i_{kj} - \omega^j_{ik}
\Gamma^i_{j\ee}\right) X_kf X_\ee f
\\
& = \sum_{k,\ee=1}^d \bigg\{\sum_{i,j=1}^d \omega_{ji}^i
\omega^\ee_{k j} - \sum_{i=1}^d \omega_{k i}^i \omega_{\ee i}^i +
\frac{1}{2} \sum_{1\le i<j\le d} \bigg(\omega^\ee_{ij} \omega^k_{ij}
- (\omega_{\ee j}^i +\omega_{\ee i}^j)(\omega^i_{kj} +
\omega^j_{ki})\bigg)\bigg\}X_k f X_\ee f. \notag\end{align} We now
notice that again from \eqref{christoffel1} and the skew-symmetry of
$\omega^\ee_{jk}$ in the lower indices we obtain
\begin{align*}
\sum_{k,\ee=1}^d  \sum_{i,j=1}^d \Gamma^j_{k\ee} \Gamma^i_{ij} X_kf
X_\ee f
= & \sum_{k,\ee=1}^d  \sum_{i,j=1}^d \frac{\Gamma^j_{k\ee} + \Gamma^j_{\ee k}}{2} \Gamma^i_{ij} X_kf X_\ee f \\
=& \sum_{k,\ee=1}^d  \sum_{i,j=1}^d \frac{\omega^k_{j\ee} +
\omega^\ee_{j k}}{2} \omega^i_{ij} X_kf X_\ee f = \sum_{k,\ee=1}^d
\sum_{i,j=1}^d \omega^\ee_{j k} \omega^i_{ij} X_kf X_\ee f \\
& = \sum_{k,\ee=1}^d \sum_{i,j=1}^d \omega^i_{ji}\omega^\ee_{kj}
X_kf X_\ee f.
\end{align*}
Next, we have
\begin{align*}
& - \sum_{k,\ee=1}^d \sum_{i,j=1}^d \left(\Gamma^j_{i\ell}
\Gamma^i_{kj} + \omega^j_{ik} \Gamma^i_{j\ell}\right) X_kf X_\ee f =
- \sum_{k,\ee=1}^d \sum_{i,j=1}^d \Gamma^j_{i\ell}
\left(\Gamma^i_{kj} + \omega^i_{jk}\right) X_kf X_\ee f.
\end{align*}
By \eqref{christoffel1} one has
\[
\Gamma^i_{kj} + \omega^i_{jk} = \frac{1}{2} \left(\omega^j_{ik} +
\omega^k_{ij} + \omega^i_{jk}\right),
\]
and thus
\begin{align*}
& - \sum_{k,\ee=1}^d \sum_{i,j=1}^d \left(\Gamma^j_{i\ell}
\Gamma^i_{kj} + \omega^j_{ik} \Gamma^i_{j\ell}\right) X_kf X_\ee f
\\
& = \frac{1}{4} \sum_{k,\ee=1}^d \sum_{i,j=1}^d
\left(\omega^\ee_{ij} - \omega^j_{\ee i} - \omega^i_{\ee
j}\right)\left(\omega^k_{ij} + \omega^j_{ik} +
\omega^i_{jk}\right)X_kf X_\ee f
\\
& = \sum_{k,\ee=1}^d \sum_{i=1}^d \omega^i_{ki} \omega^i_{\ee i}
X_kf X_\ee f + \frac{1}{2} \sum_{k,\ee=1}^d \sum_{1\le i<j\le d}
\omega^\ee_{ij} \omega^k_{ij} X_kf X_\ee f
\\
& - \frac{1}{2} \sum_{k,\ee=1}^d \sum_{1\le i<j\le d} (\omega^i_{\ee
j} + \omega^j_{\ee i})(\omega^i_{lj} + \omega^j_{ki}) X_kf X_\ee f,
\end{align*}
where the last equality is obtained by expanding the product and
canceling equal terms. This proves \eqref{ft}, thus completing the
proof of \eqref{RI}.

Next, we turn to the proof of \eqref{RII}. One has
\begin{align}\label{dertensor}
& \sum_{\ee,k=1}^d ((\nabla_{X_\ee} T)(X_\ee,X_k)f) X_k f =
\sum_{\ee,k=1}^d \nabla_{X_\ee}\left(T(X_\ee,X_k)\right)f X_kf
\\
& - \sum_{\ee,k=1}^d T\left(\nabla_{X_\ee} X_\ee,X_k\right)f X_k f -
\sum_{\ee,k=1}^d  T\left(X_\ee,\nabla_{X_\ee} X_k\right)f X_k f.
\notag\end{align} Proposition \ref{P:torsion} and the fact that
$\nabla Z = 0$ now give
\begin{align*}
& \sum_{\ee,k=1}^d \nabla_{X_\ee}\left(T(X_\ee,X_k)\right)f X_kf = -
\sum_{\ee,k=1}^d \sum_{m,n=1}^\di (X_\ee \gamma^{mn}_{\ee k}) Z_{mn}
X_k f =  \sum_{k=1}^d \sum_{m,n=1}^d \sum_{j=1}^d (X_j
\gamma^{mn}_{kj}) Z_{mn} X_k f .
\end{align*}
Next, using \eqref{christoffel4} and Proposition \ref{P:torsion}
again one obtains
\begin{align*}
& - \sum_{\ee,k=1}^d T\left(\nabla_{X_\ee} X_\ee,X_k\right)f X_k f =
\sum_{\ee,k=1}^d \sum_{j=1}^d \omega^\ee_{\ee j} T(X_j,X_k) f X_k f
\\
& = - \sum_{k=1}^d \sum_{m,n=1}^\di  \sum_{j,\ee=1}^d
\omega^\ee_{j\ee} \gamma^{mn}_{kj} Z_{mn} f X_k f.
\end{align*}
Finally,
\begin{align*}
&- \sum_{\ee,k=1}^d  T\left(X_\ee,\nabla_{X_\ee} X_k\right)f X_k f =
- \sum_{j,\ee,k=1}^d \Gamma_{\ee k}^j T\left(X_\ee,X_j\right)f X_k f
\\
& =  \sum_{k=1}^d \sum_{m,n=1}^\di \sum_{j,\ee=1}^d \gamma_{\ee
j}^{mn}\Gamma_{\ee k}^j Z_{mn} f X_k f = \frac{1}{2} \sum_{k=1}^d
\sum_{m,n=1}^\di  \sum_{j,\ee=1}^d \gamma_{\ee j}^{mn}(\omega_{\ee
k}^j + \omega^k_{j\ee} + \omega^\ee_{jk})Z_{mn} f X_k f
\\ & = \frac{1}{2}
\sum_{k=1}^d \sum_{m,n=1}^\di \sum_{1\le \ee<j\le d} \gamma_{\ee
j}^{mn}(\omega_{\ee k}^j + \omega^k_{j\ee} + \omega^\ee_{jk}-
\omega^\ee_{jk} - \omega^k_{\ee j} - \omega^j_{\ee k})Z_{mn} f X_k f
\\
& = - \sum_{k=1}^d \sum_{m,n=1}^\di \sum_{1\le \ee<j\le d}
\gamma_{\ee j}^{mn} \omega^k_{\ee j}Z_{mn} f X_k f.
\end{align*}
Substitution in \eqref{dertensor} gives
\begin{align*}
\sum_{\ee,k=1}^d ((\nabla_{X_\ee} T)(X_\ee,X_k)f)(X_k f) = -
\mathcal R_{II}(f,f),
\end{align*}
which proves \eqref{RII}. In order to complete the proof we are left
with establishing \eqref{RIII}. From Proposition \ref{P:torsion} and
the skew-symmetry of $\gamma^{mn}_{\ee k}$ in the lower indices, we
easily find
\begin{align*}
\frac{1}{4} \sum_{\ee,k=1}^d \left(T(X_\ee,X_k)f\right)^2 &
=\frac{1}{2} \sum_{\ee,k=1}^d \left(\sum_{m,n=1}^\di \gamma_{\ee
k}^{mn} Z_{mn}f\right)^2 = \mathcal R_{III}(f,f).
\end{align*}
This finishes the proof.

\end{proof}

We can now compute $\mathcal{R}$ in the geometric examples
introduced in section \ref{SS:framework}.

\begin{example}\label{E:Rriemann}
For a Riemannian manifold $\mathbb M$, see Example \ref{riemannian},
we have
\[
\mathcal{R} (f,f)=\emph{Ric}(\nabla f, \nabla f).
\]
This follows immediately from Proposition \ref{P:miniblackhole}.
\end{example}

\begin{example}\label{E:Rnilpotent}
When $\mathbb M$ is a graded nilpotent Lie group of step two, see
Example \ref{nilpotent}, we immediately obtain from \eqref{R}
\[
\mathcal R(f,f) = \frac{1}{4} \sum_{i,j=1}^\di (Z_{ij}f)^2 =
 \frac{1}{4} \sum_{i,j=1}^\di
([X_i,X_j]f)^2.
\]
We note here that, in terms of the quadratic form introduced in
\eqref{gamma2Z} below, one has
\begin{equation}\label{Rcg}
\mathcal{R} (f,f)= \frac{1}{4} \Gamma^Z(f,f). \end{equation}
\end{example}

\begin{example}\label{E:Rhorbochner}
In the Example \ref{horizontalbochner} we have
\begin{align*}
\mathcal{R} (f,f)= &\sum_{j,k=1}^d \emph{Ric}^* (H_j,H_j) (H_j
f)(H_k f) +V_{\nabla_{H_j} \Omega (H_j,H_k)} f H_kf   +\frac{1}{4}
\left( V_{\Omega(H_j,H_k)} f\right)^2,
\end{align*}
where for horizontal vector fields $X$ and $Y$,
\[
\emph{Ric}^* (X,Y)= \emph{Ric} (\pi_* X,\pi_*Y),
\]
with \emph{Ric} denoting the Ricci tensor of $\mathbb{M}$.
\end{example}

\section{Bochner type formulas}\label{S:bochner}

Our  goal in this section is to prove sub-Riemannian Bochner type
formulas which play a central role in this paper. Since the operator
$L$ is two-step generating, we prove two types of such formulas, one
involving the commutation between the horizontal gradient and $L$,
the other one involving the commutation between a vertical gradient
and $L$. As it is to be surmised, the horizontal one will prove
quite involved technically.

\

In the sequel, we will use the following differential bilinear
forms:
\begin{equation}\label{gamma}
\Gamma(f,g) =\frac{1}{2}(L(fg)-fLg-gLf)=\sum_{i=1}^d X_i f X_i g,
\end{equation}
\begin{equation}\label{gamma2}
\Gamma_{2}(f,g) = \frac{1}{2}\big[L\Gamma(f,g) - \Gamma(f,
Lg)-\Gamma (g,Lf)\big],
\end{equation}
\begin{equation}\label{gammaZ}
\Gamma ^Z (f,g)=\sum_{i,j=1}^\di  \left(Z_{ij} f \right)  \left(
Z_{ij}g \right),
\end{equation}
\begin{equation}\label{gamma2Z}
\Gamma^Z_{2}(f,g) = \frac{1}{2}\big[L\Gamma^Z (f,g) - \Gamma^Z(f,
Lg)-\Gamma^Z (g,Lf)\big].
\end{equation}

\subsection{The first Bochner  formula}\label{SS:firstbochner}

Henceforth, we adopt the notation
\[
f_{,ij} = \frac{X_i X_j f + X_j X_i f}{2}
\]
for the entries of the symmetrized Hessian of $f$ with respect to
the vector fields $X_1,...,X_d$. Noting that
\[
X_iX_j f\ =\ f_{,ij}\ +\ \frac{1}{2}\ [X_i,X_j] f,
\]
using \eqref{bra1} we obtain the useful formula
\begin{equation}\label{nonsym}
X_iX_jf = f_{,ij} + \frac{1}{2} \sum_{\ee=1}^d \omega^\ee_{ij} X_\ee
f + \frac{1}{2} \sum_{m,n=1}^\di \gamma^{mn}_{ij} Z_{mn} f\ .
\end{equation}

Our principal result of this section is the following:

\begin{theorem}[Horizontal Bochner formula]\label{T:bochner}
For every smooth function $f :\mathbb{M} \rightarrow \mathbb{R}$,
\begin{align}\label{bochner}
\Gamma_{2}(f,f)= & \sum_{\ee=1}^d \left(f_{,\ee\ee} -\sum_{i=1}^d
\omega_{i\ee}^\ee X_i f \right)^2 +2 \sum_{1 \le \ee<j \le d} \left(
f_{,\ee j}
 -\sum_{i=1}^d \frac{\omega_{i\ee}^j +\omega_{ij}^\ee}{2} X_i f \right)^2  \\
 & -2 \sum_{i,j=1}^d \sum_{m,n=1}^\di\gamma_{ij}^{mn} (X_j Z_{mn} f) (X_i f) +\mathcal{R}
 (f,f),
\notag
\end{align}
where $\mathcal R(f,f)$ is the quadratic form defined in \eqref{R}.
\end{theorem}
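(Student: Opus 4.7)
The plan is to obtain \eqref{bochner} by a direct, if laborious, computation starting from the definitions \eqref{gamma} and \eqref{gamma2}, and then carefully regrouping the resulting terms.

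First, I would write
\[
\Gamma_2(f,f) \;=\; \tfrac{1}{2} L\Gamma(f,f) - \Gamma(f,Lf),
\]
and expand each piece. Since $\Gamma(f,f)=\sum_i (X_if)^2$, applying $L=\sum_j X_j^2 + X_0$ gives
\[
\tfrac{1}{2}L\Gamma(f,f) - \Gamma(f,Lf) \;=\; \sum_{i,j}(X_jX_if)^2 \;+\; \sum_i (X_if)\bigl[X_j^2,X_i\bigr]_j f \;+\; \sum_i (X_if)[X_0,X_i]f,
\]
where the second sum is shorthand for $\sum_{i,j}(X_if)(X_j^2X_i-X_iX_j^2)f$. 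The commutators $[X_j^2,X_i]=X_j[X_j,X_i]+[X_j,X_i]X_j$ and $[X_0,X_i]$ are then expanded using \eqref{bra1}, \eqref{bra2}, and the definition \eqref{X0} of $X_0$. This produces terms in which the structure functions $\omega^\ell_{ij}$ and $\gamma^{mn}_{ij}$ are differentiated, together with products of them, together with terms of the form $(X_if)(X_j Z_{mn}f)$ and $(X_if)(Z_{mn}X_jf)$.

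Next, I would rewrite every non-symmetrized $X_jX_if$ via the key identity \eqref{nonsym}, so that
\[
\sum_{i,j}(X_jX_if)^2 \;=\; \sum_{i,j} f_{,ij}^2 \;+\; \text{(cross terms)} \;+\; \tfrac{1}{4}\sum_{i,j}\Bigl(\sum_\ell \omega^\ell_{ij}X_\ell f+\sum_{m,n}\gamma^{mn}_{ij}Z_{mn}f\Bigr)^2.
\]
The pure Hessian part $\sum_{i,j}f_{,ij}^2$ splits naturally into the diagonal squares $\sum_\ell f_{,\ell\ell}^2$ and twice $\sum_{\ell<j} f_{,\ell j}^2$, which are exactly the leading squares on the right-hand side of \eqref{bochner}. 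The cross terms involving $f_{,ij}$ times a horizontal first-order piece are precisely what is needed to complete the squares $(f_{,\ell\ell}-\sum_i\omega^\ell_{i\ell}X_if)^2$ and $(f_{,\ell j}-\sum_i\tfrac{\omega^j_{i\ell}+\omega^\ell_{ij}}{2}X_if)^2$; the mismatch produced by completing these squares is a quadratic form in $X_if$ whose coefficients consist of bilinears in the $\omega$'s, and this must be tracked carefully.

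Now I would also identify the cross terms of the form $f_{,ij}\cdot \gamma^{mn}_{ij}Z_{mn}f$ coming from the expansion of $(X_jX_if)^2$; by the antisymmetry of $\gamma^{mn}_{ij}$ in $i,j$ they vanish upon summation. The remaining mixed pieces $(X_if)(X_jZ_{mn}f)$ are collected using Leibniz on $X_j(\gamma^{mn}_{kj}Z_{mn}f)$ produced by $[X_j^2,X_i]$: one part gives the explicit term $-2\sum_{i,j,m,n}\gamma^{mn}_{ij}(X_jZ_{mn}f)(X_if)$ appearing in \eqref{bochner}, and the other part, containing $(X_j\gamma^{mn}_{ij})Z_{mn}f\cdot X_if$, contributes to $\mathcal R_{II}(f,f)$ in \eqref{R}. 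Finally the $(Z_{mn}f)^2$ contributions from $\tfrac14\bigl(\sum\gamma^{mn}_{ij}Z_{mn}f\bigr)^2$ reproduce exactly $\mathcal R_{III}(f,f)$, while the purely $\omega$-quadratic and $X_\ee\omega$-linear remainders match $\mathcal R_I(f,f)$. This last matching is the main obstacle: one has to verify that the precise combinations of $\omega$'s produced by the above expansion agree coefficient-by-coefficient with those appearing in the first two lines of \eqref{R}. The identity can be confirmed by using $\omega^\ell_{ij}=-\omega^\ell_{ji}$ to symmetrize indices and then matching termwise, exactly as done in the proof of \eqref{RI} in Proposition \ref{P:miniblackhole}.
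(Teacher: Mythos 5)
Your proposal follows the same route as the paper's proof: expand $\Gamma_2$ from the definition, rewrite the second-order pieces via \eqref{nonsym} and the commutation relations \eqref{bra1}--\eqref{bra2}, use skew-symmetry to kill the $f_{,ij}[X_i,X_j]f$ cross-terms, complete the two families of squares in the symmetrized Hessian, isolate the $-2\sum\gamma^{mn}_{ij}(X_jZ_{mn}f)(X_if)$ term, and match the left-over quadratic remainder termwise against the definition \eqref{R} of $\mathcal R(f,f)$. The only small imprecision is the appeal to Proposition \ref{P:miniblackhole} for the final matching; that proposition re-expresses the already-defined $\mathcal R$ tensorially rather than carrying out the coefficient-by-coefficient identification with the Bochner remainder, but the symmetrization technique you invoke is indeed the same one the paper uses for that step.
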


\begin{proof}
We begin by observing that for any smooth function $F$ on $\mathbb
M$
\[
L(F^2) = 2 F LF + 2 \Gamma(F,F),
\]
This and \eqref{gamma} give
\begin{align*}
L \Gamma(f,f) & = \sum_{i=1}^d L((X_i f)^2) = 2 \sum_{i=1}^d X_i f
L(X_i f) + 2 \sum_{i=1}^d \Gamma(X_i f,X_if) .
\end{align*}
We now have
\begin{align*}
L(X_i f) & = X_0 X_i f + \sum_{j=1}^d X_j^2 X_i f = X_i X_0 f +
[X_0,X_i]f + \sum_{j=1}^d X_j (X_iX_j f) + X_j[X_j,X_i]f
\\
& = X_i X_0 f + [X_0,X_i]f + \sum_{j=1}^d \big\{X_i (X_jX_j f) +
[X_j,X_i]X_j f +  X_j[X_j,X_i]f\big\} \notag\\
& = X_i(Lf) + [X_0,X_i]f + \sum_{j=1}^d \big\{[X_j,X_i]X_j f +
X_j[X_j,X_i]f\big\} \notag\\
& = X_i(Lf) + [X_0,X_i]f + 2 \sum_{j=1}^d [X_j,X_i]X_j f +
\sum_{j=1}^d [X_j,[X_j,X_i]]f. \notag
\end{align*}
Using this identity we find
\begin{align*}
L \Gamma(f,f) & = 2 \sum_{i=1}^d X_i f \left\{X_i(Lf) + [X_0,X_i]f +
2 \sum_{j=1}^d [X_j,X_i]X_j f + \sum_{j=1}^d
[X_j,[X_j,X_i]]f\right\} \\
& + 2 \sum_{i,j=1}^d (X_jX_i f)^2 \\
& = 2 \Gamma(f,Lf) + 2 \sum_{i=1}^d X_i f [X_0,X_i]f + 4
\sum_{i,j=1}^d X_if [X_j,X_i]X_j f + 2 \sum_{i,j=1}^d X_i f
[X_j,[X_j,X_i]]f \\
& + 2 \sum_{i,j=1}^d (X_jX_i f)^2.
\end{align*}

Since, thanks to the skew-symmetry of the matrix
 $\{[X_i,X_j]f\}_{i,j=1,...,d}$, we have
 \[
 \sum_{i,j=1}^d
 f_{,ij} [X_i,X_j] f = 0,
 \]
we find
\begin{align*}
\sum_{i,j=1}^d (X_jX_i f)^2  & =  \sum_{i,j=1}^d f_{,ij}^2 +
 \frac{1}{4}  \sum_{i,j=1}^d ([X_i,X_j] f)^2 +  \sum_{i,j=1}^d
 f_{,ij} [X_i,X_j] f
 \\
 & =  \sum_{i,j=1}^d f_{,ij}^2 +
 \frac{1}{4}  \sum_{i,j=1}^d ([X_i,X_j] f)^2.
 \end{align*}
We thus obtain \begin{align*} \frac{1}{2} \big[L \Gamma(f,f) - 2
\Gamma(f,Lf)\big] & =  \sum_{i=1}^d X_i f [X_0,X_i]f \\
& + 2 \sum_{i,j=1}^d X_i f [X_j,X_i]X_j f +  \sum_{i,j=1}^d X_i f [X_j,[X_j,X_i]]f \\
& +  \sum_{i,j=1}^d f_{,ij}^2 +
 \frac{1}{4}  \sum_{i,j=1}^d ([X_i,X_j] f)^2.
\end{align*}
Since \eqref{gamma2} gives $\Gamma_{2}(f,f) =
\frac{1}{2}\big[L\Gamma(f,f) - 2 \Gamma(f, Lf)\big]$, we conclude
\begin{align}\label{bochner1}
\Gamma_{2}(f,f) & = \sum_{i,j=1}^d f_{,ij}^2 - 2 \sum_{i,j=1}^d X_i
f [X_i,X_j]X_j f
 \\
& +
 \frac{1}{4}  \sum_{i,j=1}^d ([X_i,X_j] f)^2 + \sum_{i=1}^d X_i f [X_0,X_i]f +  \sum_{i,j=1}^d X_i f
[[X_i,X_j],X_j]f. \notag\end{align} To complete the proof we need to
recognize that the right-hand side in \eqref{bochner1} coincides
with that in \eqref{bochner}. With this objective in mind, using
\eqref{bra1} and \eqref{nonsym} we obtain
\begin{align*}
& \sum_{i,j=1}^d f_{,ij}^2  - 2 \sum_{i,j=1}^d X_i f [X_i,X_j]X_j f
\\
& = \sum_{\ell=1}^d f_{,\ell\ell}^2 + 2  \sum_{1 \le \ell<j \le d}
f_{,j\ell}^2 - 2 \sum_{i,j=1}^d X_i f \left(\sum_{\ee=1}^d
\omega_{ij}^\ee X_\ee +\sum_{m,n=1}^\di \gamma_{ij}^{mn}
Z_{mn}\right)X_j f
\\
& = \sum_{\ell=1}^d f_{,\ell\ell}^2 + 2  \sum_{1 \le \ee<j \le d}
f_{,j\ell}^2
\\
& - 2 \sum_{i,j=1}^d \sum_{\ee=1}^d \omega_{ij}^\ee X_\ee X_j f\ X_i
f - 2 \sum_{i,j=1}^d  \sum_{m,n=1}^\di \gamma_{ij}^{mn} Z_{mn}X_j f\
X_i f
\\
& = \sum_{\ell=1}^d f_{,\ell\ell}^2 + 2  \sum_{1 \le \ee<j \le d}
f_{,j\ell}^2  - 2 \sum_{i,j=1}^d \sum_{\ee=1}^d \omega_{ij}^\ee
f_{,\ee j}X_i f \\
&  -  \sum_{i,j=1}^d \sum_{\ee, k=1}^d \omega_{ij}^\ee \omega^k_{\ee
j} X_k f X_i f -  \sum_{i,j=1}^d \sum_{\ee=1}^d \sum_{m,n=1}^\di
\omega_{ij}^\ee \gamma^{mn}_{\ee j} Z_{mn} f\ X_i f
\\
& - 2 \sum_{i,j=1}^d  \sum_{m,n=1}^\di \gamma_{ij}^{mn} Z_{mn}X_j f\
X_i f
\\
& = \sum_{\ell=1}^d f_{,\ell\ell}^2 + 2  \sum_{1 \le \ee<j \le d}
f_{,j\ell}^2  - 2 \sum_{\ee,j=1}^d \left(\sum_{i=1}^d
\omega_{ij}^\ee
X_i f\right) f_{,\ee j} \\
&  -  \sum_{i,j=1}^d \sum_{\ee, k=1}^d \omega_{ij}^\ee \omega^k_{\ee
j} X_k f X_i f -  \sum_{i,j=1}^d \sum_{\ee=1}^d \sum_{m,n=1}^\di
\omega_{ij}^\ee \gamma^{mn}_{\ee j} Z_{mn} f\ X_i f
\\
& - 2 \sum_{i,j=1}^d  \sum_{m,n=1}^\di \gamma_{ij}^{mn} Z_{mn}X_j f\
X_i f
\\
& = \sum_{\ell=1}^d \left(f_{,\ell\ell}^2 - 2 \left(\sum_{i=1}^d
\omega^\ee_{i\ee} X_i f\right) f_{,\ee \ee}\right)
\\
& + 2  \sum_{1 \le \ee<j \le d}\left( f_{,j\ell}^2  - 2 \sum_{1\le
\ee<j\le d} \left(\sum_{i=1}^d \frac{\omega_{ij}^\ee +
\omega_{i\ee}^j}{2}
X_i f\right) f_{,\ee j} \right)
\\
&  -  \sum_{i,j=1}^d \sum_{\ee, k=1}^d \omega_{ij}^\ee \omega^k_{\ee
j} X_k f X_i f -  \sum_{i,j=1}^d \sum_{\ee=1}^d \sum_{m,n=1}^\di
\omega_{ij}^\ee \gamma^{mn}_{\ee j} Z_{mn} f\ X_i f
\\
& - 2 \sum_{i,j=1}^d  \sum_{m,n=1}^\di \gamma_{ij}^{mn} Z_{mn}X_j f\
X_i f.
\end{align*}
If we now complete the squares we obtain
\begin{align}\label{monster1}
& \sum_{i,j=1}^d f_{,ij}^2  - 2 \sum_{i,j=1}^d X_i f [X_i,X_j]X_j f
\\
& = \sum_{\ee=1}^d \left( f_{,\ell\ell} -\sum_{i=1}^d
\omega_{i\ee}^\ee X_i f \right)^2 +  2 \sum_{1 \le \ee<j \le d}
\left( f_{,j\ell} -\sum_{i=1}^d \frac{\omega_{ij}^\ee
+\omega_{i\ee}^j}{2} X_i f \right)^2
\notag\\
& - \sum_{\ee=1}^d \left(\sum_{i=1}^d \omega_{i\ee}^\ee X_i f
\right)^2 -  2 \sum_{1 \le \ee<j \le d} \left(\sum_{i=1}^d
\frac{\omega_{ij}^\ee +\omega_{i\ee}^j}{2} X_i f \right)^2
\notag\\
&  -  \sum_{i,j,k,\ee=1}^d  \omega_{ij}^\ee \omega^k_{\ee j} X_k f
X_i f -  \sum_{i,j=1}^d \sum_{\ee=1}^d \sum_{m,n=1}^\di
\omega_{ij}^\ee \gamma^{mn}_{\ee j} Z_{mn} f\ X_i f
\notag\\
& - 2 \sum_{i,j=1}^d \sum_{m,n=1}^\di \gamma_{ij}^{mn} X_j Z_{mn}f\
X_i f - 2 \sum_{i,j=1}^d  \sum_{m,n=1}^\di \gamma_{ij}^{mn}
[Z_{mn},X_j] f\ X_i f. \notag
\end{align}
Next, we have from \eqref{X0}
\begin{align}\label{monster2}
\sum_{i=1}^d X_i f [X_0,X_i]f  & =  \sum_{i,j,k,\ee=1}^d
\omega^k_{jk} \omega^\ee_{ij} X_\ee f X_i f
\\
& + \sum_{i=1}^d \sum_{j,k=1}^d \sum_{m,n=1}^\di \omega_{jk}^k
\gamma_{ij}^{mn} Z_{mn} f  X_i f + \sum_{i=1}^d \sum_{j,k=1}^d (X_i
\omega^k_{jk}) X_if X_j f, \notag\end{align} and also
\begin{align*}
\sum_{i,j=1}^d X_i f [[X_i,X_j],X_j]f  & = \sum_{i,j=1}^d \sum_{\ee
= 1}^d [\omega^\ee_{ij} X_\ee,X_j]f X_if + \sum_{i,j=1}^d \sum_{m,n
= 1}^\di [\gamma^{mn}_{ij} Z_{mn},X_j]f X_if
\\
& =  \sum_{i,j=1}^d \sum_{\ee=1}^d \omega^\ee_{ij} X_i f [X_\ee,X_j]
f - \sum_{i,j=1}^d \sum_{\ee=1}^d (X_j\omega^\ee_{ij}) X_if X_\ee f
\\
& + \sum_{i,j=1}^d \sum_{m,n=1}^\di \gamma^{mn}_{ij} [Z_{mn},X_j]f
X_i f - \sum_{i,j=1}^d \sum_{m,n=1}^\di (X_j \gamma^{mn}_{ij})
Z_{mn} f X_i f.
\end{align*}
Using \eqref{bra1} we find
\begin{align}\label{monster3}
\sum_{i,j=1}^d X_i f [[X_i,X_j],X_j]f  & = \sum_{i,j=1}^d
\sum_{\ee,k=1}^d \omega^\ee_{ij} \omega^k_{\ee j} X_i f X_kf +
\sum_{i,j,\ee=1}^d \sum_{m,n=1}^\di \omega^\ee_{ij} \gamma^{mn}_{\ee
j} Z_{mn}f X_i f
\\
& + \sum_{i,j=1}^d \sum_{m,n=1}^\di \gamma^{mn}_{ij} [Z_{mn},X_j]f
X_i f - \sum_{i,j=1}^d \sum_{m,n=1}^\di (X_j \gamma^{mn}_{ij})
Z_{mn} f
X_i f \notag\\
& - \sum_{i,j=1}^d \sum_{\ee=1}^d (X_j\omega^\ee_{ij}) X_if X_\ee f.
\notag
\end{align}
Again by \eqref{bra1} we have
\begin{align}\label{monster4}
\frac{1}{4} \sum_{i,j=1}^d ([X_i,X_j]f)^2 & = \frac{1}{2} \sum_{1\le
i<j\le d}\left(\sum_{\ee=1}^d \omega^\ee_{ij} X_\ee f\right)^2 +
\frac{1}{2} \sum_{1\le i<j\le d}\left(\sum_{m,n=1}^\di
\gamma^{mn}_{ij} Z_{mn} f\right)^2 \\
& + \sum_{1\le i<j\le d}\sum_{\ee=1}^d \sum_{m,n=1}^\di
\omega^\ee_{ij} \gamma^{mn}_{ij} Z_{mn}f X_\ee f. \notag\end{align}
Substituting \eqref{monster1}-\eqref{monster4} in \eqref{bochner1}
we obtain
\begin{align*}
\Gamma_2(f,f) & = \sum_{\ee=1}^d \left( f_{,\ell\ell} -\sum_{i=1}^d
\omega_{i\ee}^\ee X_i f \right)^2 +  2 \sum_{1 \le \ee<j \le d}
\left( f_{,j\ell} -\sum_{i=1}^d \frac{\omega_{ij}^\ee
+\omega_{i\ee}^j}{2} X_i f \right)^2
\\
& - 2 \sum_{i,j=1}^d  \sum_{m,n=1}^\di \gamma_{ij}^{mn} X_j Z_{mn}f\
X_i f + \mathcal{M}onster
\end{align*}
where we have let
\begin{align}\label{Monster}
\mathcal{M}onster & = - \sum_{\ee=1}^d \left(\sum_{i=1}^d
\omega_{i\ee}^\ee X_i f \right)^2 -  2 \sum_{1 \le \ee<j \le d}
\left(\sum_{i=1}^d \frac{\omega_{ij}^\ee +\omega_{i\ee}^j}{2} X_i f
\right)^2
\\
&  + \sum_{i,j,k,\ee=1}^d \omega^k_{jk} \omega^\ee_{ij} X_\ee f X_i
f -  \sum_{i,j,k,\ee=1}^d  \omega_{ij}^k \omega^\ee_{k j} X_\ee f
X_i f -  \sum_{i,j=1}^d \sum_{\ee=1}^d \sum_{m,n=1}^\di
\omega_{ij}^\ee \gamma^{mn}_{\ee j} Z_{mn} f\ X_i f
\notag\\
&  - \sum_{i,j=1}^d  \sum_{m,n=1}^\di \gamma_{ij}^{mn} [Z_{mn},X_j]
f\ X_i f  + \sum_{i=1}^d \sum_{j,k=1}^d \sum_{m,n=1}^\di
\omega_{jk}^k
\gamma_{ij}^{mn} Z_{mn} f  X_i f \notag\\
& + \sum_{i=1}^d \sum_{j,k=1}^d (X_i \omega^k_{jk}) X_if X_j f +
\sum_{i,j=1}^d \sum_{\ee,k=1}^d \omega^\ee_{ij} \omega^k_{\ee j} X_i
f X_kf + \sum_{i,j,\ee=1}^d \sum_{m,n=1}^\di \omega^\ee_{ij}
\gamma^{mn}_{\ee j} Z_{mn}f X_i f
\notag\\
&  - \sum_{i,j=1}^d \sum_{m,n=1}^\di (X_j \gamma^{mn}_{ij}) Z_{mn} f
X_i f  - \sum_{i,j=1}^d \sum_{\ee=1}^d (X_j\omega^\ee_{ij}) X_if
X_\ee f
\notag\\
& + \frac{1}{2} \sum_{1\le i<j\le d}\left(\sum_{\ee=1}^d
\omega^\ee_{ij} X_\ee f\right)^2 + \frac{1}{2} \sum_{1\le i<j\le
d}\left(\sum_{m,n=1}^\di
\gamma^{mn}_{ij} Z_{mn} f\right)^2 \notag\\
& + \sum_{1\le i<j\le d}\sum_{\ee=1}^d \sum_{m,n=1}^\di
\omega^\ee_{ij} \gamma^{mn}_{ij} Z_{mn}f X_\ee f.\notag
\end{align}
Simplifying the expression we obtain
\begin{align}\label{Monster2}
\mathcal{M}onster & = - \sum_{k,\ee=1}^d \sum_{i=1}^d \omega_{k i}^i
\omega_{\ee i}^i X_k f X_\ee f \notag\\
& -  \frac{1}{2} \sum_{k,l=1}^d \sum_{1 \le i<j \le d} (\omega_{\ee
j}^i +\omega_{\ee i}^j)(\omega^i_{kj} + \omega^j_{ki}) X_k f X_\ee f
\\
& + \sum_{k,\ee=1}^d \sum_{j=1}^d (X_\ee\omega^j_{kj} -
X_j\omega^k_{\ee j}) X_kf X_\ee f    +  \sum_{i,j,k,\ee=1}^d
\omega_{ji}^i \omega^\ee_{k j}  X_k f X_\ee f \notag\\
& + \frac{1}{2} \sum_{k,\ee=1}^d \sum_{1\le i<j\le d}
\omega^\ee_{ij} \omega^k_{ij} X_k f X_\ee f + \sum_{k,j=1}^d
\sum_{m,n=1}^\di \gamma_{kj}^{mn} [X_j,Z_{mn}] f\ X_k f
\notag\\
&   + \sum_{i=1}^d \sum_{j,k=1}^d \sum_{m,n=1}^\di \omega_{jk}^k
\gamma_{ij}^{mn} Z_{mn} f  X_i f + \sum_{1\le i<j\le
d}\sum_{\ee=1}^d \sum_{m,n=1}^\di \omega^\ee_{ij} \gamma^{mn}_{ij}
Z_{mn}f X_\ee f
\notag\\
&  - \sum_{i,j=1}^d \sum_{m,n=1}^\di (X_j \gamma^{mn}_{ij}) Z_{mn} f
X_i f  + \frac{1}{2} \sum_{1\le i<j\le d}\left(\sum_{m,n=1}^\di
\gamma^{mn}_{ij} Z_{mn} f\right)^2. \notag
\end{align}

To complete the proof we need to recognize that the right-hand side
of \eqref{Monster2} coincides with $\mathcal R(f,f)$ defined by
\eqref{R}. If we now use \eqref{bra2} we obtain
\begin{align*}
\sum_{k,j=1}^d \sum_{m,n=1}^\di \gamma_{kj}^{mn} [X_j,Z_{mn}] f\ X_k
f & = \sum_{k,j=1}^d \sum_{m,n=1}^\di \sum_{\ee=1}^d
\gamma_{kj}^{mn}
\delta_{jmn}^\ee X_k f X_\ee f \\
& = \sum_{k,\ee=1}^d \bigg(\sum_{j=1}^d \sum_{m,n=1}^\di
\gamma_{kj}^{mn} \delta_{jmn}^\ee\bigg) X_k f X_\ee f,
\end{align*}

Substituting in \eqref{Monster2} we find
\begin{align}\label{Monster3}
\mathcal{M}onster & = \sum_{k,\ee=1}^d \bigg\{\bigg(\sum_{j=1}^d
\sum_{m,n=1}^\di \gamma_{kj}^{mn} \delta_{jmn}^\ee\bigg) +
\sum_{j=1}^d
(X_\ee\omega^j_{kj} - X_j\omega^k_{\ee j}) \\
& + \sum_{i,j=1}^d \omega_{ji}^i \omega^\ee_{k j} - \sum_{i=1}^d
\omega_{k i}^i \omega_{\ee i}^i + \frac{1}{2} \sum_{1\le i<j\le d}
\bigg(\omega^\ee_{ij} \omega^k_{ij} - (\omega_{\ee j}^i +\omega_{\ee
i}^j)(\omega^i_{kj} + \omega^j_{ki})\bigg)\bigg\}X_k f X_\ee f
\notag\\
&   + \sum_{i=1}^d \sum_{j,k=1}^d \sum_{m,n=1}^\di \omega_{jk}^k
\gamma_{ij}^{mn} Z_{mn} f  X_i f + \sum_{1\le i<j\le
d}\sum_{\ee=1}^d \sum_{m,n=1}^\di\omega^\ee_{ij} \gamma^{mn}_{ij}
Z_{mn}f X_\ee f
\notag\\
&  - \sum_{i,j=1}^d \sum_{m,n=1}^\di (X_j \gamma^{mn}_{ij}) Z_{mn} f
X_i f  + \frac{1}{2} \sum_{1\le i<j\le d}\left(\sum_{m,n=1}^\di
\gamma^{mn}_{ij} Z_{mn} f\right)^2. \notag
\end{align}
Rearranging the indices we can rewrite \eqref{Monster3} as follows
\begin{align*}\label{Monster4}
\mathcal{M}onster & = \sum_{k,\ee=1}^d \bigg\{\bigg(\sum_{j=1}^d
\sum_{m,n=1}^\di  \gamma_{kj}^{mn} \delta_{jmn}^\ee\bigg) +
\sum_{j=1}^d
(X_\ee\omega^j_{kj} - X_j\omega^k_{\ee j}) \\
& + \sum_{i,j=1}^d \omega_{ji}^i \omega^\ee_{k j} - \sum_{i=1}^d
\omega_{k i}^i \omega_{\ee i}^i + \frac{1}{2} \sum_{1\le i<j\le d}
\bigg(\omega^\ee_{ij} \omega^k_{ij} - (\omega_{\ee j}^i +\omega_{\ee
i}^j)(\omega^i_{kj} + \omega^j_{ki})\bigg)\bigg\}X_k f X_\ee f
\notag\\
&   +  \sum_{k=1}^d \sum_{m,n=1}^\di \bigg(\sum_{\ell,j=1}^d
\omega_{j\ee}^\ee \gamma_{kj}^{mn} + \sum_{1\le \ee<j\le d}
\omega^k_{\ee j} \gamma^{mn}_{\ee j}  - \sum_{j=1}^d X_j
\gamma^{mn}_{kj}\bigg) Z_{mn} f X_k f \notag\\
& + \frac{1}{2} \sum_{1\le \ee<j\le d}\bigg(\sum_{m,n=1}^\di
\gamma^{mn}_{\ee j} Z_{mn} f\bigg)^2 = \mathcal R(f,f). \notag
\end{align*}
This completes the proof of Theorem \ref{T:bochner}.
\end{proof}

\begin{remark}
When $\mathbb M$ is a graded nilpotent Lie group of step two, then
using the structural constants in Example \ref{nilpotent} we obtain
from formula \eqref{bochner}
\[
\Gamma_{2}(f,f)=  \sum_{\ee=1}^d f_{,\ee\ee}^2 + 2 \sum_{1 \le \ee<j
\le d}  f_{,\ee j}^2 - 2 \sum_{i,j=1}^d \sum_{m,n=1}^\di
\gamma_{ij}^{mn} (X_j Z_{mn} f) (X_i f) +\mathcal{R} (f,f).
\]
Keeping \eqref{Rcg} in mind, we thus we obtain from the latter
equation
\[ \Gamma_{2}(f,f)= ||\nabla^2_H f||^2 + \frac{1}{4}
\sum_{i,j=1}^d ([X_i,X_j]f)^2 + 2 \sum_{i,j=1}^d [X_i,X_j] X_i f X_j
f, \] which is Proposition 3.3 in \cite{Gman} (the latter, however,
is relative to groups of arbitrary step). Here, $\nabla_H^2 f =
[f_{,ij}]$ represents the symmetrized horizontal Hessian of $f$.
\end{remark}

\subsection{The second Bochner formula}\label{SS:secondbochner}

To prove our next result we will need the following lemma.

\begin{lemma}\label{L:comm}
For every $m,n=1,...,\di$ one has
\[
[L,Z_{mn}] = 0.
\]
\end{lemma}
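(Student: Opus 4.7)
The plan is to expand $[L,Z_{mn}]$ directly, show the second-order part vanishes thanks to \eqref{deltas}, and then use the self-adjointness assumption \eqref{Zstar} to kill the remaining first-order part.

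From $L=\sum_{i=1}^d X_i^2+X_0$, decompose $[L,Z_{mn}]=\sum_i[X_i^2,Z_{mn}]+[X_0,Z_{mn}]$. Using \eqref{bra2} to write $[X_i,Z_{mn}]=\sum_\ee\delta^\ee_{imn}X_\ee$ and expanding $[X_i^2,Z_{mn}]=X_i[X_i,Z_{mn}]+[X_i,Z_{mn}]X_i$, the second-order contribution after summing over $i$ collects into
\[
\sum_{i,\ee=1}^{d}\delta^\ee_{imn}\bigl(X_iX_\ee+X_\ee X_i\bigr).
\]
The key algebraic observation is that the matrix $(\delta^\ee_{imn})_{i,\ee}$ is skew-symmetric in $(i,\ee)$ by \eqref{deltas}, whereas $X_iX_\ee+X_\ee X_i$ is symmetric in $(i,\ee)$, so this double sum vanishes by relabeling. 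An analogous expansion of $[X_0,Z_{mn}]$ via \eqref{X0} is manifestly first-order. Hence
\[
[L,Z_{mn}]=\sum_{\ee=1}^{d}A_\ee X_\ee
\]
for certain explicit smooth functions $A_\ee$, with no multiplicative zeroth-order term (as $[L,Z_{mn}](1)=L(0)-Z_{mn}(0)=0$ confirms).

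The decisive second step is to show each $A_\ee$ vanishes. Verifying this coefficient-by-coefficient from the Jacobi identity would be tedious, so instead I use duality. By \eqref{Zstar} one has $L^{*}=L$ and $Z_{mn}^{*}=-Z_{mn}$, whence
\[
[L,Z_{mn}]^{*}=Z_{mn}^{*}L^{*}-L^{*}Z_{mn}^{*}=-Z_{mn}L+LZ_{mn}=[L,Z_{mn}],
\]
so $[L,Z_{mn}]$ is formally self-adjoint. On the other hand, every vector field $X_\ee$ on the Riemannian manifold $\bM$ satisfies $X_\ee^{*}=-X_\ee+b_\ee$ for some smooth function $b_\ee$. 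Consequently any horizontal first-order operator $B=\sum_\ee A_\ee X_\ee$ has $B^{*}=-B+c$ for some function $c$. The equation $B=B^{*}$ then forces $2B=c$ as operators; testing on $f\equiv 1$ gives $c=0$, whence $Bf\equiv 0$ for all smooth $f$. Since $X_1,\ldots,X_d$ are pointwise linearly independent (they frame $\mathcal H$), this yields $A_\ee\equiv 0$ for every $\ee$, proving $[L,Z_{mn}]=0$.

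The main obstacle is really the algebraic cancellation of the second-order part of the commutator: without the skew-symmetry \eqref{deltas}, the operator $[L,Z_{mn}]$ would retain genuinely second-order terms and the self-adjointness trick would not suffice to conclude. Once the second-order piece is eliminated, however, the remaining argument is essentially automatic and requires no delicate verification of structural identities.
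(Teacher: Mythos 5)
Your proof is correct and follows essentially the same three-step strategy as the paper: expand $[L,Z_{mn}]$ via \eqref{bra2}, use the skew-symmetry \eqref{deltas} to cancel the symmetric second-order part $\sum_{i,\ee}\delta^\ee_{imn}(X_iX_\ee+X_\ee X_i)$ so that what remains is a vector field, then invoke \eqref{Zstar} to see that $[L,Z_{mn}]$ is formally self-adjoint and must therefore vanish. The only variation is in the final step: where the paper shows a symmetric vector field $V$ must vanish by the integral identity $0=\langle V1,\phi\psi\rangle=\cdots=2\langle V\phi,\psi\rangle$ and then takes $\psi=V\phi$, you instead write the formal adjoint of a vector field as $B^*=-B+c$ (with $c$ a multiplication operator coming from the divergence), deduce $2B=c$, and annihilate $c$ by testing on constants. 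Both variants exploit the same two facts — $B(1)=0$ and $B=B^*$ — so the difference is cosmetic rather than structural; your version is perhaps marginally more concrete, while the paper's avoids needing to name the divergence term $c$ explicitly.
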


\begin{proof}
We begin by observing that, as consequence of the assumption
\eqref{deltas}, for every $m,n=1,...,\di$, the
commutator$[L,Z_{mn}]$ is a vector field. Indeed,  given  a smooth
function $f$, using the formulas \eqref{bra2}, \eqref{X0} and
\eqref{nonsym}, one obtains by elementary computations
\begin{align*}
[L,Z_{mn}]f & = [X_0,Z_{mn}]f + \sum_{i=1}^d X_i [X_i,Z_{mn}]f + [X_i,Z_{mn}] X_i f\\
& = - \sum_{i,k,\ee=1}^d \omega_{ik}^k \delta^{\ee}_{imn} X_\ee f+
\sum_{i,k=1}^d Z_{mn}(\omega^k_{ik}) X_i f+ \sum_{i,\ee=1}^d (X_i \delta^\ee_{imn}) X_\ee f +
\delta^\ee_{imn} (X_\ee X_i f+X_iX_\ee f)\\
& = - \sum_{i,k,\ee=1}^d \omega_{ik}^k \delta^{\ee}_{imn} X_\ee f+
\sum_{i,k=1}^d Z_{mn}(\omega^k_{ik}) X_i f+ \sum_{i,\ee=1}^d (X_i \delta^\ee_{imn}) X_\ee f
\end{align*}
where we used the crucial fact that
\[
\delta^\ee_{imn}  = -\delta^i_{\ee mn}.
\]
To complete the proof let $\phi,
\psi\in C^\infty_0(\bM)$, then \eqref{symm} gives
\begin{align*}
<[L,Z_{mn}]^*\phi,\psi> & = <\phi,[L,Z_{mn}]\psi> =
<\phi,L(Z_{mn}\psi)> - <\phi,Z_{mn}(L\psi)>
\\
& = <L\phi,Z_{mn}\psi> + <Z_{mn}\phi,L\psi> = <L(Z_{mn}\phi,\psi> -
<Z_{mn}(L\phi),\psi>
\\
& = <[L,Z_{mn}]\phi,\psi>,
\end{align*}
and thus $[L,Z_{mn}]^* = [L,Z_{mn}]$, i.e. $[L,Z_{mn}]$ is a
symmetric vector field. Such vector fields must vanish. Indeed, if
$V$ is symmetric vector field, one has for any $\phi, \psi\in
C^\infty(\bM)$
\begin{align*}
0 = & <V1,\phi \psi> = <V^* 1,\phi \psi>  = <1,V(\phi \psi)> =
<1,\phi V\psi> + <1,\psi V\phi>
\\
& = <\phi, V\psi> + <\psi, V\phi> = 2 <V\phi,\psi>.
\end{align*}
Taking $\psi = V\phi$ we conclude that it must be $V\phi = 0$. By
the arbitrariness of $\phi\in C^\infty(\bM)$ we conclude $V=0$.

\end{proof}

Our second Bochner type formula is expressed by the following
proposition.

\begin{proposition}[Vertical Bochner formula]\label{P:bochner2}
For every smooth function $f :\mathbb{M} \rightarrow \mathbb{R}$,
\[
\Gamma^Z_{2}(f,f) = \sum_{m,n=1}^\di \Gamma( Z_{mn} f,Z_{mn}f )
\]
\end{proposition}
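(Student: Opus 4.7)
The plan is to unwind the definition of $\Gamma_2^Z$ directly and use the commutator lemma just proved. Writing out the definition \eqref{gamma2Z} at $g=f$ gives
\[
\Gamma_2^Z(f,f) \;=\; \tfrac{1}{2} L\Gamma^Z(f,f) \;-\; \Gamma^Z(f,Lf),
\]
so the task reduces to evaluating $L\Gamma^Z(f,f)$ with $\Gamma^Z(f,f) = \sum_{m,n=1}^{\di}(Z_{mn}f)^2$.

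First I would apply to each summand the elementary identity $L(F^2) = 2 F L F + 2 \Gamma(F,F)$ (which follows immediately from \eqref{gamma}) with $F = Z_{mn}f$. This yields
\[
\tfrac{1}{2} L\Gamma^Z(f,f) \;=\; \sum_{m,n=1}^{\di} (Z_{mn}f)\, L(Z_{mn}f) \;+\; \sum_{m,n=1}^{\di} \Gamma(Z_{mn}f, Z_{mn}f).
\]
On the other hand $\Gamma^Z(f,Lf) = \sum_{m,n}(Z_{mn}f)(Z_{mn}Lf)$, so combining these,
\[
\Gamma_2^Z(f,f) \;=\; \sum_{m,n=1}^{\di} (Z_{mn}f)\, [L,Z_{mn}]f \;+\; \sum_{m,n=1}^{\di} \Gamma(Z_{mn}f, Z_{mn}f).
\]

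The key step is then to invoke Lemma \ref{L:comm}, which asserts $[L,Z_{mn}] = 0$ for every $m,n$. That lemma is what makes the vertical formula so clean compared with the horizontal one; it rests crucially on the antisymmetry assumption \eqref{deltas}. Once applied, the first sum drops out and one obtains exactly $\Gamma_2^Z(f,f) = \sum_{m,n=1}^{\di} \Gamma(Z_{mn}f, Z_{mn}f)$, as claimed. There is no real obstacle here beyond being careful with the factor $\tfrac{1}{2}$ and recognizing the commutator pattern; the substantive content was absorbed into Lemma \ref{L:comm}.
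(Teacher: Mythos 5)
Your proof is correct and is essentially the same as the paper's: both expand $L\Gamma^Z(f,f)$ via $L(F^2) = 2F\,LF + 2\Gamma(F,F)$ with $F = Z_{mn}f$ and then invoke Lemma \ref{L:comm} to kill the commutator term. The only cosmetic difference is that you isolate $[L,Z_{mn}]f$ explicitly before applying the lemma, whereas the paper substitutes $L(Z_{mn}f) = Z_{mn}(Lf)$ directly inside the sum.
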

\begin{proof}
>From \eqref{gamma2Z} we obtain
\[
\Gamma_2^Z(f,f) = \frac{1}{2} \bigg[L\Gamma^Z(f,f) - 2
\Gamma^Z(f,Lf)\bigg].
\]
We now have from \eqref{gammaZ} and from Lemma \ref{L:comm}
\begin{align*}
L\Gamma^Z(f,f) & = \sum_{m,n=1}^\di L((Z_{mn}f)^2) = 2
\sum_{m,n=1}^\di Z_{mn}f\ L(Z_{mn}f) + 2 \sum_{m,n=1}^\di
\sum_{k=1}^d \big(X_k(Z_{mn}f)\big)^2
\\
& = 2 \sum_{m,n=1}^\di Z_{mn}f\ Z_{mn}(Lf) + 2 \sum_{m,n=1}^\di
\sum_{k=1}^d \big(X_k(Z_{mn}f)\big)^2
\\
& = 2 \Gamma^Z(f,Lf) + 2 \sum_{m,n=1}^\di \Gamma(Z_{mn}f,Z_{mn}f),
\end{align*}
where in the last term we have used \eqref{gamma}.
\end{proof}

\section{A sub-Riemannian curvature dimension
inequality}\label{S:cd}

As a consequence of our two Bochner's formulas we obtain a
generalization of the curvature dimension inequality of Bakry. A new
feature of such inequality is the presence in the left hand-side of
the vertical quadratic form $\Gamma^Z_{2}(f,f)$. For a smooth
function $f$, we define the quantity
\begin{equation}\label{T}
\mathcal{T} (f,f)= \sum_{j=1}^d \sum_{m,n=1}^\di \left(\sum_{i=1}^d
\gamma_{ij}^{mn} X_i f \right)^2.
\end{equation}
We observe that with the notations of Section \ref{S:geodesics},
\[
\mathcal{T} (f,f)=\sum_{m,n=1}^\di g(J_{mn}(\nabla f), J_{mn}(\nabla f)).
\]
We notice explicitly that, since in the Riemannian case
$\gamma^{mn}_{ij} = 0$, see Example \ref{riemannian}, in that case
we have
\begin{equation}\label{Triemann}
\mathcal T(f,f) = 0. \end{equation} When instead $\mathbb M$ is a
graded nilpotent Lie group of step two, then from the expressions of
the $\gamma^{mn}_{ij} = 0$ in example \ref{nilpotent} we find
\begin{equation}\label{Tnilpotent}
\mathcal T(f,f) = \frac{d-1}{2} \Gamma(f,f).
\end{equation}

\begin{proposition}[Sub-Riemannian curvature-dimension inequality]\label{P:Inequality1}
For  every smooth function $f :\mathbb{M} \rightarrow \mathbb{R}$ and every $\nu >0$,
\[
\Gamma_{2}(f,f)+\nu \Gamma^Z_{2}(f,f) \ge \frac{1}{d} (Lf)^2 + \mathcal{R} (f,f) -\frac{1}{\nu} \mathcal{T} (f,f).
\]
\end{proposition}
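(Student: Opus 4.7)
The plan is to combine the horizontal Bochner formula of Theorem \ref{T:bochner} with the vertical Bochner formula of Proposition \ref{P:bochner2}, using Cauchy--Schwarz in two different guises to handle two troublesome terms: the mixed term $-2\sum_{i,j,m,n}\gamma_{ij}^{mn}(X_jZ_{mn}f)(X_if)$ in the horizontal formula, and the sum of squares of the symmetrized horizontal Hessian entries.

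First I would rewrite the mixed term by grouping over $i$, namely
\[
-2 \sum_{i,j=1}^d \sum_{m,n=1}^{\di} \gamma_{ij}^{mn}(X_jZ_{mn}f)(X_if) = -2\sum_{j=1}^d\sum_{m,n=1}^{\di}(X_jZ_{mn}f)\Bigl(\sum_{i=1}^d \gamma_{ij}^{mn}X_if\Bigr).
\]
Applying the weighted Cauchy--Schwarz inequality $2|AB|\le \nu A^2+\tfrac{1}{\nu}B^2$ to each pair, the right-hand side is bounded below by
\[
-\nu\sum_{j=1}^d\sum_{m,n=1}^{\di}(X_jZ_{mn}f)^2 - \frac{1}{\nu}\sum_{j=1}^d\sum_{m,n=1}^{\di}\Bigl(\sum_{i=1}^d\gamma_{ij}^{mn}X_if\Bigr)^2 = -\nu\,\Gamma_2^Z(f,f)-\frac{1}{\nu}\mathcal{T}(f,f),
\]
where in the first term I have used Proposition \ref{P:bochner2} to identify $\sum_{j,m,n}(X_jZ_{mn}f)^2$ with $\Gamma_2^Z(f,f)$, and in the second the definition \eqref{T} of $\mathcal T$. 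Moving $\nu\Gamma_2^Z(f,f)$ to the left-hand side already produces the claimed form, up to the requirement of a lower bound by $\tfrac{1}{d}(Lf)^2$.

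For that remaining bound I would discard the non-negative off-diagonal squares $2\sum_{1\le\ell<j\le d}\bigl(f_{,\ell j}-\sum_i\tfrac{\omega_{i\ell}^j+\omega_{ij}^\ell}{2}X_if\bigr)^2$ in \eqref{bochner}, and apply the elementary inequality $(\sum_{\ell=1}^d a_\ell)^2\le d\sum_{\ell=1}^d a_\ell^2$ to the diagonal terms:
\[
\sum_{\ell=1}^d\Bigl(f_{,\ell\ell}-\sum_{i=1}^d\omega_{i\ell}^\ell X_if\Bigr)^2 \ge \frac{1}{d}\Bigl(\sum_{\ell=1}^d f_{,\ell\ell}-\sum_{\ell,i=1}^d\omega_{i\ell}^\ell X_if\Bigr)^2.
\]
The key observation here is that $f_{,\ell\ell}=X_\ell^2 f$, and that by the defining formula \eqref{X0} for $X_0$ one has $-\sum_{\ell,i}\omega_{i\ell}^\ell X_if = X_0f$, so the inner sum collapses to $\sum_\ell X_\ell^2 f+X_0f=Lf$. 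Hence the diagonal contribution is at least $\tfrac{1}{d}(Lf)^2$.

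Combining these two estimates with \eqref{bochner} and rearranging yields exactly
\[
\Gamma_2(f,f)+\nu\,\Gamma_2^Z(f,f)\ge \frac{1}{d}(Lf)^2+\mathcal{R}(f,f)-\frac{1}{\nu}\mathcal{T}(f,f).
\]
No step presents a real obstacle; the only substantive point is recognizing that the trace of the modified Hessian $f_{,\ell\ell}-\sum_i\omega_{i\ell}^\ell X_if$ equals $Lf$, which is precisely why the drift $X_0$ is defined by \eqref{X0}. The introduction of the free parameter $\nu$ is forced by the need to absorb the unsigned mixed term against the vertical quadratic form $\Gamma_2^Z$, and it is this weighted Cauchy--Schwarz that produces both the $\nu\Gamma_2^Z$ on the left and the $-\tfrac{1}{\nu}\mathcal T$ correction on the right.
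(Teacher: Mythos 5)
Your proof is correct and follows exactly the same strategy as the paper's: apply the Cauchy--Schwarz inequality $\sum_\ell a_\ell \le \sqrt{d}\,(\sum_\ell a_\ell^2)^{1/2}$ to the diagonal Hessian terms to recover $\tfrac{1}{d}(Lf)^2$ (using that their trace is $Lf$ by the definition \eqref{X0} of $X_0$), discard the non-negative off-diagonal squares, and absorb the mixed term $-2\sum\gamma_{ij}^{mn}(X_jZ_{mn}f)(X_if)$ via the weighted Cauchy--Schwarz inequality against $\nu\Gamma_2^Z$ and $\tfrac{1}{\nu}\mathcal T$. The only cosmetic difference is the order of presentation; the substance is the same.
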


\begin{proof}
>From \eqref{L} and \eqref{X0} and Schwarz inequality we find
\[
Lf = \sum_{\ee=1}^d \left(f_{,\ee \ee} - \sum_{i=1}^d
\omega^\ee_{i\ee} X_i f\right) \leq \sqrt d \left(\sum_{\ee=1}^d
\left(f_{,\ee \ee} - \sum_{i=1}^d \omega^\ee_{i\ee} X_i
f\right)^2\right)^{1/2}
\]
>From this inequality and from Theorem \ref{T:bochner}, using Schwarz
inequality again we obtain for every $\nu>0$
\begin{align*}
\frac{1}{d} (Lf)^2 & \leq \Gamma_2(f,f) + 2 \sum_{j=1}^d
\sum_{m,n=1}^\di \left(\sum_{i=1}^d \gamma^{mn}_{ij}X_if\right)
X_j(Z_{mn}f)  - \mathcal R(f,f)
\\
&  - 2 \sum_{1 \le \ee<j \le d} \left( f_{,\ee j}
 -\sum_{i=1}^d \frac{\omega_{i\ee}^j +\omega_{ij}^\ee}{2} X_i f \right)^2
\\
& \leq \Gamma_2(f,f) + \nu \sum_{j=1}^d \sum_{m,n=1}^\di
(X_j(Z_{mn}f))^2 + \frac{1}{\nu}  \sum_{j=1}^d \sum_{m,n=1}^\di
\left(\sum_{i=1}^d \gamma^{mn}_{ij}X_if\right)^2 - \mathcal R(f,f).
\end{align*}
Using Proposition \ref{P:bochner2} and the definition \eqref{T} we
find
\[
\frac{1}{d} (Lf)^2 \leq \Gamma_2(f,f) + \nu \Gamma^Z_{2}(f,f) +
\frac{1}{\nu} \mathcal T(f,f) - \mathcal R(f,f),
\]
which gives the desired conclusion.
\end{proof}

\begin{example}\label{CDriemann}
On a Riemannian manifold $\mathbb M$ with Laplace-Beltrami operator
$L$ we have $\Gamma_2^Z(f,f,) = \mathcal T(f,f) = 0$ (see
\eqref{Triemann}), and $\mathcal R(f,f) = \emph{Ric}\ (\nabla
f,\nabla f)$ (see Example \ref{E:Rriemann}). Proposition
\ref{P:Inequality1} thus gives
\[
\Gamma_{2}(f,f) \ge \frac{1}{d} (Lf)^2 + \emph{Ric}\ (\nabla
f,\nabla f).
\]
When \begin{equation}\label{riccirho} \emph{Ric}(V,V) \ge \rho_1
|V|^2, \ \ \ \rho_1\in \mathbb R, \end{equation}
we thus recover the
Riemannian curvature-dimension inequality
\begin{equation}\label{Rcdrho}
\Gamma_2(f,f) \ge \frac{1}{d} (L f)^2 + \rho_1 \Gamma(f,f).
\end{equation}
\end{example}

\begin{remark}\label{R:ricci}
It is important to keep in mind that, in view of Theorem 1.3 in
\cite{SVR} and Proposition 3.3 in \cite{bakry-tata}, the Riemannian
curvature dimension inequality \eqref{Rcdrho} is, in fact,
equivalent to \eqref{riccirho}.
\end{remark}

\begin{example}\label{CDnilpotent}
Let us assume that $L$ is the sub-Laplacian on a two-step graded
nilpotent Lie group, then for every smooth function $f$ and every
$\nu >0$,
 \[
 \Gamma_{2}(f,f)+\nu \Gamma^Z_{2}(f,f) \ge \frac{1}{d} (Lf)^2  -\frac{d-1}{2\nu}  \Gamma (f,f)
 +\frac{1}{4} \Gamma^Z (f,f).
 \]

\end{example}

\section{Gradient estimates for the heat semigroup}

In this whole section we assume that the metric space $(\bM,d)$ is
complete, where $d$ denotes the Carath\'eodory distance associated
to $L$ (see section \ref{SS:srd}). Recall that this is equivalent to
assuming that $\bM$ is complete with respect to its Riemannian
distance $d_R$. We also suppose that there exist constants $\rho_1
\in \mathbb{R}$, $\rho_2
>0$ and $\kappa >0$ such that for every smooth function $f
:\mathbb{M} \rightarrow \mathbb{R}$:
\begin{equation}\label{geombounds}
\begin{cases}
\mathcal{R}(f,f) \ge \rho_1 \Gamma (f,f) +\rho_2 \Gamma^Z (f,f),
\\
\mathcal{T}(f,f) \le \kappa \Gamma (f,f).
\end{cases}\end{equation}

We emphasize that, as we have seen in example \ref{CDriemann} and
remark \ref{R:ricci}, the assumptions \eqref{geombounds} should be
considered as the sub-Riemannian analogue of \eqref{riccirho}. In
particular, the requirement $\rho_1\ge 0$ in \eqref{geombounds}
corresponds to the Riemannian Ric$\ge 0$.

According to Proposition \ref{P:Inequality1} the assumptions
\eqref{geombounds} imply, for every smooth function $f :\mathbb{M}
\rightarrow \mathbb{R}$ and every $\nu >0$,
\begin{equation}\label{CD}
\Gamma_{2}(f,f)+\nu \Gamma^Z_{2}(f,f) \ge \frac{1}{d} (Lf)^2 + \left( \rho_1 -\frac{\kappa}{\nu}\right)  \Gamma (f,f) + \rho_2 \Gamma^Z (f,f).
\end{equation}
This is a sub-Riemannian version of the curvature-dimension
inequality \eqref{curvature_dimension}.

\subsection{The heat semigroup}

As an operator defined on $C^\infty_0(\bM)$ the operator $L$ is
symmetric with respect to the measure $\mu$ and non positive: For $f
\in C^\infty_0(\bM)$, $<Lf,f> \le 0$. Therefore, it admits a
self-adjoint extension to $L^2 (\bM,\mu)$, the Friedrichs extension.
Following an argument of Strichartz \cite{Strichartz}, Theorem 7.3
p. 246 and p. 261, we now prove that $L$ is essentially self-adjoint
on $C^\infty_0(\bM)$ (the completeness of  the metric space
$(\mathbb{M},d)$ is crucial here).

In what follows, to distinguish it from the canonical connection
$\nabla$ introduced in section \ref{SS:cc}, we use the notation
$\nabla_R$ for the Riemannian connection on $\bM$. We recall that,
thanks to the assumption \eqref{LB},
\[
\|\nabla_R f \|^2=\Gamma(f)+\frac{1}{2}\Gamma^Z(f),\ \ \ f\in
C^\infty(\bM).
\]

\begin{lemma}\label{L:exhaustion}
There exists an increasing
sequence $h_n\in C^\infty_0(\bM)$  such that $h_n\nearrow 1$ on
$\bM$, and $||\nabla_R h_n||_{\infty} \to 0$, as $n\to \infty$.
\end{lemma}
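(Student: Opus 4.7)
The plan is to build $h_n$ as a smooth cutoff of a smoothed distance function from a fixed basepoint. Fix $x_0 \in \mathbb{M}$ and set $r(x) = d_R(x_0,x)$. Since $(\mathbb{M},d_R)$ is complete, Hopf--Rinow gives that every closed Riemannian ball $\bar B_R(x_0,R) = \{r \le R\}$ is compact. The function $r$ is $1$-Lipschitz, hence differentiable almost everywhere with $\|\nabla_R r\| \le 1$, but it need not be smooth, so the first step is to replace it by a smooth surrogate.

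By a classical smoothing result (e.g.\ Greene--Wu), one may produce a function $\tilde r \in C^\infty(\mathbb{M})$ with
\[
|\tilde r(x) - r(x)| \le 1 \quad \text{and} \quad \|\nabla_R \tilde r\|_\infty \le 2.
\]
Next, choose once and for all a smooth, non-increasing function $\phi : \mathbb R \to [0,1]$ with $\phi \equiv 1$ on $(-\infty,1]$, $\phi \equiv 0$ on $[2,\infty)$, and $|\phi'| \le 2$. Then define
\[
h_n(x) = \phi\!\left(\frac{\tilde r(x)}{n}\right), \qquad n \ge 1.
\]

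The verification is routine. Smoothness of $h_n$ is immediate from the smoothness of $\tilde r$ and $\phi$. The support of $h_n$ is contained in $\{\tilde r \le 2n\} \subset \{r \le 2n+1\}$, which is compact by completeness; hence $h_n \in C^\infty_0(\mathbb{M})$. For fixed $x$, $\tilde r(x)/n \to 0$, so $h_n(x) \to \phi(0) = 1$. Monotonicity $h_n \le h_{n+1}$ follows from the fact that $n \mapsto \tilde r(x)/n$ is non-increasing while $\phi$ is non-increasing, so $n \mapsto \phi(\tilde r(x)/n)$ is non-decreasing. Finally, the chain rule gives
\[
\|\nabla_R h_n\|_\infty \le \frac{1}{n} \|\phi'\|_\infty \|\nabla_R \tilde r\|_\infty \le \frac{4}{n} \longrightarrow 0.
\]

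The genuinely non-trivial ingredient is the smoothing step, since on a general complete Riemannian manifold the distance function is only Lipschitz and can fail to be differentiable on a large set (the cut locus). If one wishes to avoid invoking Greene--Wu, an alternative is to mollify $r$ locally using a locally finite partition of unity subordinate to a cover by relatively compact charts, carrying out the mollification in each chart with scale smaller than the local injectivity radius and checking that the resulting function still has a uniform gradient bound; this is the point where completeness of $(\mathbb{M},d_R)$, and hence of $(\mathbb{M},d)$, is unavoidable, because otherwise the sublevel sets $\{r \le R\}$ need not be compact and the construction would not produce compactly supported cutoffs.
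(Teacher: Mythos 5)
Your proof is correct and follows essentially the same route as the paper: both invoke the Greene--Wu smoothing of $d_R(x_0,\cdot)$ to obtain a smooth proper exhaustion function with uniformly bounded gradient, and then compose with an $n$-scaled cutoff (the paper uses a sequence $\phi_n$ with $|\phi_n'|\le 2/n$, you use a fixed $\phi$ applied to $\tilde r/n$, which is the same device). One tiny imprecision worth flagging: where $\tilde r(x)<0$ (which is possible near $x_0$ since only $|\tilde r-r|\le 1$ is guaranteed), the quantity $\tilde r(x)/n$ is in fact increasing in $n$ rather than non-increasing, but since $\phi\equiv 1$ on $(-\infty,1]$ one has $h_n(x)=1$ for all $n$ there, so monotonicity still holds.
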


\begin{proof}
As in \cite{GW}, if we fix a base point $x_0\in \bM$, we can find an
exhaustion function $\rho\in C^\infty(\bM)$ such that
\[ |\rho - d_R(x_0,\cdot)| \le L,\ \ \ \ \ \   |\nabla_R \rho|\le L
\ \ \text{on}\ \bM. \] By the completeness of $(M,d_R)$ and the
Hopf-Rinow theorem, the level sets $\Omega_s = \{x\in \bM\mid
\rho(x)<s\}$ are relatively compact and, furthermore, $\Omega_s
\nearrow \bM$ as $s\to \infty$. We now pick an increasing sequence
of functions $\phi_n\in C^\infty([0,\infty))$ such that
$\phi_n\equiv 1$ on $[0,n]$, $\phi_n \equiv 0$ outside $[0,2n]$, and
$|\phi_n'|\le \frac{2}{n}$. If we set $h_n(x) = \phi_n(\rho(x))$,
then we have $h_n\in C^\infty_0(\bM)$, $h_n\nearrow1$ on $\bM$ as
$n\to \infty$, and \[ ||\nabla_R h_n||_{\infty} \le \frac{2L}{n}.
\]
\end{proof}

In what follows we define the action of the operator $-L$ on
$C^\infty_0(\bM)$ by the equation
\[
<-L \phi,\psi> = \int_{\bM} \Gamma(\phi,\psi) d\mu,\ \ \ \psi\in
C^\infty_0(\bM).
\]
We recall our assumption \eqref{symm}. As a corollary of the
previous lemma, we obtain:

\begin{proposition}
The operator $L$ is essentially self-adjoint on $C^\infty_0(\bM)$.
\end{proposition}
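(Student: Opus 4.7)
The plan is to prove essential self-adjointness by the standard criterion: show that for some (equivalently, any) $\lambda > 0$, any $u\in L^2(\bM,\mu)$ in the domain of $L^*$ satisfying $L^* u = \lambda u$ must be zero. This forces the deficiency indices to vanish, which, together with the symmetry of $L$ on $C^\infty_0(\bM)$, yields essential self-adjointness. The key tool is the exhaustion $\{h_n\}$ from Lemma \ref{L:exhaustion}, whose existence rests on the completeness of $(\bM, d_R)$ (equivalently, of $(\bM, d)$) through the Hopf--Rinow theorem.

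First I would reduce to the smooth setting. Assume $u \in L^2(\bM,\mu) \cap \text{Dom}(L^*)$ and $L^* u = \lambda u$ with $\lambda > 0$. Since $L$ is symmetric on $C^\infty_0(\bM)$, $u$ satisfies $(L - \lambda)u = 0$ in the distributional sense. By H\"ormander's hypoellipticity theorem (applicable since $X_1,\dots,X_d$ together with the $Z_{mn}$ span $T\bM$ and $L$ can be written as in \eqref{subL}, plus the first-order term $X_0$), $u \in C^\infty(\bM)$, and the equation $Lu = \lambda u$ holds pointwise.

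Next I would test the equation against $h_n^2 u$, which lies in $C^\infty_0(\bM)$. Integration by parts, using the identity $\int_\bM \varphi (-L\psi)\,d\mu = \int_\bM \Gamma(\varphi,\psi)\,d\mu$ for $\varphi, \psi \in C^\infty_0(\bM)$, together with the product rule $\Gamma(h_n^2 u, u) = h_n^2 \Gamma(u,u) + 2 h_n u\, \Gamma(h_n, u)$, gives
\begin{equation*}
\lambda \int_\bM h_n^2 u^2\,d\mu \;+\; \int_\bM h_n^2 \Gamma(u,u)\,d\mu \;=\; -\,2\int_\bM h_n u\, \Gamma(h_n, u)\,d\mu.
\end{equation*}
Applying the Cauchy--Schwarz inequality for the bilinear form $\Gamma$ pointwise, then in the integral, and finally Young's inequality $2ab \le a^2 + b^2$, the right-hand side is bounded by
\begin{equation*}
\int_\bM h_n^2 \Gamma(u,u)\,d\mu \;+\; \int_\bM u^2\, \Gamma(h_n,h_n)\,d\mu.
\end{equation*}

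Absorbing the $\Gamma(u,u)$ term on the left, I obtain
\begin{equation*}
\lambda \int_\bM h_n^2 u^2\,d\mu \;\le\; \int_\bM u^2\, \Gamma(h_n,h_n)\,d\mu.
\end{equation*}
Since $\Gamma(h_n, h_n) \le \|\nabla_R h_n\|^2 \le \|\nabla_R h_n\|_\infty^2$ by the assumption \eqref{LB} relating $\Gamma$, $\Gamma^Z$ to the Riemannian gradient, and since $\|\nabla_R h_n\|_\infty \to 0$ while $u \in L^2$, the right-hand side tends to $0$. Monotone convergence on the left ($h_n \nearrow 1$) then yields $\int_\bM u^2\,d\mu = 0$, so $u \equiv 0$. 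The main delicate point is producing a cutoff $h_n$ with $\|\nabla_R h_n\|_\infty \to 0$ that still exhausts $\bM$; this is precisely what Lemma \ref{L:exhaustion} provides, and it is where metric completeness is essential. The rest is a clean integration by parts plus Cauchy--Schwarz/Young argument.
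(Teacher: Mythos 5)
Your proof is correct and follows essentially the same Strichartz-type argument as the paper: hypoellipticity to get smoothness of an eigenfunction of $L^*$, testing against $h_n^2 u$, and letting the exhaustion $\{h_n\}$ from Lemma \ref{L:exhaustion} kill the error. The only (minor, and arguably cleaner) difference is that you retain the term $\lambda\int_\bM h_n^2 u^2\,d\mu$ and absorb $\int_\bM h_n^2\Gamma(u,u)\,d\mu$ via Young's inequality, thereby concluding $\int_\bM u^2\,d\mu=0$ directly, whereas the paper discards the $\lambda$-term at the outset, deduces $\Gamma(f,f)=0$, and then must implicitly invoke $Lf=\lambda f$ (or connectedness plus $f\in L^2$) to finish.
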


\begin{proof}
Denote by $\overline L$ the Friedrichs extension of $L$ initially
defined on $C^\infty_0(\bM)$. If $L^*$ is the adjoint of $\overline
L$, then according to Reed-Simon \cite{reed}, p. 137, it is enough
to prove that the eigenvalues of the adjoint $L^*$ are negative. We
thus have to show that if $L^* f=\lambda f$ with $\lambda >0$, then
$f=0$. From the hypoellipticity of $L$, we first deduce that $f$ has
to be a smooth function. Now, for $h \in C^\infty_0(\bM)$,
\begin{align*}
< \Gamma( f, h^2f) >=-<f, L(h^2f)>=-<L^*f ,h^2f>=-\lambda <f,h^2f>=-\lambda <f^2,h^2> \le 0.
\end{align*}
Since
\[
 \Gamma( f, h^2f)=h^2 \Gamma (f,f)+2 fh \Gamma(f,h),
 \]
 we deduce that
 \[
 <h^2, \Gamma (f,f)>+2 <fh, \Gamma(f,h)> \le 0.
 \]
 Therefore, by Schwarz inequality
 \[
 <h^2, \Gamma (f,f)> \le 4 \| f |_2^2 \| \Gamma (h,h) \|_\infty.
 \]
 If we now use the sequence $h_n$ constructed in Lemma \ref{L:exhaustion} and let $n \to \infty$, we obtain $\Gamma(f,f)=0$ and therefore $f=0$, as desired.
\end{proof}

If $L=-\int_0^{+\infty} \lambda dE_\lambda$ denotes the spectral
decomposition of $L$ in $L^2 (\bM,\mu)$, then by definition, the
heat semigroup $(P_t)_{t \ge 0}$ is given by $P_t= \int_0^{+\infty}
e^{-\lambda t} dE_\lambda$. It is a family of bounded operators on
$L^2 (\bM,\mu)$. Since the quadratic form $-<f,Lf>$ is a Dirichlet
form in the sense of Fukushima \cite{Fu}, we deduce from the above
result that $C^\infty_0(\bM)$ is dense in the domain of $P_t$ and
that $(P_t)_{t \ge 0}$ is a sub-Markov semigroup: it transforms
positive functions into positive functions and satisfies
\begin{equation}\label{submarkov}
P_t 1 \le 1.
\end{equation}
This property implies in particular \begin{equation}\label{sminfty}
||P_tf||_{L^1(\bM)} \le ||f||_{L^1(\bM)},\ \ \
||P_tf||_{L^\infty(\bM)} \le ||f||_{L^\infty(\bM)},
\end{equation}
and therefore by the Theorem of Riesz-Thorin
\begin{equation}\label{smp}
||P_tf||_{L^p(\bM)} \le ||f||_{L^p(\bM)},\ \  1\le p\le \infty.
\end{equation}
Moreover, it can be shown as in \cite{strichartz1}, Theorem 3.9:

\begin{proposition}\label{uniquenessLp}
The unique solution of the Cauchy problem
\[
\begin{cases}
\frac{\p u}{\p t} - Lu = 0,
\\
u(x,0) = f(x),\ \ \ \   f\in L^p(\bM),1<p<+\infty,
\end{cases}
\]
that satisfies $\| u(\cdot,t) \|_p \le C e^{Mt}$, for some constants $C$ and $M$, is given by $u(x,t)=P_t f(x)$.
\end{proposition}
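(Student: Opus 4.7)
By linearity it suffices to treat the homogeneous case: if $u$ is a classical solution of $\partial_t u = Lu$ on $\bM \times [0,T]$ with $u(\cdot,0)\equiv 0$ and $\|u(\cdot,s)\|_p \le C e^{Ms}$, I aim to show $u\equiv 0$. By hypoellipticity of $L$ together with $\partial_t-L$, $u$ is automatically $C^\infty$ in $x$ and smooth in $t$, so all pointwise manipulations below make sense.

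Fix $t\in(0,T]$ and an arbitrary test function $g\in C^\infty_0(\bM)$. The plan is to consider the scalar function
\[
\phi(s) \;=\; \int_{\bM} u(x,s)\, (P_{t-s}g)(x)\, d\mu(x), \qquad s\in[0,t],
\]
and show that $\phi(0)=\phi(t)$. Note $\phi$ is well defined because $g\in L^1\cap L^\infty$, hence by \eqref{smp} one has $P_{t-s}g\in L^{p'}(\bM)$ with $\|P_{t-s}g\|_{p'}\le \|g\|_{p'}$, while $u(\cdot,s)\in L^p$ by assumption. Differentiating formally and using $\partial_s P_{t-s}g = -L P_{t-s}g$ together with the equation for $u$ gives
\[
\phi'(s) \;=\; \int_{\bM}\!\bigl(Lu\bigr)(P_{t-s}g)\,d\mu \;-\; \int_{\bM} u\, L(P_{t-s}g)\,d\mu,
\]
which would vanish if $L$ were symmetric on this pair. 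Once $\phi'\equiv 0$ is established, $\phi(0)=\phi(t)$ yields $\int u(x,t)g(x)\,d\mu=0$, and the density of $C^\infty_0(\bM)$ in $L^{p'}(\bM)$ forces $u(\cdot,t)\equiv 0$.

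To justify the vanishing of $\phi'(s)$, the strategy is to insert the exhaustion cutoffs $h_n$ from Lemma \ref{L:exhaustion}. For each fixed $s$ the identity
\[
\int_\bM (Lu)\,(P_{t-s}g)\,h_n\,d\mu - \int_\bM u\,L(P_{t-s}g)\,h_n\,d\mu
\;=\; \int_\bM\!\bigl(u\,\Gamma(P_{t-s}g,h_n)-(P_{t-s}g)\,\Gamma(u,h_n)\bigr)\,d\mu
\]
follows from Green's identity applied to the compactly supported functions $h_n$, $u\,h_n$, $(P_{t-s}g)\,h_n$ combined with the realization $L=-\sum_i X_i^* X_i$ from \eqref{subL}. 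The left-hand side converges as $n\to\infty$ to $\phi'(s)$ by dominated convergence, using $|Lu|,\,|u|\in L^p$ locally and $P_{t-s}g,\,LP_{t-s}g\in L^{p'}$. For the right-hand side, Cauchy-Schwarz gives
\[
\Bigl|\!\int u\,\Gamma(P_{t-s}g,h_n)\,d\mu\Bigr| \le \|u(\cdot,s)\|_p\, \|\sqrt{\Gamma(P_{t-s}g,P_{t-s}g)}\|_{p'}\,\|\nabla_R h_n\|_\infty,
\]
and similarly for the other term with the roles exchanged, so both vanish in the limit provided the relevant horizontal gradients are in the correct Lebesgue space. The main obstacle is therefore this gradient control: I would use the $L^2$ energy estimate $\int \Gamma(P_\tau g,P_\tau g)\,d\mu \le \|g\|_2^2/(2\tau)$ (from spectral theory applied to the self-adjoint $\overline{L}$), interpolated with the uniform bound $\|P_{t-s}g\|_\infty\le\|g\|_\infty$, to get $\sqrt{\Gamma(P_{t-s}g,P_{t-s}g)}\in L^{p'}$ with controlled norm. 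A parallel bound for $\Gamma(u,u)$ follows by multiplying the equation $\partial_s u=Lu$ by $u\,h_n^2$, integrating, and using the growth hypothesis to show $u(\cdot,s)$ lies in the form domain locally. Together with $\|\nabla_R h_n\|_\infty\to 0$ this produces $\phi'\equiv 0$, completing the proof.
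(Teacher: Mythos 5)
The paper does not prove this statement directly but refers to Strichartz's Theorem 3.9, and the argument there is genuinely different from your plan. Strichartz's route is via the Laplace transform: setting $w(t)=u(t)-P_tf$ (so $w(0)=0$ and $\|w(t)\|_p\le C'e^{Mt}$), one forms $\hat w(\lambda)=\int_0^\infty e^{-\lambda t}w(t)\,dt\in L^p$ for $\lambda>M$ and checks $(\lambda-L)\hat w(\lambda)=w(0)=0$. The uniqueness then reduces to the injectivity of $\lambda-L$ on $L^p(\bM)$ for $1<p<\infty$, which on a complete manifold follows by multiplying $(\lambda-L)g=0$ by $|g|^{p-2}g\,h_n^2$, integrating, and absorbing the cross term $|g|^{p-2}g\,\Gamma(g,h_n)$ into the good term $(p-1)|g|^{p-2}\Gamma(g)h_n^2$ by Young's inequality. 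This absorption works precisely because the two arguments of $\Gamma$ involve the \emph{same} unknown $g$; no a priori bound on $\sqrt{\Gamma(g)}$ or $\sqrt{\Gamma(P_\tau g)}$ is needed.

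Your proposal runs into trouble exactly where this absorption is unavailable. In the dual pairing $\phi(s)=\int u(x,s)\,(P_{t-s}g)(x)\,d\mu(x)$ the cutoff error is $\int u\,\Gamma(P_{t-s}g,h_n)\,d\mu$, and since $u$ and $P_{t-s}g$ are independent functions there is nothing to absorb into; you are forced to estimate $\|\sqrt{\Gamma(P_{t-s}g)}\|_{L^{p'}}$ directly. The ``interpolation'' you invoke is not legitimate as stated: you have an $L^2$ bound on the \emph{gradient} of $P_\tau g$ (from spectral theory) and an $L^\infty$ bound on the \emph{function} $P_\tau g$ (from sub-Markovianity), but these are bounds on different operators and Riesz--Thorin does not combine them into an $L^{p'}$ bound on $\sqrt{\Gamma(P_\tau g)}$ — in particular nothing here controls the $L^{p'}$ norm for $p'<2$, i.e.\ $p>2$. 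To close this gap you would either need a pointwise gradient estimate of the form $\Gamma(P_\tau g)\le e^{-\nu\tau}\bigl(P_\tau\Gamma(g)+P_\tau\Gamma^Z(g)\bigr)$, which the paper only establishes later (Corollary \ref{C:expdecay}) and which relies on the curvature-dimension inequality, or you would need Gaussian decay of the heat kernel, also proved later. Strichartz's argument requires neither, relying only on completeness and the exhaustion sequence $h_n$; that is why it is the cleaner route and why the paper adopts it.
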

Due to the hypoellipticity of $L$, $(t,x) \rightarrow P_t f(x)$ is
smooth on $\mathbb{M}\times (0,\infty) $ and
\[ P_t f(x)  = \int_{\mathbb M} p(x,y,t) f(y) d\mu(y),\ \ \ f\in
C^\infty_0(\mathbb M),\] where $p(x,y,t) > 0$ is the so-called heat
kernel associated to $P_t$. Such function is smooth outside the
diagonal of $\bM\times \bM$, and it is symmetric, i.e., \[ p(x,y,t)
= p(y,x,t). \]
 By the
semi-group property for every $x,y\in \bM$ and $0<s,t$ we have
\begin{equation}\label{sgp}
p(x,y,t+s) = \int_\bM p(x,z,t) p(z,y,s) d\mu(z) = \int_\bM p(x,z,t)
p(y,z,s) d\mu(z) = P_s(p(x,\cdot,t))(y).
\end{equation}

\subsection{A variational inequality}\label{S:vi}

The goal of this section is to establish a variational inequality
which is the cornerstone of all the gradient estimates we shall
obtain in the sequel. Henceforth, to simplify the notation we let \[
\Gamma(f) = \Gamma(f,f), \ \ \ \ f\in C^1(\mathbb M). \] For the
sequel it will be convenient to observe that if $\phi\in C^2(\mathbb
R)$, $f\in C^2(\mathbb M)$, then
\begin{equation}\label{comp}
L(\phi\circ f) = \phi''(f) \Gamma(f) +  \phi'(f) Lf.
\end{equation}
In particular, if $f\in C^\infty_0(\bM)$ and $f\geq 0$, then for any
fixed $T>0$ and $t<T$, we obtain from \eqref{comp}
\begin{equation}\label{complog}
\frac{L P_{T-t} f}{P_{T-t} f} = L(\log P_{T-t} f) + \Gamma(\log
P_{T-t} f).
\end{equation}

For such $f$'s we now introduce the two functionals
\[
\Phi_1 (t)=P_t \left( (P_{T-t} f) \Gamma (\ln P_{T-t}f) \right),
\]
\[
\Phi_2 (t)=P_t \left( (P_{T-t} f) \Gamma^Z (\ln P_{T-t}f) \right).
\]
Notice that for every $T>0$ the function $u(x,t) = P_{T-t} f(x)$
satisfies the backward Cauchy problem
\[
\begin{cases}  Lu + \frac{\p
u}{\p t} = 0 \ \ \ u\ge 0\ \text{in}\ M\times (-\infty,T), \\
u(x,T) = f(x). \end{cases} \] It follows that
\begin{equation}\label{phis}
\begin{cases}
\Phi_1(0) = P_T(f)\Gamma(\ln P_T f),\ \ \ \ \Phi_2(0) =
P_T(f)\Gamma^Z(\ln P_T f),
\\
\Phi_1(T) = P_T(f \Gamma(\ln f)), \ \ \ \ \Phi_2(T) = P_T(f
\Gamma^Z(\ln f)).
\end{cases}
\end{equation}

\begin{lemma}\label{L:derivatives}
We have
\[
\Phi'_1 (t)=2P_t \left( (P_{T-t} f) \Gamma_2 (\ln P_{T-t}f) \right),
\]
and
\[
\Phi'_2 (t)=2P_t \left( (P_{T-t} f) \Gamma_2^Z (\ln P_{T-t}f) \right).
\]
\end{lemma}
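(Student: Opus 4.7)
The plan is to differentiate $\Phi_1$ and $\Phi_2$ by combining the commutation relation $\frac{d}{dt} P_t h(\cdot,t) = P_t(Lh+\partial_t h)$ with a pointwise computation of $L h+\partial_t h$ for $h = u \Gamma(\ln u)$ and $h = u \Gamma^Z(\ln u)$, where $u(x,t) = P_{T-t}f(x)$. The key auxiliary facts are: $\partial_t u = -Lu$; the chain rule $\Gamma(u, \varphi) = u\, \Gamma(F, \varphi)$ for $u = e^F$, and similarly for $L(u\varphi)$ via the standard product formula $L(fg) = fLg + gLf + 2\Gamma(f,g)$; the identity $\partial_t F = -(LF+\Gamma(F))$, which follows from \eqref{complog}; and the defining identity $L\Gamma(F) = 2\Gamma_2(F) + 2\Gamma(F,LF)$ (and its vertical analogue).

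For $\Phi_1$, first I would compute
\[
\partial_t\bigl(u\Gamma(F)\bigr) = -Lu\,\Gamma(F) - 2u\,\Gamma(F,LF) - 2u\,\Gamma(F,\Gamma(F)),
\]
and
\[
L\bigl(u\Gamma(F)\bigr) = 2u\,\Gamma_2(F) + 2u\,\Gamma(F,LF) + \Gamma(F)\,Lu + 2u\,\Gamma(F,\Gamma(F)).
\]
All terms except $2u\,\Gamma_2(F)$ cancel in pairs, yielding $L h+\partial_t h = 2u\,\Gamma_2(F)$. Applying $P_t$ gives the claim for $\Phi_1'$. This part is essentially algebraic and parallels the Riemannian computation.

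For $\Phi_2$, the analogous calculation with $h = u\,\Gamma^Z(F)$ produces
\[
Lh + \partial_t h = 2u\,\Gamma_2^Z(F) + 2u\bigl[\Gamma(F,\Gamma^Z(F)) - \Gamma^Z(F,\Gamma(F))\bigr].
\]
The main obstacle is to show that the bracketed term vanishes identically: this is not a formal cancellation but genuinely uses the structural assumption \eqref{deltas}. Expanding by the definitions of $\Gamma$ and $\Gamma^Z$, one obtains
\[
\Gamma(F,\Gamma^Z(F)) - \Gamma^Z(F,\Gamma(F)) = 2\sum_{i,m,n} (X_i F)(Z_{mn}F)\bigl[X_i,Z_{mn}\bigr]F,
\]
and using \eqref{bra2} this becomes $2\sum_{i,\ell,m,n}\delta_{imn}^{\ell}\,(X_i F)(X_\ell F)(Z_{mn}F)$. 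The tensor $(X_iF)(X_\ell F)$ is symmetric in $(i,\ell)$, while by \eqref{deltas} the tensor $\delta_{imn}^{\ell}$ is skew-symmetric in $(i,\ell)$; a relabeling of indices then forces the sum to equal its own negative, hence to vanish.

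Once this cancellation is in hand, the computation for $\Phi_2$ reduces to $Lh+\partial_t h = 2u\,\Gamma_2^Z(F)$, and applying $P_t$ yields the second formula. In short, the proof is a careful chain-rule calculation; the only genuinely nontrivial point is the cancellation of the cross-term $\Gamma(F,\Gamma^Z(F)) - \Gamma^Z(F,\Gamma(F))$, which is precisely where the ubiquitous hypothesis \eqref{deltas} enters.
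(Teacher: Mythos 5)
Your argument is correct, and it reaches the same essential cancellation as the paper, but it is organized differently and is arguably cleaner for $\Phi_1$. The paper introduces a general functional $\Phi_V(t)=P_t\left((P_{T-t}f)(V\ln P_{T-t}f)^2\right)$ for a single vector field $V$, computes its derivative (equation \eqref{phiV}) and finds an extra commutator term $-4P_t\bigl((VP_{T-t}f)\sum_i(X_i\ln P_{T-t}f)([V,X_i]\ln P_{T-t}f)\bigr)$, which then has to be shown to vanish upon summing over $V=X_j$ (skew-symmetry of $[X_j,X_i]$) or over $V=Z_{mn}$ (assumption \eqref{deltas}). You instead differentiate $P_t\bigl(u\Gamma(F)\bigr)$ and $P_t\bigl(u\Gamma^Z(F)\bigr)$ directly with $u=P_{T-t}f$, $F=\ln u$, using $\partial_t u=-Lu$, $\partial_t F=-(LF+\Gamma(F))$ from \eqref{complog}, the product rule for $L$, and $\Gamma(u,\cdot)=u\,\Gamma(F,\cdot)$. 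For $\Phi_1$ the cancellation is then purely formal (no skew-symmetry needed), because $\Gamma$ already sums over the horizontal frame. For $\Phi_2$ the residual cross term $\Gamma(F,\Gamma^Z(F))-\Gamma^Z(F,\Gamma(F))=2\sum_{i,\ell,m,n}\delta^\ell_{imn}(X_iF)(X_\ell F)(Z_{mn}F)$ is exactly the same obstruction the paper meets, and your relabeling argument using the skew-symmetry in \eqref{deltas} kills it in the same way. What the paper's route buys is a single reusable computation that covers both $\Phi_1$ and $\Phi_2$ and makes visible where the commutator obstruction lives for an arbitrary $V$; what your route buys is a shorter, more transparent derivation of $\Phi_1'$ and a one-line identification of the obstruction term for $\Phi_2'$ as a bracket $[X_i,Z_{mn}]$.
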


\begin{proof}
Let $V$ be a smooth vector field on $\mathbb{M}$ and let $\Phi_V$ be the functional
\[
\Phi_V(t)=P_t \left( (P_{T-t} f) (V \ln P_{T-t} f)^2 \right).
\]
We have
\begin{align*}
\Phi'_V(t) & = P_t \left( L ( (P_{T-t} f) (V \ln P_{T-t} f)^2)
\right)-P_t \left( (LP_{T-t} f) (V \ln P_{T-t} f)^2 \right) \\
& -2P_t \left( (VP_{T-t} f) V\left( \frac{LP_{T-t} f}{P_{T-t} f}
\right)\right).
\end{align*}
We now compute
\begin{align*}
& P_t \left( L ( (P_{T-t} f) (V \ln P_{T-t} f)^2) \right)  = P_t
\left( L (P_{T-t} f )(V\ln P_{T-t}f )^2\right) + P_t \left( (P_{T-t}f) L (V\ln P_{T-t}f )^2 \right)
\\
& +2 P_t \left( \Gamma( P_{T-t}f, (V\ln P_{T-t} f)^2 )\right)
\\
& = P_t \left( L (P_{T-t} f )(V\ln P_{T-t}f )^2\right)+2 P_t \left( (P_{T-t}f) (V \ln P_{T-t} f ) (LV \ln P_{T-t} f) \right) \\
& + 2 P_t \left( (P_{T-t}f) \Gamma(V\ln P_{T-t}f , V\ln P_{T-t} f )
\right)+4 P_t \left( V\ln P_{T-t} f\Gamma( P_{T-t}f, V\ln P_{T-t} f)
)\right).
\end{align*}
By taking into account \eqref{complog} we obtain
\begin{align*}
\Phi'_V(t)= & 2 P_t \left( (VP_{T-t} f) [L,V] \ln P_{T-t} f \right) +2 P_t \left( (P_{T-t}f)\Gamma(V\ln P_{T-t}f , V\ln P_{T-t} f ) \right) \\
 &+4P_t \left( V\ln P_{T-t} f\Gamma( P_{T-t}f, V\ln P_{T-t} f) )\right)-2P_t \left((VP_{T-t} f) V\Gamma( \ln P_{T-t} f, \ln P_{T-t}f) \right).
\end{align*}
We now observe that
\[
 V\Gamma( \ln P_{T-t} f, \ln P_{T-t}f) =2\Gamma ( \ln P_{T-t}f, Z\ln P_{T-t}f)+2\sum_{i=1}^d (X_i \ln P_{T-t} f) ( [Z,X_i]\ln P_{T-t} f).
 \]
 Thus,
\begin{align}\label{phiV}
\Phi'_V(t)& = 2P_t \left( (P_{T-t} f) \Gamma_2^V (\ln P_{T-t}f,\ln
P_{T-t}f ) \right) \\
& - 4 P_t \left((VP_{T-t}f) \sum_{i=1}^d (X_i \ln P_{T-t} f) (
[V,X_i]\ln P_{T-t} f) \right), \notag \end{align} where we have
defined
 \[
\Gamma^V_{2}(f,g) = \frac{1}{2}(L((Vf)(Vg)) - VfVLg-VgVLf.
\]
We first apply \eqref{phiV} with $V=X_j$, and sum in $j=1,...,d$ to
obtain
\[
\Phi'_1 (t)=2P_t \left( (P_{T-t} f) \Gamma_2 (\ln P_{T-t}f) \right).
\]
Here, we have used to skew-symmetry of $[X_i,X_j]$, which gives
\[
\sum_{j=1}^d (X_jP_{T-t}f) \sum_{i=1}^d (X_i \ln P_{T-t} f) ( [X_j,X_i]\ln P_{T-t} f)=0.
\]
We next apply \eqref{phiV} with $V=Z_{mn}$ and sum in
$m,n=1,...,\di$, obtaining
\begin{align*}
\Phi'_2 (t) & = 2 P_t \left( (P_{T-t} f) \Gamma_2^Z (\ln P_{T-t}f)
\right) \\
& - 4\sum_{m,n=1}^\di P_t \left( (Z_{mn}P_{T-t}f) \sum_{i=1}^d (X_i
\ln P_{T-t} f) ( [Z_{mn},X_i]\ln P_{T-t} f) \right).
\end{align*}
We now observe that, thanks to the crucial assumption
\eqref{deltas}, one has
\begin{align*}
  & \sum_{m,n=1}^\di  (Z_{mn}P_{T-t}f) \sum_{i=1}^d (X_i \ln P_{T-t} f) ( [Z_{mn},X_i]\ln P_{T-t} f)\\
 =&- \sum_{m,n=1}^\di  (Z_{mn}P_{T-t}f) \sum_{i,\ee=1}^d \delta_{imn}^\ee (X_i \ln P_{T-t} f) ( X_\ee \ln P_{T-t} f)=0.
\end{align*}
Therefore,
\[
\Phi'_2 (t)=2P_t \left( (P_{T-t} f) \Gamma_2^Z (\ln P_{T-t}f) \right).
\]
\end{proof}

\begin{proposition}\label{variational}
Let $b$ be a smooth, positive and decreasing function on the time
interval $[0,T]$. On $\bM\times [0,T]$, we have
\[
\left(- \frac{b'}{2\rho_2} \Phi_1 +b \Phi_2 \right)' \ge  -\frac{2b' \gamma}{d\rho_2} LP_Tf + \frac{b'  \gamma^2}{d\rho_2}  P_T f,
\]
where
\[
\gamma=\frac{d}{4} \left( \frac{b''}{b'} +\frac{\kappa}{\rho_2} \frac{b'}{b} +2\rho_1 \right).
\]
\end{proposition}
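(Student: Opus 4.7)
The plan is to differentiate the combination $-\frac{b'}{2\rho_2}\Phi_1+b\Phi_2$ using Lemma \ref{L:derivatives}, then estimate the resulting $\Gamma_2$ and $\Gamma_2^Z$ terms by means of the curvature-dimension inequality \eqref{CD} for a carefully chosen $\nu$, and finally absorb the leftover $(Lg)^2$ term with an elementary quadratic bound. The function $\gamma$ in the statement will arise precisely as the multiplier that makes the coefficient of $\Phi_1$ vanish in the resulting algebraic identity. Throughout, set $g=\ln P_{T-t}f$ for $f\in C^\infty_0(\bM)$, $f\ge 0$.

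First I would compute, using Lemma \ref{L:derivatives},
\[
\left(-\tfrac{b'}{2\rho_2}\Phi_1+b\Phi_2\right)'=-\tfrac{b''}{2\rho_2}\Phi_1+b'\Phi_2+P_t\!\left[(P_{T-t}f)\Big(-\tfrac{b'}{\rho_2}\Gamma_2(g)+2b\,\Gamma_2^Z(g)\Big)\right].
\]
Since $b>0$ and $b'<0$, the quantity $\nu:=-\tfrac{2b\rho_2}{b'}$ is positive, and the bracket inside the $P_t$ equals $-\tfrac{b'}{\rho_2}\bigl(\Gamma_2(g)+\nu\,\Gamma_2^Z(g)\bigr)$. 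Applying \eqref{CD} to $g$ with this $\nu$ yields the pointwise bound
\[
-\tfrac{b'}{\rho_2}\Gamma_2(g)+2b\,\Gamma_2^Z(g)\;\ge\;-\tfrac{b'}{d\rho_2}(Lg)^2+\Bigl(-\tfrac{b'\rho_1}{\rho_2}-\tfrac{\kappa(b')^2}{2b\rho_2^{\,2}}\Bigr)\Gamma(g)-b'\,\Gamma^Z(g).
\]

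Next, I would dispose of the $(Lg)^2$ term by the elementary inequality $(Lg)^2\ge 2\gamma\,Lg-\gamma^2$ valid for any real $\gamma$. Using \eqref{complog} in the form $(P_{T-t}f)Lg=LP_{T-t}f-(P_{T-t}f)\Gamma(g)$, applying $P_t$, and using the commutation $P_t L=LP_t$ (valid for the self-adjoint semigroup and giving $P_t(LP_{T-t}f)=LP_Tf$), one gets
\[
P_t\bigl((P_{T-t}f)(Lg)^2\bigr)\ge 2\gamma\bigl(LP_Tf-\Phi_1\bigr)-\gamma^2 P_Tf.
\]
Inserting everything back, recalling that $-b'/(d\rho_2)>0$, and grouping terms produces
\[
\left(-\tfrac{b'}{2\rho_2}\Phi_1+b\Phi_2\right)'\ge -\tfrac{2b'\gamma}{d\rho_2}LP_Tf+\tfrac{b'\gamma^2}{d\rho_2}P_Tf+C_1\Phi_1+C_2\Phi_2,
\]
where $C_2=-b'+b'=0$ automatically, and where
\[
C_1=-\tfrac{b''}{2\rho_2}+\tfrac{2b'\gamma}{d\rho_2}-\tfrac{b'\rho_1}{\rho_2}-\tfrac{\kappa(b')^2}{2b\rho_2^{\,2}}.
\]

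Finally, the only thing to check is that the specific choice of $\gamma$ in the statement forces $C_1=0$. Multiplying the equation $C_1=0$ by $-2\rho_2/b'$ (which is positive) gives exactly $\tfrac{4\gamma}{d}=\tfrac{b''}{b'}+\tfrac{\kappa}{\rho_2}\tfrac{b'}{b}+2\rho_1$, i.e. the formula for $\gamma$ in the proposition. I do not expect any real obstacle beyond careful bookkeeping; the main subtle point is the sign tracking (all of $b'<0$, $-b'/(d\rho_2)>0$, $\nu>0$), which makes the CD inequality usable with the indicated choice of $\nu$, and the use of Lemma \ref{L:comm} implicitly through $P_tLP_{T-t}f=LP_Tf$ to convert time-integrated quantities into expressions involving only $P_Tf$ and $LP_Tf$.
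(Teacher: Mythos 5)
Your proposal is correct and follows essentially the same route as the paper: apply Lemma \ref{L:derivatives}, invoke \eqref{CD} with $\nu=b/a$ where $a=-b'/(2\rho_2)$, bound $(Lg)^2$ from below by $2\gamma Lg-\gamma^2$, and choose $\gamma$ to annihilate the $\Phi_1$ coefficient. The only cosmetic difference is that the paper keeps $a$ and $b$ as two free positive functions and imposes the constraint $b'+2\rho_2 a=0$ at the end, whereas you substitute $a=-b'/(2\rho_2)$ from the outset; the algebra and the key steps are identical.
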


\begin{proof}
To prove this result we apply the sub-Riemannian curvature-dimension
inequality (\ref{CD}), in combination with Lemma
\ref{L:derivatives}. If $a$ and $b$ are positive functions we thus
obtain
\begin{align*}
(a \Phi_1 +b \Phi_2)'  & \ge \left(a'+2\rho_1 a -2\kappa \frac{a^2}{b} \right)\Phi_1 +(b'+2\rho_2 a) \Phi_2 +\frac{2a}{d} \left( P_t ( (P_{T-t} f) (L \ln P_{T-t} f )^2 )\right)
\end{align*}
But, for every $\gamma \in \mathbb{R}$,
\[
(L \ln P_{T-t} f )^2 \ge 2\gamma L \ln P_{T-t}f -\gamma^2,
\]
and
\[
 L \ln P_{T-t}f=\frac{LP_{T-t}f}{P_{T-t}f} -\Gamma(\ln P_{T-t} f ).
 \]
Therefore,
 \begin{align*}
(a \Phi_1 +b \Phi_2)'  & \ge \left(a'+2\rho_1 a -2\kappa \frac{a^2}{b}-4\frac{a\gamma}{d} \right)\Phi_1 +(b'+2\rho_2 a) \Phi_2 +\frac{4a\gamma}{d} LP_{T} f -2 \frac{a\gamma^2}{d} P_T f.
\end{align*}
By taking $a,b,\gamma$ such that
\[
a'+2\rho_1 a -2\kappa \frac{a^2}{b} -\frac{4}{d} \gamma a =0,
\]
\[
b'+2\rho_2 a=0,
\]
we obtain the desired conclusion.
\end{proof}

\subsection{Li-Yau type estimates}

In this section, we extend the celebrated Li-Yau inequality in
\cite{LY} to the heat semigroup associated with the subelliptic
operator $L$. Let us mention that, in this setting, related
inequalities were obtained by Cao-Yau \cite{Cao-Yau}. However, these
authors work only in the case of a compact manifold and do not base
their study on the analysis of a tensor like $\mathcal{R}$. As a
consequence they do not obtain a control of the constants in terms
of the geometry of the manifold.

\begin{proposition}[Gradient estimate]\label{P:ge}
Assume that \eqref{geombounds} hold. Let $f\in C^\infty_0(\mathbb
M)$, with $f\geq 0$, then the following inequality holds for $t>0$:
\[
\Gamma (\ln P_t f) +\frac{2 \rho_2}{3}  t \Gamma^Z (\ln P_t f) \le
\left(1+\frac{3\kappa}{2\rho_2}-\frac{2\rho_1}{3} t\right)
\frac{LP_t f}{P_t f} +\frac{d\rho_1^2}{6} t-\frac{\rho_1 d}{2}\left(
1+\frac{3\kappa}{2\rho_2}\right) +\frac{d\left(
1+\frac{3\kappa}{2\rho_2}\right)^2}{2t}.
\]
\end{proposition}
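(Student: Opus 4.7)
The strategy is to integrate the variational inequality of Proposition \ref{variational} over $[0,T]$ and make a judicious choice of the function $b$ so that the boundary terms at $t=T$ vanish while the boundary term at $t=0$ reproduces, up to the positive factor $P_Tf$, the left-hand side of the claimed inequality. Set
\[
\Psi(t) = -\frac{b'(t)}{2\rho_2}\Phi_1(t) + b(t)\Phi_2(t).
\]
Integrating Proposition \ref{variational} gives
\[
\Psi(0) \le \Psi(T) + \frac{1}{d\rho_2}\left(\int_0^T 2b'(t)\gamma(t)\,dt\right)LP_T f - \frac{P_T f}{d\rho_2}\int_0^T b'(t)\gamma(t)^2\,dt.
\]
Reading the coefficients of $\Gamma(\ln P_T f)$ and $\Gamma^Z(\ln P_T f)$ in $\Psi(0)$, and comparing with the target, I want $b'(0)=-2\rho_2$, $b(0)=\frac{2\rho_2 T}{3}$, and, to kill $\Psi(T)$ without any appeal to $\Gamma(\ln f)$ or $\Gamma^Z(\ln f)$, also $b(T)=0$ and $b'(T)=0$.

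The unique cubic satisfying these four constraints is
\[
b(t) = \frac{2\rho_2}{3T^2}(T-t)^3,
\]
which is positive and strictly decreasing on $[0,T)$, as required by Proposition \ref{variational}. With this choice,
\[
\frac{b''}{b'} = -\frac{2}{T-t},\qquad \frac{b'}{b} = -\frac{3}{T-t},
\]
so that
\[
\gamma(t) = \frac{d}{4}\left(2\rho_1 - \frac{c}{T-t}\right),\qquad c := 2 + \frac{3\kappa}{\rho_2} = 2\Bigl(1 + \tfrac{3\kappa}{2\rho_2}\Bigr).
\]
Note that although $\gamma$ has a simple pole at $t=T$, the weight $b'(t)=-\frac{2\rho_2}{T^2}(T-t)^2$ vanishes to order two there, so the products $b'\gamma$ and $b'\gamma^2$ remain integrable on $[0,T]$; this is precisely what justifies taking $\Psi(T)=0$.

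Next I would carry out the two one-variable integrals. A direct expansion gives
\[
\frac{1}{d\rho_2}\int_0^T 2b'(t)\gamma(t)\,dt = \frac{c}{2} - \frac{2\rho_1 T}{3} = 1 + \frac{3\kappa}{2\rho_2} - \frac{2\rho_1 T}{3},
\]
and
\[
-\frac{1}{d\rho_2}\int_0^T b'(t)\gamma(t)^2\,dt = \frac{dc^2}{8T} - \frac{dc\rho_1}{4} + \frac{d\rho_1^2 T}{6} = \frac{d\bigl(1+\tfrac{3\kappa}{2\rho_2}\bigr)^2}{2T} - \frac{d\rho_1}{2}\Bigl(1+\tfrac{3\kappa}{2\rho_2}\Bigr) + \frac{d\rho_1^2 T}{6},
\]
which exactly match, term by term, the coefficients on the right-hand side of the proposition. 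Dividing the resulting inequality by $P_T f$ (strictly positive for $T>0$ by positivity of the heat kernel) and relabeling $T$ as $t$ yields the statement.

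The main obstacle is not conceptual but technical: the quantities $\Phi_1(T)$ and $\Phi_2(T)$ formally involve $\Gamma(\ln f)$ and $\Gamma^Z(\ln f)$, which are ill-defined where $f$ vanishes. Since the weights $-b'(T)/(2\rho_2)$ and $b(T)$ are both zero, these boundary terms should drop out, but to make this rigorous one works with $f_\varepsilon := f + \varepsilon$ (so that $\ln f_\varepsilon$ is smooth on $\bM$), runs the whole integration argument for $f_\varepsilon$, and passes to the limit $\varepsilon \downarrow 0$ using $P_t f_\varepsilon = P_t f + \varepsilon$ (from \eqref{submarkov}) together with dominated convergence. A secondary point is that the pointwise differential inequality of Proposition \ref{variational} must be integrated in $t$ at a fixed $x \in \bM$, so one should remark that $\Psi(\cdot,x)$ is $C^1$ on $[0,T]$ thanks to the smoothness of $(t,x)\mapsto P_t f(x)$ on $\bM\times(0,\infty)$.
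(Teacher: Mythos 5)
Your proof is correct and follows the same route as the paper: integrate Proposition~\ref{variational} over $[0,T]$ with the cubic $b(t)\propto(T-t)^3$ so that $b(T)=b'(T)=0$ kills the boundary term at $T$, and read off the coefficients of $\Gamma(\ln P_T f)$ and $\Gamma^Z(\ln P_T f)$ at $t=0$. Your normalization $b(t)=\frac{2\rho_2}{3T^2}(T-t)^3$ differs from the paper's $b(t)=(T-t)^3$ only by a positive constant factor, which is immaterial since Proposition~\ref{variational} is homogeneous of degree one in $b$ (the quantity $\gamma$ depends only on $b''/b'$ and $b'/b$); your integrals then match the paper's after rescaling, and the regularization $f_\varepsilon=f+\varepsilon$ you invoke to make $\Phi_1(T),\Phi_2(T)$ well-defined is a sound (indeed tacitly necessary) supplement to the paper's argument.
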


\begin{proof}
If we apply Proposition \ref{variational} with $b(t)=(T-t)^3$, we
obtain:
\[
\gamma (t)=\frac{d}{4} \left( \frac{ 2\rho_1 (T-t)
-\frac{3\kappa}{\rho_2} -2 }{T-t } \right) = \frac{d}{2}\left(\rho_1
 - \frac{1}{T-t}\left(1 + \frac{3\kappa}{2\rho_2}\right)\right),
\]
\[
\int_0^T b'(t) \gamma(t) dt =-\frac{\rho_1 d}{2} T^3 +\frac{3d}{4} \left( 1+ \frac{3\kappa}{2\rho_2}\right) T^2,
\]
and
\[
\int_0^T b'(t) \gamma(t)^2 dt =-\frac{3 d^2}{16} \left(
\frac{4\rho^2_1 }{3} T^3+ 4 \left( 1+ \frac{3\kappa}{2\rho_2}\right)
^2 T -4\rho_1\left( 1+ \frac{3\kappa}{2\rho_2}\right) T^2 \right).
\]
The result easily follows.
\end{proof}

\begin{remark}\label{R:rho1zero}
When $\rho_1 >0$, then \eqref{geombounds} and \eqref{CD} also hold
in particular with $\rho_1 = 0$. As a consequence of Proposition \ref{P:ge}
\begin{align}\label{liyaupositifzero}
\Gamma (\ln P_t f) +\frac{2 \rho_2}{3}  t \Gamma^Z (\ln P_t f) \le
(1+\frac{3\kappa}{2\rho_2}) \frac{LP_t f}{P_t f} +\frac{d\left(
1+\frac{3\kappa}{2\rho_2}\right)^2}{2t}.
\end{align}
This inequality leads to a optimal Harnack inequality only when
$\rho_1 = 0$ (which corresponds to \emph{Ric}$\ge 0$). Sharper
bounds in the strictly positive curvature case will be obtained in
\eqref{gamma_bound} by a different choice of the function $b(t)$.
\end{remark}

\begin{remark}\label{R:ultrac}
If we set \begin{equation}\label{D} D =  d  \left( 1+
\frac{3\kappa}{2\rho_2}\right), \end{equation} then, as a
consequence of \eqref{liyaupositifzero}, we obtain that in the case
$\rho_1 \ge 0$,
\begin{align}\label{ultracontractivity}
\frac{LP_t f}{P_t f} \ge -\frac{D}{2t}.
\end{align}

The constant $- \frac{D}{2}$ in \eqref{ultracontractivity} is, in
general, not sharp, as the example of the heat semigroup on a graded
nilpotent Lie group shows. In such case, in fact, one can argue as
in \cite{Fo} to show that the heat kernel $p(x,y,t)$ is homogeneous
of degree $-\frac{Q}{2}$ with respect to the parabolic dilations
$(x,t)\to (\delta(\lambda)(x),\lambda^2 t)$, where $\delta(\lambda)$
represent the non-isotropic dilations associated with the grading of
the Lie algebra of $\bM$, and $Q=d+2\di$ indicates the corresponding
homogeneous dimension of $\bM$. From such homogeneity of $p(x,y,t)$,
a scaling argument produces the estimate
\[
\frac{LP_t f}{P_t f} \ge -\frac{Q}{2t},
\]
which, unlike \eqref{ultracontractivity}, is best possible. However,
the estimate  \eqref{ultracontractivity} is sharp in the case of the
Laplace operator in $\mathbb{R}^d$ for which we have $\kappa = 0$.
It seems difficult to obtain sharp geometric constants by using only
the inequality \eqref{CD}. In part this is due to the fact that when
we apply Cauchy-Schwarz inequality in the Bochner formulas, we loose
a piece of information. Another difference is in the presence of the
commutator terms in \eqref{CD}. These aspects are quite different
from the Riemannian case, for which the $CD (d,R)$ inequality
\eqref{curvature_dimension} provides sharp geometric constants (see
\cite{bakry-tata}, \cite{ledoux-zurich}).
\end{remark}

\section{Stochastic completeness of the heat semigroup}

In \cite{Yau2} Yau proved that if $\bM$ is a complete Riemannian
manifold with a Ric $\ge \rho$, with $\rho \in \mathbb{R}$,  then
one has the stochastic completeness of the heat semigroup, i.e. $P_t
1=1$. Under the same assumptions, Dodziuk \cite{dodziuk} proved that
bounded solutions of the heat equation are characterized by their
initial condition.

\

In this section, we extend these theorems of Yau and Dodziuk to our
setting. Throughout this section, we assume that $(\bM,d)$ is
complete and that for every smooth function $f :\mathbb{M}
\rightarrow \mathbb{R}$ and every $\nu >0$,
\begin{equation}\label{CD2}
\Gamma_{2}(f,f)+\nu \Gamma^Z_{2}(f,f) \ge \frac{1}{d} (Lf)^2 + \left( \rho_1 -\frac{\kappa}{\nu}\right)  \Gamma (f,f) + \rho_2 \Gamma^Z (f,f),
\end{equation}
where $\rho_1 \in \mathbb{R}$, whereas $\rho_2,\kappa>0$. As in the
previous section, the idea is to study monotone increasing
functionals of the heat semigroup.

 \begin{proposition}\label{P:monofun}
 Let $f \in C^\infty_0(\mathbb M)$ and $T>0$. If $a,b:[0,T] \rightarrow \mathbb{R}$ are two positive functions such that
 \[
  a'(t)+2\rho_1 a(t)-\frac{2\kappa a(t)^2}{b(t)} \ge 0
  \]
  \[
  b'(t)+2\rho_2 a(t)\ge 0,
\]
then  the functional
 \[
 E(t)=a(t) P_t ( \Gamma (P_{T-t} f)) +b(t) P_t ( \Gamma^Z (P_{T-t} f))
 \]
 is monotone increasing.
 \end{proposition}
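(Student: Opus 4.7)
The plan is to differentiate $E(t)$ in time, reorganize the result so that the sub-Riemannian curvature-dimension inequality \eqref{CD2} can be applied pointwise to $g_t := P_{T-t}f$, and then verify that the two coefficients that remain are precisely the quantities assumed non-negative in the hypothesis.

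First I would compute $E'(t)$ using the product rule and the backward heat equation $\partial_t g_t = -Lg_t$. Setting $h_t = \Gamma(g_t)$ we have
\[
\frac{d}{dt} P_t(h_t) \;=\; P_t(L h_t) + P_t(\partial_t h_t) \;=\; P_t(L\Gamma(g_t)) - 2 P_t(\Gamma(g_t,Lg_t)) \;=\; 2 P_t(\Gamma_2(g_t,g_t)),
\]
where the last equality is just the definition \eqref{gamma2} of $\Gamma_2$. The analogous computation for $\Gamma^Z$ gives
\[
\frac{d}{dt} P_t(\Gamma^Z(g_t)) \;=\; 2 P_t(\Gamma^Z_2(g_t,g_t)),
\]
where one uses Lemma \ref{L:comm} (namely $[L,Z_{mn}]=0$) in the form $Z_{mn}(Lg_t)=L(Z_{mn}g_t)$ to ensure that the cross term reorganizes into the definition \eqref{gamma2Z} of $\Gamma^Z_2$. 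Combining these two identities with $a'$ and $b'$ terms,
\[
E'(t) \;=\; a'(t) P_t(\Gamma(g_t)) + b'(t) P_t(\Gamma^Z(g_t)) + 2a(t)P_t(\Gamma_2(g_t)) + 2b(t) P_t(\Gamma^Z_2(g_t)).
\]

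Next I would apply the curvature dimension inequality \eqref{CD2} to $g_t$ with the choice $\nu = b(t)/a(t) > 0$, obtaining pointwise
\[
\Gamma_2(g_t) + \frac{b(t)}{a(t)} \Gamma^Z_2(g_t) \;\ge\; \frac{1}{d} (Lg_t)^2 + \left(\rho_1 - \frac{\kappa a(t)}{b(t)}\right) \Gamma(g_t) + \rho_2 \Gamma^Z(g_t).
\]
Multiplying by $2a(t) > 0$, inserting into the formula for $E'(t)$, and using the positivity preserving property of $P_t$ to discard the non-negative term $\frac{2a(t)}{d}P_t((Lg_t)^2)$, I obtain
\[
E'(t) \;\ge\; \left(a'(t) + 2\rho_1 a(t) - \frac{2\kappa a(t)^2}{b(t)}\right) P_t(\Gamma(g_t)) + \bigl(b'(t) + 2\rho_2 a(t)\bigr) P_t(\Gamma^Z(g_t)).
\]
By the two hypotheses on $a$ and $b$, both coefficients are non-negative; since $P_t$ preserves non-negativity of its arguments and $\Gamma(g_t),\Gamma^Z(g_t) \ge 0$, we conclude $E'(t) \ge 0$.

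The only real obstacle is the justification of exchanging $d/dt$ with the action of $P_t$ and with the horizontal/vertical derivatives defining $\Gamma$ and $\Gamma^Z$. Since $f \in C^\infty_0(\bM)$, hypoellipticity ensures $(x,t)\mapsto P_t f(x)$ is smooth, and the completeness of $(\bM,d)$ together with the sub-Markov property \eqref{submarkov} and the growth bounds on derivatives of $P_t f$ make these manipulations legitimate via a standard approximation argument through an exhaustion of $\bM$ by the cutoff sequence $\{h_n\}$ of Lemma \ref{L:exhaustion}; the essential self-adjointness of $L$ on $C^\infty_0(\bM)$ provides the functional-analytic framework. The commutation $[L,Z_{mn}]=0$ of Lemma \ref{L:comm} is what makes the $\Gamma^Z_2$ identity clean: without it one would pick up boundary-like terms preventing the tidy form of $E'(t)$ obtained above.
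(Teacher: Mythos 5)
Your proof is correct and follows essentially the same route as the paper's: differentiate $E$, use that $\frac{d}{dt}P_t(\Gamma(P_{T-t}f)) = 2P_t(\Gamma_2(P_{T-t}f))$ and likewise for $\Gamma^Z$, then apply the curvature-dimension inequality \eqref{CD2} with $\nu = b(t)/a(t)$ and discard the non-negative $\frac{1}{d}(Lg_t)^2$ term. (One small remark: Lemma \ref{L:comm} is not actually needed for the identity $\frac{d}{dt}P_t(\Gamma^Z(g_t)) = 2P_t(\Gamma^Z_2(g_t))$, which follows formally from the definition \eqref{gamma2Z} exactly as the $\Gamma$ case does; the commutation is only used when one wants the explicit form of $\Gamma^Z_2$ as in Proposition \ref{P:bochner2}.)
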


\begin{proof}
We begin by noting that, with $\Phi_1(t), \Phi_2(t)$ as in section
\ref{S:vi}, we have
\[
E(t)=a(t) \Phi_1(t) +b(t) \Phi_2(t).
 \]
Lemma \ref{L:derivatives} now gives
\[
E'(t)=a'(t) P_t ( \Gamma (P_{T-t} f)) +b'(t) P_t ( \Gamma^Z (P_{T-t}
f))+2a(t)P_t ( \Gamma_2 (P_{T-t} f)) +2b(t)P_t ( \Gamma_2^Z (P_{T-t}
f)).
\]
Using the curvature-dimension inequality (\ref{CD}) we now find
\begin{align*}
& a(t)P_t ( \Gamma_2 (P_{T-t} f)) +b(t)P_t ( \Gamma_2^Z (P_{T-t} f))
\\
& \ge a(t) \left( \left( \rho_1 -\frac{\kappa a(t) }{b(t)}\right)
P_t( \Gamma (P_{T-t} f) )+ \rho_2 P_t(\Gamma^Z (P_{T-t} f) )\right).
\end{align*}
Therefore,
\[
E'(t) \ge \left( a'(t)+2\rho_1 a(t)-\frac{2\kappa a(t)^2}{b(t)}
\right)P_t ( \Gamma (P_{T-t} f)) +(b'(t)+2\rho_2 a(t))P_t ( \Gamma^Z
(P_{T-t} f))  \ge 0.
\]
The desired conclusion immediately follows from this inequality.

 \end{proof}

The following energy estimate represents a basic consequence of
Proposition \ref{P:monofun}.

\begin{corollary}\label{C:expdecay}
There exists $\nu \in \mathbb{R}$ ($ \nu \le 2\min\{\rho_2, \rho_1
-\kappa\}$ will do), such that for every $f \in C^\infty_0(\mathbb
M)$, one has
\begin{equation}\label{pointwiseCaccioppoli}
\Gamma (P_t f)+\Gamma^Z (P_t f) \le  e^{-\nu t} \left( P_t
\Gamma(f)+ P_t \Gamma^Z(f)\right).
\end{equation}
As a consequence, for every $1\le p\le \infty$ one obtains
\begin{equation}\label{LpCaccioppoli}
\| \Gamma (P_t f) \|_{L^p(\bM)} \le e^{-\nu t} \left( \| \Gamma (f)
\|_{L^p(\bM)} +  \| \Gamma^Z (f) \|_{L^p(\bM)} \right), \quad t \ge
0.
\end{equation}
\end{corollary}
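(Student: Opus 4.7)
The plan is to apply Proposition \ref{P:monofun} to a particularly simple pair of weights, namely $a(t)=b(t)=e^{-\nu t}$, and then read off the estimate \eqref{pointwiseCaccioppoli} from the resulting monotonicity at the endpoints $t=0$ and $t=T$.

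First I would check that the admissibility conditions on $(a,b)$ in Proposition \ref{P:monofun} reduce, for this ansatz, precisely to the hypothesis on $\nu$ in the statement. A direct computation gives
\[
a'(t)+2\rho_1 a(t)-\frac{2\kappa a(t)^2}{b(t)} = e^{-\nu t}\bigl(-\nu+2\rho_1-2\kappa\bigr),
\qquad
b'(t)+2\rho_2 a(t)=e^{-\nu t}\bigl(-\nu+2\rho_2\bigr),
\]
and both are nonnegative exactly when $\nu\le 2(\rho_1-\kappa)$ and $\nu\le 2\rho_2$, i.e.\ $\nu\le 2\min\{\rho_2,\rho_1-\kappa\}$. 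Consequently the functional
\[
E(t)=e^{-\nu t}\bigl[P_t(\Gamma(P_{T-t}f))+P_t(\Gamma^Z(P_{T-t}f))\bigr]
\]
is non-decreasing on $[0,T]$ for every $f\in C^\infty_0(\bM)$ and every $T>0$.

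Next I would compare $E(0)$ and $E(T)$. Since $P_0=\mathrm{Id}$, one has
\[
E(0)=\Gamma(P_Tf)+\Gamma^Z(P_Tf),\qquad E(T)=e^{-\nu T}\bigl[P_T\Gamma(f)+P_T\Gamma^Z(f)\bigr].
\]
The inequality $E(0)\le E(T)$, after relabeling $T$ as $t$, is exactly \eqref{pointwiseCaccioppoli}. A small point worth noting: since the monotonicity uses $P_t$ applied to bounded smooth functions with compactly supported initial data, all regularity needed to apply Lemma \ref{L:derivatives} and the curvature-dimension inequality \eqref{CD2} is automatic via hypoellipticity of $L$, so no extra justification is required.

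Finally, for \eqref{LpCaccioppoli} I would just take the $L^p(\bM)$-norm in \eqref{pointwiseCaccioppoli}. Since $\Gamma^Z(P_tf)\ge 0$, one has $\|\Gamma(P_tf)\|_{L^p}\le \|\Gamma(P_tf)+\Gamma^Z(P_tf)\|_{L^p}$; applying the triangle inequality on the right-hand side and then the sub-Markov contraction $\|P_tg\|_{L^p}\le \|g\|_{L^p}$ from \eqref{smp} (valid for all $1\le p\le\infty$) yields
\[
\|\Gamma(P_tf)\|_{L^p}\le e^{-\nu t}\bigl(\|P_t\Gamma(f)\|_{L^p}+\|P_t\Gamma^Z(f)\|_{L^p}\bigr)\le e^{-\nu t}\bigl(\|\Gamma(f)\|_{L^p}+\|\Gamma^Z(f)\|_{L^p}\bigr),
\]
which is \eqref{LpCaccioppoli}. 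There is no genuine obstacle here; the only delicate step is verifying that the ansatz $a=b=e^{-\nu t}$ is admissible under the stated bound on $\nu$, which is a line of algebra, and that the admissibility conditions in Proposition \ref{P:monofun} really do translate into the pointwise inequality at the endpoints after cancellation of the common factor $e^{-\nu t}$.
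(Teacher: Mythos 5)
Your proposal is correct and takes essentially the same route as the paper: one applies Proposition \ref{P:monofun} with the constant ansatz $a=b=e^{-\nu s}$ (verifying that the admissibility conditions reduce to $\nu\le 2\min\{\rho_2,\rho_1-\kappa\}$), reads off the pointwise estimate from the monotonicity of $E$, and then obtains the $L^p$ bound from the sub-Markov contraction \eqref{smp}. The only cosmetic difference is that you evaluate $E$ directly at the right endpoint $t=T$, whereas the paper compares $E(0)$ with $E(s)$ for $s<t$ and then lets $s\to t^-$; both amount to the same thing since $E(T)=e^{-\nu T}\bigl(P_T\Gamma(f)+P_T\Gamma^Z(f)\bigr)$ is perfectly well defined for $f\in C^\infty_0(\bM)$.
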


\begin{proof}
Let $t >0$ and consider the interval $0\le s\le t$. With the choice
\[ a(s)=b(s)=e^{-\nu s}, \quad 0\le s \le t,
\]
where $\nu \in \mathbb{R}$ satisfies $ \nu \le 2\min\{\rho_2, \rho_1
-\kappa\}$, Proposition \ref{P:monofun} gives $E(0) \leq E(s)$. This
inequality reads
\[
\Gamma (P_t f)+\Gamma^Z (P_t f) \le  e^{-\nu s} \left( P_s(
\Gamma(P_{t-s}f))+ P_s(\Gamma^Z(P_{t-s}f))\right).
\]
Letting $s\to t^-$ we obtain \eqref{pointwiseCaccioppoli}. Combining
\eqref{pointwiseCaccioppoli} with \eqref{smp} we deduce
\eqref{LpCaccioppoli}.

\end{proof}

%\begin{remark}\label{R:ricci}
%We note explicitly that in the Riemannian case of example
%\ref{E:riemannian} we have $2\min\{\rho_2, \rho_1 -\kappa\} = 0$,
%and thus Corollary \ref{C:expdecay} implies in particular for any
%$f\in C^\infty_0(\bM)$
%\[
%\| \Gamma (P_t f) \|_\infty \le  \| \Gamma (f) \|_\infty, \quad t
%\ge 0.
%\]
%In view of (v) in Theorem 1.3 in \cite{SVR}, we see that this latter
%inequality is equivalent to requiring \emph{Ric}\ $\ge 0$.
%\end{remark}

\begin{theorem}\label{T:sc}
For $t \ge 0$, one has $ P_t 1 =1$.
\end{theorem}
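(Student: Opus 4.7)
The strategy is to show, via the gradient decay estimate from Corollary \ref{C:expdecay} applied to the exhaustion sequence of Lemma \ref{L:exhaustion}, that $P_t 1$ is constant in the spatial variable, and then to identify this constant as $1$ using the heat equation and the initial condition.

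Take $h_n \in C_0^\infty(\bM)$ from Lemma \ref{L:exhaustion}, so that $h_n \nearrow 1$ pointwise and $\|\nabla_R h_n\|_\infty \to 0$. Since $\|\nabla_R h_n\|^2 = \Gamma(h_n) + \tfrac{1}{2}\Gamma^Z(h_n)$, both $\|\Gamma(h_n)\|_\infty$ and $\|\Gamma^Z(h_n)\|_\infty$ tend to zero. The pointwise estimate \eqref{pointwiseCaccioppoli} combined with the sub-Markov bound $P_t g \le \|g\|_\infty$ yields
\[
\|\Gamma(P_t h_n)\|_\infty \le e^{-\nu t}\bigl(\|\Gamma(h_n)\|_\infty + \|\Gamma^Z(h_n)\|_\infty\bigr) \xrightarrow{n\to\infty} 0,
\]
while the kernel representation $P_t h_n(x) = \int p(x,y,t)\, h_n(y)\, d\mu(y)$ together with monotone convergence shows $P_t h_n(x) \nearrow P_t 1(x)$ pointwise. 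By completeness of $(\bM, d)$ and Chow-Rashevsky, any two points $x, y \in \bM$ are joined by a horizontal sub-unit curve $\gamma:[0, T]\to \bM$ with $T$ close to $d(x, y)$. Hence
\[
|P_t h_n(y) - P_t h_n(x)| \le \int_0^T \sqrt{\Gamma(P_t h_n)(\gamma(s))}\, ds \le T\, \|\Gamma(P_t h_n)\|_\infty^{1/2} \xrightarrow{n\to\infty} 0,
\]
so that $P_t 1(y) = P_t 1(x)$. Thus $P_t 1 \equiv c(t)$ is independent of $x$.

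It remains to identify $c(t) = 1$. By smoothness of the heat kernel and standard heat-kernel bounds that permit differentiation under the integral, the function $P_t 1(x) = \int p(x, y, t)\, d\mu(y)$ is smooth in $(t, x)$ for $t > 0$ and satisfies $\partial_t P_t 1 = L P_t 1$. Since $P_t 1 = c(t)$ is constant in $x$, $L P_t 1 = 0$ and therefore $c'(t) = 0$, so $c$ is constant in $t$. For the initial value, given any $x \in \bM$ we pick $n$ large enough that $h_n(x) = 1$; pointwise continuity $P_t h_n(x) \to h_n(x)$ as $t \to 0^+$, together with $P_t h_n(x) \le c(t) \le 1$, forces $\lim_{t \to 0^+} c(t) = 1$. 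Combined with the constancy of $c$, we get $c(t) \equiv 1$, hence $P_t 1 = 1$.

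The main obstacle is justifying that $P_t 1$ satisfies the heat equation in the classical sense, since $1 \notin L^2(\bM)$ so the abstract $L^2$ semigroup theory does not apply directly; this is handled via the kernel representation and standard local heat-kernel regularity, which allow $\partial_t$ and $L$ to be brought inside the integral $\int p(x,y,t)\, d\mu(y)$.
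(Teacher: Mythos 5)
Your approach differs from the paper's. You first show that $P_t 1$ is constant in $x$ --- a correct and rather elegant step that uses Corollary \ref{C:expdecay} and Lemma \ref{L:exhaustion} --- and then attempt to identify the constant as $1$. The paper instead tests $P_t 1 - 1$ directly against $g \in C_0^\infty(\bM)$, writing $\int_\bM (P_t h_n - h_n)\,g\,d\mu = -\int_0^t\int_\bM\Gamma(P_s h_n, g)\,d\mu\,ds$, bounding the right-hand side via \eqref{LpCaccioppoli} and Cauchy--Schwarz, and letting $n\to\infty$; this produces $\int_\bM (P_t 1 - 1)\,g\,d\mu = 0$ in one stroke.

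The second stage of your argument has a genuine gap. You invoke the heat equation for $P_t 1$ in the classical sense, $\frac{\p}{\p t} P_t 1 = L P_t 1$, which requires differentiating $\int_\bM p(x,y,t)\,d\mu(y)$ under the integral sign, and you appeal to unspecified ``standard heat-kernel bounds'' for domination. But no such bounds are available at this point: the Gaussian upper bounds are established only later in Section \ref{S:gaussianub}, and only under the extra hypothesis $\rho_1 \ge 0$, whereas the present theorem is asserted for every $\rho_1 \in \mathbb{R}$. Since $1 \notin L^2(\bM)$ the $L^2$ semigroup calculus is also unavailable, so the interchange of $\frac{\p}{\p t}$ with the integral over a possibly infinite-volume manifold is not justified. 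Note moreover that what one does obtain for free --- the semigroup relation $P_{t+s}1 = P_t(P_s 1) = c(s)c(t)$, hence $c(t)=e^{-\lambda t}$ for some $\lambda \ge 0$ --- is not enough: your initial-condition observation $\lim_{t\to 0^+} c(t) = 1$ is then automatic and does not exclude $\lambda > 0$. A separate argument is needed to force $\lambda = 0$, and the natural one (fix $g \in C_0^\infty(\bM)$ with $\int g\,d\mu = 1$, use self-adjointness to write $c(t) = \int_\bM P_t g\,d\mu$, and control the $t$-derivative via the decay of $\Gamma(P_t g)$ from Corollary \ref{C:expdecay}) is, in substance, the paper's computation --- so the two-stage detour does not actually save any work.
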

\begin{proof}
Let $f,g \in  C^\infty_0(\mathbb M)$, we have
\begin{align*}
\int_{\bM} (P_t f -f) g d\mu = \int_0^t \int_{\bM}\left(
\frac{\partial}{\partial s} P_s f \right) g d\mu ds= \int_0^t
\int_{\bM}\left(L P_s f \right) g d\mu ds=- \int_0^t \int_{\bM}
\Gamma ( P_s f , g) d\mu ds.
\end{align*}
By means of \eqref{LpCaccioppoli}, and Cauchy-Schwarz inequality, we
find
\begin{equation}\label{P1}
\left| \int_{\bM} (P_t f -f) g d\mu \right| \le \left(\int_0^t
e^{-\frac{\nu s}{2}} ds\right) \sqrt{ \| \Gamma (f) \|_\infty + \|
\Gamma^Z (f) \|_\infty } \int_{\bM}\Gamma (g)^{\frac{1}{2}}d\mu.
\end{equation}

We now apply \eqref{P1} with $f = h_n$, where $h_n$ are the
functions in Lemma \ref{L:exhaustion}, and then let $n\to \infty$.
Since by Beppo Levi's monotone convergence theorem we have $P_t
h_n(x)\nearrow P_t 1(x)$ for every $x\in \bM$, we see that the
left-hand side converges to $\int_{\bM} (P_t 1 -1) g d\mu$. We claim
that the right-hand side converges to zero. To see this observe
that, thanks to the assumption \eqref{LB}, we have for any $\phi \in
C^\infty(\bM)$
\[
|\nabla_R \phi|^2 = \Gamma(\phi) + \frac{1}{2} \Gamma^Z(\phi).
\]
Therefore,
\[
\sqrt{ \| \Gamma (h_n) \|_\infty + \| \Gamma^Z (h_n) \|_\infty }
\leq C ||\nabla_R h_n||_{\infty} \to 0,\ \ \text{as}\ n\to \infty.
\] We thus reach the conclusion
\[
\int_{\bM} (P_t 1 -1) g d\mu=0,\ \ \ g\in C^\infty_0(\bM).
\]
It follows that $P_t 1 =1$.

\end{proof}

Theorem \ref{T:sc} is equivalent to the uniqueness in the Cauchy
problem for initial data in $L^\infty(\bM)$.

\begin{proposition}\label{P:ucp}
The unique bounded solution of the Cauchy problem
\[
\begin{cases}
\frac{\p u}{\p t} - Lu = 0,
\\
u(x,0) = f(x),\ \ \ \  f\in L^\infty(\bM),
\end{cases}
\]
is given by $u(x,t)=P_t f(x)$.
\end{proposition}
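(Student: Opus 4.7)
By linearity it suffices to show that any bounded solution $u$ of $\partial_t u = Lu$ with $u(\cdot,0)=0$ must vanish identically; hypoellipticity of $\partial_t - L$ guarantees $u\in C^\infty(\bM\times(0,\infty))$. The strategy I would follow mirrors the test-function argument used in the proof of Theorem \ref{T:sc}.

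Fix $T>0$ and $\phi\in C^\infty_0(\bM)$; the plan is to prove $\int_\bM u(x,T)\phi(x)\,d\mu=0$. With the cutoff functions $h_n$ from Lemma \ref{L:exhaustion}, introduce
\[
F_n(t)=\int_\bM u(x,t)\,P_{T-t}\phi(x)\,h_n(x)\,d\mu(x), \qquad 0\le t\le T.
\]
Then $F_n(0)=0$ from the vanishing initial condition, and $F_n(T)\to\int_\bM u(x,T)\phi(x)\,d\mu$ as $n\to\infty$ by dominated convergence, since $\phi$ has compact support. Differentiating, using that $v(x,t):=P_{T-t}\phi(x)$ satisfies $\partial_t v + Lv=0$, and applying the self-adjointness $L^*=L$ to the smooth compactly supported $v\cdot h_n$, the interior terms cancel and one obtains
\[
F_n'(t)=\int_\bM u(x,t)\,\bigl[\,2\Gamma(P_{T-t}\phi,h_n)+P_{T-t}\phi\cdot Lh_n\,\bigr](x)\,d\mu(x),
\]
an expression supported where $\nabla h_n\neq 0$.

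I would then integrate over $[0,T]$, let $n\to\infty$, and verify that the right-hand side tends to $0$. The decisive input is Corollary \ref{C:expdecay}: since $\Gamma(\phi),\Gamma^Z(\phi)\in L^\infty$, the pointwise bound \eqref{pointwiseCaccioppoli} gives
\[
\Gamma(P_s\phi)+\Gamma^Z(P_s\phi)\le e^{-\nu s}\bigl(\|\Gamma(\phi)\|_\infty+\|\Gamma^Z(\phi)\|_\infty\bigr),
\]
so that $|P_{T-t}\phi|\le\|\phi\|_\infty$ and $\Gamma(P_{T-t}\phi)^{1/2}$ are uniformly bounded in $(x,t)\in\bM\times[0,T]$. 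Combining this with $\|u\|_\infty<\infty$ and $\|\nabla_R h_n\|_\infty\to 0$, a Cauchy-Schwarz estimate in the spirit of the final step in the proof of Theorem \ref{T:sc} drives the $\Gamma(P_{T-t}\phi,h_n)$ term to zero. Once both contributions to $F_n'(t)$ are shown to vanish in the limit, $\int u(x,T)\phi(x)\,d\mu=0$, and arbitrariness of $\phi$ and $T$ yields $u\equiv 0$.

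The main technical obstacle will be the $Lh_n$ contribution, which a priori involves second-order derivatives of the cutoffs and is therefore not controlled by Lemma \ref{L:exhaustion} alone. I would circumvent this either by sharpening the construction of $h_n$ to enforce $\|Lh_n\|_\infty\to 0$ as well (using $h_n=\phi_n(\rho)$ with $\phi_n''$ of order $n^{-2}$ and taking advantage of the boundedness of $\Gamma(\rho)$ together with the moderate growth of $L\rho$), or by performing an additional integration by parts that redistributes one derivative of $h_n$ onto $u\cdot P_{T-t}\phi$, thereby reducing $Lh_n$ to first-order derivatives of $h_n$. In either formulation, the quantitative gradient bounds of Corollary \ref{C:expdecay} supply the estimates needed to pass to the limit and conclude the proof.
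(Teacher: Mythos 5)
Your approach is genuinely different from the paper's. The paper's proof is entirely probabilistic: it represents the semigroup via the solution flow of the Stratonovich SDE $dY^x_t = X_0(Y^x_t)\,dt + \sqrt{2}\sum_i X_i(Y^x_t)\circ dB^i_t$, invokes stochastic completeness (Theorem \ref{T:sc}) to get $P_t f(x)=\mathbb{E}(f(Y^x_t))$, and then shows that for any bounded solution $u$ of the heat equation the process $s\mapsto u(t-s,Y^x_s)$ is a bounded local, hence true, martingale, from which $u(t,x)=\mathbb{E}(u(0,Y^x_t))=P_t f(x)$. You instead propose a purely analytic test-function argument mirroring Theorem \ref{T:sc}. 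That is a reasonable instinct, but as written it has a genuine gap.

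The issue is that the roles of the cutoff and the test function are reversed relative to Theorem \ref{T:sc}, and this is not a cosmetic change. In that proof the cutoff $h_n$ appears \emph{inside} $P_s$, and the quantity one must integrate over $\bM$ is $\Gamma(g)^{1/2}$ with $g$ compactly supported, so $\int_\bM \Gamma(g)^{1/2}\,d\mu<\infty$ is free; the decay then comes from $\|\Gamma(P_s h_n)\|_\infty^{1/2}\to 0$. In your computation
\[
F_n'(t)=\int_\bM u\,\bigl[\,2\Gamma(P_{T-t}\phi,h_n)+ (P_{T-t}\phi)\,Lh_n\,\bigr]\,d\mu,
\]
the cutoff enters through $\Gamma(h_n)^{1/2}$ and $Lh_n$, and you must integrate against $u\cdot P_{T-t}\phi$, which has no decay beyond boundedness. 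For the first term you get
\[
\Bigl|\int_\bM u\,\Gamma(P_{T-t}\phi,h_n)\,d\mu\Bigr|\le \|u\|_\infty\,\|\nabla_R h_n\|_\infty \int_{\mathrm{supp}\,\nabla h_n}\Gamma(P_{T-t}\phi)^{1/2}\,d\mu,
\]
and the pointwise bound $\|\nabla_R h_n\|_\infty\le 2L/n$ alone is useless without control of the integral on the right: $\mathrm{supp}\,\nabla h_n$ is an annulus of growing volume, and Corollary \ref{C:expdecay} only gives $\Gamma(P_s\phi)\in L^1(\bM)$, not $\Gamma(P_s\phi)^{1/2}\in L^1(\bM)$ (the inequality \eqref{LpCaccioppoli} controls $\|\Gamma(P_tf)\|_{L^p}$ for $p\ge 1$, not the $L^{1/2}$-type quantity you need). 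So the $\Gamma(P_{T-t}\phi,h_n)$ term, which you describe as unproblematic, is in fact just as much an obstacle as the $Lh_n$ term you flag. Your suggested fix of redistributing a derivative of $h_n$ onto $u\cdot P_{T-t}\phi$ runs into the same trouble, since that function is neither compactly supported nor known to have integrable horizontal gradient. An analytic proof of this statement is certainly possible (the equivalence of stochastic completeness with uniqueness of bounded solutions has classical PDE proofs via comparison with super-solutions, not via the exhaustion-sequence device you use here), but the specific route you propose does not close without new input controlling the volume growth of the annuli or the $L^1$ norm of $\Gamma(P_s\phi)^{1/2}$.
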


\begin{proof}
Since  the vector fields $X_i$'s are locally Lipschitz (because smooth), for every $x \in \mathbb{M}$, the stochastic differential equation
\[
Y_t^x=x+\int_0^t X_0 (Y_s^x) ds +\sqrt{2} \sum_{i=1}^d \int_0^t X_i (Y_s^x) \circ dB^i_s,
\]
has a solution up to a stopping time $\mathbf{e}(x)$, where
$(B_t)_{t \ge 0}$ is a $d$-dimensional Brownian motion. It is seen
that
\[
(Q_t f)(x)=\mathbb{E}(1_{t < e(x)} f(Y_t^x)), \quad f\in L^2(\bM),
\]
defines a contraction semigroup on $L^2(\bM)$. By It\^o's formula
its generator on $\mathcal{C}_0 (\mathbb{M})$ is given by $L$. By
uniqueness of the heat semigroup, we actually have $Q_t f
(x)=P_tf(x)$, for $f \in \mathcal{C}_0 (\mathbb{M})$. By using the
definition of $Q_t$, we deduce that for $f \in \mathcal{C}_0
(\mathbb{M})$,
\[
P_tf(x)=\mathbb{E}(1_{t < e(x)} f(Y_t^x))
\]
By applying the previous equality with $fh_n$, and letting $n \to
+\infty$, we deduce from Beppo Levi's monotone convergence theorem
that
\[
P_tf(x)=\mathbb{E}(1_{t < e(x)} f(Y_t^x))
\]
when $f$ is a bounded and smooth function. Since $P_t 1=1$, almost surely $e(x)=+\infty$, and
\[
P_tf(x)=\mathbb{E}(f(Y_t^x)).
\]
Let now $u (t,x)$ be a bounded solution of the Cauchy problem:
\[
\begin{cases}
\frac{\p u}{\p t} - Lu = 0,
\\
u(x,0) = f(x),
\end{cases}
\]
where $f$ is a bounded and smooth function. From It\^o's formula,
for $t >0$ the process $(u(t-s, Y_s^x))_{0 \le s \le t}$ is a local
martingale and since $u$ is bounded, it is a martingale. The
expectation of $(u(t-s, Y_s^x))_{0 \le s \le t}$ is therefore
constant. From this we deduce
\[
u(t,x)=\mathbb{E}(f(Y_t^x))=P_tf(x).
\]
Finally, when $ f\in L^\infty(\bM)$, for the Cauchy problem
\[
\begin{cases}
\frac{\p u}{\p t} - Lu = 0,
\\
u(x,0) = f(x)
\end{cases}
\]
we have from the regularization property of the heat semigroup and the above uniqueness result: For every $\tau >0$,
\[
u(x,t+\tau)=\int_\bM p(x,y,t)u(y,\tau) \mu (dy).
\]
By letting $\tau \to 0$, we obtain $u(x,t)=P_t f(x)$ as desired.
\end{proof}

\section{A parabolic Harnack inequality}\label{S:harnack}

In this section we generalize the celebrated Harnack inequality in
\cite{LY} to nonnegative solutions of the heat equation $H = L -
\frac{\p}{\p t}$ on $\bM$ which are in the form $u(x,t) = P_t f(x)$,
for some $f\in C^\infty(\bM)\cap L^\infty(\bM)$. Theorem
\ref{T:harnack} below should be seen as a generalization of (i) of
Theorem 2.2 in \cite{LY}, in the case of a zero potential $q$. One
should also see the paper \cite{Cao-Yau}, where the authors deal
with subelliptic operators on a compact manifold. As we have
mentioned, they do not obtain bounds which explicitly depend solely
on the geometry of the underlying manifold.

\begin{theorem}\label{T:harnack}
Let $\bM$ be a complete sub-Riemannian manifold such that
\eqref{geombounds} holds with $\rho_1\ge 0$. Let $f\in
C^\infty(\bM)$ be such that $0\le f\le M$, and consider $u(x,t) =
P_t f(x)$. For every $(x,s), (y,t)\in \bM\times (0,\infty)$ with
$s<t$ one has with $D$ as in \eqref{D}
\begin{equation}\label{beauty}
u(x,s) \le u(y,t) \left(\frac{t}{s}\right)^{\frac{D}{2}} \exp\left(
\frac{D}{d} \frac{d(x,y)^2}{4(t-s)} \right).
\end{equation}
\end{theorem}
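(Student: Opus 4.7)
My plan is to derive the Harnack inequality from the Li--Yau type estimate of Remark \ref{R:rho1zero} by integrating $\log u$ along a horizontal path joining $(x,s)$ to $(y,t)$, in the classical manner of Li--Yau \cite{LY}. Since $\rho_1\ge 0$, the hypothesis \eqref{geombounds} is in particular satisfied with $\rho_1$ replaced by $0$, so \eqref{liyaupositifzero} applies. Dropping the nonnegative term $\frac{2\rho_2}{3}t\,\Gamma^Z(\ln P_t f)$ on the left and using $LP_t f = \partial_t P_t f$, the estimate can be rewritten in the compact form
\[
\Gamma(\ln u)\ \le\ \frac{D}{d}\,\partial_\tau \ln u\ +\ \frac{D\alpha}{2\tau}, \qquad \alpha:=\frac{D}{d}=1+\frac{3\kappa}{2\rho_2},
\]
valid for $u(\cdot,\tau)=P_\tau f$ with $f\in C^\infty_0(\bM)$, $f\ge 0$. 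A preliminary step is to justify that this pointwise gradient estimate carries over to bounded nonnegative smooth $f$ as in the statement; the natural device is to apply it to $f h_n$ (with $h_n$ as in Lemma \ref{L:exhaustion}) and pass to the limit using the uniqueness result of Proposition \ref{P:ucp}, together with standard interior parabolic regularity for the hypoelliptic operator $L-\partial_t$ so that the limit estimate holds pointwise on $\bM\times(0,\infty)$.

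Next I would fix $(x,s),(y,t)$ with $s<t$, choose a horizontal curve $\gamma:[s,t]\to\bM$ with $\gamma(s)=x$, $\gamma(t)=y$, and consider
\[
\phi(\tau)\ =\ \ln u(\gamma(\tau),\tau),\qquad \tau\in[s,t].
\]
Differentiating and applying Cauchy--Schwarz in the horizontal bundle gives
\[
\phi'(\tau)\ =\ \partial_\tau \ln u\ +\ \langle \nabla_H \ln u,\gamma'(\tau)\rangle\ \ge\ \partial_\tau \ln u\ -\ \sqrt{\Gamma(\ln u)}\,|\gamma'(\tau)|_H.
\]
Substituting the Li--Yau bound $\partial_\tau \ln u\ge \frac{1}{\alpha}\Gamma(\ln u)-\frac{D}{2\tau}$ and minimizing in $\sqrt{\Gamma(\ln u)}$ via the elementary inequality $\frac{a^2}{\alpha}-ab\ge -\frac{\alpha b^2}{4}$, we obtain the pointwise differential inequality
\[
\phi'(\tau)\ \ge\ -\frac{\alpha}{4}\,|\gamma'(\tau)|_H^{\,2}\ -\ \frac{D}{2\tau}.
\]

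Integration on $[s,t]$ yields
\[
\ln\frac{u(y,t)}{u(x,s)}\ \ge\ -\frac{\alpha}{4}\int_s^t |\gamma'(\tau)|_H^{\,2}\,d\tau\ -\ \frac{D}{2}\ln\frac{t}{s}.
\]
The final step is to minimize the right-hand side over admissible curves. Parametrizing a subunit path of horizontal length $\ell$ on $[s,t]$ with constant speed $\ell/(t-s)$ gives $\int_s^t|\gamma'|_H^2\,d\tau=\ell^{\,2}/(t-s)$; taking the infimum over horizontal curves joining $x$ to $y$ (which by the length-space property of $(\bM,d)$ noted in Section \ref{SS:srd} yields $\ell\to d(x,y)$) produces exactly
\[
\ln\frac{u(y,t)}{u(x,s)}\ \ge\ -\frac{\alpha\,d(x,y)^{2}}{4(t-s)}\ -\ \frac{D}{2}\ln\frac{t}{s},
\]
which is \eqref{beauty} upon exponentiation and using $\alpha=D/d$.

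The main obstacle is really just the initial reduction: promoting the Li--Yau estimate of Proposition \ref{P:ge}, proved for compactly supported smooth $f$, to all nonnegative bounded smooth $f$. Beyond that, the only delicate point in the path-integration step is ensuring that we can indeed take a horizontal curve whose length approximates $d(x,y)$ arbitrarily well; this is where the completeness of $(\bM,d)$ and the equivalence of $d$ with the intrinsic length distance (from Section \ref{SS:srd}) are used. Once those are in hand, the computation is a straightforward optimization.
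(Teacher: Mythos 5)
Your proposal is correct and the core mechanism — integrating the Li--Yau estimate \eqref{liyaupositifzero} along a space-time path joining $(x,s)$ to $(y,t)$, applying Cauchy--Schwarz in the horizontal bundle, and optimizing — is exactly the one the paper uses. The computation checks out: with $\alpha = D/d = 1+\tfrac{3\kappa}{2\rho_2}$, the bound $\partial_\tau\ln u \ge \tfrac{1}{\alpha}\Gamma(\ln u) - \tfrac{D}{2\tau}$ and the elementary inequality $\tfrac{a^2}{\alpha} - ab \ge -\tfrac{\alpha b^2}{4}$ give the differential inequality along the path, and minimizing the energy over horizontal curves recovers $d(x,y)^2/(t-s)$ with the correct constant.

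The one place where your route diverges — and is avoidably heavier than the paper's — is the ``initial reduction'' you flag as the main obstacle. You propose to first upgrade the pointwise Li--Yau estimate of Proposition \ref{P:ge} from $f\in C^\infty_0(\bM)$ to bounded smooth $f$, by passing to the limit in $f h_n$. That passage is not elementary: the left side contains $\Gamma(\ln P_\tau(fh_n))$ and the right contains $LP_\tau(fh_n)/P_\tau(fh_n)$, so one needs local $C^2$ convergence of $P_\tau(fh_n)$ to $P_\tau f$, which requires invoking interior parabolic estimates for the hypoelliptic operator $L - \partial_t$ and tracking that they upgrade pointwise (rather than distributional) convergence. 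The paper sidesteps this entirely by running the whole path-integration argument for each approximant $u_n = P_\tau(h_n f)$ (so the Li--Yau estimate is applied only where it is directly available), obtaining the Harnack inequality \eqref{beauty} with $u$ replaced by $u_n$, and \emph{then} letting $n\to\infty$: the final inequality involves only the values $u_n(x,s)$ and $u_n(y,t)$, and passing to the limit requires nothing beyond the monotone pointwise convergence $u_n\nearrow u$ given by Beppo Levi. You would do well to reorder your argument this way, since it removes the regularity considerations you correctly identify as the delicate point, at no cost elsewhere.
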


\begin{proof}
Let $f$ be as in the statement of the theorem, and consider $f_n =
h_n f$, where $h_n$ is the sequence in Lemma \ref{L:exhaustion}.
Then, $f_n \in C^\infty_0(\bM)$, and $0\le f_n\nearrow f$. By Beppo
Levi's monotone convergence theorem we have $u_n(x,t) = P_t
f_n(x)\nearrow u(x,t) = P_t f(x)$ for every $(x,t)\in \bM\times
(0,\infty)$. Since $Lu_n = \frac{\p u_n}{\p t}$, in terms of $u_n$
inequality \eqref{liyaupositifzero} can be reformulated as
\[ \Gamma (\ln u_n) +\frac{2 \rho_2}{3} t \Gamma^Z (\ln u_n) \le
(1+\frac{3\kappa}{2\rho_2}) \frac{\p \log u_n}{\p t}  +
\frac{d\left( 1+\frac{3\kappa}{2\rho_2}\right)^2}{2t}.
\]
In particular, this implies
\begin{equation}\label{liyaupositif2}
- (1+\frac{3\kappa}{2\rho_2}) \frac{\p \ln u_n}{\p t} \le - \Gamma
(\ln u_n)  +\frac{d\left( 1+\frac{3\kappa}{2\rho_2}\right)^2}{2t}.
\end{equation}

We now fix two points $(x,s), (y,t)\in \bM\times (0,\infty)$, with
$s<t$. Let $\gamma(\tau)$, $0\le \tau \le T$ be a subunit path such
that $\gamma(0) = y$, $\gamma(T) = x$. Consider the path in $M\times
(0,\infty)$ defined by
\[
\alpha(\tau) = \left(\gamma(\tau),t + \frac{s-t}{T}\tau\right),\ \ \
\ 0\le \tau\le T,
\]
so that $\alpha(0) = (y,t)$, $\alpha(T) = (x,s)$. We have
\begin{align*}
\ln \frac{u_n(x,s)}{u_n(y,t)}& = \int_0^T \frac{d}{d\tau} \ln
u_n(\alpha(\tau)) d\tau \\
& = \int_0^T \left[<\gamma'(\tau),\nabla_R (\ln u_n)(\alpha(\tau))>
- \frac{t-s}{T} \frac{\p \ln u_n}{\p t}(\alpha(\tau))\right] d \tau
\end{align*}
Now since $\gamma(\tau)$ is subunitary we have
\[
\left|<\gamma'(\tau),\nabla_R (\ln u_n)(\alpha(\tau))>\right| \le
\Gamma(\ln u_n(\alpha(\tau)))^{\frac{1}{2}},
\]
and applying \eqref{liyaupositif2} for any $\epsilon >0$ we find
\begin{align*}
\log \frac{u_n(x,s)}{u_n(y,t)}& \le T^{\frac{1}{2}} \left(\int_0^T
\Gamma(\ln u_n)(\alpha(\tau)) d\tau\right)^{\frac{1}{2}} -
\frac{t-s}{T} \int_0^T \frac{\p \ln u_n}{\p t}(\alpha(\tau)) d \tau
\\
& \le \frac{1}{2\epsilon} T + \frac{\epsilon}{2} \int_0^T \Gamma(\ln
u_n)(\alpha(\tau)) d\tau - \frac{t-s}{T(1+\frac{3\kappa}{2\rho_2})}
\int_0^T \Gamma(\ln u_n)(\alpha(\tau)) d\tau
\\
&   - \frac{d\left( 1+\frac{3\kappa}{2\rho_2}\right)(s-t)}{2T}
\int_0^T \frac{d\tau}{t + \frac{s-t}{T} \tau}.
\end{align*}
If we now choose $\epsilon >0$ such that
\[
\frac{\epsilon}{2} = \frac{t-s}{T(1+\frac{3\kappa}{2\rho_2})},
\]
we obtain from the latter inequality
\[
\log \frac{u_n(x,s)}{u_n(y,t)} \le
\frac{\ell_s(\gamma)^2(1+\frac{3\kappa}{2\rho_2})}{4(t-s)}  +
\frac{d\left( 1+\frac{3\kappa}{2\rho_2}\right)}{2}
\ln\left(\frac{t}{s}\right),
\]
where we have denoted by $\ell_s(\gamma)$ the subunitary length of
$\gamma$. If we now minimize over all subunitary paths joining $y$
to $x$, and we exponentiate, we obtain
\[
u_n(x,s) \le u_n(y,t) \left(\frac{t}{s}\right)^{\frac{d}{2}\left(
1+\frac{3\kappa}{2\rho_2}\right)} \exp\left(\frac{d(x,y)^2
(1+\frac{3\kappa}{2\rho_2})}{4(t-s)} \right).
\]
Letting $n\to \infty$ in this inequality we finally obtain
\eqref{beauty}.

\end{proof}

The following result represents an important consequence of Theorem
\ref{T:harnack}.

\begin{corollary}\label{C:harnackheat}
Let $p(x,y,t)$ be the heat kernel on $\bM$. For every $x,y, z\in
\bM$ and every $0<s<t<\infty$ one has
\[
p(x,y,s) \le p(x,z,t) \left(\frac{t}{s}\right)^{\frac{D}{2}}
\exp\left(\frac{D}{d} \frac{d(y,z)^2}{4(t-s)} \right).
\]
\end{corollary}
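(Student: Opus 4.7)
The strategy is to apply Theorem \ref{T:harnack} to the function $u(w,r) = p(x,w,r)$, which for any fixed $x \in \bM$ is a positive solution of the heat equation in the variables $(w,r)$. Fix $\tau > 0$; by the semigroup identity \eqref{sgp} together with the symmetry of the heat kernel, one has $P_r(p(x,\cdot,\tau))(w) = p(x,w,r+\tau)$. If $p(x,\cdot,\tau)$ belonged to $C^\infty(\bM) \cap L^\infty(\bM)$, then Theorem \ref{T:harnack} applied with $f = p(x,\cdot,\tau)$ would directly give, at points $(y,s)$ and $(z,t)$ with $0<s<t$,
\[
p(x,y,s+\tau) \le p(x,z,t+\tau)\left(\frac{t}{s}\right)^{D/2} \exp\left(\frac{D}{d}\frac{d(y,z)^2}{4(t-s)}\right),
\]
from which the claim would follow by letting $\tau \to 0^+$ and invoking the joint continuity of $(w,r)\mapsto p(x,w,r)$ on $\bM\times(0,\infty)$ (a consequence of H\"ormander's hypoellipticity theorem applied to $L - \partial_r$).

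The genuine obstacle is that, although $p(x,\cdot,\tau)$ is smooth and nonnegative, it is not a priori globally bounded on $\bM$, so Theorem \ref{T:harnack} cannot be invoked directly. I would remove this issue by a monotone approximation. Let $h_n$ be the exhaustion cutoffs supplied by Lemma \ref{L:exhaustion}, and choose a pointwise nondecreasing sequence $\phi_n \in C^\infty([0,\infty))$ with $\phi_n(s) = s$ on $[0,n]$ and $0 \le \phi_n \le n+1$ everywhere. Setting
\[
f_n = h_n \cdot \phi_n\big(p(x,\cdot,\tau)\big)
\]
produces a sequence in $C^\infty_0(\bM)$ satisfying $0 \le f_n \le n+1$ and $f_n \nearrow p(x,\cdot,\tau)$ pointwise on $\bM$, so that Theorem \ref{T:harnack} does apply to each $u_n(w,r) = P_r f_n(w)$, yielding
\[
P_s f_n(y) \le P_t f_n(z)\left(\frac{t}{s}\right)^{D/2} \exp\left(\frac{D}{d}\frac{d(y,z)^2}{4(t-s)}\right),\qquad 0<s<t.
\]

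Beppo Levi's monotone convergence theorem, combined with the integral representation $P_r f_n(w) = \int_\bM p(w,w',r) f_n(w')\, d\mu(w')$ and the semigroup identity \eqref{sgp}, then gives $P_r f_n(w) \nearrow \int_\bM p(w,w',r) p(x,w',\tau)\, d\mu(w') = p(x,w,r+\tau)$ for every $w\in\bM$ and $r>0$. Passing to the limit $n \to \infty$ in the displayed Harnack inequality produces the shifted estimate for the heat kernel stated above, and a final limit $\tau \to 0^+$ (using continuity of $p(x,\cdot,\cdot)$) delivers the inequality of the corollary. The only non-routine point in the argument is the $L^\infty$-approximation of $p(x,\cdot,\tau)$ by $C^\infty_0$ functions converging monotonically from below; everything else is a bookkeeping consequence of Theorem \ref{T:harnack} and the semigroup property.
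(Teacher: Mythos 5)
Your argument is correct and mirrors the paper's own proof: fix $\tau>0$, represent $p(x,\cdot,s+\tau)$ as $P_s(p(x,\cdot,\tau))$, apply Theorem~\ref{T:harnack} to cut-off approximants, pass to the limit by Beppo Levi, and finally let $\tau\to 0^+$. The only difference is that your extra truncation $\phi_n$ is redundant: the paper simply takes $f_n = h_n\, p(x,\cdot,\tau)$, which already lies in $C^\infty_0(\bM)$ (hence is bounded) because $h_n$ has compact support and $p(x,\cdot,\tau)$ is smooth, so the hypothesis $0\le f_n\le M_n$ of Theorem~\ref{T:harnack} is automatically satisfied without composing with $\phi_n$.
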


\begin{proof}
Let $\tau >0$ and $x\in \bM$ be fixed. By H\"ormander's
hypoellipticity theorem \cite{Ho} we know that $p(x,\cdot,\cdot +
\tau)\in C^\infty(\bM \times (-\tau,\infty))$. From \eqref{sgp} we
have
\[
p (x,y,s+\tau)=P_s (p(x,\cdot,\tau))(y)
\]
and
\[
p (x,z,t+\tau)=P_t (p(x,\cdot,\tau))(z)
\]
Since we cannot apply Theorem \ref{T:harnack} directly to $u(y,t) =
P_t(p(x,\cdot,\tau))(y)$, we consider as in the proof of the latter
$u_n(y,t) = P_t(h_n p(x,\cdot,\tau))(y)$, where $h_n\in
C^\infty_0(\bM)$, $0\le h_n\le 1$, and $h_n\nearrow  1$. From
\eqref{beauty} we find
\[
P_s (h_np(x,\cdot,\tau))(y) \le P_t (h_np(x,\cdot,\tau))(z)
\left(\frac{t}{s}\right)^{\frac{D}{2}} \exp\left(\frac{D}{d}
\frac{d(y,z)^2}{4(t-s)} \right)
\]
Letting $n \to \infty$, by Beppo Levi's monotone convergence theorem
we obtain
\[
p (x,y,s+\tau) \le p (x,z,t+\tau)
\left(\frac{t}{s}\right)^{\frac{D}{2}} \exp\left(\frac{D}{d}
\frac{d(y,z)^2}{4(t-s)} \right).
\]
The desired conclusion follows by letting $\tau \to 0$.

\end{proof}

\section{Off-diagonal Gaussian upper bounds for $p(x,y,t)$}\label{S:gaussianub}

Let $\bM$ be a complete sub-Riemannian manifold such that
\eqref{geombounds} holds with $\rho_1\ge 0$. Fix $x\in \bM$ and
$t>0$. Applying Corollary \ref{C:harnackheat} to $(y,t)\to p(x,y,t)$
for every $y\in B(x,\sqrt t)$ we find
\[
p(x,x,t) \le  2^{\frac{D}{2}} e^{\frac{D}{4d}}\ p(x,y,2t) =
C(d,\kappa,\rho_2) p(x,y,2t).
\]
Integration over $B(x,\sqrt t)$ gives
\[
p(x,x,t)\mu(B(x,\sqrt t)) \le C(d,\kappa,\rho_2) \int_{B(x,\sqrt
t)}p(x,y,2t)d\mu(y) \le C(d,\kappa,\rho_2),
\]
where we have used $P_t1\le 1$. This gives the on-diagonal upper
bound \begin{equation}\label{odub} p(x,x,t) \le
\frac{C(d,\kappa,\rho_2)}{\mu(B(x,\sqrt t))}.
\end{equation}

The aim of this section is to establish the following off-diagonal
upper bound for the heat kernel.

\begin{theorem}\label{T:ub}
Let $\bM$ be a complete sub-Riemannian manifold such that
\eqref{geombounds} holds with $\rho_1\ge 0$. For any $0<\epsilon <1$
there exists a constant $C(d,\kappa,\rho_2,\epsilon)>0$, which tends
to $\infty$ as $\epsilon \to 0^+$, such that for every $x,y\in \bM$
and $t>0$ one has
\[
p(x,y,t)\le \frac{C(d,\kappa,\rho_2,\epsilon)}{\mu(B(x,\sqrt
t))^{\frac{1}{2}}\mu(B(y,\sqrt t))^{\frac{1}{2}}} \exp
\left(-\frac{d(x,y)^2}{(4+\epsilon)t}\right).
\]
\end{theorem}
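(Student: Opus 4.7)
The plan is to prove the off-diagonal Gaussian upper bound via Davies' method of integrated maximum principles, combined with the parabolic Harnack inequality of Corollary \ref{C:harnackheat} and the on-diagonal bound \eqref{odub}.

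First I would establish the following $L^2$ integrated maximum principle: for every smooth $\psi:\bM\to\mathbb{R}$ with $\Gamma(\psi)\le 1$, every $\alpha\in\mathbb{R}$, and every $f\in C^\infty_0(\bM)$,
\[
\int_\bM (P_tf)^2\, e^{2\alpha\psi}\, d\mu \le e^{2\alpha^2 t}\int_\bM f^2\, e^{2\alpha\psi}\, d\mu.
\]
This is obtained by $\Gamma$-calculus: setting $u=P_tf$ and letting $J(t)$ denote the left-hand side, integration by parts gives
\[
J'(t)=-2\int \Gamma(u)\, e^{2\alpha\psi}\, d\mu-4\alpha\int u\,\Gamma(u,\psi)\, e^{2\alpha\psi}\, d\mu,
\]
and the Cauchy-Schwarz bound $|\Gamma(u,\psi)|\le\sqrt{\Gamma(u)\Gamma(\psi)}\le\sqrt{\Gamma(u)}$, together with Young's inequality $4|\alpha|u\sqrt{\Gamma(u)}\le 2\Gamma(u)+2\alpha^2 u^2$, yields $J'(t)\le 2\alpha^2 J(t)$. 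Specializing this to $f=1_A$ and to the Lipschitz function $\psi(z)=\min(d(z,A),R)$ with $R=d(A,B)$ (which satisfies $\Gamma(\psi)\le 1$ thanks to the identification $d=\rho$ of \eqref{d}, after a standard smooth approximation), and then optimizing in $\alpha=R/(2t)$ followed by one Cauchy-Schwarz, yields the integrated Gaussian estimate
\[
\int_A\!\!\int_B p(z,w,t)\, d\mu(w)\,d\mu(z)\le \sqrt{\mu(A)\mu(B)}\,\exp\!\left(-\frac{d(A,B)^2}{4t}\right)
\]
for all disjoint Borel sets $A,B\subset\bM$.

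Next I would convert this integrated bound into a pointwise estimate. By the semigroup property and Cauchy-Schwarz one has $p(x,y,t)\le\sqrt{p(x,x,t)p(y,y,t)}\le C(d,\kappa,\rho_2)/\sqrt{\mu(B(x,\sqrt t))\mu(B(y,\sqrt t))}$, which, in the regime $d(x,y)\le K\sqrt t$ for any fixed $K$, already gives the desired estimate with the factor $e^{K^2/(4+\epsilon)}$ absorbed into the constant. In the complementary range $d(x,y)\ge K\sqrt t$, I would fix $\sigma=\tfrac{\epsilon}{16}t$ and apply Corollary \ref{C:harnackheat} twice---first to move the target from $y$ to $w\in B(y,\sqrt t)$ over a time shift $\sigma$, and then, using the symmetry of the heat kernel, to move the source from $x$ to $z\in B(x,\sqrt t)$ over another shift $\sigma$---to obtain
\[
p(x,y,t)\le p(z,w,t+2\sigma)\left(1+\tfrac{\sigma}{t}\right)^{D/2}\!\!\left(1+\tfrac{\sigma}{t+\sigma}\right)^{D/2}\exp\!\left(\tfrac{D}{d}\cdot\tfrac{t}{2\sigma}\right),
\]
uniformly in $(z,w)\in B(x,\sqrt t)\times B(y,\sqrt t)$, with $D$ as in \eqref{D}. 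Averaging in $(z,w)$ and inserting the integrated bound above at time $t+2\sigma$ with $A=B(x,\sqrt t)$, $B=B(y,\sqrt t)$ (so that $d(A,B)\ge d(x,y)-2\sqrt t$) yields the pointwise estimate with $(4+\epsilon)t$ in the Gaussian exponent, once the discrepancy between $(d(x,y)-2\sqrt t)^2$ and $d(x,y)^2$ is absorbed by taking $K=K(\epsilon)$ sufficiently large.

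The main obstacle is the interplay in the final step: the $(4+\epsilon)$-loss and the divergence of $C(d,\kappa,\rho_2,\epsilon)$ as $\epsilon\to 0^+$ both originate in the Harnack time shift $2\sigma$, which must remain strictly positive in order to benefit from the $e^{-d(A,B)^2/(4(t+2\sigma))}$ decay of the integrated bound. The factor $\exp\!\left(\tfrac{D}{d}\cdot\tfrac{t}{2\sigma}\right)$ produced by Corollary \ref{C:harnackheat} is the price of that shift, and prevents the sharp constant $4$ in the Gaussian exponent from being attained by this technique.
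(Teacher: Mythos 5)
Your proof is correct, but it takes a genuinely different route from the paper's. The paper implements a Cao--Yau-style argument: it works with the auxiliary function $f(y,t)=\int_{\bM}p(y,z,t)p(x,z,T)\psi(z)\,d\mu(z)$, derives a weighted $L^2$ monotonicity
\[
\int_{\bM} e^{g(y,\tau)} f^2(y,\tau)\,d\mu(y)\le\int_{\bM} e^{g(y,0)} f^2(y,0)\,d\mu(y)
\]
using a time-dependent weight $g(y,t)=-\frac{d(x,y)^2}{2((1+2\alpha)T-t)}$ satisfying $-\partial_t g\ge\frac{1}{2}\Gamma(g)$, and then relates $f(x,T)$ --- which is the local $L^2$ norm $\int_{B(x,\sqrt t)}p(x,z,T)^2\,d\mu(z)$ of the heat kernel --- to the weighted integral via the parabolic Harnack inequality (Theorem \ref{T:harnack}, Corollary \ref{C:harnackheat}). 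You instead factor the argument through Davies' classical $L^2$ exponential-weight inequality with a \emph{constant} parameter $\alpha$ and a \emph{time-independent} Lipschitz weight $\psi$, producing the Davies--Gaffney estimate $\int_A\int_B p\,d\mu\,d\mu\le\sqrt{\mu(A)\mu(B)}\,e^{-d(A,B)^2/(4t)}$ as a clean intermediate step with the sharp constant $4$, and only then invoke Harnack to pass from an integrated bound to a pointwise one. Both routes use Corollary \ref{C:harnackheat} at the same place and for the same reason, and both incur the $(4+\epsilon)$-loss and the divergence of the constant precisely there. What yours buys is modularity: the Davies--Gaffney step is a self-contained, reusable $L^2$ result in which the origin of the sharp exponent $4$ is visible, and the Harnack passage is cleanly separated as the lossy part. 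What the paper's route buys is directness: it proves exactly the weighted inequality it needs for the function $f$, which is already adapted to the heat kernel, so the conversion to a pointwise bound is shorter. Note that the technical issues you gloss over with ``standard smooth approximation'' --- justifying the integration by parts on a non-compact manifold via the exhaustion $h_n$ of Lemma \ref{L:exhaustion} and the energy estimate of Corollary \ref{C:expdecay}, and regularizing the capped distance function with $\Gamma(\psi)\le 1$ --- are exactly the points the paper spends effort on; your argument needs the same care at those steps, but there is no gap in the underlying idea.
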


\begin{proof}
We suitably adapt here an idea in \cite{Cao-Yau} for the case of a
compact manifold without boundary. Since, however, we allow the
manifold $\bM$ to be  non-compact, we need to take care of this
aspect. Corollary \ref{C:expdecay} will prove crucial in this
connection. Given $T>0$, and $\alpha>0$ we fix $0<\tau \le
(1+\alpha)T$. For a function $\psi\in C^\infty_0(\bM)$, with $\psi
\ge 0$, in $\bM \times (0,\tau)$ we consider the function
\[
f(y,t) = \int_{\bM} p(y,z,t) p(x,z,T) \psi(z) d\mu(z),\ \ \ x\in
\bM.
\]
Since $f = P_t(p(x,\cdot,T)\psi)$, it satisfies the Cauchy problem
\[
\begin{cases}
Lf - f_t = 0 \ \ \ \ \text{in}\ \bM \times (0,\tau),
\\
f(z,0) = p(x,z,T)\psi(z),\ \  \ z\in \bM.
\end{cases}
\]
Notice that thanks to H\"ormander's theorem \cite{Ho} we know $y\to
p(x,y,T)$ is in $C^\infty(\bM)$, and therefore $p(x,\cdot,T)\psi\in
L^\infty(\bM)$. Moreover, \eqref{smp} gives \[
||P_t(p(x,\cdot,T)\psi)||^2_{L^2(\bM)} \le
||p(x,\cdot,T)\psi||^2_{L^2(\bM)} = \int_{\bM} p(x,z,T)^2 \psi(z)
d\mu(z) <\infty, \]
and therefore
\begin{equation}\label{L2f}
\int_0^\tau \int_{\bM} f(y,t)^2 d\mu(z)dt \le \tau \int_{\bM}
p(x,z,T)^2 \psi(z) d\mu(z)dt <\infty.
\end{equation}
Invoking \eqref{pointwiseCaccioppoli} in Corollary \ref{C:expdecay}
we have
\[
\Gamma (f)(z,t) \le  e^{-\nu t} \left( P_t
\Gamma(p(x,\cdot,T)\psi)(z) + P_t
\Gamma^Z(p(x,\cdot,T)\psi)(z)\right).
\]
This allows to conclude
\begin{equation}\label{L2gradf}
\int_0^\tau \int_{\bM} \Gamma(f)(z,t)^2 d\mu(z)dt \leq e^{|\nu|\tau}
\int_{\bM} \left\{\Gamma(p(x,\cdot,T)\psi)(z)^2 +
\Gamma^Z(p(x,\cdot,T)\psi)(z)^2\right\} d\mu(z)<\infty.
\end{equation}
We now consider a function $g\in
C^1([0,(1+\alpha)T],\text{Lip}_d(\bM))\cap L^\infty(\bM\times
(0,(1+\alpha)T))$ such that
\begin{equation}\label{ineg}
- \frac{\p g}{\p t} \ge \frac{1}{2}\Gamma(g),\ \ \text{on}\ \bM \times
(0,(1+\alpha)T).
\end{equation}

Since
\[ (L-\frac{\p}{\p t})f^2 = 2 f(L-\frac{\p}{\p t})f + 2 \Gamma(f) =
2 \Gamma(f), \] multiplying this identity by $h_n^2(y) e^{g(y,t)}$,
where $h_n$ is the sequence in Lemma \ref{L:exhaustion}, and
integrating by parts, we obtain
\begin{align*}
0 & = 2 \int_0^\tau \int_{\bM} h_n^2 e^g \Gamma(f) d\mu(y) dt -
\int_0^\tau \int_{\bM}h_n^2 e^g (L-\frac{\p}{\p t})f^2 d\mu(y) dt
\\
& = 2 \int_0^\tau \int_{\bM} h_n^2 e^g \Gamma(f) d\mu(y) dt + 4
\int_0^\tau \int_{\bM} h_n e^g f \Gamma(h_n,f) d\mu(y) dt \\
& + 2 \int_0^\tau\int_{\bM}h_n^2 e^g f \Gamma(f,g)d\mu(y) dt -
\int_0^\tau \int_{\bM} h_n e^g f^2 \frac{\p g}{\p t} d\mu(y) dt
\\
& -   \int_{\bM} h_n e^g f^2 d\mu(y)\bigg|_{t=0} + \int_{\bM} h_n
e^g f^2 d\mu(y)\bigg|_{t=\tau}
\\
& \ge 2 \int_0^\tau \int_{\bM} h_n^2 e^g \sum_{i=1}^d \left(X_i f +
\frac{f}{2} X_i g\right)^2 d\mu(y) dt + 4 \int_0^\tau \int_{\bM} h_n
e^g f \Gamma(h_n,f) d\mu(y) dt
\\
& + \int_{\bM} h_n e^g f^2 d\mu(y)\bigg|_{t=\tau} -   \int_{\bM} h_n
e^g f^2 d\mu(y)\bigg|_{t=0},
\end{align*}
where in the last inequality we have made use of the assumption
\eqref{ineg} on $g$. From this we conclude
\[
\int_{\bM} h_n e^g f^2 d\mu(y)\bigg|_{t=\tau} \le \int_{\bM} h_n e^g
f^2 d\mu(y)\bigg|_{t=0} - 4 \int_0^\tau \int_{\bM} h_n e^g f
\Gamma(h_n,f) d\mu(y) dt.
\]
We now claim that
\[
\underset{n\to \infty}{\lim} \int_0^\tau \int_{\bM} h_n e^g f
\Gamma(h_n,f) d\mu(y) dt = 0.
\]
To see this we apply Cauchy-Schwarz inequality which gives
\begin{align*}
& \left|\int_0^\tau \int_{\bM} h_n e^g f \Gamma(h_n,f) d\mu(y)
dt\right|\le \left(\int_0^\tau \int_{\bM} h_n^2 e^g f^2 \Gamma(h_n)
d\mu(y) dt\right)^{\frac{1}{2}} \left(\int_0^\tau \int_{\bM} e^g
\Gamma(f) d\mu(y) dt\right)^{\frac{1}{2}}
\\
& \le \left(\int_0^\tau \int_{\bM} e^g f^2 \Gamma(h_n) d\mu(y)
dt\right)^{\frac{1}{2}} \left(\int_0^\tau \int_{\bM} e^g \Gamma(f)
d\mu(y) dt\right)^{\frac{1}{2}} \to 0,
\end{align*}
as $n\to \infty$, thanks to \eqref{L2f}, \eqref{L2gradf}. With the
claim in hands we now let $n\to \infty$ in the above inequality
obtaining
\begin{equation}\label{CYpsi}
\int_{\bM}  e^{g(y,\tau)} f^2(y,\tau) d\mu(y) \le \int_{\bM}
e^{g(y,0)} f^2(y,0) d\mu(y).
\end{equation}
At this point we fix $x\in \bM$ and for $0<t\le\tau$ consider the
indicator function $\mathbf 1_{B(x,\sqrt t)}$ of the ball $B(x,\sqrt
t)$. Let $\psi_k\in C^\infty_0(\bM)$, $\psi_k \ge 0$, be a sequence
such that $\psi_k \to \mathbf 1_{B(x,\sqrt t)}$ in $L^2(\bM)$, with
supp$\ \psi_k\subset B(x,100\sqrt t)$. Slightly abusing the notation
we now set \[ f(y,s) = P_s(p(x,\cdot,T)\mathbf{1}_{B(x,\sqrt t)})(y)
= \int_{B(x,\sqrt t)} p(y,z,s) p(x,z,T) d\mu(z). \] Thanks to the
symmetry of $p(x,y,s) = p(y,x,s)$, we have
\begin{equation}\label{psquare}
 f(x,T) = \int_{B(x,\sqrt t)} p(x,z,T)^2 d\mu(z).
\end{equation}

Applying \eqref{CYpsi} to $f_k(y,s) = P_s(p(x,\cdot,T)\psi_k)(y)$,
we find
\begin{equation}\label{CYpsik}
\int_{\bM}  e^{g(y,\tau)} f^2_k(y,\tau) d\mu(y) \le \int_{\bM}
e^{g(y,0)} f^2_k(y,0) d\mu(y).
\end{equation}
At this point we observe that as $k\to \infty$
\begin{align*}
& \left|\int_{\bM}  e^{g(y,\tau)} f^2_k(y,\tau) d\mu(y) - \int_{\bM}
e^{g(y,\tau)} f^2(y,\tau) d\mu(y)\right|
\\
& \le 2 ||e^{g(\cdot,\tau)}||_{L^\infty(\bM)}
||p(x,\cdot,T)||_{L^2(\bM)} ||p(x,\cdot,\tau)||_{L^\infty(B(x,110
\sqrt t))} ||\psi_k - \mathbf 1_{B(x,\sqrt t)}||_{L^2(\bM)} \to 0.
\end{align*}
By similar considerations we find
\begin{align*}
& \left|\int_{\bM}  e^{g(y,0)} f^2_k(y,0) d\mu(y) - \int_{\bM}
e^{g(y,0)} f^2(y,0) d\mu(y)\right|
\\
& \le 2 ||e^{g(\cdot,0)}||_{L^\infty(\bM)}
||p(x,\cdot,T)||_{L^\infty(B(x,110 \sqrt t))} ||\psi_k - \mathbf
1_{B(x,\sqrt t)}||_{L^2(\bM)} \to 0.
\end{align*}
Letting $k\to \infty$ in \eqref{CYpsik} we thus conclude that the
same inequality holds with $f_k$ replaced by $f(y,s) =
P_s(p(x,\cdot,T)1_{B(x,\sqrt t)})(y)$. This implies in particular
the basic estimate
\begin{align}\label{be}
& \underset{z\in B(x,\sqrt t)}{\inf}\ e^{g(z,\tau)} \int_{B(x,\sqrt
t)} f^2(z,\tau) d\mu(z)
\\
& \le \int_{B(x,\sqrt t)} e^{g(z,\tau)} f^2(z,\tau) d\mu(z) \le
\int_{\bM} e^{g(z,\tau)} f^2(z,\tau) d\mu(z) \notag\\
& \le \int_{\bM} e^{g(z,0)} f^2(z,0) d\mu(z) = \int_{B(y,\sqrt t)}
e^{g(z,0)} p(x,z,T)^2 d\mu(z) \notag\\
& \le \underset{z\in B(y,\sqrt t)}{\sup}\ e^{g(z,0)} \int_{B(y,\sqrt
t)} p(x,z,T)^2 d\mu(z). \notag
\end{align}

At this point we choose in \eqref{be}
\[ g(y,t) = g_x(y,t) = -
\frac{d(x,y)^2}{2((1+2\alpha) T - t)}.
\]
Using the fact that $\Gamma(d)\le 1$, one can easily check that
\eqref{ineg} is satisfied for this $g$. Taking into account that
\[
\underset{z\in B(x,\sqrt t)}{\inf}\ e^{g_x(z,\tau)} = \underset{z\in
B(x,\sqrt t)}{\inf}\ e^{-\frac{d(x,z)^2}{2((1+2\alpha)T- \tau)}} \ge
e^{\frac{-t}{2((1+2\alpha)T- \tau)}},
\]
if we now choose $\tau = (1+\alpha)T$, then from the previous
inequality and from \eqref{psquare} we conclude that
\begin{equation}\label{lemmaub}
\int_{B(x,\sqrt t)} f^2(z,(1+\alpha)T) d\mu(z) \le
\left(\underset{z\in B(y,\sqrt t)}{\sup}\
e^{-\frac{d(x,z)^2}{2(1+2\alpha)T} + \frac{t}{2\alpha T}}\right)
\int_{B(y,\sqrt t)} p(x,z,T)^2 d\mu(z).
\end{equation}
We now apply Theorem \ref{T:harnack} which gives for every $z\in
B(x,\sqrt t)$
\[
f(x,T)^2 \le f(z,(1+\alpha)T)^2
(1+\alpha)^{d(1+\frac{3\kappa}{2\rho_2})}
e^{\frac{t(1+\frac{3\kappa}{2\rho_2})}{2\alpha T}}.
\]
Integrating this inequality on $B(x,\sqrt t)$ we find
\[
\left(\int_{B(y,\sqrt t)} p(x,z,T)^2 d\mu(z)\right)^2 = f(x,T)^2 \le
\frac{(1+\alpha)^{d(1+\frac{3\kappa}{2\rho_2})}
e^{\frac{t(1+\frac{3\kappa}{2\rho_2})}{2\alpha T}}}{\mu(B(x,\sqrt
t)) } \int_{B(x,\sqrt t)} f^2(z,(1+\alpha)T) d\mu(z).
\]
If we now use \eqref{lemmaub} in the last inequality we obtain
\begin{align*}
& \int_{B(y,\sqrt t)} p(x,z,T)^2 d\mu(z) \le
\frac{(1+\alpha)^{d(1+\frac{3\kappa}{2\rho_2})}
e^{\frac{t(1+\frac{3\kappa}{2\rho_2})}{2\alpha T}}}{\mu(B(x,\sqrt
t))} \left(\underset{z\in B(y,\sqrt t)}{\sup}\
e^{-\frac{d(x,z)^2}{2(1+2\alpha)T} + \frac{t}{2\alpha T}}\right).
\end{align*}
Choosing $T = (1+\alpha)t$ in this inequality we find
\begin{align}\label{ub2}
& \int_{B(y,\sqrt t)} p(x,z,(1+\alpha)t)^2 d\mu(z) \le
\frac{(1+\alpha)^{d(1+\frac{3\kappa}{2\rho_2})}
e^{\frac{(1+\frac{3\kappa}{2\rho_2})}{2\alpha(1+\alpha)}+
\frac{1}{2\alpha (1+\alpha)}}}{\mu(B(x,\sqrt t))}
\left(\underset{z\in B(y,\sqrt t)}{\sup}\
e^{-\frac{d(x,z)^2}{2(1+2\alpha)(1+\alpha)t} + \frac{1}{2\alpha
(1+\alpha)}}\right).
\end{align}
We now apply Corollary \ref{C:harnackheat} obtaining for every $z\in
B(y,\sqrt t)$
\[
p(x,y,t)^2 \le p(x,z,(1+\alpha)t)^2 (1+\alpha)^{d\left(
1+\frac{3\kappa}{2\rho_2}\right)} \exp\left(\frac{
1+\frac{3\kappa}{2\rho_2}}{2\alpha } \right).
\]
Integrating this inequality in $z\in B(y,\sqrt t)$, we have
\[
\mu(B(y,\sqrt t)) p(x,y,t)^2 \le (1+\alpha)^{d\left(
1+\frac{3\kappa}{2\rho_2}\right)} e^{\frac{
1+\frac{3\kappa}{2\rho_2}}{2\alpha }} \int_{B(y,\sqrt t)}
p(x,z,(1+\alpha)t)^2 d\mu(z).
\]
Combining this inequality with \eqref{ub2} we conclude
\[
p(x,y,t) \le \frac{(1+\alpha)^{d(1+\frac{3\kappa}{2\rho_2})}
e^{\frac{(1+\frac{3\kappa}{2\rho_2})(2+\alpha)}{4\alpha(1+\alpha)}+
\frac{3}{4\alpha (1+\alpha)}}}{\mu(B(x,\sqrt
t))^{\frac{1}{2}}\mu(B(y,\sqrt
t))^{\frac{1}{2}}}\left(\underset{z\in B(y,\sqrt t)}{\sup}\
e^{-\frac{d(x,z)^2}{2(1+2\alpha)(1+\alpha)t}}\right).
\]
If now $x\in B(y,\sqrt t)$, then
\[
d(x,z)^2 \ge (d(x,y) - \sqrt t)^2 > d(x,y)^2 - t,
\]
and therefore
\[
\underset{z\in B(y,\sqrt t)}{\sup}\
e^{-\frac{d(x,z)^2}{2(1+2\alpha)(1+\alpha)t}} \le
e^{\frac{1}{2(1+2\alpha)(1+\alpha)}}
e^{-\frac{d(x,y)^2}{2(1+2\alpha)(1+\alpha)t}}.
\]
If instead $x\not\in B(y,\sqrt t)$, then for every $\delta >0$ we
have
\[
d(x,z)^2 \ge (1-\delta) d(x,y)^2  - (1+ \delta^{-1}) t
\]
Choosing $\delta = \alpha/(\alpha+1)$ we find
\[
d(x,z)^2 \ge \frac{d(x,y)^2}{1+\alpha}  - (2 + \alpha^{-1}) t,
\]
and therefore
\[
\underset{z\in B(y,\sqrt t)}{\sup}\
e^{-\frac{d(x,z)^2}{2(1+2\alpha)(1+\alpha)t}} \le
e^{-\frac{d(x,y)^2}{2(1+2\alpha)(1+\alpha)^2 t} + \frac{2 +
\alpha^{-1}}{2(1+2\alpha)(1+\alpha)}}
\]
For any $\epsilon >0$ we now choose $\alpha>0$ such that
$2(1+2\alpha)(1+\alpha)^2 = 4+\epsilon$ to reach the desired
conclusion.

\end{proof}

\section{A generalization of Yau's Liouville theorem}

In his seminal 1975 paper \cite{Yau}, by using gradient estimates,
Yau proved his celebrated Liouville theorem that there exists no
non-constant positive harmonic function on a complete Riemannian
manifold with non-negative Ricci curvature. The aim of this section
is to extend Yau's theorem to the sub-Riemannian setting of this
paper. An interesting point to keep in mind here is that, even in
the Riemannian setting, our approach gives a new proof of Yau's
theorem which is not based on delicate tools from Riemann geometry
such as the Hessian and the Laplacian comparison theorems for the
geodesic distance. However, due to the nature of our proof at the
moment we are only able to deal with harmonic functions bounded from
two sides, whereas in \cite{Yau} the author is able to treat
functions satisfying a one-side bound.

\

In what follows we assume that the curvature-dimension inequality
\eqref{CD} hold with $\rho_1 = 0$, $\rho_2
>0$ and $\kappa >0$. We also assume that the metric space $(\bM
,d)$, or equivalently $(\bM, d_R)$, is complete. As we mentioned it
before, see Remark \ref{R:ricci}, in the Riemannian case of Example
\ref{E:riemannian} our assumptions are in fact equivalent to
assuming that $\bM$ is a complete Riemannian manifold with Ric$\ge
0$. We begin with a Harnack type inequality for the operator $L$.

\begin{theorem}\label{T:harnackL}
Let $\bM$ be a complete sub-Riemannian manifold and assume \eqref{CD}
with $\rho_1=0$. Let $0\le f\le M$ be a harmonic function on $\bM$,
then there exists a constant $C = C(d,\rho_2,\kappa)>0$ such that
for any $x_0\in \bM$ and any $r>0$ one has
\[
\underset{B(x_0,r)}{\sup} f  \le  C \underset{B(x_0,r)}{\inf} f.
\]
\end{theorem}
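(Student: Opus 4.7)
The plan is to reduce the elliptic Harnack inequality to the parabolic Harnack inequality already established as Theorem \ref{T:harnack}. The main ingredient is the observation that, under our completeness hypothesis and the assumption that $f$ is bounded, the time-independent function $(x,t)\mapsto f(x)$ must coincide with the heat semigroup acting on $f$.

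First I would note that, by H\"ormander's hypoellipticity theorem, any bounded harmonic function $f$ is automatically smooth on $\mathbb M$. Since $Lf=0$ and $f$ is bounded, the function $u(x,t):=f(x)$ is a bounded classical solution of the Cauchy problem $\partial_t u = Lu$ with initial datum $f\in L^\infty(\mathbb M)$. By the uniqueness statement in Proposition \ref{P:ucp}, which uses completeness in an essential way via the stochastic completeness $P_t 1=1$ (Theorem \ref{T:sc}), we conclude
\[
P_t f(x) = f(x) \quad\text{for every } x\in\mathbb M\text{ and every } t>0.
\]
In particular, $f$ fits exactly the framework of the parabolic Harnack inequality of Theorem \ref{T:harnack}, since we have $0\le f\le M$ and $f\in C^\infty(\mathbb M)$.

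Next I apply Theorem \ref{T:harnack} to $u(x,t)=P_t f(x)=f(x)$. For any $x,y\in\mathbb M$ and any $0<s<t$, inequality \eqref{beauty} becomes
\[
f(x) \le f(y)\,\Bigl(\frac{t}{s}\Bigr)^{D/2}\exp\!\left(\frac{D}{d}\,\frac{d(x,y)^2}{4(t-s)}\right),
\]
with $D = d\bigl(1+\tfrac{3\kappa}{2\rho_2}\bigr)$ as in \eqref{D}. Now fix $x_0\in\mathbb M$ and $r>0$, and take arbitrary $x,y\in B(x_0,r)$, so that $d(x,y)\le 2r$ by the triangle inequality. Choosing the time scales $s=r^2$ and $t=2r^2$, we obtain $t/s=2$ and $d(x,y)^2/(4(t-s))\le 1$, whence
\[
f(x) \le 2^{D/2}\,e^{D/d}\,f(y).
\]
Taking the supremum in $x$ and the infimum in $y$ over $B(x_0,r)$ yields the claim with $C=C(d,\rho_2,\kappa):=2^{D/2}e^{D/d}$, which depends neither on $x_0$ nor on $r$.

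The substantive step is really the identification $P_t f = f$; once this is in hand the rest is a direct specialization of the parabolic estimate to a time-independent situation, together with a scaling choice that kills the Gaussian factor. I expect no further obstacle beyond ensuring that the hypotheses of Proposition \ref{P:ucp} and Theorem \ref{T:harnack} are cleanly met, both of which follow from the standing assumptions ($\rho_1=0$, $\rho_2>0$, $\kappa>0$, and completeness of $(\mathbb M,d)$). As a bonus, letting $t/s\to 1^+$ after sending $s\to\infty$ in the displayed inequality gives $f(x)\le f(y)$ for every pair $x,y$, so the sub-Riemannian Liouville theorem for two-sided bounded harmonic functions follows at once from this same argument.
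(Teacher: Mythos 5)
Your proof is correct and follows essentially the same route as the paper's: identify $P_t f = f$ via Proposition \ref{P:ucp} (the paper applies Theorem \ref{T:harnack} to $P_t f$ first and then makes the identification, you do it in the opposite order, but the content is identical), then specialize the parabolic Harnack inequality to $s=r^2$, $t=2r^2$ with $d(x,y)\le 2r$, yielding the same constant $C=2^{D/2}e^{D/d}=(\sqrt 2\,e^{1/d})^D$. Your closing remark about recovering the Liouville theorem by sending $s,t\to\infty$ with $t/s\to 1^+$ and $t-s\to\infty$ is also valid, and is a mild variant of the paper's Corollary \ref{C:liouville}, which instead lets $r\to\infty$ in the elliptic Harnack estimate.
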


\begin{proof}
Let $f$ be as in the statement of the theorem. By H\"ormander's
theorem \cite{Ho} we know that $f\in C^\infty(\bM)$. Applying
Theorem \ref{T:harnack} to the function $u(x,t) = P_t f(x)$, we
obtain for $x,y\in B(x_0,r)$
\[
P_s f(x) \le P_t f(y) \left(\frac{t}{s}\right)^{\frac{D}{2}}
\exp\left(\frac{D r^2}{d(t-s)}\right),\ \ \ 0<s<t<\infty.
\]
At this point we observe that, thanks to the assumption $Lf = 0$,
the functions $u(x,t) = P_t f(x)$ and $v(x,t) = f(x)$ solve the same
Cauchy problem on $\bM$. By Proposition \ref{P:ucp} we must have
$P_t f(x) = f(x)$ for every $x\in \bM$ and every $t>0$. Therefore,
taking $s = r^2, t = 2r^2$, the latter inequality gives
\[
f(x) \le  \left(\sqrt 2 e^{\frac{1}{d}}\right)^D\ f(y), \ \ \ x,y\in
B(x_0,r).
\]
\end{proof}

\begin{corollary}[of Cauchy-Liouville type]\label{C:liouville}
Assume \eqref{CD} with $\rho_1=0$, then there exist no bounded
solutions to $Lf=0$ on $\bM$, other than the constants.
\end{corollary}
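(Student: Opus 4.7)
The idea is to deduce the Liouville property directly from the scale-invariant Harnack inequality just proved in Theorem \ref{T:harnackL}. Let $f$ be a bounded solution of $Lf = 0$ on $\bM$. By H\"ormander's hypoellipticity theorem \cite{Ho} we have $f \in C^\infty(\bM)$, and since $f$ is bounded we may set $m = \inf_\bM f$ and $M = \sup_\bM f$, both finite real numbers. Consider the auxiliary function $g = f - m$, which is nonnegative, satisfies $0 \le g \le M - m$, and still solves $Lg = 0$.

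Fix an arbitrary base point $x_0 \in \bM$. By the Chow-Rashevsky connectivity theorem the Carnot-Carath\'eodory distance $d(x_0, \cdot)$ is finite at every point of $\bM$, so the metric balls exhaust $\bM$: $\bigcup_{r > 0} B(x_0, r) = \bM$. Applying Theorem \ref{T:harnackL} to $g$ on $B(x_0, r)$ yields
\[
\sup_{B(x_0, r)} g \;\le\; C \inf_{B(x_0, r)} g,
\]
where $C = C(d, \rho_2, \kappa)$ is independent of $r$. Letting $r \to \infty$, the left-hand side converges to $M - m$ while the right-hand side converges to $C \cdot 0 = 0$. We conclude $M = m$, whence $f$ is constant on $\bM$.

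The decisive feature is that the Harnack constant in Theorem \ref{T:harnackL} is genuinely scale-invariant, i.e.\ independent of the radius $r$ and of the center $x_0$. There is no real obstacle to the present argument: all the analytic difficulty has already been absorbed in establishing the parabolic Harnack inequality, which itself rested on the Li-Yau type gradient estimate of Proposition \ref{P:ge} (in the form \eqref{liyaupositifzero}) under the hypothesis $\rho_1 = 0$, together with the uniqueness statement of Proposition \ref{P:ucp} that identifies a bounded harmonic function $f$ with $P_t f$.
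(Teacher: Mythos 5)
Your proof is correct and follows essentially the same route as the paper's: subtract the infimum to make the harmonic function nonnegative, apply the scale-invariant elliptic Harnack inequality of Theorem \ref{T:harnackL} on balls $B(x_0,r)$, and let $r\to\infty$ to force the supremum and infimum to coincide. The only cosmetic difference is that you explicitly invoke hypoellipticity to record $f\in C^\infty(\bM)$ and spell out that $\inf_{B(x_0,r)} g\to 0$; the paper leaves these implicit.
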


\begin{proof}
Suppose $a\le f\le b$ on $\bM$. Consider the function $g = f -
\underset{\bM}{\inf}\ f$. Clearly, $0\le g \le M = b-a$. If we apply
Theorem \ref{T:harnackL} to $g$ we find for any $x_0\in \bM$ and
$r>0$
\[
\underset{B(x_0,r)}{\sup} g  \le  C \underset{B(x_0,r)}{\inf} g.
\]
Letting $r\to \infty$ we reach the conclusion $\underset{\bM}{\sup}\
f = \underset{\bM}{\inf}\ f$, hence $f\equiv $ const.

\end{proof}

\section{Volume growth and Isoperimetric inequality when $\rho_1=0$}\label{S:Isoperimetry}

Throughout this section, we shall assume that $(\bM,d)$ is complete non compact
and that \eqref{geombounds} holds with $\rho_1\ge 0$.

\subsection{Volume growth}

We first derive a basic and straightforward consequence of the
parabolic Harnack inequality on the volume growth of metric balls.

\begin{proposition}
For every $x \in \bM$ and every $R_0 >0$ there is a constant
$C(d,\kappa,\rho_2)>0$ such that, with $D$ as in \eqref{D},
\[
\mu \left( B(x,R)\right) \le \frac{C(d,\kappa,\rho_2)}{R_0^D
p(x,x,R_0^2)} R^D, \quad\ \  R \ge R_0.
\]
\end{proposition}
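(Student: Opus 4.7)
The plan is to deduce the volume estimate from the parabolic Harnack inequality (Corollary \ref{C:harnackheat}) applied to the heat kernel, combined with the sub-Markov property $P_t 1 \le 1$ of the semigroup.

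First, I would apply Corollary \ref{C:harnackheat} in the ``reverse'' direction: with $s = R_0^2$ and $t = 2R^2$ (noting that $R \ge R_0$ ensures $s < t$), and with the roles $y := x$ and $z$ arbitrary in $B(x,R)$, we obtain
\[
p(x,x,R_0^2) \le p(x,z,2R^2)\left(\frac{2R^2}{R_0^2}\right)^{D/2} \exp\!\left(\frac{D}{d}\,\frac{d(x,z)^2}{4(2R^2-R_0^2)}\right).
\]
Since $d(x,z) \le R$ and $2R^2 - R_0^2 \ge R^2$ when $R \ge R_0$, the exponential factor is bounded by $\exp(D/(4d))$, and the prefactor is $2^{D/2}(R/R_0)^D$. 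Thus, with a constant $C_1 = C_1(d,\kappa,\rho_2) := 2^{D/2} e^{D/(4d)}$,
\[
p(x,x,R_0^2) \le C_1 \left(\frac{R}{R_0}\right)^D p(x,z,2R^2), \qquad z \in B(x,R).
\]

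Second, I would integrate this inequality with respect to $z$ over the ball $B(x,R)$ against the Riemannian measure $\mu$. The left-hand side becomes $p(x,x,R_0^2)\mu(B(x,R))$, while the right-hand side is bounded by
\[
C_1 \left(\frac{R}{R_0}\right)^D \int_{B(x,R)} p(x,z,2R^2)\,d\mu(z) \le C_1 \left(\frac{R}{R_0}\right)^D \int_{\bM} p(x,z,2R^2)\,d\mu(z) = C_1 \left(\frac{R}{R_0}\right)^D P_{2R^2}1(x),
\]
and by the sub-Markov property \eqref{submarkov} (which holds in our framework irrespective of stochastic completeness), the last factor is at most $1$. Dividing through by $p(x,x,R_0^2)$, which is strictly positive by the positivity of the heat kernel, yields
\[
\mu(B(x,R)) \le \frac{C_1}{R_0^D\, p(x,x,R_0^2)}\, R^D,
\]
which is the claimed inequality with $C(d,\kappa,\rho_2) := C_1$.

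There is no real obstacle: the proof is essentially a one-line application of Harnack once one selects the time scales $s = R_0^2$ and $t = 2R^2$. The only minor points to check are that $R \ge R_0$ is enough to control the Gaussian factor uniformly, and that the constant depends solely on $d$, $\kappa$, $\rho_2$ (through $D = d(1 + 3\kappa/(2\rho_2))$) and not on $x$ or $R_0$, which is clear from the display above.
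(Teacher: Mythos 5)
Your argument is correct and essentially the same as the paper's. Both proofs rest solely on Corollary \ref{C:harnackheat} and $P_t 1\le 1$: the paper first applies Harnack on the diagonal to get $p(x,x,R^2)\ge p(x,x,R_0^2)(R_0/R)^D$ and then cites the on-diagonal bound \eqref{odub} (which was itself derived at the start of Section \ref{S:gaussianub} by the same integration-plus-sub-Markov step you perform), whereas you merge the two into a single application of Harnack from $(x,R_0^2)$ to $(z,2R^2)$ with $z\in B(x,R)$ followed by integration.
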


\begin{proof}
Let $t>\tau >0$. From the Harnack inequality of Corollary
\ref{C:harnackheat}, we have
\[
p(x,x,t) \ge p(x,x,\tau) \left( \frac{\tau}{t} \right)^{\frac{D}{2}}
\]
On the other hand, the inequality (\ref{odub}) gives
\[
p(x,x,t) \le \frac{C(d,\kappa,\rho_2)}{\mu  \left(
B(x,\sqrt{t})\right) }.
\]
This implies the desired conclusion.

\end{proof}

\subsection{Isoperimetric inequality}

In \cite{GN} it was proved that in a Carnot-Carath\'eodory space
$(X,\mu,d)$ the doubling condition \[ \mu(B(x,2r)) \le C_1
\mu(B(x,r)), \ \ \ x\in X, r>0, \] for the volume of the metric
balls combined with a weak Poincar\'e inequality suffice to
establish the following basic relative isoperimetric inequality
\begin{align}\label{iso}
\min\left\{\mu(E\cap B(x,r)),\mu((X\setminus E)\cap
B(x,r))\right\}^{\frac{D-1}{D}} \le C_{\text{iso}}
\left(\frac{r^D}{\mu(B(x,r))}\right)^{\frac{1}{D}} P(E,B(x,r)),
\end{align}
where $P(E,B(x,r))$ represents a generalization of De Giorgi's
variational notion of perimeter, and $E\subset X$ is any set of
locally finite perimeter. In this inequality the number $D = \log_2
C_1$, where $C_1$ is the doubling constant, and $C_{\text{iso}}$ is
a constant which depends only on $C_1$ and on the constant in the
Poincar\'e inequality. If in addition the space $X$ satisfies the
maximum volume growth
\begin{equation}\label{maxvolgrowth}
\mu(B(x,r))\ge C_2 r^D,\ \ \ x\in \bM, r>0,
\end{equation}
then \eqref{iso} gives
\begin{align}\label{isovolgrowth}
\min\left\{\mu(E\cap B(x,r)),\mu((X\setminus E)\cap
B(x,r))\right\}^{\frac{D-1}{D}} \le C^*_{\text{iso}} P(E,B(x,r)),
\end{align}
where $C^*_{\text{iso}} = C_2^{-1/D} C_{\text{iso}}$.

When $X=M$ is a Riemannian manifold with Ric $\ge 0$, then the
doubling condition with $C_1 = 2^D$, $D=$ dim\ $\bM$, follows from
Bishop-Gromov comparison theorem (see \cite{Chavel}, Proposition 3.3
and Theorem 3.10), whereas the Poincar\'e inequality was proved by
Buser, see \cite{Buser}. As a consequence, one obtains the relative
isoperimetric inequality \eqref{iso} in this setting. When $\bM$
satisfies the maximum volume growth \eqref{maxvolgrowth}, one also
obtains from \eqref{iso} the global isoperimetric inequality
\begin{equation}\label{isoglobal}
\mu(E)^{\frac{D-1}{D}} \le C_{\text{iso}} P(E,\bM),
\end{equation}
for any measurable set of locally finite perimeter $E\subset \bM$.

\

In this subsection we investigate the sub-Riemannian counterpart of
the isoperimetric estimate \eqref{isoglobal} under the assumption
that $\rho_1 = 0$. Here, the main obstacle is precisely the a priori
lack of a global doubling condition and of a Poincar\'e inequality,
and therefore we cannot rely on the above cited results from
\cite{GN}. Instead, using our Li-Yau type estimate \eqref{LY} we
adapt some beautiful ideas of Varopoulos and Ledoux to provide a
characterization of those sub-Riemannian manifolds which support an
inequality such as \eqref{isoglobal}. We stress that, due to the
nature of our approach, we obtain a lower bound on the dimension $D$
in \eqref{isoglobal}, but in the case of a graded nilpotent Lie
group our $D$ is not optimal, see Remark \ref{R:ultrac}.

In what follows, given an open set $\Om \subset \bM$ we will
indicate with
\[
\mathcal F(\Om) = \{\phi\in C^1_0(\Om,\mathcal H)\mid
||\phi||_\infty \le 1\}.
\]
Here, for $\phi = \sum_{i=1}^d \phi_i X_i$, we have let
$||\phi||_\infty = \underset{\Om}{\sup} \sqrt{\sum_{i=1}^d
\phi_i^2}$. Following \cite{CDG}, given a function $f\in
L^1_{loc}(\Om)$ we define the horizontal total variation of $f$ in
$\Om$ as
\[
\text{Var}_{\Ho}(f;\Om) = \underset{\phi\in \F(\Om)}{\sup} \int_\Om
f \left(\sum_{i=1}^d X_i \phi_i\right) d\mu.
\]
The space \[ BV_\Ho(\Om) = \{f\in L^1(\Om)\mid
\text{Var}_\Ho(f;\Om)<\infty\},
\]
endowed with the norm
\[
||f||_{BV_\Ho(\Om)} = ||f||_{L^1(\bM)} + \text{Var}_\Ho(f;\Om),
\]
is a Banach space. It is well-known that $W^{1,1}_\Ho(\Om) = \{f\in
L^1(\Om)\mid X_i f\in L^1(\Om),i=1,...,d\}$ is a strict subspace of
$BV_\Ho(\Om)$. It is important to note that when $f\in
W^{1,1}_\Ho(\Om)$, then $f\in BV_\Ho(\Om)$, and one has in fact
\[
\text{Var}_\Ho(f;\Om) = ||\sqrt{\Gamma(f)}||_{L^1(\Om)}.
\]
Given a measurable set $E\subset \bM$ we say that it has finite
horizontal perimeter in $\Om$ if $\mathbf 1_E\in BV_\Ho(\Om)$. In
such case the horizontal perimeter of $E$ relative to $\Om$ is by
definition
\[
P_\Ho(E;\Om) = \text{Var}_\Ho(\mathbf 1_E;\Om).
\]
We say that a measurable set $E\subset \bM$ is a Caccioppoli set if
$P_\Ho(E;\Om)<\infty$ for any $\Om \Subset \bM$. We will need the
following approximation result, see Theorem 1.14 in \cite{GN}.

\begin{proposition}\label{P:ag}
Let $f\in BV_\Ho(\Om)$, then there exists a sequence $\{f_n\}_{n\in
\mathbb N}$ of functions in $C^\infty(\Om)$ such that:
\begin{itemize}
\item[(i)] $||f_n - f||_{L^1(\Om)} \to 0$;
\item[(ii)] $\int_\Om \sqrt{\Gamma(f_n)} d\mu \to
\text{Var}_\Ho(f;\Om)$.
\end{itemize}
If $\Om = \bM$, then the sequence $\{f_n\}_{n\in \mathbb N}$ can be
taken in $C^\infty_0(\bM)$.
\end{proposition}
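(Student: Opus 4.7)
The plan is to adapt the classical Meyers–Serrin / Anzellotti–Giaquinta construction to the horizontal setting by combining a locally finite partition of unity with mollification and the Friedrichs commutator lemma. First I would exhaust $\Omega$ by relatively compact open sets $\Omega_k$ with $\Omega_k \Subset \Omega_{k+1}$ and $\bigcup_k \Omega_k = \Omega$, take $\Omega_{-1} = \Omega_0 = \emptyset$, and fix a smooth partition of unity $\{\varphi_k\}$ subordinate to the locally finite open cover $U_k := \Omega_{k+1} \setminus \overline{\Omega_{k-1}}$. Working in local coordinate charts and using a standard Euclidean mollifier $\rho_\epsilon$, for each fixed $\eta > 0$ I pick $\epsilon_k = \epsilon_k(\eta) > 0$ small enough that the piece $g_k := \rho_{\epsilon_k} \ast (\varphi_k f)$ is smooth, supported in $U_k$, and satisfies $\|g_k - \varphi_k f\|_{L^1(\Omega)} < \eta\, 2^{-k}$. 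The sum $f_\eta := \sum_k g_k$ is locally a finite sum, hence $f_\eta \in C^\infty(\Omega)$, and telescoping the $L^1$ estimates yields $\|f_\eta - f\|_{L^1(\Omega)} < \eta$, which establishes (i).

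For (ii), lower semicontinuity of $\text{Var}_\Ho(\cdot;\Omega)$ under $L^1$-convergence --- immediate from its definition as a supremum of continuous linear functionals on $\F(\Omega)$ --- gives $\liminf_{\eta \to 0} \text{Var}_\Ho(f_\eta;\Omega) \ge \text{Var}_\Ho(f;\Omega)$. The matching upper bound is the technical core. For each $\phi \in \F(\Omega)$, I would expand
\[
\int_\Omega f_\eta \sum_{i=1}^d X_i \phi_i\, d\mu = \sum_k \int_\Omega (\varphi_k f) \sum_{i=1}^d X_i (\rho_{\epsilon_k} \ast \phi_i)\, d\mu + E_k(\phi),
\]
where $E_k(\phi)$ is the paired Friedrichs commutator error arising from the fact that each smooth $X_i = \sum_j a_{ij}(x) \partial_j$ does not commute with convolution. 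For smooth coefficients $a_{ij}$, the commutator $\rho_{\epsilon_k} \ast (X_i \psi) - X_i(\rho_{\epsilon_k} \ast \psi)$ tends to $0$ in $L^1_{\mathrm{loc}}$ as $\epsilon_k \to 0$ uniformly on bounded test families, so a further shrinking of $\epsilon_k$ absorbs $|E_k(\phi)|$ into the $2^{-k}\eta$ budget uniformly in $\phi \in \F(\Omega)$. Using $\sum_k \varphi_k \equiv 1$ and the definition of $\text{Var}_\Ho(f;\Omega)$, the main term is bounded by $\text{Var}_\Ho(f;\Omega)$ up to an $O(\eta)$ correction, yielding $\limsup_{\eta \to 0} \text{Var}_\Ho(f_\eta;\Omega) \le \text{Var}_\Ho(f;\Omega)$. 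Since $f_\eta \in W^{1,1}_{\Ho,\mathrm{loc}}(\Omega)$, the horizontal variation equals $\int_\Omega \sqrt{\Gamma(f_\eta)}\, d\mu$, completing (ii).

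For the case $\Omega = \bM$, I would truncate the smooth approximants $f_\eta$ by the exhaustion cutoffs $h_n \in C^\infty_0(\bM)$ furnished by Lemma \ref{L:exhaustion}. Each $h_n f_\eta$ lies in $C^\infty_0(\bM)$, and the Leibniz bound $\sqrt{\Gamma(h_n f_\eta)} \le h_n \sqrt{\Gamma(f_\eta)} + |f_\eta| \sqrt{\Gamma(h_n)}$, together with $\sqrt{\Gamma(h_n)} \le \|\nabla_R h_n\|_\infty \to 0$, controls the truncation error by $\|\nabla_R h_n\|_\infty \|f_\eta\|_{L^1(\bM)}$, which vanishes as $n \to \infty$. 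A standard diagonal extraction $n = n(\eta) \to \infty$ then produces the desired $C^\infty_0(\bM)$ sequence. The principal obstacle is the Friedrichs commutator step in (ii): the horizontal vector fields do not commute with convolution, so the commutator must be handled in local charts using the smoothness of the coefficients of the $X_i$ and patched globally via the partition of unity, which constitutes the main bookkeeping step in the proof of Theorem 1.14 in \cite{GN}.
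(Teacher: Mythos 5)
The paper does not give a proof of Proposition~\ref{P:ag}: the statement is quoted, with the proof attributed to Theorem~1.14 of \cite{GN}. Your outline is the Anzellotti--Giaquinta mechanism that \cite{GN} adapts to the Carnot--Carath\'eodory setting: exhaustion of $\Omega$, subordinate partition of unity, mollification on each annulus with a $k$-dependent parameter $\epsilon_k(\eta)$, lower semicontinuity of $\mathrm{Var}_\Ho$ for one inequality in~(ii), and a commutator argument for the reverse inequality, followed by truncation with the cutoffs of Lemma~\ref{L:exhaustion} when $\Omega=\bM$. So the strategy is the right one and matches the cited source.

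Two steps in the sketch, however, are asserted but would not go through as literally written. First, in the identity
\[
\int_\Omega f_\eta \sum_{i=1}^d X_i \phi_i \, d\mu \;=\; \sum_k \int_\Omega (\varphi_k f)\sum_{i=1}^d X_i\bigl(\rho_{\epsilon_k}\ast\phi_i\bigr)\,d\mu \;+\; \sum_k E_k(\phi),
\]
the main sum is \emph{not} bounded by $\mathrm{Var}_\Ho(f;\Omega)$ merely from $\sum_k \varphi_k\equiv 1$: because $\epsilon_k$ varies with $k$, the field $\sum_k \varphi_k\,\rho_{\epsilon_k}\ast\phi$ is a single admissible competitor only after $\varphi_k$ is moved inside the derivative. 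That integration by parts produces the extra term $-\sum_k\int_\Omega f\,(X_i\varphi_k)\,(\rho_{\epsilon_k}\ast\phi_i)\,d\mu$, which is what gets killed, up to $O(\eta)$, by $\sum_k X_i\varphi_k=X_i(1)=0$ together with a further shrinking of $\epsilon_k$. This cancellation is the heart of the Meyers--Serrin argument; as written, the jump from the displayed sum to $\mathrm{Var}_\Ho(f;\Omega)+O(\eta)$ is a genuine gap. Second, you claim the commutator error $E_k(\phi)$ can be made $<2^{-k}\eta$ ``uniformly in $\phi\in\mathcal F(\Omega)$'' by shrinking $\epsilon_k$. The ordinary Friedrichs lemma gives convergence of $\rho_\epsilon\ast(X_i\psi)-X_i(\rho_\epsilon\ast\psi)$ for a \emph{fixed} $\psi$; what you need is the quantitative commutator estimate (valid since the $X_i$ have smooth, hence locally Lipschitz, coefficients) that bounds the error by $C(K)\,\epsilon_k\,\|\phi\|_{L^\infty}$ on compacta, which then is uniform over the $\|\cdot\|_\infty$-bounded family $\mathcal F(\Omega)$. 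Stating and using that version, rather than the non-uniform one, is what actually closes the argument. The global truncation step with $h_n f_\eta$ and Lemma~\ref{L:exhaustion} is fine.
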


 Our intent is
to establish the following result.

\begin{theorem}[Isoperimetric inequality]\label{T:iso} Suppose that $\bM$ is not compact in
the metric topology and that there exists $D>1$ such that
\begin{equation}\label{volgrowth}
\mu(B(x,r)) \ge C_1 r^D.
\end{equation}
Then, there is a constant $C_{\emph{iso}} =
C_{\emph{iso}}(d,\rho_2,\kappa, C_1,D)>0$, such that for every
Caccioppoli set $E\subset \bM$
\[
\mu(E)^{\frac{D-1}{D}} \le C_{\emph{iso}} P_\Ho(E,\bM).
\]
\end{theorem}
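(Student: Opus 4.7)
The plan is to adapt the Varopoulos--Ledoux heat-semigroup strategy. For a Caccioppoli set $E$ with $\mu(E)<\infty$, I would study the functional
\[
\Phi(t) = \int_\bM \mathbf{1}_E\bigl(1 - P_t \mathbf{1}_E\bigr)\, d\mu = \mu(E) - \int_\bM \mathbf{1}_E \cdot P_t \mathbf{1}_E\, d\mu,
\]
and sandwich it between a lower bound coming from ultracontractivity and an upper bound coming from the perimeter, comparing them at an optimal time.

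For the lower bound, I would first derive ultracontractivity of the semigroup. The on-diagonal estimate $p(x,x,t) \le C(d,\kappa,\rho_2)/\mu(B(x,\sqrt{t}))$ (already established in Section \ref{S:harnack} via the Harnack inequality) together with the hypothesis \eqref{volgrowth} yields $\|P_t\|_{L^1\to L^\infty} \le K_0 t^{-D/2}$ for a constant $K_0$ depending only on $d,\kappa,\rho_2,C_1$. Hence
\[
\int_\bM \mathbf{1}_E \cdot P_t \mathbf{1}_E\, d\mu \;\le\; \|P_t \mathbf{1}_E\|_\infty\, \mu(E) \;\le\; K_0\, t^{-D/2}\, \mu(E)^2,
\]
so that with $t_\star := (2K_0 \mu(E))^{2/D}$ one obtains $\Phi(t_\star) \ge \mu(E)/2$.

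The central technical step is the pointwise horizontal gradient bound
\[
\sqrt{\Gamma(P_t g)}(x) \;\le\; \frac{K_1 \|g\|_\infty}{\sqrt{t}}, \qquad g\in L^\infty(\bM),\ t>0,
\]
which I expect to be the main obstacle. I would derive it from Proposition \ref{P:ge} by a doubling trick: after splitting $g$ into positive and negative parts, assume $0\le g\le M$ and set $u = P_t g$, $v = M - u$; both are nonnegative bounded classical solutions of the heat equation, with $\Gamma(v) = \Gamma(u)$ and $\partial_t v = -\partial_t u$. Writing $a = 1 + 3\kappa/(2\rho_2)$ and applying Proposition \ref{P:ge} with $\rho_1 = 0$ to each of $u$ and $v$ (and discarding the nonnegative $\Gamma^Z$ contribution) yields
\[
\Gamma(u) \;\le\; a\, u\, \partial_t u + \tfrac{da^2}{2t}\, u^2, \qquad \Gamma(u) \;\le\; -a\, v\, \partial_t u + \tfrac{da^2}{2t}\, v^2.
\]
Multiplying the first by $v$ and the second by $u$ and adding causes the $\partial_t u$ contributions to cancel, producing $(u+v)\Gamma(u) \le \tfrac{da^2}{2t}\, uv(u+v)$, hence $\Gamma(u) \le \tfrac{da^2}{2t}\, uv \le \tfrac{da^2 M^2}{8t}$. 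The delicate issue is that Proposition \ref{P:ge} is stated for $f \in C^\infty_0(\bM)$, whereas $v = M - P_t g$ is not of this form; this can be handled either by inspecting the proof of Proposition \ref{P:ge} to verify it extends to arbitrary bounded positive classical heat solutions, or by approximating the constant $M$ with a sequence of compactly supported cutoffs and passing to the limit.

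Once the gradient bound is in place, the upper bound on $\Phi$ follows by approximation. By Proposition \ref{P:ag}, choose $f_n \in C^\infty_0(\bM)$ with $0 \le f_n \le 1$, $f_n \to \mathbf{1}_E$ in $L^1(\bM)$, and $\int \sqrt{\Gamma(f_n)}\, d\mu \to P_\Ho(E)$ (a truncation after approximation is harmless). Setting $\Phi_n(t) := \int f_n(1 - P_t f_n)\, d\mu$, one computes $\Phi_n'(t) = \int \Gamma(f_n, P_t f_n)\, d\mu$, and Cauchy--Schwarz together with the gradient bound give
\[
\Phi_n'(t) \;\le\; \int \sqrt{\Gamma(f_n)}\, \sqrt{\Gamma(P_t f_n)}\, d\mu \;\le\; \frac{K_1}{\sqrt{t}} \int \sqrt{\Gamma(f_n)}\, d\mu.
\]
Integrating in $t$ and sending $n\to\infty$ (justified by the $L^1$-convergence of $f_n$ and ultracontractivity, which imply $\Phi_n(t) \to \Phi(t)$ pointwise) yields $\Phi(t) \le 2 K_1 \sqrt{t}\, P_\Ho(E)$. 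Evaluating at $t = t_\star$ and comparing with the lower bound produces $\mu(E)/2 \le 2 K_1 (2K_0 \mu(E))^{1/D} P_\Ho(E)$, which rearranges to the desired $\mu(E)^{(D-1)/D} \le C_{\mathrm{iso}} P_\Ho(E)$ with $C_{\mathrm{iso}}$ depending only on $d,\kappa,\rho_2,C_1,D$.
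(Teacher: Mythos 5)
Your proposal is correct and follows the same Varopoulos--Ledoux heat-semigroup skeleton as the paper: ultracontractivity furnishes the lower bound on $\Phi(t)$, a pointwise horizontal gradient bound on $P_t$ furnishes the upper bound, and the isoperimetric inequality falls out by comparing the two at an optimal time. Where you diverge from the paper is precisely at the step you flag as the main obstacle, namely the derivation of $\sqrt{\Gamma(P_t g)}\le K_1\|g\|_\infty/\sqrt t$. You obtain it by the Bakry--Ledoux doubling trick: apply the Li--Yau inequality of Proposition~\ref{P:ge} (with $\rho_1=0$) to both $u=P_tg$ and $v=M-u$ and let the $\partial_t u$ terms cancel after cross-multiplying by $v$ and $u$. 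The paper instead argues by duality: from the one-sided bound $(\partial_t P_tf)^-\le\frac{\nu}{2t}P_tf$ and stochastic completeness it first extracts the $L^1\!\to\!L^1$ estimate $\|\partial_t P_tf\|_{L^1}\le\frac{\nu}{t}\|f\|_{L^1}$ via a Tonelli/Fubini argument, then dualizes (using self-adjointness of $L$) to get the $L^\infty$ bound $\|\partial_t P_tf\|_{L^\infty}\le\frac{\nu}{t}\|f\|_{L^\infty}$, and finally feeds that back into the Li--Yau inequality to control $\Gamma(P_tf)$. Both routes rest on the same two ingredients -- the $\rho_1=0$ Li--Yau inequality and $P_t1=1$ -- but they trade off differently: your doubling trick is shorter and even produces a slightly sharper numerical constant, whereas the paper's duality argument has the technical advantage of only ever invoking Proposition~\ref{P:ge} for $f\in C_0^\infty(\bM)$, whereas you must first extend Li--Yau to $v=P_t(M-g)$, i.e.\ to initial data that are bounded and nonnegative but not compactly supported. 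Your proposed remedy is sound: approximate $M-g$ by $h_n(M-g)\in C_0^\infty(\bM)$ with $h_n$ the exhaustion functions of Lemma~\ref{L:exhaustion}; since for $n$ large $h_n\equiv 1$ on $\mathrm{supp}\,g$ one has $0\le h_n(M-g)\nearrow M-g$, and the local $C^\infty$ convergence of $P_t\bigl(h_n(M-g)\bigr)$ to $P_t(M-g)$ (by H\"ormander hypoellipticity applied to the uniformly bounded heat solutions) lets the quadratic form of the Li--Yau inequality pass to the limit. The remaining steps -- the computation $\Phi_n'(t)=\int_\bM\Gamma(f_n,P_tf_n)\,d\mu$, Cauchy--Schwarz, the approximation via Proposition~\ref{P:ag}, and the optimization in $t$ -- match the paper's argument.
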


The essence of the proof of Theorem \ref{T:iso} is contained in the
following result.

\begin{proposition}\label{P:char}
Let $D>1$. Let us assume that $\bM$ is not compact in the metric
topology, then the following assertions  are equivalent:
\begin{itemize}
\item[(1)] There exists a constant $C_1 >0$ such that for every $x \in \bM$, $r \ge 0$,
\[
\mu (B(x,r)) \ge C_1 r^D.
\]
\item[(2)] There exists a constant $C_2>0$ such that for $x \in \bM$, $t>0$,
\[
p(x,x,t) \le \frac{C_2}{t^{\frac{D}{2}}}.
\]
\item[(3)] There exists a constant $C_3 >0$ such that for every Caccioppoli set $E\subset \bM$ one has
\[
\mu(E)^{\frac{D-1}{D}} \le C_3 P_\Ho(E;\bM).
\]
\item[(4)] With the same constant $C_3>0$ as in (3), for every $f \in
BV_\Ho(\bM)$ one has
\[
\left( \int_\bM |f|^{\frac{D}{D-1}} d\mu\right)^{\frac{D-1}{D}}  \le
C_3 \emph{Var}_\Ho(f;\bM).
\]
\end{itemize}
\end{proposition}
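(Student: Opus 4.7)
The plan is to establish the cyclic chain of implications $(1) \Rightarrow (2) \Rightarrow (3) \Rightarrow (1)$ together with the equivalence $(3) \Leftrightarrow (4)$.

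For $(1) \Rightarrow (2)$, I would simply combine the on-diagonal heat kernel bound \eqref{odub}, $p(x,x,t) \le C(d,\kappa,\rho_2)/\mu(B(x,\sqrt t))$, with the hypothesis (1) to conclude $p(x,x,t) \le C_2 t^{-D/2}$.

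For $(3) \Leftrightarrow (4)$, this is the classical Federer-Fleming-Maz'ya equivalence. The implication $(4) \Rightarrow (3)$ follows by applying (4) to $f = \mathbf 1_E$ after smooth approximation via Proposition \ref{P:ag}. For $(3) \Rightarrow (4)$, I would use the layer cake formula $f = \int_0^\infty \mathbf 1_{\{|f|>s\}} ds$: Minkowski's integral inequality gives $\|f\|_{D/(D-1)} \le \int_0^\infty \mu(\{|f|>s\})^{(D-1)/D} ds$, the coarea formula gives $\text{Var}_\Ho(f;\bM) = \int_0^\infty P_\Ho(\{|f|>s\};\bM)\, ds$, and inserting (3) yields (4).

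For $(3) \Rightarrow (1)$, I apply (3) to $E = B(x_0,r)$. Since $\sqrt{\Gamma(d(x_0,\cdot))} \le 1$ (by the variational characterisation \eqref{d}), the sub-Riemannian coarea formula gives
\[
\int_0^r P_\Ho(B(x_0,s);\bM)\, ds = \int_{B(x_0,r)} \sqrt{\Gamma(d(x_0,\cdot))}\, d\mu \le \mu(B(x_0,r)).
\]
Writing $V(r) = \mu(B(x_0,r))$, this means $P_\Ho(B(x_0,r);\bM) \le V'(r)$ for a.e.\ $r$. Hypothesis (3) then yields the differential inequality $V(r)^{(D-1)/D} \le C_3 V'(r)$, which integrates to $V(r) \ge (r/(DC_3))^D$, establishing (1).

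The core technical step is $(2) \Rightarrow (3)$, which I would handle via the semigroup approach of Varopoulos-Ledoux. First, combining (2) with the factorization $p(x,y,t) \le \sqrt{p(x,x,t)\,p(y,y,t)}$ (consequence of Chapman-Kolmogorov and Cauchy-Schwarz) upgrades (2) to the full ultracontractive bound $\|P_t f\|_\infty \le C_2 t^{-D/2} \|f\|_1$. Second, from the Li-Yau inequality of Proposition \ref{P:ge} applied with $\rho_1 = 0$ (which is permitted since $\rho_1 \ge 0$ by hypothesis), I would derive the pointwise gradient bound
\[
\|\sqrt{\Gamma(P_s g)}\|_\infty \le \frac{C(d,\rho_2,\kappa)}{\sqrt s}\, \|g\|_\infty, \qquad g \in L^\infty(\bM).
\]
Dualising via the symmetry identity $\int (P_t f - f)\, g\, d\mu = -\int_0^t \int \Gamma(f, P_s g)\, d\mu\, ds$, this pointwise bound transforms into
\[
\|P_t f - f\|_1 \le C\sqrt{t}\, \int_\bM \sqrt{\Gamma(f)}\, d\mu
\]
for $f \in W^{1,1}_\Ho(\bM)$. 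Extending the latter to $f = \mathbf 1_E$ for a Caccioppoli set $E$ via Proposition \ref{P:ag}, I would then decompose
\[
\mu(E) = \int_E P_t \mathbf 1_E\, d\mu + \int_E (\mathbf 1_E - P_t \mathbf 1_E)\, d\mu \le C_2\, \mu(E)^2\, t^{-D/2} + C\sqrt{t}\, P_\Ho(E;\bM),
\]
and optimise by choosing $t \sim \mu(E)^{2/D}$, which yields exactly $\mu(E)^{(D-1)/D} \le C_3\, P_\Ho(E;\bM)$.

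The principal obstacle is the derivation of the semigroup gradient bound $\|\sqrt{\Gamma(P_s g)}\|_\infty \le Cs^{-1/2}\|g\|_\infty$ from \eqref{liyaupositifzero}. In the Riemannian setting this is routine $\Gamma_2$-calculus, but in our sub-Riemannian framework the vertical contribution $\Gamma^Z(\ln P_s g)$ appearing on the left side of the Li-Yau inequality must be absorbed carefully; extracting the bound uses the full strength of both Bochner formulas of section \ref{S:bochner} and the structural assumption \eqref{geombounds}, and the resulting constant will necessarily depend on $d$, $\kappa$, and $\rho_2$.
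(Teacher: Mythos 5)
Your proof is correct, and it follows the paper's skeleton for three of the four links in the chain: the derivation $(1)\Rightarrow(2)$ from \eqref{odub}, the $(2)\Rightarrow(3)$ implication via Varopoulos--Ledoux (ultracontractivity, then a gradient bound for $P_t$, then the pseudo--Poincar\'e inequality $\|P_tf-f\|_{L^1}\le C\sqrt t\,\|\sqrt{\Gamma(f)}\|_{L^1}$, then optimisation in $t$), and the classical Fleming--Rishel--Maz'ya proof of $(3)\Leftrightarrow(4)$ are all exactly the paper's route. The genuine divergence is in closing the cycle: the paper proves $(4)\Rightarrow(1)$ by choosing $f(y)=(r-d(y,x))^+$ in the Sobolev inequality, interpolating with H\"older, iterating the resulting scale relation $\mu(B(x,r))\ge c\,\mu(B(x,r/2))^a\,r^{\theta p a}$ infinitely many times, and invoking the Nagel--Stein--Wainger local doubling estimate (Theorem \ref{T:doubling}) to show the boundary term $\mu(B(x,r/2^k))^{a^k}$ does not degenerate. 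You instead prove $(3)\Rightarrow(1)$ directly by applying the isoperimetric inequality to balls and differentiating: the coarea formula for the Lipschitz map $(r-d(x_0,\cdot))^+$ gives $\int_0^r P_\Ho(B(x_0,s);\bM)\,ds\le\mu(B(x_0,r))$, hence $P_\Ho(B(x_0,r);\bM)\le V'(r)$ a.e., and the differential inequality $V^{(D-1)/D}\le C_3 V'$ integrates (using only that $V$ is nondecreasing with $V(0^+)=0$, so $W=DV^{1/D}$ satisfies $W(r)\ge\int_0^r W'\ge r/C_3$) to $\mu(B(x_0,r))\ge(r/(DC_3))^D$. This is a more elementary and self-contained argument: it avoids the infinite iteration, avoids the appeal to the NSW small-scale doubling theorem, and yields a cleaner explicit constant. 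The one place you slightly misdiagnose the difficulty is in $(2)\Rightarrow(3)$: you flag the absorption of the vertical term $\Gamma^Z(\ln P_tg)$ as the principal obstacle, but that term is nonnegative and the paper simply drops it from the left-hand side of \eqref{liyaupositifzero}. The real obstacle, and the step the paper is most careful about, is that the Li--Yau estimate controls $(\partial_t\ln P_tf)^-$ only; to control $\Gamma(P_tf)$ one also needs an upper bound on $\partial_t P_tf$, and the paper gets it by first establishing $\|\partial_tP_tf\|_{L^1}\le\nu t^{-1}\|f\|_{L^1}$ via stochastic completeness ($P_t1=1$ forces $\int(\partial_tP_tf)^+=\int(\partial_tP_tf)^-$), and then dualising to obtain $\|\partial_tP_tf\|_{L^\infty}\le\nu t^{-1}\|f\|_{L^\infty}$. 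With that two-sided bound in hand the gradient estimate $\|\sqrt{\Gamma(P_tf)}\|_\infty\le Ct^{-1/2}\|f\|_\infty$ falls out of \eqref{LY} directly.
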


\begin{proof}
That (1) $\rightarrow$ (2) follows immediately from the Gaussian
bound of Theorem \ref{T:ub} (observe that we may take
$C_2=\frac{1}{C_1}$).

The proof that (2) $\rightarrow$ (3) is not straightforward. First
we note that \eqref{geombounds} implies the Li-Yau type estimate
\eqref{liyaupositifzero}. This enables us to adapt some beautiful
ideas of Varopoulos (see \cite{Varopoulos2}, pp.256-58) and Ledoux
(see pp. 22 in \cite{ledoux-bourbaki}, see also Theorem 8.4 in
\cite{ledoux-stflour}). Let $f \in C_0(\bM)$ with $f\ge 0$. By
\eqref{liyaupositifzero} we obtain
\begin{equation}\label{LY}
\Gamma (P_t f) - (1+\frac{3\kappa}{2\rho_2})P_tf  \frac{\p P_tf}{\p
t} \le \frac{d\left(
1+\frac{3\kappa}{2\rho_2}\right)^2}{2t}(P_tf)^2.
\end{equation}
This gives in particular, with $\nu = d\left(
1+\frac{3\kappa}{2\rho_2}\right)$,
\begin{equation}\label{LYpt}
\left(\frac{\p P_tf}{\p t}\right)^- \le \frac{\nu}{2t} P_tf,
\end{equation}
where we have denoted $a^+ = \sup\{a,0\}$, $a^- = \sup\{-a,0\}$. It
follows that for every $0<T_1<T_2<\infty$
\begin{equation}\label{pminus}
\int_{T_1}^{T_2} \int_\bM \left(\frac{\p P_tf}{\p t}\right)^- d\mu
dt \le \frac{\nu}{2} \ln(T_2/T_1) ||f||_{L^1(\bM)}< \infty.
\end{equation}
Using Tonelli's theorem we now have for any $t>0$
\[
\int_\bM P_t f(x) d\mu(x) = \int_\bM f(y) \int_\bM p(x,y,t)d\mu(x)
d\mu(y) = \int_\bM f(y) d\mu(y),
\]
where we have used Theorem \ref{T:sc}. This gives for any $t,h>0$
\begin{align*}
0 & = \int_\bM P_{t+h}f(x) d\mu(x) - \int_\bM P_{t}f(x) d\mu(x) =
\int_\bM \int_t^{t+h} \frac{\p P_sf}{\p s}(x) ds d\mu(x)
\\
& = \int_t^{t+h} \int_\bM  \frac{\p P_sf}{\p s}(x) d\mu(x) ds,
\end{align*}
where the exchange of order of integration is justified by Fubini's
theorem, which we can apply in view of \eqref{pminus}. The latter
equation implies
\[
\int_t^{t+h} \int_\bM  \left(\frac{\p P_sf}{\p s}(x)\right)^+
d\mu(x) ds = \int_t^{t+h} \int_\bM  \left(\frac{\p P_sf}{\p
s}(x)\right)^- d\mu(x) ds,\ \ \ t, h>0,
\]
and therefore, we have
\begin{align*}
\frac{1}{h}\int_t^{t+h} \int_\bM  \left|\frac{\p P_sf}{\p
s}(x)\right| d\mu(x) ds & = \frac{2}{h} \int_t^{t+h} \int_\bM
\left(\frac{\p P_sf}{\p s}(x)\right)^- d\mu(x) ds
\\
& \le \frac{\nu}{h} \left(\int_t^{t+h} \frac{ds}{s}\right)
||f||_{L^1(\bM)}.
\end{align*}
Letting $h\to 0^+$ we finally conclude
\[
||\frac{\p P_tf}{\p t}||_{L^1(\bM)} \le \frac{\nu}{t}
||f||_{L^1(\bM)},\ \ \ \ t>0.
\]
By duality, we deduce that for every $f \in C_0(\bM)$, $f\ge 0$,
\[
 \|\frac{\p P_tf}{\p t} \|_{L^\infty(\bM)} \le \frac{\nu}{t} \| f\|_{L^\infty(\bM)}.
 \]
Once we have this crucial information we can return to \eqref{LY}
and infer
\begin{align*}
\Gamma (P_t f)  \le \frac{1}{t} \frac{3\nu^2}{2d} \|
f\|^2_{L^\infty(\bM)},\ \ \ \ t>0.
\end{align*}
Thus,
\[
\| \sqrt{\Gamma (P_t f)} \|_{L^\infty(\bM)} \le
\nu\sqrt{\frac{3d}{2t} }\| f\|_{L^\infty(\bM)}.
\]
Applying this inequality to $g \in C_0^\infty(\bM)$, with $g\ge 0$
and $||g||_{L^\infty(\bM)}\le 1$, if $f \in C_0^1(\bM)$ we have
\begin{align*}
\int_\bM g(f-P_tf) d\mu & = \int_0^t \int_\bM g \frac{\p P_sf}{\p s}
d\mu ds = \int_0^t \int_\bM g L P_sf d\mu ds =  \int_0^t \int_\bM L
g P_sf d\mu ds
\\
& = \int_0^t \int_\bM P_sLg f d\mu ds = \int_0^t \int_\bM L P_sg f
d\mu ds  = - \int_0^t \int_\bM \Gamma(P_sg,f) d\mu ds
\\
& \le \int_0^t  \| \sqrt{\Gamma(P_sg)} \|_{L^\infty(\bM)}\int_\bM
\sqrt{\Gamma(f)} d\mu ds  \le \sqrt{6d} \nu\ \sqrt{t}
\int_\bM\sqrt{\Gamma(f)} d\mu.
\end{align*}
We thus obtain the following basic inequality: for $f \in
C_0^1(\bM)$,
\begin{align}\label{PoincareP_t}
\|P_tf - f\|_{L^1(\bM)} \le \sqrt{6d}\ \nu\ \sqrt{t}\ \|
\sqrt{\Gamma(f)} \|_{L^1(\bM)},\ \ \ t>0.
\end{align}
Suppose now that $E\subset \bM$ is a bounded Caccioppoli set. But
then, $\mathbf 1_E\in BV_\Ho(\Om)$, for any bounded open set
$\Om\supset E$. It is easy to see (see e.g. the proof of Lemma 2.5
in \cite{DGNisoper}) that $\text{Var}_\Ho(\mathbf 1_E;\Om) =
\text{Var}_\Ho(\mathbf 1_E;\bM)$, and therefore $\mathbf 1_E\in
BV_\Ho(\bM)$. By Proposition \ref{P:ag} there exists a sequence
$\{f_n\}_{n\in \mathbb N}$ in $C^\infty_0(\bM)$ satisfying (i) and
(ii). Applying \eqref{PoincareP_t} to $f_n$ we obtain \[ \|P_tf_n -
f_n\|_{L^1(\bM)} \le \sqrt{6d} \nu\ \sqrt{t}\ \| \sqrt{\Gamma(f_n)}
\|_{L^1(\bM)} = \sqrt{6d} \nu\ \sqrt{t}\ Var_\Ho(f_n,\bM),\ \ \ n\in
\mathbb N.
\]
Letting $n\to \infty$ in this inequality, we conclude
\[ \|P_t \mathbf 1_E -
\mathbf 1_E\|_{L^1(\bM)} \le \sqrt{6d} \nu\ \sqrt{t}\
Var_\Ho(\mathbf 1_E,\bM) = \sqrt{6d} \nu\ \sqrt{t}\ P_\Ho(E;\bM),\ \
\ \ t>0.
\]
Observe now that, using $P_t 1 = 1$, we have
\[
||P_t \mathbf 1_E - \mathbf 1_E||_{L^1(\bM)}  = 2\left(\mu(E) -
\int_E P_t \mathbf 1_E d\mu\right).
\]
On the other hand,
\[
\int_E  P_t \mathbf 1_E d\mu  = \int_\bM \left(P_{t/2}\mathbf
1_E\right)^2 d\mu.
\]
We thus obtain
\[
||P_t \mathbf 1_E - \mathbf 1_E||_{L^1(\bM)} = 2 \left(\mu(E) -
\int_\bM \left(P_{t/2}\mathbf 1_E\right)^2 d\mu\right).
\]
We now observe that \eqref{odub} and the assumption (1) imply
\[
p(x,x,t) \le \frac{C(d,\kappa,\rho_2)}{\mu(B(x,\sqrt t))} \le
\frac{C_4}{t^{D/2}},\ \ \ x\in \bM, t>0,
\]
where $C_4 = C_1^{-1} C(d,\kappa,\rho_2)$. This gives
\begin{align*}
\int_\bM (P_{t/2} \mathbf 1_E)^2 d\mu & \le \left(\int_E
\left(\int_\bM p(x,y,t/2)^2
d\mu(y)\right)^{\frac{1}{2}}d\mu(x)\right)^2
\\
& = \left(\int_E p(x,x,t)^{\frac{1}{2}}d\mu(x)\right)^2 \le
\frac{C_4}{t^{D/2}} \mu(E)^2.
\end{align*}
Combining these equations we reach the conclusion
\[
\mu(E)  \le \frac{\sqrt{6d}}{2} \nu\ \sqrt{t}\ P_\Ho(E;\bM) +
\frac{C_4}{t^{D/2}} \mu(E)^2,\ \ \ \ t>0.
\]
Now the absolute minimum of the function $g(t) = A t^\alpha + B
t^{-\beta}$, $t>0$, where $A, B, \alpha, \beta>0$, is given by
\[
g_{\min} =
\left[\left(\frac{\alpha}{\beta}\right)^{\frac{\beta}{\alpha +
\beta}} + \left(\frac{\beta}{\alpha}\right)^{\frac{\alpha}{\alpha +
\beta}}\right] A^{\frac{\beta}{\alpha + \beta}}
B^{\frac{\alpha}{\alpha + \beta}}
\]
Applying this observation with $\alpha = \frac{1}{2}, \beta =
\frac{D}{2}$, we conclude
\[
\mu(E)^{\frac{D-1}{D}} \le C_3 P_\Ho(E,\bM),
\]
with \[ C_3 = (1+D)^{\frac{D+1}{D}}\left(\frac{\sqrt{6d}\
\nu}{2D}\right) C_4^{\frac{1}{D}}. \] The latter inequality proves
(3).

The proof that (3) is equivalent to (4) follows the classical ideas
of Fleming-Rishel and Maz'ya, and it is based on a generalization of
Federer's co-area formula for the space $BV_\Ho$, see for instance
\cite{GN}.

\

Finally, we show that $(4) \rightarrow (1)$. We adapt an idea in
\cite{saloff1} (see Theorem 3.1.5 on p. 58). In what follows we let
$\nu = D/(D-1)$. Let $p,q\in (0,\infty)$ and $0<\theta\le 1$ be such
that
\[
\frac{1}{p} = \frac{\theta}{\nu} + \frac{1-\theta}{q}.
\]
H\"older inequality, combined with assumption (4), gives for any
$f\in Lip_d(\bM)$ with compact support
\[
||f||_{L^p(\bM)} \le ||f||^\theta_{L^{\nu}(\bM)}
||f||^{1-\theta}_{L^q(\bM)}\le \left(C_3
||\sqrt{\Gamma(f)}||_{L^1(\bM)}\right)^\theta
||f||^{1-\theta}_{L^q(\bM)}.
\]
For any $x\in \bM$ and $r>0$ we now let $f(y) = (r-d(y,x))^+$.
Clearly such $f\in Lip_d(\bM)$ and supp$\ f = \overline B(x,r)$.
Since with this choice $||\sqrt{\Gamma(f)}||_{L^1(\bM)}^\theta \le
\mu(B(x,r))^{\theta}$, the above inequality implies
\[
\frac{r}{2} \mu(B(x,\frac{r}{2})^{\frac{1}{p}} \le r^{1-\theta}
\left(C_3 \mu(B(x,r)\right)^\theta \mu(B(x,r))^{\frac{1-\theta}{q}},
\]
which, noting that $\frac{1-\theta}{q} + \theta = \frac{D+\theta
p}{pD}$, we can rewrite as follows
\[
\mu(B(x,r)) \ge \left(\frac{1}{2C_3^\theta}\right)^{pa}
\mu(B(x,\frac{r}{2}))^a r^{\theta p a},
\]
where we have let $a = \frac{D}{D+\theta p}$. Notice that $0<a<1$.
Iterating the latter inequality we find
\[
\mu(B(x,r)) \ge \left(\frac{1}{2C_3^\theta}\right)^{p\sum_{j=1}^k
a^j} r^{\theta p \sum_{j=1}^k a^j} 2^{-\theta p \sum_{j=1}^k
(j-1)a^j}\mu(B(x,\frac{r}{2^k}))^{a^k},\ \ \ k\in \mathbb N.
\]
>From Theorem \ref{T:doubling} for any $x\in \bM$ there exist
constants $C(x), R(x)>0$ such that with $Q(x) = \log_2 C(x)$ one has
\[
\mu(B(x,tr)) \ge C(x)^{-1} t^{Q(x)} \mu(B(x,r)),\ \ \ 0\le t\le 1,
0<r\le R(x).
\]
This estimate implies that
\[
\underset{k\to \infty}{\liminf}\ \mu(B(x,\frac{r}{2^k}))^{a^k}\ge
1,\ \ \ x\in \bM, r>0.
\]
Since on the other hand $\sum_{j=1}^\infty a^j = \frac{D}{\theta
p}$, and $\sum_{j=1}^\infty (j-1) a^j = \frac{D^2}{\theta^2p^2}$, we
conclude that
\[
\mu(B(x,r)) \ge \left(2^{-\frac{1}{\theta}(1+\frac{D}{p})}
C_3^{-1}\right)^D r^D,\ \ \ x\in \bM, r>0.
\]
This establishes (1), thus completing the proof.

\end{proof}

\section{A sub-Riemannian Bonnet-Myers theorem}\label{S:myer}

Let  $(\mathbb{M},g)$ be a complete, connected  Riemannian manifold
of dimension $d\ge 2$. It is well-known that if the Ricci tensor of
$\mathbb{M}$ satisfies the following bound for all $V\in T\bM$
\begin{equation}\label{ricbd}
\text{Ric}(V,V) \geq (d-1) \rho_1 |V|^2, \ \ \ \
\rho_1>0,
\end{equation} then $\bM$ is compact, with a finite
fundamental group, and diam$(\bM) \le \frac{\pi}{\sqrt{\rho_1}}$.
This is the celebrated Myer's theorem, which strengthens Bonnet's
theorem. Like the latter, Myer's theorem is usually proved by using
Jacobi vector fields (see e.g. Theorem 2.12 in \cite{Chavel}).

\

A different approach is based on the curvature-dimension inequality
\begin{equation}\label{Rcdrho2}
\Gamma_2(f,f) \ge \frac{1}{n} (L f)^2 + (n-1)\rho_1 \Gamma(f,f),
\end{equation}
which one obtains from \eqref{Rcdrho} assuming \eqref{ricbd}. By using ingenious non linear methods based on the study of the partial differential equation
\[
c(f^{p-1}-f)=-Lf, \\\\\\\\\\   1\le p \le
\frac{2d}{d-2},
\]
the
inequality \eqref{Rcdrho2} implies (for $d>2$) the following Sobolev
inequality
\begin{equation}\label{sobolev}
\frac{d}{(d-2)R^2} \left[ \left( \int_{\mathbb M} |f|^p d\mu
\right)^{2/p} -\int_{\mathbb M} f^2 d\mu \right] \le \int_{\mathbb
M} \Gamma (f,f) d\mu, \ \ f\in C^\infty_0(\mathbb M).
\end{equation}
where $\mu$ is the Riemannian measure and. Using a simple iteration procedure, it is deduced
from \eqref{sobolev} that the diameter of $\mathbb{M}$ is finite and
bounded by $\frac{\pi}{\sqrt{\rho_1}}$, see
\cite{ledoux-zurich}. This non linear method seems difficult to extend in our framework. However, a weak version of the Myers theorem could be proven by Bakry in \cite{bakry-stflour} by using linear methods only. This method based on entropy-energy inequalities (a strong form of log-sobolev inequalities) can be extended to our framework.

\

In this section we establish the following sub-Riemannian
Bonnet-Myer's compactness theorem. Here again, we assume that
$(\bM,d)$ is complete.

\begin{theorem}\label{T:BM}
If there exist constants $\rho_1,\rho_2 >0$ and a constant $\kappa >0$ such that for every smooth function $f :\mathbb{M} \rightarrow \mathbb{R}$:
\begin{equation}\label{Ricci_positive1}
\mathcal{R}(f,f) \ge \rho_1 \Gamma (f,f) +\rho_2 \Gamma^Z (f,f)
\end{equation}
\begin{equation}\label{Ricci_positive2}
\mathcal{T}(f,f) \le \kappa \Gamma (f,f),
\end{equation}
then the metric space $(\mathbb{M},d)$ is compact in the metric
topology with  a Hausdorff dimension less than $d\left( 1+\frac{3
\kappa}{2\rho_2} \right)$ and we have \[ \emph{diam}\ \bM \le
2\sqrt{3} \pi \sqrt{ \frac{\kappa+\rho_2}{\rho_1\rho_2} \left(
1+\frac{3\kappa}{2\rho_2}\right)d }.
\]
\end{theorem}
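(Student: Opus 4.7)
The strategy is to deduce the diameter bound from a sharp gradient estimate for the heat semigroup obtained by applying Proposition \ref{variational} with an auxiliary function $b$ carefully tuned to the strictly-positive-curvature regime $\rho_1>0$, and then to translate that estimate into bounds on Lipschitz functions via the dual characterization \eqref{d} of the sub-Riemannian distance. The compactness and Hausdorff-dimension statements will follow as corollaries of the diameter bound together with the Gaussian heat-kernel machinery of Section \ref{S:gaussianub}.

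The first step is to revisit Proposition \ref{variational}. In the proof of Proposition \ref{P:ge} the polynomial choice $b(t)=(T-t)^3$ is used; since the parameter $\rho_1$ enters $\gamma$ only additively, this choice cannot exploit the sign of $\rho_1$. When $\rho_1>0$ I would instead pick $b$ in the trigonometric family $b(t)=\bigl(\cos\omega(T-t)\bigr)^{\alpha}$, with $\alpha,\omega$ tuned so that $\gamma(t)=\tfrac{d}{4}(b''/b'+\tfrac{\kappa}{\rho_2}b'/b+2\rho_1)$ is sign-balanced and so that $b$ remains positive on $[0,T]$ only for $T$ below a critical value $T_*=T_*(\rho_1,\rho_2,\kappa,d)$. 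Integrating the variational inequality from $0$ to $T<T_*$, using the boundary values \eqref{phis} and the monotonicity already proved in Proposition \ref{P:monofun}, and then letting $T\uparrow T_*$ should give a sharpened Li--Yau type bound
\[
A_*\,\Gamma(\ln P_{T_*}f)+B_*\,\Gamma^{Z}(\ln P_{T_*}f)\le C_*\frac{LP_{T_*}f}{P_{T_*}f}+D_*,
\]
with the four constants $A_*,B_*,C_*,D_*$ explicit in $\rho_1,\rho_2,\kappa,d$. The numerical factor $2\sqrt{3}\,\pi$ appearing in the theorem strongly suggests that the optimal $\alpha,\omega$ are such that $\omega T_*$ equals the first zero of the relevant trigonometric profile, exactly as $\pi$ appears in the classical Myers bound $\pi\sqrt{(n-1)/\rho}$.

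The second, and main, step is to convert this gradient estimate into the stated diameter bound. I would use the intrinsic characterization $d(x,y)=\sup\{|f(x)-f(y)|:\Gamma_{\infty}(f)\le 1\}$ recalled in Section \ref{SS:srd}. Fix such a function $f$. The estimate of Step 1 provides, via the Bakry--Emery--Ledoux dictionary, an entropy--energy inequality of the form $\mathrm{Ent}_{\mu}(g^{2})\le \Phi(\mathcal E(g,g))$ with $\Phi$ concave and completely determined by $\rho_1,\rho_2,\kappa,d$. Applying this inequality to the one-parameter family $g=(\cos\omega f)^{\mu}$ (admissible since $f$ is smooth) and optimizing over $\omega$, one finds that the inequality can only be consistent with $g\ge 0$ on $\mathbb M$ provided $\omega\cdot\mathrm{osc}(f)\le\pi$ for a specific value of $\omega$ dictated by the structural constants; inserting that value produces
\[
\mathrm{osc}(f)\le 2\sqrt{3}\,\pi\sqrt{\tfrac{\kappa+\rho_2}{\rho_1\rho_2}\!\left(1+\tfrac{3\kappa}{2\rho_2}\right)\!d}.
\]
Taking the supremum over all such $f$ and invoking \eqref{d} gives the diameter bound.

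The remaining two conclusions then follow readily. For compactness: the hypothesis gives completeness of $(\mathbb M,d_R)$ and hence of $(\mathbb M,d)$; since $(\mathbb M,d)$ is a locally compact length space (Section \ref{SS:srd}), the Hopf--Rinow-type Theorem \ref{T:HR} ensures that closed $d$-balls are compact, and the finite diameter obtained in Step 2 upgrades this to compactness of $\mathbb M$ itself. For the Hausdorff dimension: the on-diagonal bound $p(x,x,t)\le C/\mu(B(x,\sqrt t))$ from \eqref{odub}, combined with the Gaussian off-diagonal bound of Theorem \ref{T:ub}, yields a volume doubling property on $(\mathbb M,d)$ with doubling exponent $D=d\bigl(1+\tfrac{3\kappa}{2\rho_2}\bigr)$; a standard covering argument then bounds the Hausdorff dimension of $(\mathbb M,d)$ from above by $D$.

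The main obstacle is Step 2: identifying a profile $b$ in Step 1 for which the entropy--energy inequality produced in Step 2 saturates at exactly the constants appearing in the theorem, and executing the Ledoux-type test-function argument without loss. The precise form of the upper bound---in particular the factor $2\sqrt{3}\,\pi$ and the way $\kappa,\rho_2,\rho_1,d$ combine under the square root---requires a nontrivial optimization over the class of admissible $b$ and over the auxiliary parameter $\omega$ in the test function $(\cos\omega f)^{\mu}$; this is where any loss in the constants would show up.
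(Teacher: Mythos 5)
Your overall pipeline — a tuned Li--Yau estimate via Proposition \ref{variational}, an entropy--energy inequality, and a Bakry-style conversion to a diameter bound — is broadly aligned with the paper's route, but several concrete ingredients are missing or wrong, and the two endpoints of your argument (Step~1 and the Hausdorff dimension) deviate from what actually works.

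First, the paper does not use a trigonometric profile. It takes $b(t)=(e^{-\alpha t}-e^{-\alpha T})^\beta$ with $\beta>2$ and lets $T\to\infty$, producing the large-time exponential decay of Proposition \ref{bound_kernel} and then the global ultracontractivity bound $p(x,y,t)\le(1-e^{-\alpha t})^{-D/2}$ of Proposition \ref{globalbound}. Your proposal $b(t)=(\cos\omega(T-t))^{\alpha}$ with $\alpha>0$ is \emph{increasing} on $[0,T]$ (for $\omega T<\pi/2$, $\cos\omega(T-t)$ grows from $\cos\omega T$ to $1$ as $t$ rises to $T$), so it violates the standing hypothesis in Proposition \ref{variational} that $b$ be decreasing. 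Replacing it with $(\cos\omega t)^\alpha$ is fixable, but then your "critical time'' picture is not what the paper exploits: the paper needs global-in-time bounds to even make sense of $p_\infty$ and conclude that $\mu$ is finite. That finiteness step — deduced from Propositions \ref{bound_kernel} and \ref{harnack_spectral} and the symmetry of $p$, after which $\mu$ is normalized to a probability measure — is an indispensable prerequisite for the entropy--energy machinery, and your proposal omits it entirely. Without $\mu(\bM)<\infty$ the quantity $\mathrm{Ent}_\mu(g^2)$ is not even well controlled, and Bakry's diameter theorem does not apply. Your "cos test function'' argument in Step~2 is also only gestured at: the constraint $\omega\,\mathrm{osc}(f)\le\pi$ is an a priori admissibility condition on the test function, not a conclusion that the inequality forces, so the logic as written is circular. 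The paper instead derives an explicit $\Phi$ via Davies' theorem from the ultracontractivity bound, verifies its integrability and concavity properties, and then quotes Theorem 5.4 of \cite{bakry-stflour} to compute $\mathrm{diam}\,\bM\le -2\int_0^\infty\sqrt{x}\,\Phi''(x)\,dx$ directly.

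Second, the Hausdorff-dimension part of your proposal does not go through. You want to infer a doubling property from the on-diagonal upper bound \eqref{odub} and the Gaussian upper bound of Theorem \ref{T:ub}; but upper bounds on the heat kernel alone never imply volume doubling (that would require a matching lower bound on $p(x,x,t)$, which is not established in this generality). The paper's actual argument is entirely different: it invokes Mitchell's theorem \cite{mitchell} to identify $\dim_{\mathrm{Haus}}(\bM)=d+2\dim\mathcal V$, then the small-time asymptotics $\lim_{t\to 0}t^{D/2}p(x,x,t)=m(x)>0$ from Ben Arous and Takanobu, and finally compares this against the ultracontractivity exponent coming from \eqref{ultracontractivity} to get $\dim_{\mathrm{Haus}}\le D$. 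This genuinely uses the sub-Riemannian structure of the heat kernel, not a generic covering argument.
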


We shall proceed in several steps. Throughout this section, we assume that  (\ref{Ricci_positive1}) and (\ref{Ricci_positive2}) are satisfied.

\subsection{Global heat kernel bounds}\label{S:global}

Our first result is the following large-time exponential decay for
the heat kernel.

\begin{proposition}\label{bound_kernel}
Let $0< \nu < \frac{\rho_1\rho_2}{\rho_2+ \kappa}$. There exist $t_0
>0$ and $C_1>0$ such that for every $f\in C^\infty_0(\mathbb M)$, $f\ge 0$:
\[
\left| \frac{\partial }{\partial t} \ln P_t f  (x) \right| \le C_1
e^{-\nu t} , \quad \ \ \ \ x \in \mathbb{M}, t \ge t_0.
\]
\end{proposition}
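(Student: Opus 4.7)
The strategy is to exploit the variational inequality of Proposition \ref{variational} with the one-parameter family of test functions $b(t)=e^{-\alpha t}$, $\alpha>0$, which is positive and decreasing on $[0,T]$. This choice yields $b'/b=-\alpha$ and $b''/b'=-\alpha$, so that $\gamma$ is \emph{constant} in $t$, equal to
\[
\gamma_0(\alpha)\ =\ \frac{d}{4}\Bigl(2\rho_1-\alpha\,\frac{\rho_2+\kappa}{\rho_2}\Bigr)\ =\ \frac{d(\rho_2+\kappa)}{4\rho_2}(\alpha^{*}-\alpha),
\]
where $\alpha^{*}:=\frac{2\rho_1\rho_2}{\rho_2+\kappa}=2\nu^{*}$ is precisely twice the threshold $\nu^{*}:=\rho_1\rho_2/(\rho_2+\kappa)$ in the hypothesis.

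For the upper bound on $\partial_t\ln P_tf$ I fix $\alpha\in(0,\alpha^{*})$, so that $\gamma_0>0$. Integrating the variational inequality on $[0,T]$ and setting $M(t)=-\frac{b'(t)}{2\rho_2}\Phi_1(t)+b(t)\Phi_2(t)\ge 0$ gives
\[
M(T)-M(0)\ \ge\ \frac{2\gamma_0(1-e^{-\alpha T})}{d\rho_2}\,LP_Tf\ -\ \frac{\gamma_0^2(1-e^{-\alpha T})}{d\rho_2}\,P_Tf.
\]
Dropping $M(0)\ge 0$, dividing by $P_Tf>0$, and using $M(T)=e^{-\alpha T}\bigl(\tfrac{\alpha}{2\rho_2}\Phi_1(T)+\Phi_2(T)\bigr)$, yields
\[
\frac{LP_Tf}{P_Tf}\ \le\ \frac{d\rho_2\,e^{-\alpha T}}{2\gamma_0(1-e^{-\alpha T})}\Bigl(\frac{\alpha}{2\rho_2}\frac{\Phi_1(T)}{P_Tf}+\frac{\Phi_2(T)}{P_Tf}\Bigr)+\frac{\gamma_0}{2}.
\]
The elementary estimate $P_T(fg)/P_Tf\le\sup_{\{f>0\}}g$ (valid for $f\ge 0$) bounds the parenthesis above by a constant $K(f)$ depending only on $\|\Gamma(\ln f)\|_\infty$ and $\|\Gamma^Z(\ln f)\|_\infty$ on the support of $f$.

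The crucial step is now an arithmetic--geometric optimization in $\gamma_0$: for $T$ large the right-hand side is at most $\frac{d\rho_2 K(f)}{2\gamma_0}e^{-\alpha T}+\frac{\gamma_0}{2}$, which by AM--GM is minimized at $\gamma_0=\sqrt{d\rho_2 K(f)\,e^{-\alpha T}}$ with minimum value $\sqrt{d\rho_2 K(f)}\,e^{-\alpha T/2}$. Since $\gamma_0$ and $\alpha$ are linked by $\gamma_0=\frac{d(\rho_2+\kappa)}{4\rho_2}(\alpha^{*}-\alpha)$, this implicitly prescribes a $T$-dependent choice $\alpha=\alpha(T)\nearrow\alpha^{*}$, hence $\alpha(T)\,T/2=\nu^{*}T(1+o(1))$. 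Consequently, for any prescribed $\nu<\nu^{*}$, the bound $LP_Tf/P_Tf\le C_1(f)e^{-\nu T}$ holds for all $T\ge t_0(f,\nu)$.

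The matching lower bound $LP_Tf/P_Tf\ge -C_1 e^{-\nu T}$ follows by a mirror argument: taking $\alpha>\alpha^{*}$ makes $\gamma_0<0$, and dividing the same integrated inequality by the now-negative factor $2\gamma_0(1-e^{-\alpha T})/(d\rho_2)$ reverses the inequality, producing an expression of the same form with $|\gamma_0|$ in place of $\gamma_0$ and an overall minus sign. The identical AM--GM optimization then yields the corresponding exponential lower bound. The main delicate point is to verify that the $T$-dependent choice $\alpha(T)$ stays in its admissible range, that $1-e^{-\alpha(T)T}$ is bounded away from zero uniformly for $T$ large, and that the effective decay rate $\alpha(T)/2$ is genuinely asymptotic to $\nu^{*}$ so that every $\nu<\nu^{*}$ can indeed be attained.
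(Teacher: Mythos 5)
Your choice of test function $b(t)=e^{-\alpha t}$ is elegant in that it makes $\gamma$ constant, and you correctly identify the threshold $\alpha^{*}=2\rho_1\rho_2/(\rho_2+\kappa)=2\nu^{*}$. But the proof has a genuine gap: the boundary term $M(T)=-\tfrac{b'(T)}{2\rho_2}\Phi_1(T)+b(T)\Phi_2(T)$ does not vanish with this choice of $b$, and it cannot be controlled. You bound $\Phi_1(T)/P_Tf = P_T\bigl(f\,\Gamma(\ln f)\bigr)/P_Tf$ by $\sup_{\{f>0\}}\Gamma(\ln f)$, calling the result $K(f)$, but this supremum is $+\infty$ for every non-constant $f\in C^\infty_0(\bM)$ with $f\ge 0$. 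Indeed, if $g(t)=f(\gamma(t))$ along a horizontal curve approaching a boundary point of $\{f>0\}$, then $\ln g(t)\to-\infty$ forces $(\ln g)'=g'/g$ to be unbounded, hence $\Gamma(\ln f)=\Gamma(f)/f^2\to\infty$ there. Your constant $C_1(f)=\sqrt{d\rho_2\,K(f)}$ is therefore infinite, and the claimed exponential bound is vacuous. The alternative of first using that $f\Gamma(\ln f)=\Gamma(f)/f$ is bounded (which holds, since $f\ge0$ is smooth) also fails: it yields $\Phi_1(T)\le C(f)$, but then $\Phi_1(T)/P_Tf\le C(f)/P_Tf$ and $P_Tf$ is not bounded below on a noncompact $\bM$.

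The paper avoids this entirely by choosing $b(t)=(e^{-\alpha t}-e^{-\alpha T})^{\beta}$ with $\beta>2$, so that $b(T)=b'(T)=0$ and the boundary term at $t=T$ vanishes identically. The only surviving boundary contribution is $M(0)$, which involves $\Gamma(\ln P_Tf)$ and $\Gamma^Z(\ln P_Tf)$ — both finite because $P_Tf>0$ everywhere — and, crucially, the argument only uses their \emph{non-negativity}, never an upper bound. The price is a more complicated, $t$-dependent $\gamma$, which is why $\beta>2$ is needed (for the integrability of $\int_0^T b'\gamma^2\,dt$) and why the upper bound requires a delicate $T$-dependent choice of $\alpha$. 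Your idea of sending $\alpha(T)\nearrow\alpha^{*}$ to capture every $\nu<\nu^{*}$ is actually the same mechanism the paper uses, but it cannot be made to work without first killing the $t=T$ boundary term.
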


\begin{proof}
In  Proposition \ref{variational}, we choose
\[
b(t)=(e^{-\alpha t} -e^{-\alpha T})^\beta,\ \ \ \ 0\le t\le T,
\]
with $\beta >2$ and $\alpha >0$. With such choice a simple
computation gives,
\[
\gamma (t) =\frac{d}{4}  \left( 2\rho_1 -\alpha \beta -\alpha \beta
\frac{\kappa}{\rho_2} -e^{-\alpha T} \left( \alpha(\beta -1) +
\frac{\alpha \beta\kappa}{\rho_2}\right) b(t)^{
-\frac{1}{\beta}}\right).
\]
Keeping in mind that $b(T) = b'(T) = 0$, and that $b(0) =
(1-e^{-\alpha T})^\beta$, $b'(0) = - \alpha \beta (1-e^{-\alpha
T})^{\beta-1}$, we obtain from \eqref{phis}
\begin{align}\label{lb}
& - \frac{\alpha \beta (1-e^{-\alpha T})^{\beta-1}}{2\rho_2}
\Gamma(\ln P_Tf) - (1-e^{-\alpha T})^\beta \Gamma^Z(\ln P_T f)
\\
& \ge - \frac{2}{\rho_2} \left(\int_0^T b'(t) \gamma (t) dt\right)
\frac{L P_T f}{P_T f} + \frac{1}{d\rho_2} \left(\int_0^T b'(t)
\gamma (t)^2 dt\right). \notag
\end{align} Now,
\begin{align*}
\int_0^T b'(t) \gamma (t) dt  = & - \frac{d}{4} \left( 2 \rho_1
-\alpha\beta - \alpha \beta \frac{\kappa}{\rho_2}\right) (1
-e^{-\alpha T})^\beta \\
& + \frac{d}{4}\frac{1}{1-\frac{1}{\beta}} \left( \alpha\beta
-\alpha+ \alpha \beta \frac{\kappa}{\rho_2} \right) e^{-\alpha T}(1
-e^{-\alpha T})^{\beta-1},
\end{align*}
\begin{align*}
\int_0^T b'(t) \gamma (t)^2 dt= & -\frac{d^2}{16} \left( 2 \rho_1 -\alpha\beta - \alpha \beta \frac{\kappa}{\rho_2}\right)^2 (1 -e^{-\alpha T})^\beta
 \\ &+\frac{d^2}{8} \frac{ \left( 2 \rho_1 -\alpha\beta - \alpha \beta \frac{\kappa}{\rho_2}\right) \left( \alpha\beta -\alpha+ \alpha \beta \frac{\kappa}{\rho_2} \right)}{1-\frac{1}{\beta}} e^{-\alpha T}(1 -e^{-\alpha T})^{\beta-1} \\
 &-\frac{d^2}{16}\frac{\left( \alpha\beta -\alpha+ \alpha \beta \frac{\kappa}{\rho_2} \right)^2}{ 1-\frac{2}{\beta}}e^{-2\alpha T}(1 -e^{-\alpha T})^{\beta-2}.
\end{align*}
If we choose
\[
\alpha= \frac{2 \rho_1\rho_2}{\beta(\rho_2+\kappa)},
\]
then \[ 2\rho_1 - \alpha \beta - \alpha \beta \frac{\kappa}{\rho_2}
= 0, \ \ \ \alpha \beta - \alpha + \alpha \beta
\frac{\kappa}{\rho_2} = 2\rho_1 - \alpha,\]
 and we obtain from \eqref{lb}:
\begin{align}\label{gamma_bound}
0 \le & \frac{\rho_1}{\rho_2+ \kappa} \Gamma (\ln P_T
f)+(1-e^{-\alpha T})\Gamma^Z (\ln P_T f) \le
\frac{d(2\rho_1-\alpha)}{2\rho_2 \left(1-\frac{1}{\beta} \right)}
e^{-\alpha T} \frac{LP_T f}{P_T f}
\\
& + \frac{d(2\rho_1-\alpha)^2}{16 \rho_2 \left(1-\frac{2}{\beta}
\right)} \frac{e^{-2\alpha T}}{ 1-e^{-\alpha T}}. \notag
\end{align}
Noting that $2\rho_1 - \alpha =
\frac{2\rho_1}{\beta(\rho_2+\kappa)}((\beta-1)\rho_2 +
\beta\kappa)>0$, and that $\beta
>2$ implies $\alpha < \frac{\rho_1 \rho_2}{\rho_2 + \kappa}$,
\eqref{gamma_bound} gives in particular the desired lower bound for
$\frac{\partial }{\partial t} \ln P_t f (x) $ with $\nu = \alpha$.

The upper bound is more delicate. We fix  $ 0<\eta = \frac{2
\rho_1\rho_2}{\beta(\rho_2+\kappa)}$, and with $\gamma =
2\beta\rho_1\rho_2$ we now choose in \eqref{lb}
\[
\alpha= \frac{2 \rho_1\rho_2- \gamma e^{-\eta T}
}{\beta(\rho_2+\kappa)} = \eta - \frac{\gamma e^{-\eta T}
}{\beta(\rho_2+\kappa)}.
\]
Clearly, $\alpha>0$ provided that $T$ be sufficiently large. This
choice gives
\[ 2\rho_1 - \alpha \beta - \alpha \beta \frac{\kappa}{\rho_2}
= \frac{\gamma e^{-\eta T}}{\rho_2}, \ \ \ \alpha \beta - \alpha +
\alpha \beta \frac{\kappa}{\rho_2} = 2\rho_1 - \alpha - \frac{\gamma
e^{-\eta T}}{\rho_2}.
\]
We thus have
\begin{align*}
\int_0^T b'(t) \gamma (t) dt & = - \frac{d}{4} e^{-\alpha T} (1 -
e^{-\alpha T})^{\beta-1} \left\{\frac{\gamma (1 - e^{-\alpha T})
e^{-(\eta-\alpha)T}}{\rho_2} - \frac{\beta}{\beta-1} (2 \rho_1 -
\alpha - \frac{\gamma e^{-\eta T}}{\rho_2})\right\}. \end{align*}
Noting that $e^{-(\eta-\alpha)T} = e^{-\frac{\gamma T e^{-\eta
T}}{\beta(\rho_2+\kappa)}}\to 1$, and $\alpha \longrightarrow
 \frac{2\rho_1\rho_2}{\beta(\rho_2+\kappa)}$ as $T\to \infty$, we
obtain
\[
\frac{\gamma (1 - e^{-\alpha T}) e^{-(\eta-\alpha)T}}{\rho_2} -
\frac{\beta}{\beta-1} (2 \rho_1 - \alpha - \frac{\gamma e^{-\eta
T}}{\rho_2})\ \longrightarrow\ \frac{\gamma}{\rho_2} -
\frac{\beta}{\beta-1}\left(2\rho_1  -
\frac{2\rho_1\rho_2}{\beta(\rho_2+\kappa)}\right).\] Since by our
choice of $\gamma$ we have $\frac{\gamma}{\rho_2} -
\frac{\beta}{\beta-1}\left(2\rho_1  -
\frac{2\rho_1\rho_2}{\beta(\rho_2+\kappa)}\right)>0$, it is clear
that we have \[ \int_0^T b'(t) \gamma (t) dt \le -
\frac{d}{8}\left(\frac{\gamma}{\rho_2} -
\frac{\beta}{\beta-1}\left(2\rho_1
-\frac{2\rho_1\rho_2}{\beta(\rho_2+\kappa)}\right)\right) e^{-\alpha
T} (1 - e^{-\alpha T})^{\beta-1} , \]
 provided that $T$ be large
enough. We also have
\begin{align*}
\int_0^T b'(t) \gamma (t)^2 dt= & -\frac{d^2}{16} e^{-2\alpha T} (1
-e^{-\alpha T})^{\beta-2} \bigg\{\frac{\beta}{\beta-2} (2 \rho_1 -
\alpha - \frac{\gamma e^{-\eta T}}{\rho_2})^2
\\
&  + \frac{\gamma^2}{\rho_2^2} (1 -e^{-\alpha T})^2
e^{-2(\eta-\alpha)T} - 2 \frac{\gamma}{\rho_2}\frac{\beta}{\beta -1}
(1 -e^{-\alpha T}) (2 \rho_1 - \alpha - \frac{\gamma e^{-\eta
T}}{\rho_2}) e^{-(\eta-\alpha)T}\bigg\}.
\end{align*}
Using our choice of $\gamma$ we see that, if we let $T\to \infty$,
the quantity between curly bracket in the right-hand side converges
to
\[
\frac{\beta}{\beta-2} 4\rho_1^2 \left(\frac{(\beta-1)\rho_2+\beta
\kappa}{\beta(\rho_2+\kappa)}\right)^2 + 4 \beta^2\rho_1^2 -
\frac{8\beta^2\rho_1^2}{\beta-1}\frac{(\beta-1)\rho_2+\beta
\kappa}{\beta(\rho_2+\kappa)}.
\]
This quantity is strictly positive provided that
\[
\frac{2\beta}{\beta-1}\frac{(\beta-1)\rho_2+\beta
\kappa}{\beta(\rho_2+\kappa)} < \frac{1}{\beta-2}
\left(\frac{(\beta-1)\rho_2+\beta
\kappa}{\beta(\rho_2+\kappa)}\right)^2 +  \beta,
\]
and this latter inequality is true, as one recognizes by applying
the inequality $2xy\le x^2 + y^2$. From these considerations and
from \eqref{lb} we conclude the desired upper bound for
$\frac{\partial }{\partial t} \ln P_t f (x) $.

\end{proof}

\begin{proposition}\label{harnack_spectral}
Let $0< \nu < \frac{\rho_1\rho_2}{\kappa +\rho_2}$. There exist $t_0
>0$ and $C_2>0$ such that for every $f\in C^\infty_0(\mathbb
M)$, with $f\ge 0$,
\[
e^{-C_2 e^{-\nu t} d(x,y)} \le \frac{P_t f  (x)}{P_t f (y)} \le
e^{C_2 e^{-\nu t} d(x,y)} , \quad \ \ \ x,y \in \mathbb{M},\ t \ge
t_0.
\]
\end{proposition}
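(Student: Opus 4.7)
The plan is to reduce Proposition~\ref{harnack_spectral} to a uniform pointwise bound of the form
\[
\sqrt{\Gamma(\ln P_t f)}(x) \le C_2 e^{-\nu t}, \qquad x \in \mathbb{M},\ t \ge t_0,
\]
and then convert this gradient estimate into the claimed two-sided ratio inequality by invoking the distance characterization $d = \rho$ of Section~\ref{SS:srd} (Lemma~5.43 of \cite{CKS}). Once the gradient bound is known, the rescaled function $g_t = (C_2 e^{-\nu t})^{-1} \ln P_t f$ satisfies $\Gamma(g_t) \le 1$ on $\mathbb{M}$, so from the definition \eqref{d} of $\rho$ we would obtain
\[
|\ln P_t f(x) - \ln P_t f(y)| \le C_2 e^{-\nu t}\, d(x,y),
\]
and exponentiating gives both inequalities in the statement.

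The main work is thus to produce the uniform gradient estimate. My plan here is to revisit inequality \eqref{gamma_bound}, which is the pointwise identity hidden inside the proof of Proposition~\ref{bound_kernel}. It has the shape
\[
\Gamma(\ln P_T f)(x) \le A\, e^{-\alpha T}\Bigl|\tfrac{LP_T f}{P_T f}\Bigr|(x) + B\, e^{-2\alpha T},
\]
valid for constants $A,B$ depending only on $d,\rho_1,\rho_2,\kappa,\beta$, where $\alpha = 2\rho_1\rho_2/[\beta(\rho_2+\kappa)]$ may be chosen anywhere in $(0,\rho_1\rho_2/(\rho_2+\kappa))$ by tuning $\beta > 2$. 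This inequality converts information about the time derivative $\partial_t \ln P_t f = LP_t f/P_t f$ into information about the horizontal gradient.

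The second input is Proposition~\ref{bound_kernel} itself, which yields a uniform bound $|LP_t f/P_t f|(x) \le C_1 e^{-\nu' t}$ for $t\ge t_0$, with $\nu'$ any constant strictly less than $\rho_1\rho_2/(\rho_2+\kappa)$. Given the prescribed $\nu < \rho_1\rho_2/(\rho_2+\kappa)$, I choose $\alpha$ and $\nu'$ in $(\nu,\rho_1\rho_2/(\rho_2+\kappa))$ satisfying $\alpha + \nu' \ge 2\nu$ and $2\alpha \ge 2\nu$, which is possible since both can be pushed arbitrarily close to the critical value. Substituting the bound of Proposition~\ref{bound_kernel} into the display above then produces $\Gamma(\ln P_t f)(x) \le C_2^2 e^{-2\nu t}$ uniformly in $x$, for $t \ge t_0$.

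The anticipated obstacle is essentially a matter of bookkeeping decay rates: one must ensure that $\alpha$ from \eqref{gamma_bound} and the rate $\nu'$ from Proposition~\ref{bound_kernel} can be chosen so that the $e^{-\alpha t}$ prefactor and the $e^{-\nu' t}$ decay of $|LP_tf/P_tf|$ combine to dominate the required $e^{-2\nu t}$. A conceptual subtlety worth emphasizing is that Proposition~\ref{bound_kernel} controls only the time derivative $\partial_t \ln P_t f$, whereas the Harnack inequality demands control of the \emph{spatial} horizontal gradient; bridging the two is exactly the role played by the curvature-dimension inequality through \eqref{gamma_bound}. No further difficulty is expected, since the passage from $\sqrt{\Gamma(\ln P_tf)} \le C_2 e^{-\nu t}$ to the conclusion is immediate through $d = \rho$.
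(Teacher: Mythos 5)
Your proposal is correct and follows essentially the same route as the paper: combine inequality \eqref{gamma_bound} with the decay bound of Proposition~\ref{bound_kernel} to obtain a uniform pointwise estimate on $\Gamma(\ln P_t f)$, then translate this into the two-sided Harnack bound via the distance characterization $d=\rho$ from \eqref{d}. Your version is in fact slightly more careful with the exponents (tracking that the intermediate bound should be $\Gamma(\ln P_t f)\le C_2^2 e^{-2\nu t}$), whereas the paper's displayed intermediate inequality has a harmless bookkeeping slip in the power of $e^{-\nu t}$; both lead to the same conclusion since $\nu$ may be adjusted freely below the critical threshold.
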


\begin{proof}
If we combine \eqref{gamma_bound} with the upper bound of
Proposition \ref{bound_kernel}, we obtain that for $x \in
\mathbb{M}$ and $t \ge t_0$,
\[
\Gamma (\ln P_t f)(x) \le C_2 e^{-\nu t} .
\]
We infer that the function $u(x) = C_2^{-1} e^{\nu t} \ln P_t f(x)$,
which belongs to $C^\infty(\bM)$, is such that $\Gamma(u)(x) \le 1$,
$x\in \bM$. From \eqref{d} we obtain that \[ |u(x) - u(y)| \le
d(x,y), \ \ \ \ \ x, y\in \bM. \] This implies the sought for
conclusion.

\end{proof}

If we now fix $x \in \mathbb{M}$, and denote by $p(x,\cdot,t)$ the
heat kernel with singularity at $(x,0)$, then according to
Proposition \ref{bound_kernel} we obtain for $t \ge t_0$,
\begin{align}\label{estimeenoyau}
\left| \frac{\partial \ln p(x,y,t)}{\partial t}\right|   \le  C_1
\exp\left(-\nu t \right),
\end{align}
with $0< \nu < \frac{\rho_1\rho_2}{\kappa +\rho_2}$. This shows that
$\ln p(\cdot,\cdot,t) $ converges when $t\to\infty$. Let us call
$\ln p_\infty $ this limit. Moreover, from Proposition
\ref{harnack_spectral}  the limit, $\ln p_{\infty}( x, \cdot)$ is a
constant $C(x)$. By the symmetry property $p(x,y,t)=p(y,x,t)$, so
that $C(x)$ actually does not depend on $x$ . We deduce from this
that the invariant measure $\mu$ is finite. We may then as well
suppose that $\mu$ is a probability measure, in which case
$p_{\infty}=1$. We assume this from now on.

We now can prove a global and explicit upper bound for the heat kernel $p(x,y,t)$.

\begin{proposition}\label{globalbound}
For $x,y \in \bM$ and $t>0$,
\[
p(x,y,t) \le \frac{1}{\left( 1-e^{-\frac{2\rho_1 \rho_2
t}{3(\rho_2+\kappa)}}
\right)^{\frac{d}{2}\left(1+\frac{3\kappa}{2\rho_2}\right)} }.
\]
\end{proposition}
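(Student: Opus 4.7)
The plan is to derive a sharpened Li--Yau type inequality from Proposition \ref{variational} by choosing the test function $b$ cleverly, extract a bound on $-L P_T f / P_T f$, and then transfer that bound to the heat kernel and integrate in time using the normalization $p_\infty = 1$ established in section \ref{S:global}.

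\textbf{Step 1: choice of $b$.} Set $\lambda = \frac{2\rho_1 \rho_2}{3(\rho_2+\kappa)}$ and, for fixed $T>0$, choose on $[0,T]$
\[
b(t) = (e^{-\lambda t} - e^{-\lambda T})^3.
\]
Then $b$ is smooth, positive and strictly decreasing on $[0,T)$, and $b(T)=b'(T)=0$, so the boundary terms at $t=T$ in the integration by parts of Proposition \ref{variational} vanish. A direct computation gives
\[
\frac{b''}{b'} = \frac{-\lambda(3e^{-\lambda t}-e^{-\lambda T})}{e^{-\lambda t}-e^{-\lambda T}}, \qquad \frac{b'}{b} = \frac{-3\lambda e^{-\lambda t}}{e^{-\lambda t}-e^{-\lambda T}}.
\]
The choice of $\lambda$ is made precisely so that $-3\lambda - 3\lambda\kappa/\rho_2 + 2\rho_1 = 0$, which cancels the $e^{-\lambda t}$ coefficient in the numerator of $\gamma$; consequently
\[
\gamma(t) = -\gamma_0 \, \frac{e^{-\lambda T}}{e^{-\lambda t}-e^{-\lambda T}},\qquad \gamma_0 := \frac{d(2\rho_1-\lambda)}{4} = \frac{d\rho_1(2\rho_2+3\kappa)}{6(\rho_2+\kappa)}.
\]
Note that $\gamma_0 = \theta\lambda$, where $\theta := \tfrac{d}{2}(1+\tfrac{3\kappa}{2\rho_2})$ is the exponent appearing in the desired estimate.

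\textbf{Step 2: extract a bound on $LP_T f / P_T f$.} Integrating Proposition \ref{variational} from $0$ to $T$, dividing by $P_T f>0$, and using the explicit evaluations (obtained via a brief substitution $u = 1-e^{-\lambda t}$)
\[
\int_0^T b'\gamma\,dt = \frac{3\gamma_0}{2}\, e^{-\lambda T}(1-e^{-\lambda T})^2,\qquad
\int_0^T b'\gamma^2\,dt = -3\gamma_0^2\, e^{-2\lambda T}(1-e^{-\lambda T}),
\]
one obtains, after dividing the resulting inequality by $(1-e^{-\lambda T})^2$,
\[
\frac{\rho_1}{\rho_2+\kappa}\Gamma(\ln P_T f) + (1-e^{-\lambda T})\,\Gamma^Z(\ln P_T f) \;\le\; \frac{3\gamma_0\, e^{-\lambda T}}{d\rho_2}\,\frac{LP_T f}{P_T f} \;+\; \frac{3\gamma_0^2}{d\rho_2}\cdot\frac{e^{-2\lambda T}}{1-e^{-\lambda T}}.
\]
Since the left-hand side is nonnegative, I drop it and solve for $-LP_T f/P_T f$; the factors of $\gamma_0$ telescope to yield
\begin{equation*}
-\frac{LP_T f}{P_T f}(x) \;\le\; \theta\lambda\cdot\frac{e^{-\lambda T}}{1-e^{-\lambda T}},\qquad x\in\bM,\; T>0,
\end{equation*}
for every $f\in C^\infty_0(\bM)$ with $f\ge 0$.

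\textbf{Step 3: passage to the heat kernel and integration in time.} Fix $x,y\in\bM$ and $s>0$, and apply the previous inequality to $f_n = h_n\, p(y,\cdot,s)$ with $\{h_n\}$ the exhaustion sequence of Lemma \ref{L:exhaustion}. Since $P_T f_n \to p(y,\cdot,T+s) = p(\cdot,y,T+s)$ locally uniformly, and $LP_T f_n \to \partial_t p(\cdot,y,T+s)$ by standard parabolic regularity, we may let $n\to\infty$ and then $s\to 0^+$ to obtain
\[
-\frac{\partial}{\partial t}\ln p(x,y,t) \;\le\; \theta\lambda\,\frac{e^{-\lambda t}}{1-e^{-\lambda t}},\qquad t>0.
\]
Integrating from $t$ to $T'$ and using the antiderivative $\int \lambda e^{-\lambda s}(1-e^{-\lambda s})^{-1} ds = \ln(1-e^{-\lambda s})$, I get
\[
\ln p(x,y,t) - \ln p(x,y,T') \;\le\; \theta\,[\ln(1-e^{-\lambda T'}) - \ln(1-e^{-\lambda t})].
\]
As recalled in section \ref{S:global}, after normalizing $\mu$ so that it is a probability measure one has $p(x,y,T')\to p_\infty = 1$ as $T'\to\infty$. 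Letting $T'\to\infty$ and exponentiating therefore yields the claimed bound
\[
p(x,y,t) \;\le\; \bigl(1-e^{-\lambda t}\bigr)^{-\theta} = \bigl(1-e^{-\frac{2\rho_1\rho_2\, t}{3(\rho_2+\kappa)}}\bigr)^{-\frac{d}{2}(1+\frac{3\kappa}{2\rho_2})}.
\]

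The main technical point is Step 1--2: one must recognize that $b(t)=(e^{-\lambda t}-e^{-\lambda T})^3$ together with the specific value of $\lambda$ is what makes the two integrals of $b'\gamma$ and $b'\gamma^2$ factor in such a way that $\gamma_0$ cancels to leave exactly the product $\theta\lambda$ in the coefficient of $e^{-\lambda T}/(1-e^{-\lambda T})$. Step 3 is then routine given the apparatus (approximation by $C^\infty_0$ functions, hypoellipticity, and the stochastic completeness plus long-time convergence $p_\infty=1$) already established in earlier sections.
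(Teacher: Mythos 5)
Your proposal is correct and follows essentially the same route as the paper. The paper's proof of Proposition~\ref{globalbound} simply invokes \eqref{gamma_bound} with $\beta=3$ (which is exactly the choice $b(t)=(e^{-\alpha t}-e^{-\alpha T})^3$ with $\alpha=\frac{2\rho_1\rho_2}{3(\rho_2+\kappa)}$ you make), drops the nonnegative left-hand side to get the lower bound on $\partial_t\ln P_t f$, and then integrates from $t$ to $\infty$ using $p_\infty=1$; your Steps~1--3 reproduce that argument, just re-deriving the integrals $\int b'\gamma$, $\int b'\gamma^2$ directly from Proposition~\ref{variational} instead of quoting \eqref{gamma_bound}, and spelling out the approximation to the heat kernel that the paper leaves implicit.
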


\begin{proof}
We apply \eqref{gamma_bound} with $\beta=3$ and obtain
\begin{align}\label{bla}
\frac{\rho_1}{\rho_2+\kappa} \Gamma (\ln P_t f)+(1-e^{-\alpha
t})\Gamma^Z (\ln P_t f) & \le   \frac{\rho_1}{2\rho_2}
\frac{2\rho_2+3\kappa}{\rho_2+\kappa} e^{-\alpha t} \frac{LP_t
f}{P_t f}
\\
& + \frac{d \rho_1^2}{12 \rho_2} \left(\frac{2\rho_2+
3\kappa}{\rho_2+\kappa} \right)^2 \frac{e^{-2\alpha t}}{
1-e^{-\alpha t}}, \notag
\end{align}
where $\alpha=\frac{2\rho_1\rho_2}{3(\rho_2+\kappa)}$. We deduce
\[
\frac{\partial \ln P_tf}{\partial t} \ge -\frac{d\rho_1}{6}
\frac{2\rho_2+3\kappa}{\rho_2+\kappa}\frac{e^{-\alpha t}}{
1-e^{-\alpha t}}.
\]
By integrating from $t$ to $\infty$, we obtain
\[
-\ln p(x,y,t) \ge -\frac{d}{2} \left( 1+\frac{3\kappa}{2\rho_2}\right) \ln (1-e^{-\alpha t}).
\]
This gives the desired conclusion.

\end{proof}

\subsection{Diameter bound}

In this subsection we conclude the proof of Theorem \ref{T:BM} by
showing that the diam$\ \bM$ is bounded. Since we have assumed that
$(\bM,d)$ be complete, this implies that such metric space is
compact. The idea is to show that the operator $L$ satisfies an
entropy-energy inequality. Such inequalities have been extensively
studied by Bakry in \cite{bakry-stflour} (see chapters 4 and 5).

To simplify the computations, in what follows we denote by $D$ the
number defined in \eqref{D}, and we set
\[
\alpha=\frac{2\rho_1 \rho_2 }{3(\rho_2+\kappa)}.
\]

\begin{proposition}\label{P:entropyenergy}
For $f \in L^2 (\bM)$ such that $\int_\bM f^2 d\mu =1$, we have
\[
\int_\bM f^2 \ln f^2 d\mu \le \Phi \left( \int_\bM \Gamma(f) d\mu \right),
\]
where
\[
\Phi(x)=  D \left[  \left( 1+\frac{2}{\alpha D } x\right)\ln \left(
1+\frac{2}{\alpha D} x\right)-\frac{2}{\alpha D } x  \ln \left(
\frac{2}{\alpha D} x  \right)\right].
\]
\end{proposition}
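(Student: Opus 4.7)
The plan is to derive this entropy-energy inequality from the global heat kernel bound established in Proposition \ref{globalbound} via the by-now classical correspondence, due to Gross, Davies, and Bakry, between ultracontractive heat semigroup estimates and logarithmic Sobolev-type inequalities. The key observation is that the manifold is \emph{compact of finite volume one} by the preceding discussion, and all the analytical machinery (self-adjointness of $L$, density of $C^\infty_0(\bM)$ in the energy domain, the chain rule for $\Gamma$) is available in our framework.

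First, I would reformulate Proposition \ref{globalbound} as an ultracontractive estimate: for every $t>0$ and every $f\in L^1(\bM)$,
\[
\|P_tf\|_{L^\infty(\bM)} \le (1-e^{-\alpha t})^{-D/2} \|f\|_{L^1(\bM)},
\]
which is immediate from the representation $P_tf(x)=\int p(x,y,t)f(y)\,d\mu(y)$ and the stated pointwise bound on the heat kernel. Next, I would invoke the general scheme of Bakry (Chapters 4--5 of \cite{bakry-stflour}) converting such an ultracontractive bound into a one-parameter family of logarithmic Sobolev inequalities. Concretely, for $f\in L^2(\bM)$ with $\int f^2\,d\mu = 1$ and every $t>0$ one establishes
\[
\int_\bM f^2 \ln f^2\, d\mu \ \le\ 2t \int_\bM \Gamma(f,f)\, d\mu \ +\ U(t),
\]
where $U(t)$ is an explicit entropy cost built from $\ln\|P_t\|_{L^1\to L^\infty}$. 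This step is the genuine analytic step: it uses that along the semigroup flow $s\mapsto P_sf$, the derivative of $\int (P_sf)^2 \ln (P_sf)^2\,d\mu$ can be computed via the formula $L(g^2)=2g\,Lg+2\Gamma(g,g)$ together with integration by parts, and the decay in time is controlled by the ultracontractive bound combined with Jensen's inequality applied to $\ln(f^2/P_tf^2)+\ln P_tf^2$.

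Finally, I would optimize the right-hand side over $t>0$. Writing $\mathcal{E}=\int\Gamma(f,f)\,d\mu$ and recalling that $U(t)$ contributes a multiple of $-\ln(1-e^{-\alpha t})$, the stationary point is determined by
\[
2\mathcal{E} \ =\ -U'(t^*) \ =\ \text{(const)}\cdot\frac{\alpha\,e^{-\alpha t^*}}{1-e^{-\alpha t^*}},
\]
which gives $e^{\alpha t^*} = 1+\frac{D\alpha}{2\mathcal{E}}$ and $1-e^{-\alpha t^*} = \frac{D\alpha}{D\alpha+2\mathcal{E}}$. Substituting back and collecting terms with the natural substitution $u = 2\mathcal{E}/(\alpha D)$, both the linear and the logarithmic contributions reorganize as
\[
D\Big[(1+u)\ln(1+u) - u\ln u\Big],
\]
which is exactly the stated $\Phi$.

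The main obstacle is the middle step: verifying the precise linear-in-$t$ and logarithmic-in-$\Psi(t)$ form of the entropy bound with the right numerical constants, because any factor discrepancy at that stage propagates into the wrong $\Phi$. All the ingredients needed to make this step rigorous are already at our disposal, namely the self-adjointness of $L$, stochastic completeness (Theorem \ref{T:sc}), the density and approximation result via $h_n$ from Lemma \ref{L:exhaustion}, and the symmetry and positivity of the heat kernel $p(x,y,t)$.
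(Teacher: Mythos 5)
Your argument follows exactly the route the paper takes: reformulate Proposition \ref{globalbound} as an ultracontractive bound, invoke the classical machinery converting ultracontractivity into a one-parameter family of logarithmic Sobolev inequalities, and then optimize over $t$; the paper simply cites Davies (Theorem 2.2.3 in \cite{Davies}) for the ``genuine analytic step'' you flagged, yielding precisely $\int_\bM f^2\ln f^2\,d\mu \le 2t\int_\bM\Gamma(f)\,d\mu - D\ln(1-e^{-\alpha t})$, after which your optimization computation reproduces $\Phi$ correctly. One small wrinkle: Davies' theorem is stated in terms of the $L^2\to L^\infty$ norm, and the paper derives $\|P_tf\|_\infty\le(1-e^{-\alpha t})^{-D/2}\|f\|_2$ directly (via Cauchy--Schwarz on the probability space $\mu$), whereas you start from the $L^1\to L^\infty$ bound; this is fine since $\|f\|_1\le\|f\|_2$ on a probability space, but if you instead dualized $L^1\to L^\infty$ through the semigroup property you would get a different (slightly sharper) exponent $(1-e^{-2\alpha t})^{-D/4}$ and hence a different $\Phi$, so one should use the direct Cauchy--Schwarz route to land on the stated constant.
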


\begin{proof}
>From Proposition \ref{globalbound}, for every $f \in L^2(\bM)$ we
have
\[
\|P_t f \|_\infty \le \frac{1}{\left( 1-e^{-\alpha t}
\right)^{\frac{D}{2} }} \| f \|_2.
\]
Therefore, from Davies theorem (Theorem 2.2.3 in \cite{Davies}), for $f \in L^2
(\bM)$ such that $\int_\bM f^2 d\mu =1$, we obtain
\[
\int_\bM f^2 \ln f^2 d\mu \le 2t \int_\bM \Gamma(f) d\mu -D \ln \left( 1-e^{-\alpha t} \right), \quad t >0.
\]
By minimizing over $t$ the right-hand side of the above inequality,
we obtain
\[
\int_\bM f^2 \ln f^2 d\mu \le -\frac{2}{\alpha} x
\ln\left(\frac{2x}{2x + \alpha D}\right) +D
\ln\left(\frac{2x+\alpha D}{\alpha D}\right).
\]
where $ x = \int_\bM \Gamma(f) d\mu$. It is now an easy exercise to
recognize that the right-hand side of the latter inequality is the
same as $\Phi(x)$.

\end{proof}

With Proposition \ref{P:entropyenergy} in hands, we can finally
complete the proof of Theorem \ref{T:BM}.

\begin{proposition}
One has
\[ \emph{diam}\ \bM \le 2 \sqrt 2 \sqrt{\frac{D}{\alpha}}\pi = 2\sqrt{3} \pi \sqrt{
\frac{\rho_2+ \kappa}{\rho_1\rho_2} \left(
1+\frac{3\kappa}{2\rho_2}\right)d }.
\]
\end{proposition}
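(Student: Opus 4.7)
The plan is to deduce the diameter bound from the entropy--energy inequality of Proposition \ref{P:entropyenergy} by means of Herbst's classical argument, combined with the identification $d = \rho$ from \eqref{d}. The philosophy is that a control on $\int f^2 \log f^2\, d\mu$ by $\int \Gamma(f)\, d\mu$ produces sub-Gaussian concentration for $\mu$ when applied to exponentials, and this concentration forces Lipschitz functions to be uniformly bounded; applying this to (smooth approximations of) $d(\cdot,x_0)$ will give a diameter bound.

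The Herbst step goes as follows. Fix a smooth bounded function $g$ with $\Gamma(g) \le 1$ on $\bM$ and $\int g\, d\mu = 0$, and set $Z(\lambda)=\int e^{\lambda g}\,d\mu$. Applying Proposition \ref{P:entropyenergy} to $f=e^{\lambda g/2}/\sqrt{Z(\lambda)}$, and using $\Gamma(f) \le \lambda^2 e^{\lambda g}/(4 Z(\lambda))$, one obtains
\[
\frac{\lambda Z'(\lambda)}{Z(\lambda)} - \log Z(\lambda) \le \Phi\!\left(\frac{\lambda^2}{4}\right),
\]
since $\Phi$ is increasing. Setting $K(\lambda) = \lambda^{-1}\log Z(\lambda)$ this is exactly $K'(\lambda) \le \Phi(\lambda^2/4)/\lambda^2$. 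Because $\int g\, d\mu=0$, $\lim_{\lambda\to 0^+}K(\lambda) = 0$, hence
\[
K(\lambda) \le \int_0^{\infty}\frac{\Phi(s^2/4)}{s^2}\,ds \qquad \text{for every }\lambda>0,
\]
provided this last integral is finite.

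The core computation is to evaluate this integral explicitly. Writing $\Phi(x) = D\,\varphi(2x/(\alpha D))$ with $\varphi(u) = (1+u)\log(1+u)-u\log u$, the substitution $u = s^2/(2\alpha D)$ converts the integral to $\frac{\sqrt{2\alpha D}}{4\alpha}\int_0^\infty \varphi(u)\,u^{-3/2}\,du$. Since $\varphi'(u) = \log(1+1/u)$ and $\varphi(0)=0$, a Fubini interchange gives $\int_0^\infty \varphi(u)u^{-3/2}\,du = 2\int_0^\infty s^{-1/2}\log(1+1/s)\,ds$, and the substitution $t=1/s$ followed by integration by parts and the elementary identity $\int_0^\infty dt/(t^{1/2}(1+t)) = \pi$ yields $\int_0^\infty \varphi(u)u^{-3/2}\,du = 4\pi$. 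Consequently,
\[
\int_0^{\infty}\frac{\Phi(s^2/4)}{s^2}\,ds = \pi\sqrt{\frac{2D}{\alpha}}.
\]
This is the main technical step; the integrability at $0$ and $\infty$ has to be verified (the integrand is $O(-\log s)$ near $0$ and $O(s^{-2}\log s)$ at $\infty$), but the explicit value $\pi\sqrt{2D/\alpha}$ is what matches the numerical constant in the stated bound.

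To finish, the estimate $K(\lambda) \le \pi\sqrt{2D/\alpha}$ gives $Z(\lambda)\le \exp(\lambda\pi\sqrt{2D/\alpha})$ for every $\lambda>0$, so by Markov $\mu(g\ge r)\le e^{-\lambda(r-\pi\sqrt{2D/\alpha})}$; letting $\lambda\to\infty$ shows $g\le \pi\sqrt{2D/\alpha}$ $\mu$-a.e., and replacing $g$ by $-g$ yields $|g|\le \pi\sqrt{2D/\alpha}$ $\mu$-a.e. By continuity this holds everywhere. Now given $x,y\in\bM$ and any smooth $f$ with $\Gamma_\infty(f)\le 1$, apply the bound to $g=f-\int f\, d\mu$ (here one uses that $\mu$ is finite, as established in Section \ref{S:global}, possibly with a truncation to reduce to the bounded case); this gives $|f(x)-f(y)|\le 2\pi\sqrt{2D/\alpha}$. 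Taking the supremum over such $f$ and invoking \eqref{d} together with $d=\rho$ yields
\[
d(x,y)\le 2\pi\sqrt{\frac{2D}{\alpha}} = 2\sqrt{3}\,\pi\sqrt{\frac{\kappa+\rho_2}{\rho_1\rho_2}\left(1+\frac{3\kappa}{2\rho_2}\right)d},
\]
the last equality being the straightforward algebraic identification of $D$ and $\alpha$. The main obstacle is the explicit evaluation of $\int_0^\infty \Phi(s^2/4)/s^2\,ds$ and matching the numerical constant; a secondary technical point is justifying Herbst's argument for potentially unbounded Lipschitz functions, which is handled by a truncation argument made harmless by the finiteness of $\mu$.
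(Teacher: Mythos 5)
Your argument is correct and reaches the same bound, but it is more self-contained than the paper's proof. The paper simply verifies that $\Phi$ is concave, increasing, and has the right integrability properties, then invokes Theorem~5.4 of Bakry's Saint-Flour notes \cite{bakry-stflour} as a black box to get $\text{diam}\,\bM \le -2\int_0^\infty \sqrt{x}\,\Phi''(x)\,dx$, and finally computes this last integral from the explicit form of $\Phi''$. You instead unwind the mechanism behind that theorem: Herbst's exponential-moment argument applied to $f = e^{\lambda g/2}/\sqrt{Z(\lambda)}$, leading to the differential inequality $K'(\lambda)\le \Phi(\lambda^2/4)/\lambda^2$, hence $K(\lambda)\le \int_0^\infty \Phi(s^2/4)s^{-2}\,ds$, and from there a uniform bound on $1$-Lipschitz (in the $\Gamma$ sense) functions with zero mean, which you convert to a diameter bound via the dual characterization \eqref{d} of the Carnot--Carath\'eodory distance. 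Your identity $\int_0^\infty \varphi(u)u^{-3/2}\,du = 4\pi$ is correct (two integrations by parts and the beta-type evaluation $\int_0^\infty dt/(t^{1/2}(1+t))=\pi$), and feeding it back gives $\int_0^\infty\Phi(s^2/4)s^{-2}\,ds = \pi\sqrt{2D/\alpha}$, so the doubled oscillation bound $2\pi\sqrt{2D/\alpha}$ matches the paper's $-2\int\sqrt{x}\Phi''\,dx$ exactly — as it must, by the integration-by-parts identities the paper records. The trade-off is transparency versus concision: your version exposes the mechanism and verifies the constant independently, while the paper's version is shorter at the cost of a reference. The loose ends you flag at the end — extending the a.e.\ bound to an everywhere bound, and reducing from unbounded to bounded $1$-Lipschitz functions by truncation using $\mu(\bM)<\infty$ — are genuine but routine, and the paper is equally silent on them since it delegates all of this to Bakry's theorem.
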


\begin{proof}
The function $\Phi$ that appears in the Proposition
\ref{P:entropyenergy} enjoys the following properties:
\begin{itemize}
\item $\Phi'(x)/x^{1/2}$  and $\Phi(x)/x^{3/2}$ are integrable on $(0,\infty)$;
\item $\Phi$ is concave;
\item $\frac{1}{2}\int_0^{+\infty} \frac{\Phi(x)}{x^{3/2}}dx=\int_0^{+\infty} \frac{\Phi'(x)}{\sqrt{x}}dx =-2\int_0^{+\infty} \sqrt{x} \Phi''(x)dx <+\infty.$
\end{itemize}
We can therefore apply the beautiful Theorem 5.4 in \cite{bakry-stflour} to deduce that the diameter of $\bM$ is finite and
\[
\emph{diam}\ \bM \le-2\int_0^{+\infty} \sqrt{x} \Phi''(x)dx.
\]
Since $\Phi''(x) = - \frac{2D}{x(2x+\alpha D)}$, a routine
calculation shows
\[
-2\int_0^{+\infty} \sqrt{x} \Phi''(x)dx= 2\sqrt{3} \pi \sqrt{
\frac{\rho_2+\kappa}{\rho_1\rho_2} \left(
1+\frac{3\kappa}{2\rho_2}\right)d }.
\]
\end{proof}

\begin{remark}
The constant $2\sqrt{3} \pi \sqrt{
\frac{\rho_2+\kappa}{\rho_1\rho_2} \left(
1+\frac{3\kappa}{2\rho_2}\right)d } $ is not sharp. For instance, in
the Riemannian case $\kappa=\rho_2=0$, we obtain
\[
\emph{diam}\ \bM \le  2\sqrt{3}\pi\sqrt{ \frac{d}{\rho_1}},
\]
whereas it is known from the Bonnet-Myer's theorem that
\[
\emph{diam}\ \bM  \le  \pi \sqrt{ \frac{d-1}{\rho_1}}.
\]
\end{remark}

\subsection{Dimension  bound}

We now  turn to an upper bound for the Hausdorff dimension of the compact metric space $(\bM,d)$.

\begin{proposition}
The Hausdorff dimension of the  metric space $(\bM,d)$ is less than
$D$ given by \eqref{D}.
\end{proposition}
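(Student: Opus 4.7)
The plan is to derive a lower bound $\mu(B(x,r)) \ge c r^D$ for the volumes of small metric balls, and then conclude via a standard covering argument using the compactness of $(\bM,d)$ that was established in the previous proposition.

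First I would extract from Proposition \ref{globalbound} the small-time on-diagonal heat kernel upper bound. Since $1 - e^{-\alpha t} \ge \alpha t/2$ on a neighborhood of $t=0$, one obtains a constant $C_0 > 0$ and $t_0 > 0$ such that
\[
p(x,y,t) \le C_0\, t^{-D/2}, \qquad x,y \in \bM,\ 0 < t \le t_0.
\]
Next I would use this together with the stochastic completeness $P_t 1 = 1$ (Theorem \ref{T:sc}) to produce the volume lower bound. The target is a heat-kernel concentration estimate of the form
\[
\int_{B(x, A\sqrt t)} p(x,y,t)\, d\mu(y) \ge \frac{1}{2}, \qquad 0 < t \le t_1,
\]
for a universal constant $A$. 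Inserting $p(x,\cdot,t) \le C_0 t^{-D/2}$ on $B(x,A\sqrt t)$ and rearranging yields $\mu(B(x, A\sqrt t)) \ge (2C_0)^{-1} t^{D/2}$, which after rescaling is exactly $\mu(B(x,r)) \ge c\, r^D$ for $0 < r \le r_0$. To establish the concentration estimate one may either invoke the Gaussian off-diagonal bound of Theorem \ref{T:ub} to control the mass of $p(x,\cdot,t)$ outside $B(x,A\sqrt t)$, or equivalently use the parabolic Harnack inequality of Theorem \ref{T:harnack} to compare $p(x,\cdot,t)$ with $p(x,x,2t)$ on $B(x,\sqrt t)$ and derive an exit-time estimate from the resulting near-diagonal lower bound together with $P_t1 = 1$.

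With the volume lower bound in hand, the conclusion is a classical packing argument. Since $(\bM,d)$ is compact, $\mu(\bM) < \infty$. For $0 < r \le r_0$, let $\{x_i\}_{i=1}^{N(r)}$ be a maximal $r$-separated set in $\bM$; the balls $B(x_i, r/2)$ are pairwise disjoint, so
\[
N(r)\, c\, (r/2)^D \le \sum_{i=1}^{N(r)} \mu(B(x_i,r/2)) \le \mu(\bM),
\]
which gives $N(r) \le C\, r^{-D}$. By maximality, $\{B(x_i,r)\}_{i=1}^{N(r)}$ covers $\bM$, and therefore for any $s > D$ the $s$-dimensional Hausdorff pre-measure satisfies
\[
\mathcal{H}^s_{2r}(\bM) \le N(r)\,(2r)^s \le C'\, r^{s-D} \longrightarrow 0 \quad \text{as } r \to 0^+.
\]
Hence $\mathcal{H}^s(\bM) = 0$ for every $s > D$, which is exactly $\dim_H(\bM) \le D$.

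The main obstacle is the concentration/exit-time estimate used to obtain the volume lower bound, because the Gaussian bound of Theorem \ref{T:ub} features $\mu(B(y,\sqrt t))^{1/2}$ in the denominator and is therefore not directly integrable without a priori control of ball volumes. This forces either a bootstrap argument coupling the Harnack inequality with the on-diagonal upper bound, or an independent derivation of the exit-time estimate from the Li-Yau inequality \eqref{liyaupositifzero}; the remainder of the argument, namely the covering step, is then routine.
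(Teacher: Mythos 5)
Your packing/covering argument at the end is correct, and the extraction of the small-time bound $p(x,y,t) \le C_0\, t^{-D/2}$ from Proposition \ref{globalbound} is routine. The crucial intermediate step, however --- the volume lower bound $\mu(B(x,r)) \ge c\, r^D$ --- is not established, and you rightly flag the heat-kernel concentration estimate as the obstacle. The difficulty is more serious than it may appear: the Gaussian upper bound of Theorem \ref{T:ub}, the only available tool for controlling the mass of $p(x,\cdot,t)$ outside a ball $B(x,A\sqrt t)$, carries the factor $\mu(B(x,\sqrt t))^{-1/2}\mu(B(y,\sqrt t))^{-1/2}$ in its denominator; integrating it over the complement of $B(x,A\sqrt t)$ requires a uniform lower bound on $\mu(B(y,\sqrt t))$, which is precisely the statement you are trying to prove. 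The parabolic Harnack inequality alone does not supply an exit-time estimate either; the classical deduction of exit-time bounds from a parabolic Harnack inequality uses volume doubling, which is again a priori unavailable at the relevant global scale, so this route is circular without further input.

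The paper's proof avoids volume estimates entirely. It invokes Mitchell's theorem, which identifies the Hausdorff dimension as $Q = d + 2\dim\mathcal V$, and the small-time diagonal asymptotics of Ben Arous and Takanobu, which give $p(x,x,t) \sim m(x)\, t^{-Q/2}$ as $t\to 0^+$ with $m(x)>0$. Comparing this expansion against the upper bound $p(x,x,t) \lesssim t^{-D/2}$, which follows either from \eqref{ultracontractivity} or from the $t\to 0^+$ limit of Proposition \ref{globalbound}, forces $Q \le D$. Geometry enters only through the two cited asymptotic results, and no volume bound is needed. If you wish to salvage your measure-theoretic route, the natural fix would be to quote the Nagel--Stein--Wainger estimate of Theorem \ref{T:doubling} together with compactness to obtain $\mu(B(x,r)) \ge c\, r^Q$ uniformly for small $r$; but then the packing argument only yields $\dim_H(\bM) \le Q$, and one still needs $Q \le D$ --- the actual content of the proposition --- as a separate input, so the gap remains.
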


\begin{proof}
Let us recall that, from our assumptions,
\[
T_x \mathbb{M} = \mathcal{H}(x) \oplus \mathcal{V}(x),\ \ \ \ x\in
\bM,
\]
where
\[
\mathcal{H}(x)=\text{span} \left\{X_1 (x),...,X_d (x) \right\},
\quad x\in \mathbb{M},
\]
and
\[
\mathcal{V}(x)=\text{span}\left\{  Z_{mn} (x), 1\le m,n \le \di.
\right\},
\]
We moreover assumed that
\[
\text{dim} \mathcal{H}(x) =d,\ \ \ \ x\in \bM,
\]
and this implies that also $\text{dim} \mathcal V(x) = \text{dim}
\bM - d$, is independent of $x\in \bM$. From Theorem 2 in
\cite{mitchell} we deduce that the Hausdorff dimension of the
compact metric space $(\bM,d)$ is equal to
$\dim_{\mathbf{Haus}}(\bM)=d+2\text{dim} \mathcal V(x)$. Moreover,
from \cite{benarous} and \cite{takanobu} (see also Chapter 3 in
\cite{baudoin}), there exists a smooth and positive function $m$ on
$\bM$ such that
\[
\lim_{t \to 0} t^{\frac{D}{2}} p(x,x,t) =m(x).
\]
From the bound (\ref{ultracontractivity}) we conclude that
\[
\dim_{\mathbf{Haus}} (\bM) \le D.
\]
\end{proof}

\subsection{Isoperimetric bounds and $L^1$ Poincar\'e inequality}

We recall our assumption, following Proposition
\ref{harnack_spectral}, that $\mu(\bM)=1$. Also, let $D$ be defined
by \eqref{D}. With this in hands, we can now proceed as in section
\ref{S:Isoperimetry}.

\begin{proposition}\label{P:isocompact}
Let $E\subset \bM$ be a Caccioppoli  set. We have
\[
\mu(E)(1-\mu(E))  \le \frac{3}{2} D \sqrt{\frac{\kappa+\rho_2}{d\rho_1 \rho_2}} P_{\mathcal{H}}(E,\bM).
\]
\end{proposition}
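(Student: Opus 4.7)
The plan is to adapt the Varopoulos--Ledoux argument from the proof of Proposition \ref{P:char} to the present compact setting so as to establish a $t$-\emph{uniform} $L^1$-Poincar\'e inequality
\[
\|P_t f - f\|_{L^1(\bM)} \le 3D\sqrt{\frac{\rho_2+\kappa}{d\rho_1\rho_2}}\,\|\sqrt{\Gamma(f)}\|_{L^1(\bM)}, \qquad f\in C^\infty_0(\bM),
\]
apply it to a smooth approximation of $\mathbf{1}_E$, and let $t\to\infty$. With $\mu(\bM)=1$ and the normalization $p_\infty=1$, the kernel converges uniformly to $1$, so $P_t\mathbf{1}_E\to\mu(E)$ and $\|P_t\mathbf{1}_E-\mathbf{1}_E\|_{L^1(\bM)}\to 2\mu(E)(1-\mu(E))$, producing the factor $1/2$ in the target constant. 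The decisive input is $\rho_1>0$, which furnishes an exponential damping with rate $\alpha=\frac{2\rho_1\rho_2}{3(\rho_2+\kappa)}$ and replaces the borderline non-integrable $1/t$-factor of the noncompact case (which forced the $\sqrt{t}$-type bound \eqref{PoincareP_t}) by a quantity that is integrable on $(0,\infty)$.

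The engine will be the refined Li--Yau estimate \eqref{gamma_bound} taken with $\beta=3$: dropping the nonnegative vertical term and rearranging yields, for $f\ge 0$,
\[
\Gamma(\ln P_tf) \le \frac{D}{d}\,e^{-\alpha t}\,\frac{LP_tf}{P_tf} + \frac{\alpha D^2}{2d}\,\frac{e^{-2\alpha t}}{1-e^{-\alpha t}},
\]
from which I extract the one-sided bound $(LP_tf)^-\le \tfrac{\alpha D}{2}\tfrac{e^{-\alpha t}}{1-e^{-\alpha t}}P_tf$. Mass conservation $\int_\bM LP_tf\,d\mu=0$ (Theorem \ref{T:sc}) promotes this to a two-sided bound on $\|LP_tf\|_{L^1(\bM)}$, and self-adjointness transfers the estimate to $L^\infty$. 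Reinjecting both bounds into the display above produces a pointwise gradient estimate of the shape $\|\sqrt{\Gamma(P_tf)}\|_\infty\le D\sqrt{3\alpha/(2d)}\cdot e^{-\alpha t}(1-e^{-\alpha t})^{-1/2}\|f\|_\infty$; the substitution $u=1-e^{-\alpha s}$ computes $\int_0^\infty e^{-\alpha s}(1-e^{-\alpha s})^{-1/2}ds=2/\alpha$, so that $\int_0^\infty\|\sqrt{\Gamma(P_sg)}\|_\infty\,ds$ is bounded by precisely $3D\sqrt{(\rho_2+\kappa)/(d\rho_1\rho_2)}\|g\|_\infty$, the constant appearing in the claim.

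For $f,g\in C^\infty_0(\bM)$ with $\|g\|_\infty\le 1$, the identity
\[
\int_\bM g(P_tf - f)\,d\mu \;=\; -\int_0^t \int_\bM \Gamma(P_sg,f)\,d\mu\,ds,
\]
obtained from self-adjointness of $L$ and $P_s$ combined with Green's formula $\int u Lv\,d\mu = -\int \Gamma(u,v)\,d\mu$, together with $|\Gamma(P_sg,f)|\le\sqrt{\Gamma(P_sg)}\sqrt{\Gamma(f)}$ and the integral estimate above, delivers the desired $t$-uniform $L^1$-Poincar\'e inequality. Invoking Proposition \ref{P:ag} to approximate $\mathbf{1}_E$ by $\{f_n\}\subset C^\infty_0(\bM)$ with $\|\sqrt{\Gamma(f_n)}\|_{L^1(\bM)}\to P_\mathcal{H}(E,\bM)$, passing the inequality to $\mathbf{1}_E$, and letting $t\to\infty$ via bounded convergence (legitimate in view of the uniform heat-kernel bound from Proposition \ref{globalbound}) then completes the argument.

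The main obstacle will be the two-step bootstrap in the second paragraph: upgrading the one-sided Li--Yau bound into a two-sided $L^\infty$ estimate on $LP_tf$, and then reinjecting it to extract a gradient estimate that is integrable in time. The mean-zero trick $\int_\bM LP_tf\,d\mu=0$ is the same device already exploited in Proposition \ref{P:char}, while the novelty is that positivity of $\rho_1$ replaces the temporal factor $1/t$ by $e^{-\alpha t}/(1-e^{-\alpha t})$, whose integral converges on $(0,\infty)$---precisely what is needed to turn the finite-time estimate of Section \ref{S:Isoperimetry} into a uniform one and produce a finite isoperimetric constant.
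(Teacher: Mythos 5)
Your proposal is correct and follows essentially the route the paper takes: the authors say ``we proceed exactly as in the proof of Proposition~\ref{P:char}'' starting from the refined Li--Yau bound \eqref{bla}, and your reconstruction of that argument---the one-sided bound on $(LP_tf)^-$, promotion to a two-sided $L^1$ and then $L^\infty$ bound via $\int_\bM LP_tf\,d\mu=0$ and duality, reinjection to obtain $\|\sqrt{\Gamma(P_tf)}\|_\infty\le D\sqrt{\rho_1\rho_2/(d(\rho_2+\kappa))}\,e^{-\alpha t}(1-e^{-\alpha t})^{-1/2}\|f\|_\infty$, and time-integration using $\int_0^\infty e^{-\alpha s}(1-e^{-\alpha s})^{-1/2}ds=2/\alpha$---reproduces exactly the two displayed inequalities in the paper's proof, with matching constants. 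The only (minor) divergence is in the endgame: you pass to the limit $t\to\infty$ by observing $P_t\mathbf 1_E\to\mu(E)$ pointwise (since $p(\cdot,\cdot,t)\to p_\infty=1$) and invoking bounded convergence to get $\|P_t\mathbf 1_E-\mathbf 1_E\|_{L^1}\to 2\mu(E)(1-\mu(E))$, whereas the paper repeats the $L^2$ identity $\|P_t\mathbf 1_E-\mathbf 1_E\|_{L^1}=2\bigl(\mu(E)-\|P_{t/2}\mathbf 1_E\|_{L^2}^2\bigr)$ and bounds $\|P_{t/2}\mathbf 1_E\|_{L^2}^2\le\mu(E)^2(1-e^{-\alpha t})^{-D/2}$ via Proposition~\ref{globalbound} before letting $t\to\infty$. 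Both deliver the same left-hand side in the limit, so this is a cosmetic rather than substantive difference.
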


\begin{proof}
We proceed exactly as in the proof of Proposition \ref{P:char} to
obtain from \eqref{bla} the inequalities
\[
\|\sqrt{\Gamma(P_t f)}\|_\infty \le D \sqrt{\frac{\rho_1 \rho_2}{d(\kappa+\rho_2)} } \frac{e^{-\alpha t}}{\sqrt{1-e^{-\alpha t}}} \|f \|_\infty
\]
and
\[
\| f -P_tf \|_1 \le 3 D \sqrt{\frac{\kappa+\rho_2}{d\rho_1 \rho_2}} \sqrt{1-e^{-\alpha t}} \|f \|_1.
\]
Combining this with Proposition \ref{globalbound} gives
\[
3 D \sqrt{\frac{\kappa+\rho_2}{d\rho_1 \rho_2}}  \sqrt{1-e^{-\alpha t}}P_{\mathcal{H}}(E,\bM) \ge 2 \left( \mu(E) -\frac{1}{(1-e^{-\alpha t})^{D/2}} \mu(E)^2 \right).
\]
We conclude by letting $t \to +\infty$.

\end{proof}

The previous isoperimetric inequality leads to the following $L^1$
Poincar\'e inequality.

\begin{proposition}
Let $f\in C^\infty(\bM)$, then
\[
\inf_{c \in \mathbb{R}} \int_\bM | f-c | d \mu \le  6 D
\sqrt{\frac{\kappa+\rho_2}{d\rho_1 \rho_2}} \int_\bM
\sqrt{\Gamma(f)} d\mu.
\]
\end{proposition}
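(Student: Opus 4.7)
The plan is to deduce the $L^{1}$ Poincar\'e inequality from the relative isoperimetric inequality of Proposition~\ref{P:isocompact} by the classical coarea-formula argument of Maz'ya--Federer, taking $c$ to be a median of $f$. First, I would fix a median $m=m(f)\in\mathbb{R}$, that is, a real number satisfying $\mu(\{f>m\})\le 1/2$ and $\mu(\{f<m\})\le 1/2$; such $m$ exists because $\mu(\bM)=1$. For $t>0$, set
\[
E_{t}^{+}=\{f>m+t\}, \qquad E_{t}^{-}=\{f<m-t\},
\]
both of which are Caccioppoli sets for a.e.\ $t$ (by Sard's theorem, since $f\in C^{\infty}(\bM)$) and satisfy $\mu(E_{t}^{\pm})\le 1/2$.

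Because $1-\mu(E_{t}^{\pm})\ge 1/2$, Proposition~\ref{P:isocompact} applied to $E_{t}^{\pm}$ yields, for a.e.\ $t>0$,
\[
\mu(E_{t}^{\pm}) \;\le\; 3D\sqrt{\frac{\kappa+\rho_{2}}{d\rho_{1}\rho_{2}}}\, P_{\Ho}(E_{t}^{\pm},\bM).
\]
Integrating in $t$ and using the layer-cake decomposition
\[
\int_{\bM}|f-m|\,d\mu=\int_{0}^{\infty}\!\bigl[\mu(E_{t}^{+})+\mu(E_{t}^{-})\bigr]\,dt,
\]
together with the changes of variables $s=m+t$ and $s=m-t$ and the identity $P_{\Ho}(\{f<s\},\bM)=P_{\Ho}(\{f>s\},\bM)$ valid for a.e.\ $s$ (complementation preserves the horizontal perimeter), one obtains
\[
\int_{\bM}|f-m|\,d\mu \;\le\; 3D\sqrt{\frac{\kappa+\rho_{2}}{d\rho_{1}\rho_{2}}}\int_{-\infty}^{\infty} P_{\Ho}(\{f>s\},\bM)\,ds.
\]

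Next I would invoke the sub-Riemannian coarea formula for $BV_{\Ho}$ functions, which in the present vector-field setting is standard (see \cite{GN} and the discussion leading to Theorem~\ref{T:iso}): it identifies the right-hand integral with $\mathrm{Var}_{\Ho}(f;\bM)$. Since $\bM$ is compact by Theorem~\ref{T:BM}, the smooth function $f$ lies in $C^{1}(\bM)\subset W^{1,1}_{\Ho}(\bM)$ and therefore $\mathrm{Var}_{\Ho}(f;\bM)=\int_{\bM}\sqrt{\Gamma(f)}\,d\mu$. Taking the infimum over $c\in\mathbb{R}$ on the left of the displayed inequality (the minimum is actually attained at the median) delivers the stated Poincar\'e inequality, with constant $3D\sqrt{(\kappa+\rho_{2})/(d\rho_{1}\rho_{2})}$, which is a factor of two sharper than the stated $6D\sqrt{(\kappa+\rho_{2})/(d\rho_{1}\rho_{2})}$ and is consistent with the paper's own remark that the numerical constants need not be optimal.

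The main technical point to justify is the coarea identity in this sub-elliptic setting and the a.e.\ finiteness of $P_{\Ho}(\{f>s\},\bM)$; both are classical under H\"ormander's condition and follow from the approximation result Proposition~\ref{P:ag} combined with Sard's theorem applied to the smooth function $f$. A secondary bookkeeping point is measurability of $s\mapsto P_{\Ho}(\{f>s\},\bM)$, which is a byproduct of the coarea formula itself. Beyond these, the proof is a routine application of the isoperimetric bound already established.
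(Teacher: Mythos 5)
Your proof follows essentially the same route as the paper's: take a median $m$, decompose $|f-m|$ via super-level sets, apply the isoperimetric estimate of Proposition~\ref{P:isocompact} to each level set, and sum with the coarea formula. Your slightly more careful bookkeeping in the coarea step in fact produces the sharper constant $3D\sqrt{(\kappa+\rho_2)/(d\rho_1\rho_2)}$ (the paper's $6D$ appears to carry an extraneous factor of $2$, harmless since the weaker bound it states is still implied).
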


\begin{proof}
Let $m$ be a median for $f$, that is
\[
\mu ( f \ge m) \ge \frac{1}{2}, \quad \mu( f \le m) \ge \frac{1}{2}.
\]
Set
\[
f^+=\max(f-m,0),\quad f^-=-\min(f-m,0)
\]
so that $f-m=f^+-f^-$. We have
\[
\int_\bM | f-m| d\mu = \int_\bM f^+ d\mu +\int_\bM f^- d\mu,
\]
and thus
\[
\int_\bM | f-m| d\mu =\int_0^{+\infty} \mu(f^+>t)dt
+\int_0^{+\infty} \mu(f^->t)dt.
\]
Observe that for every $t>0$,
\[
\mu(f^+ \ge t) \le \frac{1}{2}, \quad \mu(f^- \ge t) \le
\frac{1}{2},
\]
from Proposition \ref{P:isocompact} we obtain
\[
\mu(f^+>t) \le 3D \sqrt{\frac{\kappa+\rho_2}{d\rho_1 \rho_2}}
P_{\mathcal{H}}(\{f^+>t\},\bM),
\]
and
\[
\mu(f^->t) \le 3D \sqrt{\frac{\kappa+\rho_2}{d\rho_1 \rho_2}}
P_{\mathcal{H}}(\{f^->t\},\bM).
\]
This gives
\[
\int_\bM | f-m| d\mu \le 6D \sqrt{\frac{\kappa+\rho_2}{d\rho_1
\rho_2}}\left( \int_\bM \sqrt{\Gamma(f^+)} d\mu+\int_\bM
\sqrt{\Gamma(f^-)} d\mu\right).
\]
Observing that
$\sqrt{\Gamma(f^+)}+\sqrt{\Gamma(f^-)}=\sqrt{\Gamma(f^++f^-)}$,
completes the proof.

\end{proof}

\subsection{A Lichnerowicz type theorem}\label{lichnerowicz}

A well-known theorem of Lichnerowicz asserts that on a
$d$-dimensional complete Riemannian manifold whose Ricci curvature
is bounded below by a non negative constant $\rho$, then the first
eigenvalue of the Laplace-Beltrami operator is bounded below by
$\frac{\rho d}{d-1}$. In this section, we provide a similar theorem
for our operator $L$. Let us observe that in \cite{Greenleaf},
Greenleaf obtained a similar result for the sub-Laplacian on a CR
manifold.

\begin{proposition}
The first non zero eigenvalue $\lambda_1$ of $-L$ satisfies the estimate
\[
\lambda_1 \ge \frac{\rho_1 \rho_2}{\frac{d-1}{d} \rho_2 +\kappa}.
\]
\end{proposition}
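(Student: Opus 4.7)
The plan is to integrate the curvature-dimension inequality \eqref{CD} against the Riemannian volume $\mu$ (which is finite by Theorem~\ref{T:BM} and the normalization adopted after Proposition~\ref{harnack_spectral}), evaluated on an eigenfunction $f$ of $-L$ associated to $\lambda_1 > 0$. By H\"ormander's hypoellipticity theorem $f \in C^\infty(\bM)$, so all manipulations below are justified.

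First I would establish the three integration-by-parts identities
\[
\int_\bM \Gamma_2(f,f)\, d\mu = \int_\bM (Lf)^2\, d\mu, \qquad \int_\bM \Gamma(f)\, d\mu = -\int_\bM f\,Lf\, d\mu,
\]
and, crucially, using Lemma~\ref{L:comm} (so that $[L,Z_{mn}]=0$),
\[
\int_\bM \Gamma_2^Z(f,f)\, d\mu = \sum_{m,n=1}^{\di}\int_\bM \Gamma(Z_{mn}f, Z_{mn}f)\, d\mu = \lambda_1 \int_\bM \Gamma^Z(f)\, d\mu,
\]
where the last equality uses $LZ_{mn}f = Z_{mn} Lf = -\lambda_1 Z_{mn} f$, so that each $Z_{mn} f$ is either zero or an eigenfunction of $-L$ with eigenvalue $\lambda_1$. (In $\int L\Gamma$ and $\int L\Gamma^Z$ we use $\int_\bM L g\, d\mu = 0$, valid since $\mu(\bM)<\infty$ and $L=L^*$.)

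Next, substitute $Lf = -\lambda_1 f$ into \eqref{CD} and integrate. Using the identities above together with $\int (Lf)^2 d\mu = \lambda_1^2 \int f^2 d\mu$ and $\int \Gamma(f) d\mu = \lambda_1 \int f^2 d\mu$, the inequality becomes, for every $\nu > 0$,
\[
\lambda_1^2 \int_\bM f^2\, d\mu + \nu \lambda_1 \int_\bM \Gamma^Z(f)\, d\mu \ \ge\ \frac{\lambda_1^2}{d}\int_\bM f^2\, d\mu + \left(\rho_1 - \frac{\kappa}{\nu}\right)\lambda_1 \int_\bM f^2\, d\mu + \rho_2 \int_\bM \Gamma^Z(f)\, d\mu.
\]
The final step is to kill the (a priori uncontrolled) vertical term $\int_\bM \Gamma^Z(f)\, d\mu$ by the optimal choice $\nu = \rho_2/\lambda_1$, which makes the coefficients of $\int_\bM \Gamma^Z(f)\, d\mu$ cancel. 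What remains is
\[
\lambda_1^2\,\frac{d-1}{d} \int_\bM f^2\, d\mu \ \ge\ \left(\rho_1 - \frac{\kappa \lambda_1}{\rho_2}\right)\lambda_1 \int_\bM f^2\, d\mu,
\]
and dividing by $\lambda_1 \int_\bM f^2 d\mu > 0$ and rearranging yields the stated bound. The main (small) subtlety is the treatment of the vertical term: without the identity $\int \Gamma_2^Z(f,f)\, d\mu = \lambda_1 \int \Gamma^Z(f)\, d\mu$, coming from $[L,Z_{mn}]=0$, one could not choose $\nu$ to eliminate $\int \Gamma^Z(f) d\mu$ and the argument would stall.
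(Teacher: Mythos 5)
Your proof is correct and follows essentially the same strategy as the paper's: integrate the curvature-dimension inequality against an eigenfunction, use $Lf=-\lambda_1 f$ and integration by parts, and choose $\nu=\rho_2/\lambda_1$ to cancel the uncontrolled vertical term. The only cosmetic difference is that you derive $\int_\bM\Gamma_2^Z(f,f)\,d\mu=\lambda_1\int_\bM\Gamma^Z(f,f)\,d\mu$ via Proposition~\ref{P:bochner2} and Lemma~\ref{L:comm}, whereas the paper gets it more directly from the defining identity $\Gamma_2^Z(f,f)=\frac{1}{2}\big[L\Gamma^Z(f,f)-2\Gamma^Z(f,Lf)\big]$ together with $\int_\bM L\Gamma^Z(f,f)\,d\mu=0$, so that $[L,Z_{mn}]=0$ is not actually needed at this step.
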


\begin{proof}
Let $f:\mathbb{M} \rightarrow \mathbb{R}$ be an eigenfunction corresponding to the eigenvalue $-\lambda_1$.
>From our assumptions,
\begin{equation*}
\Gamma_{2}(f,f)+\nu \Gamma^Z_{2}(f,f) \ge \frac{1}{d} (Lf)^2 + \left( \rho_1 -\frac{\kappa}{\nu}\right)  \Gamma (f,f) + \rho_2 \Gamma^Z (f,f).
\end{equation*}
By integrating this inequality on the manifold $\mathbb{M}$, we obtain
\begin{equation*}
\int_{\mathbb{M}} \Gamma_{2}(f,f) d\mu+\nu\int_{\mathbb{M}} \Gamma^Z_{2}(f,f)d\mu \ge \frac{1}{d} \int_{\mathbb{M}}(Lf)^2 d\mu+ \left( \rho_1 -\frac{\kappa}{\nu}\right) \int_{\mathbb{M}} \Gamma (f,f) d\mu+ \rho_2\int_{\mathbb{M}} \Gamma^Z (f,f)d\mu.
\end{equation*}
Let us now recall that
\begin{equation*}
\Gamma(f,f) =\frac{1}{2}(L(f^2)-2fLf)
\end{equation*}
\begin{equation*}
\Gamma_{2}(f,f) = \frac{1}{2}\big[L\Gamma(f,f) -2 \Gamma(f,
Lf)\big],
\end{equation*}
and
\begin{equation*}
\Gamma^Z_{2}(f,f) = \frac{1}{2}\big[L\Gamma^Z (f,f) - 2\Gamma^Z(f,
Lf)\big].
\end{equation*}
Therefore, by using $Lf=-\lambda_1f$ and integrating by parts in the
above inequality, we find
\begin{equation*}
\left( \lambda_1^2 -\frac{\lambda^2_1}{d}+\frac{\kappa \lambda_1}{\nu}  -\rho_1 \lambda_1 \right)
\int_{\mathbb{M}} f^2 d\mu \ge (\rho_2 -\nu \lambda_1) \int_{\mathbb{M}} \Gamma^Z (f,f)d\mu.
\end{equation*}
By choosing $\nu=\frac{\rho_2}{\lambda_1}$, we obtain the desired
inequality
\[
\lambda_1 \ge \frac{\rho_1 \rho_2}{\frac{d-1}{d} \rho_2 +\kappa}.
\]
\end{proof}

\begin{remark}
We note that when $\kappa = 0$, we recover the classical theorem of
Lichnerowicz.
\end{remark}

\end{document}